\documentclass[11pt, reqno]{amsart}
\usepackage[T1]{fontenc}
\usepackage[latin1]{inputenc}
\usepackage[english]{babel}
\usepackage{amsthm}
\usepackage{amssymb,amsmath,bbm,dsfont}
\usepackage{enumitem}
\usepackage{enumitem}
\usepackage[all]{xy}
\usepackage{mathrsfs}
\usepackage[left=0.0cm,right=0.0cm,top=1.7cm,bottom=1.7cm]{geometry}
\usepackage{hyperref}
\usepackage{verbatim}

\makeatletter
\newcommand{\mylabel}[2]{#2\def\@currentlabel{#2}\label{#1}}
\makeatother

\theoremstyle{plain}
\newtheorem{theorem}{Theorem}[section]

\newtheorem{lemma}[theorem]{Lemma}
\newtheorem{proposition}[theorem]{Proposition}
\newtheorem{prop}[theorem]{Proposition}
\newtheorem{corollary}[theorem]{Corollary}
\newtheorem{cor}[theorem]{Corollary}

\theoremstyle{definition}
\newtheorem{definition}[theorem]{Definition}

\newtheorem{conjecture}[theorem]{Conjecture}
\newtheorem{notation}[theorem]{Notation}

\theoremstyle{remark}
\newtheorem{remark}[theorem]{Remark}

\theoremstyle{plain}
\newtheorem{thma}{Theorem}

\newtheorem{thmb}[thma]{Theorem}
\newtheorem{thmc}[thma]{Theorem}
\newtheorem{cord}[thma]{Corollary}

\def\Z{{\bf Z}}
\def\C{{\bf C}}

\def\R{{\bf R}}

\def\H{{H}}

\def\SL{{\mathbf{SL}}}

\def\A{{\bf A}}

\def\epsilon{\varepsilon}
\def\det{\mathrm{det}}
\def\G{\mathbf{G}}
\def\H{\mathbf{H}}

\def\GSp{{\mathbf{GSp}}}
\def\Sp{{\mathbf{Sp}}}
\def\GU{{\mathbf{GU}}}

\def\SO{{\mathbf{SO}}}
\def\U{{\mathbf{U}}}
\def\GSpin{{\mathbf{GSpin}}}
\def\GO{{\mathbf{GO}}}

\def\GSO{{\mathbf{GSO}}}
\def\GL{\mathbf{GL}}

\usepackage[usenames,dvipsnames]{color}

\usepackage{tikz-cd}
\title{On Periods and $L$-functions for $\GL_4 \times \GL_2$}

\author{Antonio Cauchi}
\address{Antonio Cauchi\newline University College Dublin\\ School of Mathematics and Statistics\\ Science Centre - South, Belfield, Dublin 4, Dublin, Ireland}
\email{antonio.cauchi@ucd.ie}

\author{Armando Gutierrez Terradillos}
\address{Armando Gutierrez Terradillos\newline 
Department of Mathematics, Aarhus University, Ny Munkegade 118, DK-8000, Aarhus, Denmark}
\email{armangute@math.au.dk}

\makeatletter
\def\@tocline#1#2#3#4#5#6#7{\relax
  \ifnum #1>\c@tocdepth 
  \else
    \par \addpenalty\@secpenalty\addvspace{#2}%
    \begingroup \hyphenpenalty\@M
    \@ifempty{#4}{%
      \@tempdima\csname r@tocindent\number#1\endcsname\relax
    }{%
      \@tempdima#4\relax
    }%
    \parindent\z@ \leftskip#3\relax \advance\leftskip\@tempdima\relax
    \rightskip\@pnumwidth plus4em \parfillskip-\@pnumwidth
    #5\leavevmode\hskip-\@tempdima
      \ifcase #1
       \or\or \hskip 1em \or \hskip 2em \else \hskip 3em \fi%
      #6\nobreak\relax
    \dotfill\hbox to\@pnumwidth{\@tocpagenum{#7}}\par
    \nobreak
    \endgroup
  \fi}
\makeatother
\usepackage{hyperref}

\begin{document}
\subjclass[2020]{11F67 (primary), 11F27, 11F70, 11S40 (secondary)}

\begin{abstract}
We give a new integral representation of the $\wedge^2  \otimes {\rm std}_2$ $L$-function of generic cusp forms on $\GL_4 \times \GL_2$ and $\GU_{2,2}\times \GL_2$. In the former case, we use it to prove a relation between its central $L$-value and the generalized Shalika period. Exploiting the theta correspondence for $(\GL_4,\GL_4)$, we further establish a relation between the central value of the $L$-function attached to the strongly tempered spherical pair $(\GL_4 \times \GL_2,\GL_2 \times \GL_2)$ and its corresponding period. In the case of cusp forms on $\GL_4 \times \GL_2$ that are unramified everywhere, our formulas give new evidence towards conjectures of Wan--Zhang and of Gan--Gross--Prasad for $\GSpin_6 \times \GSpin_3$. \end{abstract}

\maketitle
\selectlanguage{english}

\setcounter{tocdepth}{1}
\tableofcontents
 
\section{Introduction}

Periods of automorphic forms are ubiquitous in Number Theory and Arithmetic Geometry. The non-triviality of automorphic periods is often expected to detect phenomena of Langlands functoriality and the non-vanishing of special values of $L$-functions. Exploring this ``paradigm''  has opened rich avenues for arithmetic applications - for instance, the results \cite{WaldsToricPeriods} or \cite{IchinoTriple} have played an important role in applications to the Birch--Swinnerton-Dyer conjecture. The first results and conjectures aiming at a systematic study of the relationship between automorphic periods and $L$-functions are \cite{GrossPrasadFirst} and \cite{GGPOriginal}. They set out a conjectural framework where diagonal periods for orthogonal and unitary groups detect the non-vanishing of central standard $L$-values. 

Recently, the works of Sakellaridis--Venkatesh \cite{SakellaridisVenkatesh} and Ben--Zvi--Sakellaridis--Venkatesh \cite{BZSV} proposed a relative Langlands duality in the context of spherical varieties and hyperspherical Hamiltonian spaces. These works led to a new fertile ground of conjectural formulae, which relate automorphic periods and $L$-functions associated to these spaces. 
To this extent, the works of Wan--Zhang \cite{Wan:Zhang:Spherical:Periods} and Mao--Wan--Zhang \cite{MWZ}, to cite a few, have shed light on various aspects of these phenomena.

We now recall the definition of a family of period integrals that naturally appear in these settings. Let $G$ be a reductive group defined over a number field $F$, and $H$ a closed connected subgroup of $G$. 
Let $A_G$ be the maximal split subtorus of the center of $G$ and $A_{G,H} = A_G \cap H$. For a cusp form $\varphi$ on $G(\A_F)$ and a (possibly trivial) character $\chi$ on $[H]$ such that the product of their central characters is trivial on $A_{G,H}$, we define the period of $\varphi$ on $(H,\chi)$ to be
\[ \mathcal{P}_{H,\chi}( \varphi) := \int_{A_{G,H}(\A_F)H(F)\setminus H(\A_F)} \varphi(h) \chi^{-1}(h)\,dh. \] If $\pi$ is a cuspidal automorphic representation of $G(\A_F)$, then the restriction of $\mathcal{P}_{H,\chi}$ to $\pi$ defines a functional \begin{align*}\mathcal{P}_{H,\chi}|_{\pi} \in {\rm Hom}_{H(\A_F)}(\pi, \chi). \end{align*}
One of the most fundamental problems in the relative Langlands program is to establish the relation between $\mathcal{P}_{H,\chi}|_{\pi}$ and special values of a certain automorphic $L$-function $L(s,\pi,\rho_{X})$ of $\pi$, attached to the $G$-variety $X=(H,\chi) \backslash G$. When $X=(H,\chi) \backslash G$ is a spherical $G$-variety (\textit{i.e.} a Borel subgroup of $G$ has a dense orbit in $X$), Sakellaridis showed that $L(s,\pi,\rho_X)$ is determined by the colors of the spherical variety $X$, see \cite{sakellaridis2013spherical}.

\subsection{The linear and Shalika periods}
Among the most classical examples are the intimately related linear and Shalika periods on $\GL_{2n}$, which play a central role in the study of exterior square $L$-functions and in establishing fertile cases of Langlands functoriality. The former corresponds to the period $\mathcal{P}_{\GL_n \times \GL_n}$, given by integrating an automorphic form of $\GL_{2n}$ over the Levi $L=\GL_n \times \GL_n$ of the Siegel parabolic $P=LN$  of $\GL_{2n}$, while the latter concerns the period $\mathcal{S}_{\psi_S}$, obtained by integrating an automorphic form of $\GL_{2n}$ over the subgroup $S=\GL_n\, N$, with $\GL_n$ embedded diagonally in $L$,  against a generic character $\psi_S$ of $[N]$.
Locally, these periods are at most of multiplicity one and non-zero for representations whose $L$-parameter is conjugate to the symplectic subgroup of $\GL_{2n}$ (see, for instance, \cite[Theorem 1.1]{Chen-Gan} and references therein). 

We now recall the global results on the non-vanishing of these periods in the case of  $\GL_{4}$, which may be viewed as motivating instances of the branching problems studied in this paper. Let $F$ be a number field and let us denote by $\A$ its adele ring. 

\begin{theorem}[{\cite{JacquetShalikaExterior}, \cite{BumpFriedberg}}]\label{JS_BF}
    Let $\pi$ be a unitary cuspidal automorphic representation of $\GL_4(\A)$. \begin{itemize}
        \item[(i)] ${\mathcal{S}_{\psi_S}}{|_\pi} \not \equiv 0$ if and only if the (partial) exterior square $L$-function $L(s,\pi, \wedge^2)$ has a pole at $s=1$. 
    \item[(ii)] $\mathcal{P}_{\GL_2 \times \GL_2}|_{\pi} \not \equiv 0$ if and only if the standard $L$-function $L(\frac{1}{2},\pi, {\rm std}_4) \ne 0$ and the (partial) exterior square $L$-function $L(s,\pi, \wedge^2)$ has a pole at $s=1$.
    \end{itemize}
\end{theorem}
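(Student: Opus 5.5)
The plan is to deduce both parts from the Jacquet--Shalika and Bump--Friedberg integral representations of $L(s,\pi,\wedge^2)$ on $\GL_4$, by unfolding period integrals against Eisenstein series and reading off residues. For (i), I would use the Jacquet--Shalika integral
\[
Z(s,\varphi,\Phi)=\int_{Z(\A)\GL_2(F)\backslash \GL_2(\A)}\int_{[M_2]} \varphi\!\left(\begin{pmatrix}1 & X\\ 0&1\end{pmatrix}\begin{pmatrix} g&0\\0&g\end{pmatrix}\right)\psi^{-1}(\mathrm{tr}\,X)\,dX\; E(g,s,\Phi)\,dg ,
\]
where $E(g,s,\Phi)$ is the mirabolic Eisenstein series on $\GL_2$ attached to a Schwartz function $\Phi$ on $\A^2$. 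First, unfolding $E$ and then expanding the inner $N$-integral along the Fourier--Whittaker expansion of $\varphi$ gives an Eulerian integral. At unramified places it equals $L(s,\pi_v,\wedge^2)$, and at the remaining places it equals a holomorphic multiple that can be made nonvanishing at $s=1$ by choosing the data. Second, $E(g,s,\Phi)$ has a simple pole at $s=1$ with constant residue $\hat\Phi(0)$. Taking residues, $\mathrm{Res}_{s=1}Z(s,\varphi,\Phi)$ is a nonzero multiple of $\mathcal{S}_{\psi_S}(\varphi)$. Hence the partial $L(s,\pi,\wedge^2)$ has a pole at $s=1$ if and only if $\mathcal{S}_{\psi_S}$ is nonzero on $\pi$. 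For the ``if'' direction one also needs that local integrals can be chosen nonvanishing at $s=1$ and that the ramified normalizing factors have no zero there.

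For (ii), I would use the Bump--Friedberg integral
\[
\int_{[\GL_2\times \GL_2]/A}\varphi(h)\,E(h_2,s_2,\Phi)\,|\det h_1/\det h_2|^{s_1-1/2}\,dh ,
\]
which unfolds, via the Whittaker expansion and the Fourier expansion of $E$, to a product of local integrals. This product represents $L(s_1,\pi,{\rm std}_4)\,L(s_2,\pi,\wedge^2)$ up to ramified factors. Taking the residue at $s_2=1$ and then setting $s_1=\tfrac12$ yields the identity
\[
\mathcal{P}_{\GL_2\times\GL_2}(\varphi)\doteq L^S(\tfrac12,\pi,{\rm std}_4)\,\mathrm{Res}_{s=1}L^S(s,\pi,\wedge^2)\times(\text{local factors}).
\]

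The equivalence in (ii) then follows once the ramified local integrals are controlled: they must be nonzero for suitable data, and the partial $L^S(\tfrac12)$ must detect the vanishing of the completed value. The second point uses that unramified local factors at $\tfrac12$ are finite and nonzero for unitary generic $\pi_v$, which holds by the Luo--Rudnick--Sarnak bounds. For ``only if'' one also needs that a nonzero linear period forces a pole of $L(s,\pi,\wedge^2)$. I would get this from the Friedberg--Jacquet theorem that the linear period is nonzero only if $\pi$ admits a Shalika period, and then apply part (i).

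The main obstacle I expect is in (ii). The difficult steps are matching the residue of the two-variable integral with the linear period, which needs a regularization that justifies exchanging the residue with the integral over the non-compact quotient, and showing nonvanishing of the archimedean local integrals at the relevant point.
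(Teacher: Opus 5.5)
Your outline is correct and follows the route the paper indicates. The paper does not reprove this cited result; it only remarks that (i) and (ii) come from taking the residue at $s=1$ of the Shalika period, resp.\ of the linear period, each integrated against a mirabolic Eisenstein series on $\GL_2$. At $s_1=\tfrac12$ the latter is the Bump--Friedberg integral representing $L(\tfrac12,\pi,{\rm std}_4)L(s,\pi,\wedge^2)$.

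The remaining inputs are the local ones you flag. You need holomorphy, and for suitable data non-vanishing, of the local integrals at places of $S$ at $s=1$, resp.\ at $(s_1,s_2)=(\tfrac12,1)$. Two smaller points: exchanging residue and integral needs no regularization, since $\varphi$ is rapidly decreasing on $Z_{\GL_4}(\A)\backslash[\GL_2\times\GL_2]$. And passing between $L^S(\tfrac12,\pi,{\rm std}_4)$ and the completed value concerns the factors at the places $v\in S$, ramified ones included, not the unramified factors. For those, the standard bound $|{\rm Re}\,\mu|<\tfrac12$ on exponents of unitary generic $\pi_v$ suffices, rather than Luo--Rudnick--Sarnak.
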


\noindent Crucially, $L(\frac{1}{2},\pi, {\rm std}_4) \ne 0$ if and only if $\pi$ participates (non-trivially) in the theta correspondence for the dual pair $(\GL_4,\GL_4)$. Under this assumption, the two periods are indeed equivalent (see, for instance, \cite[\S 7]{Chen-Gan} where the similar case for unitary groups is treated).

Consider the embedding
\begin{align*}
     \GL_2\times \GL_2&\to \GL_4\times \GL_2 ,\\
    (g_1, g_2) &\mapsto \left( \mathrm{diag}(g_1,g_2),g_2\right),
\end{align*}
where $\mathrm{diag}(g_1,g_2) = \left(\begin{smallmatrix}
        g_1& \\ &g_2\end{smallmatrix}\right)$. We would like to emphasize that Theorem \ref{JS_BF} is a consequence of the study of period integrals for representations of $\GL_4(\A) \times \GL_2(\A)$ restricted to $\GL_2(\A)\times \GL_2(\A)$. Indeed, both \emph{(i)} and \emph{(ii)}, proved respectively in \cite{JacquetShalikaExterior} and \cite{BumpFriedberg}, are obtained from the study of the (generalized) Shalika period ${\mathcal{S}_{\psi_S}}{|_{\pi \otimes \sigma}}$ and the (generalized) linear period $\mathcal{P}_{\GL_2 \times \GL_2}|_{\pi \otimes \sigma}$, where $\sigma$ is a mirabolic Eisenstein series on $\GL_2$. The former calculates $L(s,\pi, \wedge^2)$, while the latter represents $L(\frac{1}{2},\pi, {\rm std}_4)L(s,\pi, \wedge^2)$, where $s$ denotes the complex parameter of the Eisenstein series $\sigma$. Then Theorem \ref{JS_BF} is essentially obtained by taking the residue at $s=1$ of both period integrals.
\\

It is therefore natural to consider a variant of these branching problems in which the Eisenstein series $\sigma$ is replaced by an arbitrary cuspidal automorphic representation of $\GL_2$. Speculations in this direction already appear in \cite[\S 8]{Chen-Gan}, and are further supported by the Gan--Gross--Prasad conjecture and the broader perspective of the relative Langlands program. On the one hand, the quotient
\[\mathbf{X}_{\rm lin} := (\GL_2\times \GL_2)\setminus \GL_4 \times \GL_2\]
defines a strongly tempered spherical variety (see \cite[Table 1]{Wan:Zhang:Spherical:Periods}). The spherical variety $\mathbf{X}_{\rm lin}$ determines the automorphic period $\mathcal{P}_{\GL_2\times \GL_2}$ and the representation $\rho_{\mathbf{X}_{\rm lin}}$ of $^L(\GL_4 \times \GL_2)$, given by $$\rho_{\mathbf{X}_{\rm lin}} := \wedge^2 \otimes {\rm std}_2 \oplus {\rm std}_4 \oplus {\rm std}_4^\vee,$$
where ${\rm std}_n$  and ${\rm std}_n^\vee$ denote the standard representation of $\GL_n(\C)$ and its dual, respectively. Building on \cite{SakellaridisVenkatesh}, the works \cite{Wan:Zhang:Spherical:Periods} and \cite{Pollack:Wan:Zydor} computed the local relative characters for the pair $(\GL_4(F_v) \times \GL_2(F_v),\GL_2(F_v) \times \GL_2(F_v))$ at unramified places $v$ (with $v$ a place of $F$) and determined the local multiplicities of the local avatars of the automorphic period $\mathcal{P}_{\GL_2\times \GL_2}$.  In particular, they showed that the summation of the multiplicities is always equal to 1 on every local tempered Vogan $L$-packet.

On the other hand, the quotient $$\mathbf{X}_{\rm Sha} := (S,\psi_S)\setminus \GL_4 \times \GL_2$$
defines a spherical variety, with associated period ${\mathcal{S}_{\psi_S}}$ and representation $\rho_{\mathbf{X}_{\rm Sha}}$ of $^L(\GL_4 \times \GL_2)$, given by $$\rho_{\mathbf{X}_{\rm Sha}} := \wedge^2 \otimes {\rm std}_2.$$
We observe that, using the exceptional isomorphism $\GSpin_3 \simeq \GL_2$ and double covering map $\GSpin_6 \to \GL_4$, the generalized Shalika period ${\mathcal{S}_{\psi_S}}$ realizes the Gross--Prasad (Bessel) model on $\GSpin_6 \times \GSpin_3$ for representations which descend to $\GL_4 \times \GL_2$. 

We now record the expectations regarding the non-vanishing of both automorphic periods for $\mathbf{X}_{\rm Sha}$ and $\mathbf{X}_{\rm lin}$, which can be seen as the natural generalization of Theorem \ref{JS_BF}.

\begin{conjecture}[{\cite{GGPOriginal}, \cite[\S 17]{SakellaridisVenkatesh}, \cite[Conjecture 1.9]{Wan:Zhang:Spherical:Periods}}]\label{conj:Wan:Zhang:+:Bessel:GGP}
Let $\pi$ and $\sigma$ be cuspidal automorphic representations of $\GL_4$ and $\GL_2$, respectively, with trivial product of central characters.
\begin{enumerate}
\item $L(\frac{1}{2},\pi \otimes \sigma,\rho_{\mathbf{X}_{\rm Sha}})\neq 0$ if and only if there exists a quaternion algebra $D/F$ such that the generalized Shalika period $\mathcal{S}_{D,\psi_S}|_{\pi_D \otimes \sigma_D} \not \equiv 0$. 
\item $L(\frac{1}{2},\pi \otimes \sigma,\rho_{\mathbf{X}_{\rm lin}})\neq 0$ if and only if there exists a quaternion algebra $D/F$ such that the generalized linear period $\mathcal{P}_{D^\times \times D^\times}|_{\pi_D \otimes \sigma_D} \not \equiv 0$.
\end{enumerate}
Moreover, in each (1) and (2), if any of the conditions holds, there exists a unique $D$ for which the period is non-zero.
\end{conjecture}
\noindent Here, for a given quaternion algebra $D/F$, we have denoted $\pi_D$, resp. $\sigma_D$, the automorphic representation of $\GL_2(D)$, resp. $D^\times$, related to $\pi$, resp. $\sigma$, via the Jacquet--Langlands correspondence. 

\noindent While Conjecture \ref{conj:Wan:Zhang:+:Bessel:GGP}(1) is an instance of the global Gan--Gross--Prasad conjectures to $\GSpin$ groups, Conjecture \ref{conj:Wan:Zhang:+:Bessel:GGP}(2) was formulated as a \emph{weak global conjecture} by Wan--Zhang in \cite{Wan:Zhang:Spherical:Periods}\footnote{The authors in \cite{Wan:Zhang:Spherical:Periods} formulated weak global conjectures as well as their Ichino--Ikeda type refinements for 10 strongly tempered spherical varieties, with $\mathbf{X}_{\rm lin}$ one of them.}.

Using \cite[Theorem 5.11]{PanYan} and \cite[Corollary 1.3(6)]{Pollack:Wan:Zydor}, the latter holding under certain technical conditions, one deduces that \begin{align} \mathcal{S}_{\psi_S}|_{\pi \otimes \sigma} \not \equiv 0 \Longrightarrow L(\tfrac{1}{2},\pi \otimes \sigma,\rho_{\mathbf{X}_{\rm Sha}})\neq 0,\label{resultPanYan} \\ \mathcal{P}_{\GL_2\times \GL_2}|_{\pi \otimes \sigma} \not \equiv 0 \Longrightarrow L(\tfrac{1}{2},\pi \otimes \sigma,\rho_{\mathbf{X}_{\rm Sha}})\neq 0,\label{resultPWZ}
\end{align}
thus giving evidence towards one direction of Conjecture \ref{conj:Wan:Zhang:+:Bessel:GGP} in the case of $D$ split. We observe that to further deduce the non-vanishing of $L(\tfrac{1}{2}, \pi \otimes \sigma, \rho_{\mathbf{X}_{\rm lin}})$  from \eqref{resultPWZ}, one needs to compare the linear and Shalika periods. In particular, if $\pi$ is in the image of the theta correspondence for the dual reductive pair $(\GL_4,\GL_4)$, both periods are equivalent (see Lemma \ref{Shalika:Equals:Period}). This shows that the non-vanishing of $\mathcal{P}_{\GL_2\times \GL_2}|_{\pi \otimes \sigma}$ implies that $L(\tfrac{1}{2},\pi \otimes \sigma,\rho_{\mathbf{X}_{\rm lin}})\neq 0$.

\noindent The main focus of the present work is the considerably more delicate converse direction of \eqref{resultPanYan} and \eqref{resultPWZ}.

\subsection{Main Theorems}
 To simplify the exposition, in this subsection we let the ambient group $G$ be equal to $\GL_4 \times \GL_2$. Let $\pi \otimes \sigma$ be a cuspidal automorphic representation of $G$, with central character satisfying $\omega_\pi \, \omega_\sigma = 1$, and consider their $\wedge^2 \otimes {\rm std}_2$ $L$-function.  We introduce a new integral representation of this $L$-function, which we describe in \S \ref{Section:2:intro}. In Theorem \ref{Final:Theorem:Central:Value}\footnote{We note that Theorem \ref{Final:Theorem:Central:Value} also holds for globally generic cuspidal automorphic representations on $\GU_{2,2} \times \GL_2$.}, using this and the Siegel--Weil formula, we obtain a period relation between the central (partial) $L$-value $L^S(\tfrac{1}{2},\pi\otimes\sigma,\wedge^2\otimes\mathrm{std}_2)$ and the automorphic period $\mathcal{P}_{\GL_2 \times_{\rm det} \GL_2}|_{\pi\otimes\sigma}$, in the spirit of the relation established by Ginzburg--Rallis \cite{GinzburgRallis} for the exterior cube $L$-function on $\GL_6$. Here, $\GL_2 \times_{\rm det} \GL_2$ is embedded into $G$ via the composition of the map $$\GL_2 \times_{\rm det} \GL_2 \hookrightarrow \GSp_4  \times_{\nu,{\rm det}} \GL_2,$$ given in \eqref{Embedding:On:GSp4} together with the natural embedding of $\GSp_4$ inside $\GL_4$. We expect the period $\mathcal{P}_{\GL_2 \times_{\rm det} \GL_2}$ to have several arithmetic applications, since it has a cohomological interpretation. Moreover, despite $\GL_2 \times_{\rm det} \GL_2$ being non-spherical in $G$, it admits a central extension by the spherical subgroup $\GL_2 \times \GL_2$. In \S \ref{Shalika_period_subsec}, we exploit this fact to show that the period $\mathcal{P}_{\GL_2 \times_{\rm det} \GL_2}|_{\pi\otimes\sigma}$ unfolds to $\mathcal{S}_{\psi_S}|_{\pi \otimes \sigma}$, allowing us to relate the non-vanishing of the central $L$-value to the generalized Shalika period. 

 When the representations are unramified at every place, Theorem \ref{Final:Theorem:Central:Value} takes the following simple form.

\begin{thma}[{Theorem \ref{Final:Theorem:Shalika}}]\label{Thma}
Let $\pi$ and $\sigma$ be cuspidal automorphic representations of $\GL_4$ and $\GL_2$ respectively, which are unramified everywhere and such that the product of their central characters is trivial. Then,
        \[L(\tfrac{1}{2},\pi \otimes \sigma,\rho_{\mathbf{X}_{\rm Sha}})\neq 0 \Longleftrightarrow \mathcal{S}_{\psi_S}|_{\pi \otimes \sigma} \not \equiv 0,  \]
        where $L(s, \pi \otimes \sigma, \wedge^2 \otimes  \mathrm{std}_2)$ denotes the completed $L$-function. 
\end{thma}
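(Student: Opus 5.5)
The argument has two halves. The direction $\Leftarrow$ is already available from \eqref{resultPanYan}, i.e.\ \cite[Theorem 5.11]{PanYan}: non-vanishing of $\mathcal{S}_{\psi_S}|_{\pi\otimes\sigma}$ forces non-vanishing of the central value. The work is the direction $\Rightarrow$. For it I would use the general period relation Theorem \ref{Final:Theorem:Central:Value} and specialize to the everywhere unramified situation.

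First, I would apply the new integral representation of \S\ref{Section:2:intro}. This is a Rankin--Selberg type integral against a Siegel Eisenstein series on an auxiliary group. It unfolds to an Euler product of local zeta integrals of Whittaker functions. At unramified places these equal
\[
L(s,\pi_v\otimes\sigma_v,\wedge^2\otimes{\rm std}_2)
\]
divided by normalizing $\zeta$-factors of the Eisenstein series. Since $\pi,\sigma$ are unramified everywhere, including at the archimedean places (spherical $K_\infty$-types), the set $S$ of bad places consists only of archimedean places. There I would choose test data (spherical vectors, or a suitable archimedean section) so that the local integral is a nonzero multiple of the archimedean $L$-factor. The global integral then equals the completed $L$-function times an explicit nonvanishing factor at $s=\tfrac12$.

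Second, I would evaluate the global integral at $s=\tfrac12$ via the Siegel--Weil formula. The Eisenstein series at that point becomes a theta integral, so the zeta integral becomes an integral of $\varphi\otimes\phi$ against a theta function. Exchanging the integrations rewrites it as the period $\mathcal{P}_{\GL_2\times_{\rm det}\GL_2}$ of $\varphi\otimes\phi$, possibly translated or averaged against a Schwartz function. This is the content of Theorem \ref{Final:Theorem:Central:Value}. It shows that
\[
L(\tfrac12,\pi\otimes\sigma,\rho_{\mathbf{X}_{\rm Sha}})\neq0
\]
forces $\mathcal{P}_{\GL_2\times_{\rm det}\GL_2}|_{\pi\otimes\sigma}\not\equiv0$. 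Finally, I would use \S\ref{Shalika_period_subsec}, where the central extension of $\GL_2\times_{\rm det}\GL_2$ by the spherical $\GL_2\times\GL_2$ is exploited. Fourier-expanding along the unipotent radical $N$ unfolds $\mathcal{P}_{\GL_2\times_{\rm det}\GL_2}$ into an integral of $\mathcal{S}_{\psi_S}$ applied to translates of $\varphi\otimes\phi$. Hence non-vanishing of the former gives non-vanishing of the latter.

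The main obstacle is the first step at the archimedean places. One must make sure that at $s=\tfrac12$ the archimedean zeta integrals, and any normalizing factors (including possible poles of the Eisenstein normalization or of $\zeta$-factors at $\tfrac12$), produce exactly the completed $L$-value times a nonzero constant. Only then is the partial central value $L^S$ of Theorem \ref{Final:Theorem:Central:Value} equivalent to the completed value. I would handle this by choosing the archimedean data so that the local integral equals the archimedean $L$-factor up to a nonzero constant, which is the usual Rankin--Selberg argument. Failing that, I would use that the archimedean local $L$-factors of unramified (spherical) representations are holomorphic and nonzero near $\tfrac12$, thanks to the tempered-range bounds toward Ramanujan for $\pi$ and $\sigma$. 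Then the partial and completed values vanish simultaneously.
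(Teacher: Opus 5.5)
Your $\Leftarrow$ direction (via \cite{PanYan}) and your last step $\mathcal{P}_{\mathbf J}\not\equiv 0\Rightarrow\mathcal{S}_{\psi_S}\not\equiv0$ are fine; the latter is essentially Proposition \ref{J:Period:To:GL2}. The core of $\Rightarrow$, however, has a genuine gap.

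Theorem \ref{Final:Theorem:Central:Value} does \emph{not} say that non-vanishing of the central value forces $\mathcal{P}_{\mathbf J}|_{\pi\otimes\sigma}\not\equiv 0$. It only gives a dichotomy: either $\mathcal{P}_{\mathbf J}\neq 0$, or the theta integral \eqref{FormulaWithTheta} attached to some \emph{non-split} quaternion algebra $D$ is nonzero. The reason is that at $s=\tfrac12$ the Siegel--Weil formula writes $E^*_{P_{(3,0)}}(g,\tfrac12,\phi_{1/2})$ as a sum over all coherent quaternionic spaces $V$ with ${\rm pr}_{\Pi(V)}\phi_{1/2}\neq 0$. Only the $V=\mathbb{H}^2$ term leads to $\mathcal{P}_{\mathbf J}$. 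Moreover, this is not done by ``exchanging the integrations'', since the split theta integral only exists after regularization. Instead, the split term is rewritten as $\mathrm{Res}_{s=1/2}E_{P_{(2,2)}}$ (Theorem \ref{SWcentralSplit}) and unfolded along the open orbit of Lemma \ref{doublecosetPtilde}, whose stabilizer is $\mathbf J$. Even with all data unramified at finite places, algebras $D$ ramified at an even number of real places can occur as soon as $F$ has two real places, so these terms cannot be discarded.

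What is missing is a choice of data that achieves two things at once. First, the Siegel section must lie entirely in the split summand, $\phi_{1/2}\in\Pi(\mathbb{H}^2)$, so that only the $\mathbb{H}^2$ term survives. Second, the Klingen section must still be a nonzero unramified section, so that Theorems \ref{zetaintegralfinalthm} and \ref{Theorem_Arch_computation_split} identify the integral with the completed $L$-value. By Garrett's pullback formula (Corollary \ref{CorPullbackZeta}) that section is not free: it must be $p_\Psi(\phi_{1/2})$.

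Your choice of archimedean test data only addresses the second requirement, and only on the $\GSp_4$ side. The paper supplies the compatible choice in Lemma \ref{Non:Vanishing:Pullback}(2), which is then used in Corollary \ref{Main:Corollary:Non:Spherical}:
\begin{itemize}
\item at finite places, via B\"ocherer's identity (Theorem \ref{PullbackFormulaNew}(2));
\item at complex places, because $I_{P_{(3,0)}}(\tfrac12,\mathbf{1})_v=\Pi(V_\C)$;
\item at real places, where $I_{P_{(3,0)}}(\tfrac12,\mathbf{1})_v=\Pi(\mathbb{H}^2_\R)\oplus\Pi(D)$, through a $\U(3)$-type analysis and $\U(3)\downarrow\U(1)\times\U(2)$ branching. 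This produces an open-orbit section in $\Pi(\mathbb{H}^2_\R)$ with nonzero pullback.
\end{itemize}
Your fallback via bounds toward Ramanujan does not bypass this. You would still need archimedean data in $\Pi(\mathbb{H}^2)$ whose local integral is nonzero at $s=\tfrac12$.
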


The discrepancy between Theorem \ref{Thma} and Conjecture \ref{conj:Wan:Zhang:+:Bessel:GGP}(2) seems to rely on the quotient \begin{align}\label{eq_ref_discrepancy_spectral}
    \frac{L(\frac{1}{2},\pi \otimes \sigma,\rho_{\mathbf{X}_{\rm lin}})}{L(\frac{1}{2},\pi\otimes\sigma,\rho_{\mathbf{X}_{\rm Sha}})} = L(\tfrac{1}{2}, \pi , {\rm std}_4)L(\tfrac{1}{2}, \pi , {\rm std}_4^\vee).
\end{align}
This ratio turns out to be non-zero if and only if $\pi$ participates (non-trivially) in the theta correspondence for the dual pair $(\GL_4,\GL_4)$. One of the advantages of the use of the theta lifting is that it produces various period relations by using combinations of seesaw identities and Siegel--Weil formulae. We exploit this principle to prove a relation between the linear period $\mathcal{P}_{ \GL_2 \times \GL_2}$ and the partial $L$-value $L^S( \tfrac{1}{2}, \pi \otimes \sigma, \rho_{\mathbf{X}_{\rm lin}})$, provided that we have certain control on the local zeta integrals at places in $S$ (see Corollary \ref{Main:Corollary:Spherical:1} and Remark \ref{Main:remark:after:MTSP}). In the case of representations that are unramified at every place, the result takes the following elegant form. 

\begin{thmb}[{Corollary \ref{Main:Corollary:Spherical:2}}]\label{Thmb}
    Let $\pi$ and $\sigma$ be cuspidal automorphic representations of $\GL_4$ and $\GL_2$ respectively, which are unramified everywhere and such that the product of their central characters is trivial. Then,
    \[L( \tfrac{1}{2}, \pi \otimes \sigma, \rho_{\mathbf{X}_{\rm lin}}) \neq 0 \Longrightarrow {\mathcal{P}_{ \GL_2 \times \GL_2}}|_{\pi \otimes \sigma} \not \equiv 0.\] In particular, if we further assume that \begin{itemize}
        \item[(i)] $\pi$ and $\sigma$ have trivial central characters,
        \item[(ii)] $L(\frac{3}{2},\pi \otimes \sigma,\wedge^2\otimes\mathrm{std}_2)\neq 0$, which is always the case if $\pi$ and $\sigma$ are tempered,
    \end{itemize} then,
\[L( \tfrac{1}{2}, \pi \otimes \sigma, \rho_{\mathbf{X}_{\rm lin}}) \neq 0 \Longleftrightarrow {\mathcal{P}_{ \GL_2 \times \GL_2}}|_{\pi \otimes \sigma} \not \equiv 0.\]
\end{thmb}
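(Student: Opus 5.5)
The plan is to reduce Theorem \ref{Thmb} to Theorem \ref{Thma} via the theta correspondence for the dual pair $(\GL_4,\GL_4)$, using the factorization \eqref{eq_ref_discrepancy_spectral}. Since $\rho_{\mathbf{X}_{\rm lin}} = \wedge^2\otimes{\rm std}_2 \oplus {\rm std}_4\oplus{\rm std}_4^\vee$, the completed $L$-function factors as
\[ L(\tfrac12,\pi\otimes\sigma,\rho_{\mathbf{X}_{\rm lin}}) = L(\tfrac12,\pi\otimes\sigma,\wedge^2\otimes{\rm std}_2)\,L(\tfrac12,\pi,{\rm std}_4)\,L(\tfrac12,\pi,{\rm std}_4^\vee). \]
Assume the left side is non-zero. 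Then each factor is non-zero. Applying Theorem \ref{Thma} to the first factor gives $\mathcal{S}_{\psi_S}|_{\pi\otimes\sigma}\not\equiv 0$. The second factor's non-vanishing means, by the criterion recalled after Theorem \ref{JS_BF}, that $\pi$ has non-zero theta lift to the other copy of $\GL_4$. Under this condition, Lemma \ref{Shalika:Equals:Period} says the Shalika and linear periods are equivalent, so $\mathcal{P}_{\GL_2\times\GL_2}|_{\pi\otimes\sigma}\not\equiv 0$.

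The main point to check is that Lemma \ref{Shalika:Equals:Period} applies with $\sigma$ cuspidal, not just an Eisenstein series. The argument would proceed as follows. Write $\pi$ as a theta lift $\theta(\pi')$. Use a seesaw in which the $\GL_2\times\GL_2$ period of $\theta(\varphi')\otimes\sigma$ is unfolded through the Fourier--Jacobi (Shalika) coefficient of the Weil representation. Then apply the Siegel--Weil formula to identify both periods, up to non-zero local factors, with the same inner product involving $\pi'$ and $\sigma$. Everywhere unramified, these local factors are explicit and non-zero at the spherical vectors. This is exactly what Corollary \ref{Main:Corollary:Spherical:1} packages. I expect this local control, namely the non-vanishing of the normalizing unramified zeta integrals at $s=\tfrac12$, to be the main obstacle.

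For the converse under (i) and (ii), suppose $\mathcal{P}_{\GL_2\times\GL_2}|_{\pi\otimes\sigma}\not\equiv 0$. Restricting the period to the $\GL_2\times\GL_2$ inside the Levi and using the Bump--Friedberg-type unfolding \eqref{resultPWZ}, this forces the linear period of $\pi$ to be distinguished. Since $\pi$ is unramified everywhere with trivial central character, this gives $L(\tfrac12,\pi,{\rm std}_4)\neq0$, and also $L(\tfrac12,\pi,{\rm std}_4^\vee)\neq 0$ because $\pi\cong\pi^\vee$ is symplectic-type. Then $\pi$ lies in the theta image, so Lemma \ref{Shalika:Equals:Period} transfers non-vanishing to $\mathcal{S}_{\psi_S}$. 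Theorem \ref{Thma} then gives the non-vanishing of the $\wedge^2\otimes{\rm std}_2$ factor.

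Hypothesis (ii) enters in the normalization: the global identity in Corollary \ref{Main:Corollary:Spherical:1} has $L(\tfrac32,\pi\otimes\sigma,\wedge^2\otimes{\rm std}_2)$ in a denominator-type position, so it is needed to divide out. For tempered $\pi,\sigma$ it is automatic, since $\tfrac32$ lies in the region of absolute convergence of the Euler product.
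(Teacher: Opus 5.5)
Your forward direction is correct and takes a genuinely different, much shorter route than the paper. You chain three results, none of which depends on Theorem \ref{Thmb}:
Theorem \ref{Thma}, giving $L(\tfrac12,\pi\otimes\sigma,\wedge^2\otimes\mathrm{std}_2)\neq0\Rightarrow\mathcal{S}_{\psi_S}|_{\pi\otimes\sigma}\not\equiv0$; then Proposition \ref{Watanabe:Theta:Main}, giving $L(\tfrac12,\pi,\mathrm{std}_4)\neq0\Rightarrow\Theta^0(\pi)=\pi$; then Lemma \ref{Shalika:Equals:Period}.

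The paper proves this direction via Corollary \ref{Main:Corollary:Spherical:1}(2) instead. There the two-variable integral $J(s,z,\varphi,f_s,\eta)$ computes $L(z+\tfrac12,\pi,\mathrm{std}_4)L(s,\pi\otimes\sigma,\wedge^2\otimes\mathrm{std}_2)$. At $(s,z)=(\tfrac12,0)$, two seesaws, the Siegel--Weil formulas of Kudla--Rallis and Yamana, Shimizu--Jacquet--Langlands and the orthogonal pullback formula rewrite it as an integral containing $\mathcal{P}_{\GL_2\times\GL_2}$ directly. That buys a direct identity between the degree-$20$ value and the linear period (Theorem \ref{Main:Theorem:Spherical:Period}), usable with ramified data and meant to extend to inner forms. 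Your route buys brevity by passing through the Shalika period.

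Your second paragraph misdescribes the situation, though. Lemma \ref{Shalika:Equals:Period} is already stated for cuspidal $\sigma$. Its proof is Poisson summation in a mixed model of $\omega_{4,4}$ plus an orbit unfolding, with no Siegel--Weil and no local zeta integrals. It is not what Corollary \ref{Main:Corollary:Spherical:1} packages, and your route has no ``local control'' obstacle.

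The converse has a genuine error. It is false that $\mathcal{P}_{\GL_2\times\GL_2}|_{\pi\otimes\sigma}\not\equiv0$ ``forces the linear period of $\pi$ to be distinguished.'' The period twisted by the cusp form $\Psi(g_2)$ neither restricts to nor unfolds into the untwisted period $\int\varphi(\mathrm{diag}(g_1,g_2))\,dg_1\,dg_2$. By Theorem \ref{JS_BF}(ii) that period needs a pole of $L(s,\pi,\wedge^2)$ at $s=1$, and nothing here provides one. Also, \eqref{resultPWZ} is the Pollack--Wan--Zydor implication to $L(\tfrac12,\pi\otimes\sigma,\wedge^2\otimes\mathrm{std}_2)$, not an unfolding. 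Finally, trivial central character plus unramifiedness does not give $\pi\cong\pi^\vee$, let alone symplectic type.

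The missing input is the ``in particular'' of Lemma \ref{Shalika:Equals:Period}. It says $\mathcal{P}_{\GL_2\times\GL_2}|_{\pi\otimes\sigma}\not\equiv0$ forces $\Theta^0(\pi)\neq0$, so $\Theta^0(\pi)=\pi$ and $L(\tfrac12,\pi,\mathrm{std}_4)\neq0$ by Proposition \ref{Watanabe:Theta:Main}. Then $L(\tfrac12,\pi,\mathrm{std}_4^\vee)=L(\tfrac12,\pi^\vee,\mathrm{std}_4)\neq0$ follows from the functional equation $L(s,\pi,\mathrm{std}_4)=\varepsilon(s,\pi,\mathrm{std}_4)L(1-s,\pi^\vee,\mathrm{std}_4)$, as in the proof of Corollary \ref{Main:Corollary:Spherical:1}. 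With this fix, the rest of your converse works: Shalika non-vanishing from the lemma, then the $\Leftarrow$ direction of Theorem \ref{Thma}.

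Your account of hypothesis (ii) is also wrong: no $L(\tfrac32,\ldots)$ appears in Corollary \ref{Main:Corollary:Spherical:1}. In the paper, (i) and (ii) are the hypotheses of \cite[Corollary 1.3(6)]{Pollack:Wan:Zydor}, which is used in the converse to obtain $L(\tfrac12,\pi\otimes\sigma,\wedge^2\otimes\mathrm{std}_2)\neq0$. In your repaired route that step is done by the $\Leftarrow$ direction of Theorem \ref{Thma} (Pan Yan) instead, so (i) and (ii) are never used.
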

 \noindent In the second part of the statement, we assume hypotheses \emph{(i)} and \emph{(ii)} to ensure that \eqref{resultPWZ} (\emph{i.e.} \cite[Corollary 1.3(6)]{Pollack:Wan:Zydor}) holds. 
 
 One of the crucial ingredients in the proof of Theorem \ref{Thmb} is the theta correspondence for $(\GL_4,\GL_4)$, which allows us to take into account the discrepancy \eqref{eq_ref_discrepancy_spectral}. This is because $\pi$ is in the image of this theta correspondence if and only if the central value of the completed $L$-function $L(z, \pi , {\rm std}_4) $ is non-zero. The explicit use of this theta correspondence allows us to ``upgrade'' our zeta integral to a two complex variable zeta integral in $s$ and $z$, which calculates $$ L(s,\pi\otimes\sigma,\wedge^2\otimes\mathrm{std}_2)L(z, \pi , {\rm std}_4),$$
 and let us connect the central $L$-value $L( \tfrac{1}{2}, \pi \otimes \sigma, \rho_{\mathbf{X}_{\rm lin}})$ to the spherical period  ${\mathcal{P}_{ \GL_2 \times \GL_2}}|_{\pi \otimes \sigma}$. Thus, our method can be viewed as parallel to that of \cite{BumpFriedberg} in the proof of Theorem~\ref{JS_BF}(2). We give a more detailed sketch of the proofs of Theorems \ref{Thma} and \ref{Thmb} in \S \ref{subsec_sketch_proofs} below.

 We conclude this subsection with a few remarks outlining how Theorems \ref{Thma} and \ref{Thmb} might be extended. Firstly, we expect that Theorem \ref{Final:Theorem:Central:Value} and Corollary \ref{Main:Corollary:Spherical:1} can be used to prove Conjecture \ref{conj:Wan:Zhang:+:Bessel:GGP} in full generality. In order to do that, we would require an explicit formula of the Jacquet--Langlands lift of $\pi$ to inner forms of $\GL_4$ in the style of the Shimizu--Jacquet--Langlands formula for $\GL_2$ proved by Shimizu \cite{Shimizu:JL} (see Remark \ref{remark_on_spl} for more details).

\noindent We also expect that the proof of Theorem \ref{Thmb} might be adapted to the case of $\GU_{2,2} \times \GL_2$, but with substantial technical differences due to the dissimilarity between type $\mathrm{I}$ and $\mathrm{II}$ dual reductive pairs. We hope to return to these questions in future work.

\subsection{New integral representations}\label{Section:2:intro}
Fix $E/F$ a quadratic field extension of $F$ defining the quasi-split unitary group $\GU_{2,2}$. The starting point of our treatise is a new integral representation of the $\wedge^2  \otimes {\rm std}_2$ $L$-function on generic cusp forms on $\GU_{2,2}\times \GL_2$ and $\GL_4 \times \GL_2$. 

Let $\pi$, resp. $\sigma$, be cuspidal automorphic representations of either $\GL_4$ or $\GU_{2,2}$, resp. $\GL_2$. Building on the works \cite{CauchiGutiMVI} and \cite{CauchiGuti}, where we studied the relation between automorphic periods and the value at $s=1$ of the $\wedge^2$ $L$-function of $\GL_4$ and $\GU_{2,2}$, we consider 
\[Z(s,\varphi,f_s) = \int_{\GSp_4(F) Z_{\GSp_4}(\A)\setminus \GSp_4(\A)} \!\!\!\!\!E^*_{P_{(1,2)}}(g,s, f_s)\varphi(g)dg,\]
where $\varphi$ is a cusp form in the space of $\pi$ and $E^*_{P_{(1,2)}}(g,s, f_s)$ is a normalized non-degenerate Eisenstein series constructed from $\sigma$ relative to the Klingen parabolic $P_{(1,2)}$. We can interpret $Z(s,\varphi,\phi_s)$ as a degenerate Gross--Prasad period for $(\mathbf{GSpin}_6,\mathbf{GSpin}_5)$. As shown in Proposition \ref{unfolding}, the integral unfolds to the Whittaker models of $\pi$ and $\sigma$. Hence, using the Casselman--Shalika formula, we prove the following.

\begin{thmc}[Theorem \ref{zetaintegralfinalthm}]\label{Thmc}
    Let $\varphi = \otimes_v\varphi_v$ and $f_{s} = \otimes_v f_{s,v}$ be pure tensors. Let $S$ be a finite set of places of $F$ containing the archimedean places and the ramified places for $\varphi$, $f_s$, and $E/F$ (if working with $\GU_{2,2}$). Then we have
    \[Z(s,\varphi,f_s) = L^S(s,\pi \otimes \sigma, \wedge^2 \otimes  \mathrm{std}_2)\cdot\prod_{v\in S}Z(s,W_{\pi_v},W_{\sigma_v,s}),\]
    where $Z(s,W_{\pi_v},W_{\sigma_v,s})$ is the local zeta integral introduced in Corollary \ref{Eulerexp}. 
\end{thmc}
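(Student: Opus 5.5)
The plan is the standard Rankin--Selberg strategy: unfold globally, factor into local integrals, and compute the unramified local integrals explicitly.

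\textbf{Step 1: global unfolding.} Start from Proposition \ref{unfolding}, which unfolds $Z(s,\varphi,f_s)$ to an adelic integral of Whittaker functions. Writing the Eisenstein series as a sum over $P_{(1,2)}(F)\backslash \GSp_4(F)$ and using cuspidality of $\varphi$ leaves only the open double coset in $P_{(1,2)}\backslash\GSp_4/\GSp_4$-type orbit analysis. Expanding $\varphi$ and $f_s$ along unipotent radicals then gives
\[Z(s,\varphi,f_s)=\int_{N_0(\A)Z(\A)\backslash \GSp_4(\A)} W_\varphi(g)\,W_{f_s}(g)\,dg\]
(up to the explicit domain found there). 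Here $W_\varphi$ is the $\psi$-Whittaker function of $\varphi$ on $\GL_4$ or $\GU_{2,2}$. The function $W_{f_s}$ is built from the Whittaker function of $\sigma$ on the $\GL_2$ Levi factor of the Klingen parabolic.

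\textbf{Step 2: factorization.} For pure tensors, uniqueness of Whittaker models gives $W_\varphi=\prod_v W_{\pi_v}$ and $W_{f_s}=\prod_v W_{\sigma_v,s}$. The global integral is therefore $\prod_v Z(s,W_{\pi_v},W_{\sigma_v,s})$ for $\mathrm{Re}(s)\gg0$, which is Corollary \ref{Eulerexp}. Absolute convergence in a right half-plane follows from standard gauge estimates on Whittaker functions, and meromorphic continuation then gives the identity for all $s$. The normalization of $E^*$ absorbs the $\GL_2$ normalizing factors. It remains to show that for $v\notin S$, with spherical vectors normalized to $1$ at the identity, the local integral equals $L(s,\pi_v\otimes\sigma_v,\wedge^2\otimes\mathrm{std}_2)$.

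\textbf{Step 3: the unramified computation.} Use the Iwasawa decomposition of $\GSp_4(F_v)$ to reduce the local integral to a sum over dominant coweights $\lambda$ of the diagonal torus of $\GSp_4$. Each term is a product of three factors:
\begin{itemize}
\item a modulus factor $\delta^{-1}(\varpi^\lambda)$;
\item $W_{\pi_v}(\varpi^\lambda)$, evaluated by the Casselman--Shalika formula as the Schur polynomial $s_{\lambda}(\alpha)$ of $\GL_4$ in the Satake parameters of $\pi_v$ (in the $\GU_{2,2}$ case, the split and inert places are treated separately through the base change to $\GL_4$);
\item $W_{\sigma_v,s}(\varpi^\lambda)$, a Schur polynomial of $\GL_2$ times $q^{-s(\cdot)}$.
\end{itemize}
One then has to identify the resulting generating function with
\[\prod_{i<j,\,k}(1-\alpha_i\alpha_j\beta_k q^{-s})^{-1}.\]
The plan is to apply the branching rule from $\GL_4$ to $\GSp_4$ together with a Cauchy-type identity for $\wedge^2$. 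Concretely, expand $\sum_n \mathrm{Sym}^n(\wedge^2\otimes\mathrm{std}_2)$ via the $(\GL_6,\GL_2)$ Cauchy identity, $\sum_\mu s_\mu(\wedge^2\alpha)s_\mu(\beta)$ with $\mu$ of length at most 2. Then match the coefficient of $s_{\mu}(\beta)$ with the $\lambda$-sum coming from the $\GL_4$ Schur functions restricted along the torus, for example by a Littlewood-type plethysm identity $s_{(a,b)}[\wedge^2]=\sum s_\lambda$. Alternatively, verify the identity by comparing both sides as rational functions, in the manner of \cite{CauchiGutiMVI}.

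\textbf{Main obstacle.} I expect the combinatorial identity in Step 3 to be the hardest part. The $\lambda$-sum is over a rank-two cone restricted by the $\GSp_4$ torus and weighted by $\GL_4$ Schur functions, and it must collapse exactly to the degree-12 Euler factor; since the integral is normalized, no extra $L$-factor is allowed. My first attempt would be to test the low-degree coefficients directly and then prove the plethysm $s_\mu(\wedge^2)\big|$ (with length at most 2) via Pieri rules.
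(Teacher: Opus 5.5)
Your Steps 1 and 2 agree with Proposition \ref{unfolding} and Corollary \ref{Eulerexp}. The gap is in Step 3: the identity you set out to prove is false, because you drop the factor $L(2s,\sigma_v,\mathrm{Sym}^2\otimes\omega_{\pi_v})$.

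Take the $\GL_4$ case, with Satake parameters $\alpha$ of $\pi_v$, $\beta$ of $\sigma_v$, and $X=q_v^{-s}$. After the Iwasawa decomposition, Lemma \ref{stupidbounds} and Casselman--Shalika, all the modulus factors cancel and the unramified integral is
\[G(X)=\sum_{n,m\geq 0}X^{2n+m}\,s_{(2n+m,n+m,n,0)}(\alpha)\,s_{(n+m,n)}(\beta).\]
This is not $\prod_{i<j,k}(1-\alpha_i\alpha_j\beta_kX)^{-1}$, and the discrepancy already appears in the $X^2$-coefficient:
\begin{itemize}
\item Since $\mathrm{Sym}^2(\wedge^2\C^4)\simeq S_{(2,2)}\C^4\oplus\wedge^4\C^4$, the $X^2$-coefficient of the Euler factor is $(s_{(2,2,0,0)}(\alpha)+\det\alpha)\,s_{(2)}(\beta)+s_{(2,1,1,0)}(\alpha)\,s_{(1,1)}(\beta)$.
\item The $X^2$-coefficient of $G$ is missing the term $\det(\alpha)\,s_{(2)}(\beta)$, which is exactly the $X^2$-term of $L(2s,\sigma_v,\mathrm{Sym}^2\otimes\omega_{\pi_v})$.
\end{itemize}
The normalizing factor of $E^*$ is not absorbed by anything. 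It survives the unfolding as the prefactor in Proposition \ref{unfolding}. The correct target is
\[L(2s,\sigma_v,\mathrm{Sym}^2\otimes\omega_{\pi_v})\,G(q_v^{-s})=L(s,\pi_v\otimes\sigma_v,\wedge^2\otimes\mathrm{std}_2).\]
Compare Theorem \ref{Theorem_Arch_computation_split}, where the bare local integral is the ratio of these two factors.

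For the same reason, matching the coefficient of $s_\mu(\beta)$ term by term cannot work: in $G$ that coefficient is a single Schur function, while $s_\mu(\wedge^2\alpha)$ is not. The fix is as follows.
\begin{itemize}
\item By Littlewood's restriction rule for $\GL_6\downarrow\mathbf{O}_6$ (valid since $\ell(\mu)\leq 2$), $s_\mu(\wedge^2\alpha)=\sum_{\kappa\ \mathrm{even}}\sum_\nu c^\mu_{\kappa\nu}(\det\alpha)^{|\kappa|/2}s_{(\nu_1+\nu_2,\nu_1,\nu_2,0)}(\alpha)$.
\item Resum using $\sum_\mu c^\mu_{\kappa\nu}s_\mu(\beta)=s_\kappa(\beta)s_\nu(\beta)$.
\item Apply Littlewood's identity $\sum_{\kappa\ \mathrm{even}}s_\kappa(\beta)Y^{|\kappa|/2}=\prod_{i\leq j}(1-\beta_i\beta_jY)^{-1}$ with $Y=\det(\alpha)X^2$.
\end{itemize}
This gives exactly $G$ times the $\mathrm{Sym}^2$ factor. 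With this correction your Cauchy-identity route works. It is the symmetric-function form of Lemma \ref{LfunctionFactSplit}, which uses the separation of variables $S(E^*)\simeq J(E^*)\otimes H(E^*)$ on $\C^{6\times 2}$: the $\mathrm{SO}_6$-invariants $J$ produce the $\mathrm{Sym}^2$ factor, and the harmonics produce $\bigoplus V_{(m_1,m_2,0,0)}\otimes W_{(m_1,m_2)}$.

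The $\GU_{2,2}$ case also leaves an unaddressed piece. At an inert place, the Casselman--Shalika formula does not give $\GL_4$ Schur polynomials in a $\GL_4$ Satake parameter. Base change to $\GL_4(E_w)$ does not fix this, since it only recovers the product of the $F_v$-factor with its twist by the unramified quadratic character, not the twisted exterior-square factor over $F_v$ itself. The paper handles this in three steps:
\begin{itemize}
\item It writes $W_{\pi_v}(\varpi_v^{\mathbf{k}})\delta_{B_\G}^{-1/2}(\varpi_v^{\mathbf{k}})$ as a $\GSp_4(\C)$-character evaluated at $s_{\pi'_v}$, via \eqref{eq_ref_CS_GU_inert}.
\item It applies the degree-$8$ identity of Lemma \ref{SymmetricAlgFact}, which yields $L(s,\pi'_v\otimes\sigma_v,r)/L(2s,\omega_{\pi'_v}\omega_{\sigma_v})$.
\item It compares Frobenius classes: the twisted $\wedge^2(s_{\pi_v})$ has two extra eigenvalues $\chi_0x,\chi_0y$, where $x,y$ are the roots of $T^2-\chi_1(\varpi_v)\chi_2(\varpi_v)$. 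This gives Corollary \ref{ZtoLtensorNonSplit}.
\end{itemize}
Your plan needs an argument of this kind for the inert places.
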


In Theorem \ref{Theorem_Arch_computation_split}, we calculate the local zeta integral at any archimedean place $v$ of $F$ in the case of unramified representations of $\GL_4(F_v) \times \GL_2(F_v)$, relating it to the archimedean $L$-factor $L(s,\pi_v \otimes \sigma_v, \wedge^2 \otimes  \mathrm{std}_2)$. Here, we crucially employ the formulae of spherical Whittaker functions given by \cite{IshiiStade} and \cite{IshiiMultivariate}. Our result implies that $Z(s,\varphi,f_s)$ computes the completed $\wedge^2 \otimes  \mathrm{std}_2$ $L$-function of cuspidal automorphic representations of $\GL_4 \times \GL_2$, which are unramified everywhere. It is also worth mentioning here that, in Proposition \ref{Proposition:Poles:L}, we use $Z(s,\varphi,f_s)$ to deduce that $L^S(s,\pi \otimes \sigma, \wedge^2 \otimes  \mathrm{std}_2)$ can have at most a simple pole at $s=1$. Using a  Siegel--Weil formula of \cite{KudlaRallis}, we relate the existence of this pole to a certain automorphic period (Corollary \ref{cor_pole_SW1}).

We would like to remark that, on its own, the zeta integral $Z(s,\varphi,f_s)$ might be used to address motivic and Iwasawa-theoretic questions. Indeed, since $\GSp_4$ and $\GU_{2,2} \times \GL_2$ have Shimura varieties, our integral might lead to the construction of new Euler systems, built from certain cohomology classes in the Eisenstein cohomology of the Shimura variety of $\GSp_4$.  To this end, in \cite{CauchiGutiArchGU}, we compute the archimedean zeta integral for generic discrete series on $\GU_{2,2}(\R) \times \GL_2(\R)$.

For the rest of the subsection, $\pi$ is a cuspidal automorphic representation of $\GL_4$. In \cite{Watanabe:Global:Theta}, the author considers a family of automorphic lifts \begin{align*} \mathcal{A}_{\rm cusp}([\GL_4])\times \mathcal{S}(M_{4}(\A))&\to \mathcal{A}([\GL_4]),\\(\varphi, \eta)&\mapsto \varphi^{z}_{\eta},\end{align*} depending on a complex parameter $z$, which is exactly the theta lift for $(\GL_4,\GL_4)$ when $z=0$. For a fixed $z \in \C$, we let $\Theta^z(\pi)$ be the space spanned by the automorphic forms $\varphi^{z}_{\eta}$, with $\varphi$ in the space of $\pi$. Watanabe shows that  $\varphi^z_f$ is a (possibly zero) cusp form whenever $\mathrm{Re}(z)$ is big enough (see \cite[p. 707]{Watanabe:Global:Theta}). We thus define
\[J(s,z,\varphi,f_s, \eta) := \int_{{\GSp_4}(F) Z_{\GSp_4}(\A)\setminus {\GSp_4}(\A)} \!\!\!\!\!\! E^*_{P_{(1,2)}}(g,s, f_s)\varphi_\eta^z(g)| {\rm det}(g)|_\A^{-z} dg,\]
when the integral converges absolutely. We remark that, when $\pi$ is in the image of the theta correspondence from $\GL_4$, we have the equality $$J(s,0,\varphi,f_s, \eta) = Z(s,\varphi^0_\eta,f_s).$$ Combining Theorem \ref{THM_two_var_integral} and Lemma \ref{J_ram_lemma}, we prove the following.

\begin{cord}\label{Cord} Let $\varphi = \otimes_v\varphi_v$, $\eta= \otimes_v \eta_v$, and $f_{s} = \otimes_v f_{s,v}$ be pure tensors. Let $S$ be a finite set of places of $F$ containing the archimedean places and the ramified places for $\varphi$, $\eta$, and $f_s$. We have the equality 
    $$J(s,z,\varphi,f_s, \eta) = L(z+ \tfrac{1}{2},\pi,{\rm std}_4) L^S(s,\pi \otimes \sigma,\wedge^2 \otimes \mathrm{std}_2) \prod_{v \in S} J^\star(s,z,\varphi_v,f_{s,v},\eta_v),$$where $J^\star(s,z,\varphi_v,f_{s,v},\eta_v)$ denotes the local zeta integral introduced in Lemma \ref{J_ram_lemma}.
\end{cord}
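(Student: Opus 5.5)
The plan is to substitute Watanabe's integral expression for $\varphi^z_\eta$ into $J(s,z,\varphi,f_s,\eta)$ and unfold it twice: once through the theta kernel, once through the Eisenstein series. Recall that $\varphi^z_\eta(g)$ is obtained by integrating $\varphi(h)$ against the theta series
\[\theta_\eta(g,h)=\sum_{X\in M_4(F)}\omega(g,h)\eta(X)\]
over $[\GL_4]$, twisted by $|\det h|^{z}$-type factors. For $\mathrm{Re}(z)$ and $\mathrm{Re}(s)$ large, everything converges absolutely, so I would interchange the $\GSp_4$- and $\GL_4$-integrations. I would then split the sum over $X$ according to rank. Cuspidality of $\varphi$ kills the terms with $\mathrm{rk}\,X<4$: the corresponding stabilizers contain unipotent radicals of proper parabolics of $\GL_4$. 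This leaves the open orbit $X\in\GL_4(F)$. Collapsing the sum over $\GL_4(F)$ with the $h$-integration gives
\[\varphi^z_\eta(g)=\int_{\GL_4(\A)}\eta(h)\,\varphi(gh)\,|\det h|^{z+2}\,dh\cdot(\text{normalisation}),\]
that is, $\varphi^z_\eta$ is a right translate of $\varphi$ by a Godement--Jacquet type convolution operator. This is essentially Theorem~\ref{THM_two_var_integral}.

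Next I would insert this into $J$ and unfold the Eisenstein series as in Proposition~\ref{unfolding}. This expresses $J$ as an integral of $W_\varphi(gh)\,W_{\sigma,s}(g)\,\eta(h)\,|\det h|^{z+2}\,|\det g|^{-z}$ over $N\backslash\GSp_4(\A)$ (modulo centre) times $\GL_4(\A)$. It then factors over places into local integrals $J^\star(s,z,\varphi_v,f_{s,v},\eta_v)$; the decomposition holds because the Whittaker function factors for pure tensors.

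At an unramified place, I would substitute $h\mapsto g^{-1}h$. The $h$-integral is then $\int W_{\pi_v}(h)\,\eta_v(g^{-1}h)\,|\det h|^{z+2}\,dh$. Taking $\eta_v=\mathbf 1_{M_4(\mathcal O_v)}$, this is the Hecke action of the Godement--Jacquet operator on the spherical vector. It therefore equals $L(z+\tfrac12,\pi_v,\mathrm{std}_4)\,W_{\pi_v}(g)$, with the factor $|\det g|^{z}$ cancelling the twist. What remains is exactly the unramified zeta integral of Corollary~\ref{Eulerexp}, which Theorem~\ref{zetaintegralfinalthm} evaluates as $L(s,\pi_v\otimes\sigma_v,\wedge^2\otimes\mathrm{std}_2)$.

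At places $v\in S$, including the archimedean ones, I would use Godement--Jacquet theory to write the $h$-integral as $L(z+\tfrac12,\pi_v,\mathrm{std}_4)$ times an entire factor. This is Lemma~\ref{J_ram_lemma}: $J^\star$ is defined as the local integral divided by that $L$-factor. That is why the completed $L(z+\tfrac12,\pi,\mathrm{std}_4)$ appears, while only the partial $L^S$ of $\wedge^2\otimes\mathrm{std}_2$ does. Multiplying the local factors gives the stated identity. For general $(s,z)$ it then follows by meromorphic continuation, on the range where $J$ converges absolutely.

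I expect the main obstacle to be the first step: justifying the vanishing of the degenerate-rank terms and the interchange of integrals. The difficulty is that $\theta_\eta$ is not rapidly decreasing, and the $\GSp_4$-integral is taken against a non-cuspidal Eisenstein series. I would first establish absolute convergence for $\mathrm{Re}(z)\gg0$, using Watanabe's bounds showing $\varphi^z_\eta$ is cuspidal, hence rapidly decreasing, in that range. After that, the rank-stratification argument is a standard orbit computation.
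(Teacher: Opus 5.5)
Your proposal is correct and follows essentially the paper's route. You unfold the Klingen Eisenstein series as in Proposition \ref{unfolding} and use the expression $W^z_{\varphi,\eta}(g)=|\det g|^{z}\int_{\GL_4(\A)}W_\varphi(gh)|\det h|^{z+2}\eta(h)\,dh$, which you re-derive by unfolding the theta kernel where the paper cites Watanabe. You then evaluate the unramified places by Godement--Jacquet together with Theorem \ref{zetaintegralfinalthm}, and absorb the local factors $L(z+\tfrac12,\pi_v,\mathrm{std}_4)$ for $v\in S$ into $J^\star$ as in Lemma \ref{J_ram_lemma}.

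One slip: Watanabe's theta sum runs over $x\neq 0$. The $x=0$ term has stabilizer all of $\GL_4$, so your unipotent-radical argument does not apply to it, and its integral is not absolutely convergent. Your rank stratification should therefore cover only ranks $1,2,3$.
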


The proof of Corollary \ref{Cord} follows from the local computations of Theorem \ref{Thmc} and the formulae of the Whittaker model of $\varphi^z_\eta$ given in \cite{Watanabe:Global:Theta}. 

\subsection{Sketch of the proofs of Theorems \ref{Thma} and \ref{Thmb}}\label{subsec_sketch_proofs}
These proofs extend the methods developed in \cite{WaldsToricPeriods}, \cite{KudlaHarrisJacquet}, and \cite{GinzburgRallis}. From now on, we assume that $\pi$ and $\sigma$ are cuspidal automorphic representations of $\GL_4$ and $\GL_2$, which are unramified everywhere and such that the product of their central characters is trivial. Although the arguments and results in the manuscript apply more generally, these assumptions simplify the exposition while preserving the essential ideas. 

The strategy is summarized by the following diagram. Solid arrows represent equivariant maps between non-trivial representations. Dotted arrows, instead, either represent $\C$-valued functionals or indicate that the corresponding functional unfolds to the period that the arrow points to.

\begin{small}
\begin{equation*}
\begin{tikzcd}[row sep=large, column sep={-1.5em,-1em,0.5em,-1em,-1.5em}]
& & I_{P_{(3,0)}}(s,\mathbf{1})\otimes \sigma \otimes \Theta^z(\pi)\arrow[dddd, "L(1/2\text{,}\pi\text{,}\mathrm{std}_4) \ne 0","z = 0\text{ } "']  \arrow[ld, swap, "(\text{Garrett's pullback formula})"]  \arrow[dr, "\text{(seesaw)}\circ\text{(Siegel-Weil)}", "\substack{s = 1/2\\ z = 0}"']& & &
\\
& I_{P_{(1,2)}}(s,\sigma,\omega_{\pi})\otimes \Theta^z(\pi)\arrow[ddl, dotted, "J(s\text{,}z\text{,-,-})"] &&  \mathcal{S}(\mathbb{H}^6)\otimes \sigma \otimes \pi \arrow[dr, "\substack{\text{(Orthogonal pullback formula)}\\ \circ\\\text{(Siegel-Weil \& Shimizu-Jacquet-Langlands)}}"] & &
\\
&&& & I_{Q_{(2,4)}}(1/2,\overline{\sigma}\boxtimes \sigma)\otimes \pi \arrow[rd, dotted,swap, "\int_{[\mathbf{GL}_1\setminus \mathbf{GL}_4\times\mathbf{GL}_1]}"] &
\\
L(s,\pi\otimes\sigma,\wedge^2\otimes \mathrm{std}_2)L(z+\tfrac{1}{2},\pi,\mathrm{std}_4) \arrow[dddd, dotted]&&& &&\mathcal{P}_{\mathbf{GL}_2\times\mathbf{GL}_2}|_{\pi\otimes\sigma} \arrow[dddd, dotted]
\\
& & I_{P_{(3,0)}}(s,\mathbf{1})\otimes \sigma \otimes \pi
  \arrow[rd, "\text{(Siegel-Weil)}", "s = 1/2"']
  \arrow[ld, swap, "(\text{Garrett's pullback formula})"]
&  & &
\\
& I_{P_{(1,2)}}(s,\sigma,\omega_{\pi})\otimes \pi \arrow[ddl, dotted, "Z(s\text{,-,-})"]&
& I_{P_{(2,2)}}(1/2, \mathbf{1})\otimes \sigma \otimes \pi
  \arrow[dr, dotted,  swap, "\int_{Z_{\mathbf{GSp}_6}(\A)\setminus [\mathbf{GL}_2\times \mathbf{GSp}_4]}"] &  &
\\
 &&& &\mathcal{P}_{\GL_2 \times_{\rm det} \GL_2} |_{\pi \otimes \sigma}  \arrow[dr, dotted] &\\
 L(s,\pi\otimes\sigma,\wedge^2\otimes \mathrm{std}_2)
&&& & &\mathcal{S}_{\psi_S}|_{\pi \otimes \sigma} 
\end{tikzcd}
\end{equation*}
\end{small}

Here, the $I_{P_{(m,2(n-m))}}(-)$'s denote the induced representations of $\GSp_{2n}(\A)$ defined in \eqref{induction:Klingen}, \eqref{induction:Siegel} and \eqref{Induction:22}, and $I_{Q_{(2,4)}}(s,\overline{\sigma}\boxtimes \sigma)$ denotes the induced representation of $\GSO_{\mathbb{H}^4}(\A)$ defined in \eqref{induction:orthogonal}. 

 The proof of Theorem \ref{Thma} follows from the \emph{bottom floor} of the diagram, while the \emph{higher floor} will lead to Theorem \ref{Thmb}, as we now explain.  Start with factorizable data \[\phi_s\otimes\Psi\otimes \varphi \in  I_{P_{(3,0)}}(s,\mathbf{1})\otimes \sigma \otimes \pi,\] and assume that $\phi_{1/2}$ lies in the image of the theta correspondence of the trivial representation of $\mathbf{O}(\mathbb{H}^2)$ for the dual reductive pair $(\mathbf{O}(\mathbb{H}^2),\Sp_6)$. Associated to $\phi_s$, one can define the (normalized) degenerate Siegel Eisenstein series $E_{P_{(3,0)}}^*(g, s,\phi_s)$ on $\GSp_6$. One can then consider the integral 
\[Z'(s,\varphi, \Psi, \phi_s) := \int_{Z_{\GSp_6}(\A)\backslash [\GL_2\times_{{\rm det},\nu} \GSp_4]} E_{P_{(3,0)}}^*(\iota( g_1  , g_2), s,\phi_s) \Psi(g_1 ) \varphi(g_2)\, dg_1\,dg_2,\]
where we have fixed an embedding $\iota: \GL_2\times_{{\rm det},\nu} \GSp_4 \hookrightarrow \GSp_6$. In \S \ref{Section:Garrett:Pullback}, we extend Garrett's pullback formula to the case of similitude groups to produce a $\GSp_4(\A)$-equivariant map $$p_{\Psi}:I_{P_{(3,0)}}(s,\mathbf{1})\otimes \sigma \to   I_{P_{(1,2)}}(s,\sigma,\omega_{\pi}),$$
which let us show, in Corollary \ref{CorPullbackZeta}, that 
$Z'(s,\varphi, \Psi, \phi_s) = Z(s, \varphi, p_\Psi(\phi_s)).$
On the one hand, thanks to Theorem \ref{Thmc}, these integrals represent the $\wedge^2 \otimes {\rm std}_2$ $L$-function on $\GL_4 \times \GL_2$. On the other hand, in \S\ref{subsec:SW:Similitudes} we adapt the (intermediate) Siegel--Weil formula of \cite[Theorem 6.12]{KudlaRallis} to the case of similitude groups. Concretely, we construct a section $F_s(\phi_s)\in I_{P_{(2,2)}}(s,\mathbf{1})$ such that $$E_{P_{(3,0)}}^*(g, 1/2,\phi_{1/2}) \doteq \mathrm{Res}_{s = 1/2}E_{P_{(2,2)}}(g, s, F_s(\phi_{s})),$$ where $\doteq$ means $=$ up to an explicit non-trivial constant and where $E_{P_{(2,2)}}(g, s, F_s(\phi_{s}))$ is a degenerate Eisenstein series on $\GSp_6$ associated to the maximal parabolic $P_{(2,2)}$. Via an unfolding computation, we use this equality to prove that if $Z'(1/2,\varphi, \Psi, \phi_{1/2})$ is non-zero, the \emph{non-spherical} period $\mathcal{P}_{\GL_2 \times_{\rm det} \GL_2}|_{\pi\otimes \sigma}$ is not identically zero (see Theorem \ref{Final:Theorem:Central:Value} and Corollary \ref{Main:Corollary:Non:Spherical}). Combining these facts with the results of \S \ref{Shalika_period_subsec}, we obtain Theorem \ref{Thma}.
 \\

In order to access the linear period associated to the spherical pair $(\GL_4 \times \GL_2, \GL_2 \times \GL_2)$, we use the two-variable integral $J(s,z,\varphi,\Psi,\phi_s,\eta)$. Recall that, thanks to Corollary \ref{Cord}, it computes the product of completed $L$-functions 
$L(s,\pi\otimes\sigma,\wedge^2\otimes \mathrm{std}_2)L(z+\tfrac{1}{2},\pi,\mathrm{std}_4)$. 

In \S\ref{Two:SeeSaw:Section}, we establish a seesaw identity associated to the diagram  
\begin{align*}
\xymatrix{
    \mathbf{GSO}_{\mathbb{H}^6} \ar@{-}[dd] \ar@{-}[ddr]  & \GSp_4 \times  \GL_4  \ar@{-}[ddl] \ar@{-}[dd] \\ \\ 
   \mathbf{GSO}_{\mathbb{H}^2} \times \GL_4  &  \GSp_4.
} 
\end{align*}
In Proposition \ref{FinalPropSct83}, we use this and the regularized Siegel--Weil formula of \cite{KudlaRallis} to write $J(1/2,0,\varphi,\Psi,\phi_{1/2},\eta)$ as an integral involving the theta function for the dual reductive pair $(\mathbf{O}(\mathbb{H}^6),\Sp_4)$ and the integral studied by Shimizu. At this point, we use Shimizu--Jacquet--Langlands \cite{Shimizu:JL} and the Siegel--Weil formula proved by \cite{YamanaSingular} to express $J(1/2,0,\cdots)$ as a zeta integral, which involves the degenerate Siegel Eisenstein series on $\GSO_{\mathbb{H}^6}$. As a last technical ingredient, in \S\ref{subsec:Pullback:Orthogonal},  we prove a pullback formula for Eisenstein series on orthogonal groups, relating the degenerate Siegel Eisenstein series on $\GSO_{\mathbb{H}^6}$ to a non-degenerate Eisenstein series on $\GSO_{\mathbb{H}^4}$, constructed from $ \overline{\sigma}\boxtimes\sigma$. Via an unfolding process, we realize the spherical period $\mathcal{P}_{\GL_2 \times \GL_2}$ as an inner integral of $J(1/2,0,\cdots)$. This and Corollary \ref{Cord} let us show that 
    \[L( \tfrac{1}{2}, \pi \otimes \sigma, \rho_{\mathbf{X}_{\rm lin}}) \neq 0 \Longrightarrow {\mathcal{P}_{ \GL_2 \times \GL_2}}|_{\pi \otimes \sigma} \not \equiv 0,\] 
concluding the proof of Theorem \ref{Thmb}.

\subsection*{Acknowledgements}
The authors express their sincere gratitude to Wee Teck Gan and Paul Nelson for their guidance and insightful advice. We especially thank Wee Teck Gan for his comments on a first draft of this manuscript and for generously explaining how an hypothesis previously imposed in Theorem \ref{Thmb} could be removed. A.C.'s research in this publication was conducted with the financial support  of the JSPS Postdoctoral Fellowship for Research in Japan and of Taighde \'{E}ireann -- Research Ireland under Grant number IRCLA/2023/849 (HighCritical).  A.G.T. was supported by the Morningside Center of Mathematics (CAS) and by a research grant (VIL54509) from VILLUM FONDEN.

\section{Preliminaries}

\subsection{Notation}\label{subsec_notation} \begin{itemize}
    \item $F$ denotes a number field, with ring of integers $\mathcal{O}$, and denote by $\A$ its adeles. Let $E/F$ be a quadratic field extension, which we write $E=F(\delta)$, and let $\bar{\bullet}$ denote the non-trivial automorphism of order 2 of $E/F$. 
    \item We let $\zeta_F$ denote the Dedekind zeta function of $F$. When $v$ is a finite place of $F$, we let $F_v$ be the $v$-adic completion of $F$ with ring of integers $\mathcal{O}_v$; we let $q_v$ be the size of the quotient of $\mathcal{O}_v$ modulo its maximal ideal $\mathfrak{p}_v$ and fix a uniformizer  $\varpi_v$ of $\mathcal{O}_v$. Accordingly, we normalize the norm $|\cdot |$ of $F_v$ so that $|\varpi_v| = q_v^{-1}$. 
    \item We fix a non-trivial additive character $\psi: F \backslash \A \to \C$.
    \item We let $I_n'$  be the $n \times n$ anti-diagonal matrix with all entries 1 and denote 
$$ J_{2n}=\left( \begin{smallmatrix}  & I_n' \\ -I_n' & \end{smallmatrix} \right).$$
\item For a reductive group $G$ over $F$, let $\mathcal{A}_{\rm cusp}([G])$ denote the space of cuspidal automorphic forms on the automorphic quotient $[G] = G(F)\backslash G(\A)$.
\item The symbol ${\rm Ind}_*^\bullet(-)$ denotes normalized induction from $*$ to $\bullet$ of $-$. 
\item Given two reductive groups $G_1,G_2 \in \{\GSp_{2n},\GSO_V,\GO_V,\GU_{n,n}\}$ with similitude characters $\nu_i : G_i \to \GL_1$, we denote by $G_1 \boxtimes G_2$ the fiber product $G_1 \times_{\nu_1,\nu_2} G_2$, i.e. 
$$G_1 \boxtimes G_2 = \{(g_1,g_2) \,:\, \nu_1(g_1)=\nu_2(g_2) \}.$$
\end{itemize}

\subsection{Symplectic and Unitary Groups}\label{sec:groups} 
For every positive integer $n$, define $\GU_{n,n}$ the group scheme over $F$ given by  
\[ \GU_{n,n}(R) = \{ (g,m_g) \in \GL_{2n,F}(R \otimes_{F}  E ) \times \GL_{1,F}(R) :{}^t\bar{g} J_{2n} g = m_g J_{2n}\}, \]
where $R$ denotes any $F$-algebra.  Denote by $\nu:\GU_{n,n} \to \GL_{1,F}, \; g \mapsto m_g$ the similitude character. Inside $\GU_{n,n}$, we have the subgroup \[\GSp_{2n}(R)=\{ (g,m_g) \in \GL_{2n,F}(R) \times \GL_{1,F}(R) :{}^tg J_{2n} g = m_g J_{2n}\}, \]
which embeds into $\GU_{n,n}$ via the natural inclusion. When $n=1$, $\GSp_{2} = \GL_2$. We again denote by $\nu : \GSp_{2n} \to \GL_{1,F}, \; g \mapsto m_g$ the similitude character. The kernel of $\nu$ is denoted by $\Sp_{2n}$. 
\begin{remark}
    
These groups arise as the generic fiber of groups schemes over $\mathcal{O}$ by replacing $E$ and $F$ by $\mathcal{O}_E$ and $\mathcal{O}$ in the definition. For any such group $G$, we denote by $G(\mathcal{O})$ the group of $\mathcal{O}$-points of the resulting scheme. This choice fixes the maximal compact subgroups that will be used along the paper. 
\end{remark}

For any group $G \in \{\GL_n, \GSp_{2n}, \GU_{n,n}\}$, we let $B_G = T_G \, U_G$ be the upper triangular Borel subgroup of $G$, with $T_G$ the maximal diagonal torus in $G$.

\subsubsection{Maximal parabolic subgroups}\label{subsec:Parabolics:GSp}

We denote by $W_{2n}$ the standard representation of $\GSp_{2n}$ with basis 
  $$\{e_1,e_2,\dots ,e_n,f_n, \dots ,f_2,f_1\}.$$ 
Then the subspace $W_{2n}^+$ generated by $\{e_1,e_2,\dots ,e_n\}$ is a maximal isotropic subspace of $W_{2n}$; we also let $W_{2n}^- := \langle f_n, \dots ,f_2,f_1 \rangle$, so that $$W_{2n} = W_{2n}^+ \oplus W_{2n}^- .$$  

For any $0 \leq m \leq n $, we let  $P_{(m, 2(n-m))} = M_{(m, 2(n-m))} \, U_{(m, 2(n-m))}$ denote the maximal parabolic subgroup of $\GSp_{2n}$ that stabilizes the isotropic subspace $\langle e_1, \cdots, e_m \rangle \subseteq  W_{2n}^+ .$ The parabolic subgroup $P_{(m, 2(n-m))}$ has Levi $$ M_{(m, 2(n-m))} \simeq \GL_m \times \GSp_{2(n-m)},$$ 
where we write $\GSp_{0} = \GL_1$. In particular, $P_{(n,0)}$ is the Siegel parabolic of $\GSp_{2n}$. We then denote $P_{(m, 2(n-m))}^\circ = M_{(m, 2(n-m))}^\circ \, U_{(m, 2(n-m))}$ the intersection of $P_{(m, 2(n-m))}$ with $\Sp_{2n}$. 

\subsection{Orthogonal groups}

We denote by $\mathbb{H}$ the hyperbolic plane, \textit{i.e.} the $2$-dimensional quadratic $F$-vector space with quadratic form given by $I'_2$.
Let $V$ be any non-degenerate quadratic space over $F$ of dimension $m$ with symmetric bilinear form $B$. Then, $V$ has an orthogonal decomposition of the form $$V_0 \oplus \mathbb{H}^r,$$
with $V_0$ a non-degenerate anisotropic quadratic space. The number $r$ is called the Witt index of $V$. We refer to $V$ being split if $r =  \lfloor \tfrac{m}{2} \rfloor$.

We let $\mathbf{O}_V$, resp. $\mathbf{SO}_V$, be the isometry group of $(V,B)$, resp. the subgroup of 
$\mathbf{O}_V$ of determinant one matrices. We further define the orthogonal similitude group of $(V,B)$ by the rule 
$$\mathbf{GO}_V(R) : = \{ (g,\nu_g') \in \GL_V(R) \times \GL_1(R) \,:\, B(g x,gy) = \nu_g' B(x,y) \},$$
\noindent where $R$ denotes any $F$-algebra. When $m$ is even, we define the subgroup 
$$\mathbf{GSO}_V(R) : = \{ (g,\nu_g') \in \mathbf{GO}_V(R) \,:\, {\rm det}(g) = (\nu_g')^{m/2} \}.$$
We call $ \nu' : \mathbf{GO}_V \to \GL_1, \, g \to \nu_g'$ the similitude character of $\mathbf{GO}_V$.

\begin{remark}
   When $V$ is four dimensional, we have a short exact sequence 
\[1 \to  \mathbf{GSO}_V \to \mathbf{GO}_V \to \mu_2 \to 1,\]
with $\mu_2 = \{ +,-\}$.
\end{remark}
\subsubsection{Maximal parabolic subgroups}\label{subsec:Parabolics:GSO}
Let $V$ be any non-degenerate quadratic space over $F$ of dimension $m$ and Witt index $r$. Then $V$ has a maximal isotropic subspace $V^+$ of dimension $r$ and we can write 
$$V = V^+ \oplus V_0 \oplus V^-,$$
with $V^-$ dual to $V^+$ with respect to the quadratic form on $V$. Fix an ordered basis $\{e_1, \dots, e_r \}$ of $V^+$ and corresponding dual basis $\{f_r, \dots, f_1 \}$ of $V^-$ such that the restriction of the bilinear form $B$ to $V^+ \oplus V^-$ is given by the matrix\footnote{This choice let us identify $V^+$ and $V^-$ with $(\mathbb{H}^+)^r$ and $(\mathbb{H}^-)^r$, respectively.} $I'_{2r}$.

We then let, for any $0 \leq a \leq r$, $Q_{(a , m-2a)} = L_{(a , m-2a)}\, N_{(a , m-2a)}$ denote the maximal parabolic subgroup of $\mathbf{GSO}_V$ stabilizing the isotropic subspace $\langle e_1, \dots, e_a \rangle  \subseteq V^+$. Its Levi is $$L_{(a , m-2a)} \simeq \GL_a \times \mathbf{GSO}_{V'},$$ with $V' = V_0 \oplus \mathbb{H}^{r-a}$. Here, we use the convention $\GSO_{\emptyset} := \GL_1$. When $m = 2r$ and $a=r$, we refer to $Q_{(r , 0)}$ as the Siegel parabolic of $\mathbf{GSO}_{V}$.  Finally, we denote by $Q^\circ_{(a,m-a)} = L^\circ_{(a , m-2a)}\, N_{(a , m-2a)}$ the intersection of $Q_{(a,m-a)}$ with $\SO_{V}$.

\begin{remark}\label{remark_stupid_on_Siegel_parabolic_so22}
For instance, when $r = 2$ and $V = \mathbb{H}^2$ with quadratic form associated with $I'_4$, then $Q_{(2,0)} = L_{(2,0)}\, N_{(2,0)} $ is such that 
\[L_{(2,0)} = \left \{ \left(\begin{smallmatrix} g  & \\ &\mu I_2'\;^{t}g^{-1}I_2'\end{smallmatrix}\right), \,\, g \in \GL_2, \, \mu \in \GL_1 \right \},\]
\[N_{(2,0)} = \left \{ \left(\begin{smallmatrix} 1 & & x & \\ & 1 & & -x\\ & & 1 & \\ & & & 1 \end{smallmatrix}\right), \,\, x \in \mathbb{G}_a \right \}.\]
\end{remark}

\subsubsection{Quaternionic quadratic spaces}\label{Exceptional:Isom:Section}
Let $D$ be a quaternion algebra over $F$ with involution $\iota$ and reduced norm ${\rm Nm}_{D}$. Consider $V=D$ to be the quadratic space with non-degenerate symmetric bilinear form $B(x,y) = \frac{1}{2}{\rm Tr}_{D}(x y^{\iota})$ for $x, y \in D$. Note that $B(x,x) = {\rm Nm}_{D}(x)$ for any $x \in D$. We have an \emph{exceptional isomorphism}  
$$ 1 \to \GL_1 \xrightarrow{z} D^\times \times D^\times \xrightarrow{\rho} \mathbf{GSO}_D \to 1,$$
where $z(t) = (t,t)$, for $t \in \GL_1$ seen as an element in the center of $D^\times$, and $\rho(g,h) v = g^{-1} v h$ for $g,h \in D^\times$ and $v \in D$. Note that $$\nu' (\rho(g,h) ) =  {\rm Nm}_{D}(g)^{-1}{\rm Nm}_{D}(h).$$

We explicit the map $\rho$ in the case where $D$ is the split quaternion algebra $M_2(F)$, in which case the exceptional isomorphism reads as 
\[(\GL_2\times \GL_2)/\{(z,z),\;z\in Z_{\GL_2}\} \simeq \GSO_{\mathbb{H}^2}. \]
The isomorphism is realized by identifying $\mathbb{H}^2\simeq M_2(F)$, with quadratic form $B(x,x)=\det (x)$. Let $e_{ij} \in M_2(F)$ be the matrix with entry 1 at position $(i,j)$ and $0$ elsewhere. Fix the orthogonal basis $\{ e_{11}, e_{12}, -e_{21}, e_{22}\}$, so that we identify  $M_2(F) \simeq F^4$, with bilinear form given by the matrix  $I_4'$.
Here, we think of $F^4$ as row vectors and the $\mathbf{GSO}_{\mathbb{H}^2}$-action is the usual right action. With these choices, $\rho : \GL_2 \times \GL_2 \to \mathbf{GSO}_{\mathbb{H}^2}$ reads as \begin{align}\label{eq:exceptionalisomorphismsplit} 
    \rho:\left( \left(\begin{smallmatrix}
    a & b \\ c & d
\end{smallmatrix} \right), \left(\begin{smallmatrix}
    p & q \\ r & s
\end{smallmatrix} \right) \right) \mapsto
 \tfrac{1}{ad-bc}\left(
\begin{smallmatrix}
d p & d q & c p & -c q\\
d r & d s & c r & -c s\\
b p & b q & a p & - a q\\
-b r & -b s & - a r & a s
\end{smallmatrix}
\right).
\end{align}
\begin{remark}\label{Remark:Siegel:Parabolic:Exceptional}
    Note that $\rho^{-1}Q_{(2,0)}$, with $Q_{(2,0)}$  the Siegel parabolic of $\mathbf{GSO}_{\mathbb{H}^2}$, corresponds to the parabolic subgroup $(B_{\GL_2}^{\rm opp} \times \GL_2) /\{ (z,z)\}$ of $(\GL_2 \times \GL_2)/\{ (z,z)\}$. Indeed, with the identifications of Remark \ref{remark_stupid_on_Siegel_parabolic_so22}, we have
\[\rho \left( \left(\begin{smallmatrix}
    1 &  \\  & {\mu}^{-1} \cdot {\rm det}(g) 
\end{smallmatrix} \right), g \right) =  \left(\begin{smallmatrix} g  & \\ &\mu I_2'\;^{t}g^{-1}I_2'\end{smallmatrix}\right),\, \,  \rho \left( \left(\begin{smallmatrix}
    1 &  \\ x & 1 
\end{smallmatrix} \right), I_2 \right) = \left(\begin{smallmatrix} 1 & & x & \\ & 1 & & -x\\ & & 1 & \\ & & & 1 \end{smallmatrix}\right). \]
\end{remark}

 \subsection{The Weil representation for similitude groups}\label{subsec_weil_rep}

Consider the dual pair $(\mathbf{O}_V , \Sp_{2n})$, with $V$ a non-degenerate quadratic space of even dimension $m$. The vector space $\mathbb{W} = V\otimes_F W_{2n} $ has a natural non-degenerate symplectic form and we fix the embedding $\mathbf{O}_V \times \Sp_{2n} \to \Sp_{\mathbb{W}} \simeq \Sp_{2nm}$, defined by 
$$ (v \otimes w) (h,g) = h^{-1}v \otimes  w g,\, \text{for } v \in V, w \in W_{2n}, h \in \mathbf{O}_V, g \in \Sp_{2n}.$$
Fix a Witt decomposition $\mathbb{W} = \mathbb{W}^+ \oplus \mathbb{W}^-$ and let $\omega = \omega_\psi$ be the $\mathrm{Schr\ddot{o}dinger}$ model of the Weil representation of $\Sp_{\mathbb{W}}$ corresponding to this polarization, which we realize as acting on $\mathcal{S}( \mathbb{W}^+(\A))$. We then let $\omega$ denote also its restriction to $\mathbf{O}_V \times \Sp_{2n}$. For further details, see, for instance, \cite{Prasad:Theta:Survey}. 
We now let
\[ R = \{ (h,g)\in \mathbf{GO}_V \times \GSp_{2n} \,:\, \nu'(h) = \nu(g)\}.\]
Contrary to $\nu$, the orthogonal similitude $\nu'$ is not always surjective. Moreover, there is an isomorphism $R \to \GO_V \ltimes \Sp_{2n}, (h,g) \mapsto (h, \tilde{g}) = (h, d(\nu(g)^{-1}) g) ,$ where 
$$d(x) = \left(\begin{smallmatrix}I_n & \\ &xI_n\end{smallmatrix}\right) \in \GSp_{2n}.$$

The action of $R$ on $V\otimes_F W_{2n}$ induces an an embedding $\iota:R\to \Sp_{\mathbb{W}}$. Thus, the Weil representation extends naturally to $R$ and hence to $\GO_V \ltimes \Sp_{2n}$, as 
\[\omega(h,g)\varphi(x) := |\nu'(h)|^{-\frac{n\mathrm{dim}(V)}{4}}\omega(\iota(h,g))\varphi(x).\]
See \cite[\S 3]{RobertsTheta} for further details. For instance, if we fix the polarization associated with $\mathbb{W}^+ = V \otimes W_{2n}^+$, the extension of the Weil representation to $R$ is explicitly given as follows.  We let the group $\GO_V(\A)$ act on $\mathcal{S}((V \otimes W_{2n}^+)(\A))$ by 
\[ L(h) \phi (x \otimes y) = |\nu'(h)|^{-\frac{n \cdot \dim(V)}{4}} \cdot \phi(h^{-1} x  \otimes y).\]
Since, for $g \in \Sp_{2n}(\A)$, by \cite[Lemma 3.2]{RobertsTheta} one has $$L(h) \omega(g) L(h^{-1}) = \omega\left(d(\nu'(h)) g d(\nu'(h)^{-1}) \right),$$ we obtain a representation of $R(\A)$ on $\mathcal{S}((V \otimes W_{2n}^+)(\A))$ given by 
\begin{align}\label{eq_normalization_Weil_similitudes}
    \omega(h,g) \phi(x \otimes y) =  (L(h) \omega( \tilde{g}) \phi)(x \otimes y) =  |\nu'(h)|^{-\frac{n \cdot \dim(V)}{4}} \cdot (\omega( \tilde{g}) \phi)(h^{-1} x \otimes y).
\end{align}

\subsection{Dual groups and the $\wedge^2  \otimes {\rm std}_2$ $L$-function}\label{subsec_dualgroups}

Recall that the dual group of  $\GL_n$ is ${}^L\GL_n = \GL_n(\C)$. We then let 
${\rm std}_n$  and ${\rm std}_n^\vee$ denote the standard representation of $\GL_n(\C)$ and its dual, respectively.

\subsubsection{Exterior square representations}

The representation ${\rm std}_4$ of $\GL_4(\C)$ on $\C^4$, with basis $\{e_1,e_2,e_3,e_4\}$, induces the representation $\wedge^2$ on $\wedge^2 \C^4 \simeq \C^6$ by the rule $(\wedge^2 g) \cdot ( e_i \wedge e_j) = g \cdot e_i \wedge g \cdot e_j$. This extends to a representation of the dual group of $\GU_{2,2}$, as follows. 
Recall that is \[ {}^L\GU_{2,2} = ( \GL_4(\C) \times \GL_1(\C)) \rtimes {\rm Gal}(E/F),  \]
with the action of the non-trivial element $\tau \in {\rm Gal}(E/F)$ given by (cf. \cite[\S 1.8(c)]{BlasiusRogawski})
\[ (g,\lambda) \mapsto (\Phi_4 {}^tg^{-1} \Phi_4 , \lambda {\rm det}(g)), \text{ where }  \Phi_4 = \left( \begin{smallmatrix}  & & &1\\&&-1& \\ & 1&&\\ -1& & & \end{smallmatrix} \right). \] 
Consider the six dimensional representation \[ \GL_4(\C) \times \GL_1(\C) \to \GL_6(\C),\] given by $ (g,\lambda) \mapsto  (\wedge^2 g) \lambda$. By \cite[Lemma 2.1]{KimK}, $\wedge^2$ extends to a representation of ${}^L\GU_{2,2}$. Indeed, notice that both representations $\wedge^2$ and $\wedge^2 \circ \tau$, with $1 \ne \tau \in {\rm Gal}(E/F)$, have same highest weight. Hence they are isomorphic, \textit{i.e.}, there exists $A \in \GL_6(\C)$ such that  $(\wedge^2 \circ \tau) (g,\lambda) = A^{-1} ( \wedge^2  (g,\lambda) ) A$. Choose $A$ with ${\rm Tr}(A) >0$ and extend $\wedge^2$ to ${}^L\GU_{2,2}$ by sending \[ (g,\lambda,1) \mapsto (\wedge^2g)\lambda,\, (1,1,\tau) \mapsto A.\]
The resulting representation, which we still denote by $\wedge^2: {}^L\GU_{2,2} \to \GL_6(\C)$, is called the (twisted) exterior square representation of ${}^L\GU_{2,2}$.  
\subsubsection{$L$-functions}
Let $\G$ be either $\GL_4$ or $\GU_{2,2}$. Consider the tensor product 
    \[\wedge^2\otimes\mathrm{std}_2:{}^{L}\G \;\times {}^{L}\GL_2\to \GL_{12}(\C).\]
    Let $\pi = \otimes_v' \pi_v$ and $\sigma = \otimes_v' \sigma_v$ be cuspidal automorphic representations of $\G$ and $\GL_2$, respectively. For any (possibly empty) finite subset $S$ of places of $F$, we let $$L^S(s, \pi \otimes \sigma, \wedge^2\otimes\mathrm{std}_2) := \prod_{v \not \in S} L(s,\pi_v \otimes \sigma_v, \wedge^2\otimes\mathrm{std}_2).$$ In particular, when $S =\emptyset$,  $L(s, \pi \otimes \sigma, \wedge^2\otimes\mathrm{std}_2)$ denotes the completed $L$-function. 
    
    If $v$ is a finite place of $F$, which is unramified for $\pi_v$, $\sigma_v$, and $E/F$ (if $\G =\GU_{2,2}$), then the local $L$-factor is given by 
    $$ L(s,\pi_v\otimes\sigma_v,\wedge^2\otimes\mathrm{std}_2) = \mathrm{det}\left(1-(\wedge^2\otimes \mathrm{std}_2)(s_{\pi_v}\times s_{\sigma_v})\cdot q^{-s}\right)^{-1}\!\!\!\!\!\!\!,$$
    where $s_{\pi_v}$ and $s_{\sigma_v}$ denote representatives of the Frobenius conjugacy classes of $\pi_v$ and $\sigma_v$, respectively. Finally, we refer to \S \ref{subsec:ArchimedeanLfactors}, for a description of the local archimedean $L$-factors of the $\wedge^2 \otimes {\rm std}_2$ $L$-function of $\GL_4 \times \GL_2$.

\section{The zeta integral} \label{sec:The_zeta_integral}

In this section, we let $\H = \GSp_4$ and let $\G$ be either $\GL_4$ or $\GU_{2,2}$. For $G \in \{\H,\G\}$, also let $B_G = T_G \,U_G$ denote the upper triangular Borel subgroup of $G$, with $T_G$ the maximal diagonal torus in $G$, and $U_G$ the unipotent radical of $B_G$.

\subsection{The Klingen Eisenstein series}\label{Section:Klingen:Eisenstein}
Fix a cuspidal automorphic representation $\sigma \subset \mathcal{A}_{\rm cusp}([\GL_2])$ with central character $\omega_\sigma$ and a Hecke character $\chi$ of $F^\times \backslash \A^\times$. For $s \in \C$, we let
\begin{equation}\label{induction:Klingen}
I_{P_{(1,2)}}(s,\sigma, \chi) := {\rm Ind}_{P_{(1,2)}(\A)}^{\H(\A)}\left( ((\chi\omega_{\sigma})^{-1}\otimes{\sigma}) \delta_{P_{(1,2)}}^{\tfrac{1}{2}(s-1/2) } \right)\end{equation}be the space of smooth functions $f_s:\GSp_4(\A)\to \C$ such that, for every $ n \in U_{(1,2)}(\A)$ and $  m = \left(\begin{smallmatrix} a &  &  \\ & r & \\  & & d \end{smallmatrix} \right) \in M_{(1,2)}(\A)$, we have
\[f_{s}(nm) = (\chi\omega_{\sigma})^{-1}(d)\delta_{P_{(1,2)}}^{\tfrac{1}{2}(s + 1/2)}(m)\Psi(r),\]
where $\Psi$ is a cusp form in $\sigma$. Recall that the modulus character of $P_{(1,2)}$ is given by \begin{align*}
\delta_{P_{(1,2)}}&:\left(\begin{smallmatrix} a &  &  \\ & r & \\  & & d \end{smallmatrix} \right)  \mapsto \left|\tfrac{a}{d}\right|^2.
\end{align*}
This representation is a (possibly reducible) automorphic representation with central character $\chi^{-1}$. 
Given a smooth section $ f_s \in I_{P_{(1,2)}}(s,\sigma,\chi)$, define \begin{align}\label{WhittakerFoRKES}
    W_{\sigma,f_s}(g) : = \int_{F \backslash \A} f_s \left(\left(\begin{smallmatrix}1& & & \\ &1&x& \\ & &1& \\ & & &1\end{smallmatrix}\right)g\right)\psi(-x)dx,
\end{align}  
which is an element of $I_{P_{(1,2)}}(s,\mathcal{W}_{\sigma},\chi)$, with $\mathcal{W}_\sigma$ the Whittaker model of $\sigma$.  When the context is clear, we suppress $f_s$ from the notation and denote this simply by $W_{\sigma,s}$. We have the Fourier expansion (\textit{cf.} \cite[(10.4)]{BumpBook})
\begin{equation}\label{FourExpf}f_s(g) = \sum_{\alpha\in F^{\times}}W_{\sigma,s}\left(\left(\begin{smallmatrix}\alpha& & & \\ &\alpha& & \\ & &1& \\ & & &1\end{smallmatrix}\right)g\right).\end{equation}
Given a $K$-finite smooth section $ f_s \in I_{P_{(1,2)}}(s,\sigma,\chi)$, let $S$ be a finite set of places containing the archimedean places of $F$ and the places where $f_s$ is ramified. Define the normalized Klingen Eisenstein series (cf. \cite[p.390]{BumpBook}) \begin{align}\label{KES}
    E^*_{P_{(1,2)}}(g,s, f_s) :=   L^{S}(2s,\sigma,{\rm Sym}^2 \otimes \chi) \cdot \!\!\!\!\!\!\!\!\!\!\!\! {\sum_{\gamma \in {P_{(1,2)}}(F) \backslash \H(F)}} f_s(\gamma g),
\end{align}  
where $L^{S}(2s,\sigma,{\rm Sym}^2 \otimes \chi)$ is the partial symmetric square $L$-function\footnote{The calculation of the normalization factor follows from checking how the Satake parameters of $I_{P_{(1,2)}}(s,\sigma,\chi)$ at unramified primes act on $^L\mathfrak{n}_{{(1,2)}}$ via the natural action of the Levi of $^LP_{(1,2)}$ on it.} of $\sigma$ twisted by $\chi$.  Recall that, if $\sigma_v$ is unramified with Frobenius conjugacy class of the form $\mathrm{diag}(\xi_1(\varpi_{v}),\xi_2(\varpi_v))$, then 
\begin{equation}\label{LfunctionSigma}L(2s,\sigma_v,{\rm Sym}^2 \otimes \chi_v) = (1-(\xi^{2}_1\chi_v)(\varpi_v)q_v^{-2s})(1-(\omega_{\sigma_v}\chi_v)(\varpi_v)q_v^{-2s})(1-(\xi^{2}_2\chi_v)(\varpi_v)q_v^{-2s}).\end{equation}
If $\mathrm{Re}(s)$ is sufficiently large, the Eisenstein series defined by \eqref{KES} converges absolutely and it extends meromorphically to a function on the whole complex plane. Note that the poles of the Eisenstein series $E^*_{P_{(1,2)}}(g,s, f_s)$ are given by the poles of the $L$-function $L^{S}(2s-1,\sigma ,{\rm Sym}^2 \otimes \chi)$. According to \cite[Theorem 9.3]{GelbartJacquet} the twisted symmetric square $L$-function of $\sigma$ can have poles only when $\sigma\otimes\eta \simeq \sigma$ for some non-trivial quadratic character $\eta$. A detailed analysis of the possible isobaric factorization of the functorial lift ${\rm Sym}^2(\sigma) \otimes \chi$ in this case reveals that a necessary condition for the pole is $\chi\omega_{\sigma}\neq 1$ and $(\chi\omega_{\sigma})^2 = 1$.  

\subsubsection{Spherical vectors} \label{subsec_Weird_local_whitt}

We assume that $f_s = \otimes_v f_{s,v}$ is a pure tensor in $I_{P_{(1,2)}}(s,\sigma,\chi)$. As in \cite[\S 3.10]{BumpBook}, to $f_s$ we associate  a function $W_{\sigma,s}$, which factors as a restricted product of elements \[W_{\sigma_v,s} \in {\rm Ind}_{P_{(1,2)}(F_v)}^{\H(F_v)}\left(((\chi_v\omega_{\sigma_v})^{-1}\otimes\mathcal{W}_{\sigma_v})\delta_{P_{(1,2)}}^{\tfrac{1}{2}(s-1/2)}\right),\] with $\mathcal{W}_{\sigma_v}$ the Whittaker model of $\sigma_v$. In particular, for $n \in U_{{(1,2)}}(F_v)$, $m = \left(\begin{smallmatrix} a &  &  \\ & r & \\  & & d \end{smallmatrix} \right)  \in M_{(1,2)}(F_v)$, and $g \in \H(F_v)$, we have \[W_{\sigma_v,s}(n m g) = \delta_{P_{(1,2)}}^{\tfrac{1}{2}(s+1/2)}(m) (\chi_v\omega_{\sigma_v})^{-1}(d) W_{g,v}(r),\]
with $W_{g,v} \in \mathcal{W}_{\sigma_v}$ depending on $g$. When $v \not \in S$, we have a unique spherical vector $W_{s,v}^{\rm o}$, normalized so that its restriction to $\H(\mathcal{O}_v)$ is one. Using the Iwasawa decomposition, write $g \in \H(F_v)$ as $n m k$, with $n m \in P_{(1,2)}(F_v)$ and $k \in \H(\mathcal{O}_v)$; then, if $m = \left(\begin{smallmatrix} a &  &  \\ & r & \\  & & d \end{smallmatrix} \right) $, we have
\begin{align}\label{SphericalWhittakerforKES}
    W_{s,v}^{\rm o}(n m k) = \left|\tfrac{a}{d}\right|^{s+1/2} (\chi_v\omega_{\sigma_v})^{-1}(d)W_{\sigma_v}(r), 
\end{align}
with $W_{\sigma_v}$ the spherical vector of $\mathcal{W}_{\sigma_v}$, normalized so that $W_{\sigma_v}$ is equal to $1$ on $\GL_2(\mathcal{O}_v)$ (\textit{cf.} \cite[p. 393]{BumpBook}).

\subsection{The zeta integral and its unfolding}

Consider $\pi \subset \mathcal{A}_{\rm cusp}([\G])$,  $ \sigma \subset \mathcal{A}_{\rm cusp}([\GL_2])$ cuspidal automorphic representations with central characters $\omega_{\pi}$ and $\omega_{\sigma}$, respectively. Given a cusp form $\varphi \in \pi$ and a factorizable section $f_s \in I_{P_{(1,2)}}(s,\sigma,\omega_{\pi})$\footnote{ If $\G = \GU_{2,2}$, $\omega_{\pi}$ denotes its restriction ${\omega_{\pi| \A^\times}}$ to $F\backslash \A^\times$}, we define
\[Z(s,\varphi,f_s) := \int_{\H(F) Z_\H(\A)\setminus \H(\A)} \!\!\!\!\!E^*_{P_{(1,2)}}(g,s, f_s)\varphi(g)dg.\]
Since $\varphi|_{\H(\A)}$ is rapidly decreasing and the Klingen Eisenstein series is of moderate growth and meromorphic, the integral converges absolutely and defines a meromorphic function on $s$. Let $\psi$ be the non-trivial additive character of $F \backslash \A$ fixed in \S \ref{subsec_notation}.

\begin{definition} \label{def:principal:character} \leavevmode \begin{itemize}
    \item If $\G = \GU_{2,2}$, let $\chi :U_{\G}(F) \backslash U_{\G}(\A) \to\C^{\times}$  be the principal character given $$\chi\left(\left(\begin{smallmatrix} 1 & x & y & a\\ & 1 & b & \bar{ y} \\ & & 1 & - \bar{x} \\ & & & 1\end{smallmatrix} \right)\right) = \psi(\mathrm{Tr}_{E/F}(\delta x) - b).$$
    \item If $\G = \GL_4$, let $\chi :U_{\G}(F) \backslash U_{\G}(\A) \to\C^{\times}$  be the principal character given $$\chi\left(\left(\begin{smallmatrix} 1 & a & b & c\\ & 1 & d & e \\ & & 1 & f \\ & & & 1\end{smallmatrix} \right)\right) = \psi(a - d + f).$$
    \end{itemize} 
\end{definition} 
We consider the Whittaker model of a cusp form $\varphi$ of $\G$ with respect to $\chi$, \textit{i.e.}
\[W_{\varphi}(g) := \int_{[U_{\G}]}\varphi(ng)\chi^{-1}(n)dn.\]

\begin{prop}\label{unfolding}
The zeta integral unfolds to 
\[Z(s,\varphi,f_s) = L^S(2s, \sigma, {\rm Sym}^2 \otimes \omega_\pi)\int_{U_\H(\A)Z_\H(\A)\setminus \H(\A)} \!\!\!\!\!\!W_{\sigma,s}\left(g\right)W_{\varphi}(g)dg.\]
\end{prop}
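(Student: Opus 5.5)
We follow the standard Rankin--Selberg unfolding. First, for $\mathrm{Re}(s)$ large, we substitute the definition \eqref{KES} of $E^*_{P_{(1,2)}}$ and collapse the sum over $P_{(1,2)}(F)\backslash \H(F)$ against the integral over $\H(F)Z_\H(\A)\backslash\H(\A)$. This gives
\[Z(s,\varphi,f_s)=L^S(2s,\sigma,{\rm Sym}^2\otimes\omega_\pi)\int_{P_{(1,2)}(F)Z_\H(\A)\backslash \H(\A)} f_s(g)\varphi(g)\,dg .\]
The central character condition (the section has central character $\omega_\pi^{-1}$) ensures that the integrand is $Z_\H(\A)$-invariant. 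The interchange is justified by the absolute convergence of the Eisenstein series for $\mathrm{Re}(s)\gg0$ and the rapid decay of $\varphi|_{\H(\A)}$. The identity then extends to all $s$ by meromorphic continuation.

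Next we insert the Fourier expansion \eqref{FourExpf}:
\[f_s(g)=\sum_{\alpha\in F^\times}W_{\sigma,s}(t_\alpha g),\qquad t_\alpha=\mathrm{diag}(\alpha,\alpha,1,1).\]
The elements $t_\alpha$ lie in $P_{(1,2)}(F)$, and together with the $\GL_2$-Levi part they generate the rational points of the mirabolic-type subgroup. Writing $P_{(1,2)}(F)=M_{(1,2)}(F)U_{(1,2)}(F)$, the sum over $\alpha$ combines with the quotient by $\GL_2$ (whose $U_{\GL_2}$-Whittaker expansion produced the sum) to reduce the domain to $R(F)Z_\H(\A)\backslash\H(\A)$. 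Here $R$ is the subgroup generated by $U_\H$ and the $\GL_1$ factor $\mathrm{diag}(a,1,1,a^{-1}\cdot)$ of the Levi. Since $W_{\sigma,s}$ is left-equivariant under this $\GL_1$ in a way absorbed by the modulus and character, we then factor the domain through $U_\H(F)\backslash U_\H(\A)$ and obtain
\[\int_{U_\H(\A)Z_\H(\A)\backslash\H(\A)}W_{\sigma,s}(g)\int_{[U_\H]}\varphi(ug)\,\psi_{\H}^{-1}(u)\,du\,dg,\]
where $\psi_\H$ is the generic character of $U_\H$ by which $W_{\sigma,s}$ transforms. Concretely, $\psi_\H$ is $\psi(x)$ on the middle $\GL_2$ root, coming from \eqref{WhittakerFoRKES}, and trivial on the $U_{(1,2)}$ part. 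The remaining $\GL_1$/$F^\times$ sum should exactly fill out the long-root direction, so some care with the precise bookkeeping of $P_{(1,2)}(F)=F^\times\times \GL_2(F)\ltimes U_{(1,2)}(F)$ modulo center is needed here.

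The main step, and the expected obstacle, is to identify the inner integral $\int_{[U_\H]}\varphi(ug)\psi_\H^{-1}(u)\,du$ with $W_\varphi(g)$. This requires a Fourier expansion of $\varphi$ along $U_\G$ relative to $U_\H$. We would embed $U_\H\subset U_\G$ and check that the character $\chi$ of Definition~\ref{def:principal:character} restricts on $U_\H$ to the character appearing above (for $\GL_4$ the entries $a$ and $f$ coincide on $\GSp_4$ and $d$ is the middle root, so $\chi|_{U_\H}$ is $\psi$ of the middle coordinate when the symplectic relation makes $a-d+f$ collapse appropriately). We then expand $\varphi$ along the complementary unipotent directions, namely $U_\G/U_\H$, which is two-dimensional abelian modulo commutators, in the spirit of Jacquet--Shalika. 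Cuspidality of $\varphi$ kills all degenerate terms. The surviving nondegenerate terms form a sum over $\gamma$ in a rational torus/unipotent subgroup of $\H(F)$ that is \emph{not} already quotiented out, and this sum is re-absorbed by the remaining quotient, reducing $R(F)$ to $U_\H(F)$. This is the same mechanism used in \cite{CauchiGutiMVI,CauchiGuti}, and we would imitate that computation, treating $\GL_4$ and $\GU_{2,2}$ in parallel (for $\GU_{2,2}$ via the term $\mathrm{Tr}_{E/F}(\delta x)$ and the conjugate entries). After this collapse the integral becomes the stated expression, with $W_\varphi$ and $W_{\sigma,s}$ both integrated over $U_\H(\A)Z_\H(\A)\backslash\H(\A)$.
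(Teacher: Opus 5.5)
Your first step (unfolding the Klingen Eisenstein series) is fine, but the second step fails, and that is where the real content of the proof lies. The expansion \eqref{FourExpf} is the Whittaker expansion of the $\GL_2$-cusp form in the Levi, so it runs only over $U_{\GL_2}(F)\backslash\mathrm{Mir}_{\GL_2}(F)\simeq F^\times$. The discrete group you quotient by, however, is all of $P_{(1,2)}(F)$, which contains the full $\GL_2(F)$ of $M_{(1,2)}$. Modulo $R(F)Z_\H(F)$ it is $U_{\GL_2}(F)Z_{\GL_2}(F)\backslash\GL_2(F)$, which is strictly larger than the index set $\{t_\alpha\}$; the discrepancy is $B_{\GL_2}(F)\backslash\GL_2(F)\simeq\mathbb{P}^1(F)$. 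So the $\alpha$-sum cannot be unfolded against $P_{(1,2)}(F)$. Folding your integral over $R(F)Z_\H(\A)\backslash\H(\A)$ back would require $f_s(g)=\sum_{\gamma\in U_{\GL_2}(F)Z_{\GL_2}(F)\backslash\GL_2(F)}W_{\sigma,s}(\gamma g)$, which is not the case.

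Your later step cannot repair this, for two reasons. First, $U_\H$ is not normal in $U_\G$: for $\GL_4$, $[x_{12}(u),x_{23}(t)]=x_{13}(ut)\notin\GSp_4$. Hence ``expanding along $U_\G/U_\H$'' is not a Fourier expansion over an abelian quotient group. Second, the only discrete group you have left, $R(F)$, preserves the line of characters of $U_{(1,2)}\backslash N_\G$ that are trivial on the $(1,3)$-direction. So it does not act transitively on the non-trivial characters, and a whole family of terms would survive that cuspidality does not kill.

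The missing idea is the order of operations: the $\GL_2(F)$ inside $P_{(1,2)}(F)$ has to be consumed by a Fourier expansion of $\varphi$, not of $f_s$.
\begin{enumerate}
\item Collapse $U_{(1,2)}(F)$ first, replacing $\varphi$ by $\varphi'(g)=\int_{[U_{(1,2)}]}\varphi(ng)\,dn$.
\item Expand $\varphi'$ along the abelian quotient $U_{(1,2)}\backslash N_\G$. Here $N_\G$ is the unipotent radical of the Klingen-type parabolic of $\G$ (the $(1,2,1)$-parabolic for $\GL_4$), and $U_{(1,2)}\supset[N_\G,N_\G]$ is normal in it.
\item Through its $\GL_2(F)$-factor, $M_{(1,2)}(F)$ acts on these characters with exactly two orbits. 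The trivial orbit gives the constant term along $N_\G$, which vanishes by cuspidality. The open orbit is represented by $\chi_{\delta,0}$, with stabilizer $M_\delta(F)=L_\delta(F)N_\delta(F)$, where $L_\delta=\{\mathrm{diag}(a,a,d,d)\}$ and $N_\delta=\{n(b)\}$.
\item Unfolding this orbit replaces $M_{(1,2)}(F)$ by $M_\delta(F)$. Only now does $L_\delta(F)=Z_\H(F)\cdot\{t_\alpha\}$ match the index set of \eqref{FourExpf}, so the $\alpha$-sum unfolds legitimately.
\item In the remaining $[N_\delta]$-integral, the identity $W_{\sigma,s}(n(b)g)=\psi(b)W_{\sigma,s}(g)$ glues $\psi$ on the middle root to $\chi_{\delta,0}$. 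This produces the principal character $\chi$ of $U_\G$, and hence $W_\varphi$.
\end{enumerate}
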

\begin{proof}
We prove the statement only in the case of $\G = \GU_{2,2}$, as the computation when $\G = \GL_4$ is analogous.  When ${\rm Re}(s)$ is big enough, we can unfold the Eisenstein series and obtain
\[Z(s,\varphi,f_s) =L^S(2s, \sigma, {\rm Sym}^2 \otimes \omega_\pi)  \int_{P_{(1,2)}(F)Z_{\H}(\A)\setminus \H(\A)} \!\!\!\! f_s(g)\varphi(g)dg.\]
Using the invariance of $f_s(g)$ over $U_{(1,2)}(\A)$, we collapse the sum over $U_{(1,2)}(\A)$ to write the integral as
\[\int_{M_{(1,2)}(F)U_{(1,2)}(\A)Z_\H(\A)\setminus \H(\A)} \!\!\!\! f_s(g)\int_{[ U_{(1,2)}]}\varphi(ng)dndg,\]
 where $dn$ is the Haar measure on $U_{(1,2)}(\A)$. Now, let $Q_\G = L_\G  N_\G$ be the standard Klingen parabolic of $\G$, with $L_\G \simeq \GL_1 \times \GU_{1,1}$. The function $\varphi'(g) := \int_{[U_{(1,2)}]}\varphi(ng)dn$ is an automorphic form on $\G(\A)$ invariant under $U_{(1,2)}$, hence we can Fourier expand it over $U_{(1,2)}\setminus N_{{\G}}$:
\[\varphi'(g) = \sum_{\chi:[U_{(1,2)}\setminus N_{{\G}}]\to \C^{\times}}\varphi_{\chi}(g),\]
with $\varphi_{\chi}(g) = \int_{[N_{{\G}}]}\varphi(n'g)\chi^{-1}(n')dn'$. Any character of $[U_{(1,2)}\setminus N_{{\G}}]$ is of the form  \begin{align*}
     \chi_{\alpha,\beta} : [U_{(1,2)}\setminus N_{{\G}}] &\to \C^\times,\\ n' &\mapsto  \psi( {\rm Tr}_{E/F} (\alpha x + \beta y)),
\end{align*} with $\alpha, \beta \in E$ such that $\bar{\alpha} = - \alpha$ and  $\bar{\beta} = - \beta$ (\textit{cf.} \cite[Proposition 3.1]{CauchiGutiMVI}). As in \emph{loc.cit.}, we note that $M_{(1,2)}(F)$ acts on the space of characters of $[U_{(1,2)}\setminus N_{{\G}}]$  with two orbits, the trivial one and an open one. A representative of the open orbit is given by the character $\chi_{\delta,0}$. Since $\varphi$ is a cusp form, the constant term $\varphi_{N_{{\G}}}(g) =0$, hence the integral can be written as
\begin{equation}\label{auxFourierexp}\int_{M_{\delta}(F)U_{(1,2)}(\A)Z_\H(\A)\setminus \H(\A)} f_s(g)\varphi_{\chi_{\delta,0}}(g)dg,\end{equation}
with \[M_{\delta}(F) =L_{\delta}(F)N_{\delta}(F) = \left \{ m(a,d)n(b) = \left(\begin{smallmatrix} a &   &   &  \\ & a &   &   \\ & & d &   \\ & & & d\end{smallmatrix} \right)\left(\begin{smallmatrix} 1 &   &   &  \\ & 1 &  b &   \\ & & 1 &   \\ & & & 1\end{smallmatrix} \right)\;a,d\in F^\times,\, b \in F \right \}.\]
Collapse \eqref{auxFourierexp} over $[N_{\delta}]$ to obtain 
\[\int_{L_{\delta}(F)N_{\delta}(\A)U_{(1,2)}(\A)Z_\H(\A)\setminus \H(\A)}\int_{[N_{\delta}]} f_s(n(b)g)\varphi_{\chi_{\delta,0}}(n(b)g)dbdg.\]
By \eqref{FourExpf}, we have
\begin{equation*}f_s(g) = \sum_{t \in F^{\times}}W_{\sigma,s}\left(\left(\begin{smallmatrix}t & & & \\ & t & & \\ & &1& \\ & & &1\end{smallmatrix}\right)g\right),\end{equation*}
where $W_{\sigma,s}(g)$ is the element of  $I_{P_{(1,2)}}\left (s,\mathcal{W}_{\sigma},\omega_{\pi} \right)$ defined in \eqref{WhittakerFoRKES}, with $\mathcal{W}_\sigma$ the Whittaker model of $\sigma$. Therefore the integral is equal to 
\begin{equation}\label{Aux2}\int_{L_{\delta}(F)U_\H(\A)Z_\H(\A)\setminus \H(\A)}\int_{[N_{\delta}]} \sum_{t \in F^{\times}}W_{\sigma,s}\left(\left(\begin{smallmatrix} t & & & \\ & t & & \\ & &1& \\ & & &1\end{smallmatrix}\right)n(b)g\right)\varphi_{\chi_{\delta,0}}(n(b)g)dbdg.\end{equation}
Here, we used that $N_{\delta}(\A)U_{(1,2)}(\A) = U_{\H}(\A)$, with $U_\H$ the unipotent radical of the upper triangular Borel of $\H$. Since $[N_{\delta}]$ is compact and the sum \eqref{FourExpf} converges absolutely on compact sets, using Fubini's theorem, the integral \eqref{Aux2} is equal to 
\begin{align*}\int_{L_{\delta}(F)U_\H(\A)Z_\H(\A)\setminus \H(\A)}&\sum_{t \in F^{\times}}\int_{[N_{\delta}]} W_{\sigma,s}\left(\left(\begin{smallmatrix} t & & & \\ &t & & \\ & &1& \\ & & &1\end{smallmatrix}\right)n(b)g\right)\varphi_{\chi_{\delta,0}}(n(b)g)dbdg \\ &= \int_{U_\H(\A)Z_\H(\A)\setminus \H(\A)}\int_{[N_{\delta}]} W_{\sigma,s}\left(n(b)g\right)\varphi_{\chi_{\delta,0}}(n(b)g)dbdg,\end{align*}
where we have used the equality $L_{\delta}(F) =Z_\H(F)\cdot \left \{\left(\begin{smallmatrix}t& & & \\ & t & & \\ & &1& \\ & & &1\end{smallmatrix}\right),\;t\in F^\times \right \}$.
Finally, using the transformation property $W_{\sigma,s}(n(b)g) = \psi(b) W_{\sigma,s}(g)$, the above integral is equal to 
\[\int_{U_\H(\A)Z_\H(\A)\setminus \H(\A)} W_{\sigma,s}\left(g\right)\int_{[N_{\delta}]}\varphi_{\chi_{\delta,0}}(n(b)g)\psi(b)dbdg.\]
The character on $ [U_\G /[U_\G, U_\G]]$, defined by taking $b \mapsto \psi(b)$ on $N_\delta$ and by taking $\chi_{\delta,0}$ on the one parameter unipotent subgroup associated to the other simple root, is the principal character $\chi$ of Definition \ref{def:principal:character}. The result thus follows.
\end{proof}

Using the uniqueness of Whittaker models, Proposition \ref{unfolding} has an immediate Corollary.

\begin{cor}\label{Eulerexp}
Let $\varphi = \otimes_v\varphi_v$ and $f_{s} = \otimes_v f_{s,v}$ be factorizable elements in the space of $\pi$ and $I_{P_{(1,2)}}(s,\sigma,\omega_{\pi})$ respectively. The zeta integral $Z(s,\varphi,f_s)$ admits the following Euler product expansion:
\[Z(s,\varphi,f_s) = L^S(2s, \sigma, {\rm Sym}^2 \otimes \omega_\pi) \prod_{v} Z(s,W_{\pi_v},W_{\sigma_v,s}),\]
where \[Z(s,W_{\pi_v},W_{\sigma_v,s}) := \int_{U_\H(F_v)Z_\H(F_v)\setminus \H(F_v)} W_{\sigma_v,s}\left(g_v\right)W_{\pi_v}(g_v)dg_v.\]
\end{cor}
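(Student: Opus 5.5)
The plan is to derive this Euler product directly from the unfolded expression in Proposition \ref{unfolding}, by factoring the global integrand into local pieces. First, I would use uniqueness of Whittaker models for $\G(F_v)$ (local multiplicity one for generic irreducible representations) to conclude that, since $\varphi=\otimes_v\varphi_v$, the global Whittaker function factors as $W_\varphi(g)=\prod_v W_{\pi_v}(g_v)$. Here the $W_{\pi_v}$ lie in the local Whittaker models of $\pi_v$ with respect to the local components of the principal character $\chi$ of Definition \ref{def:principal:character}, and they are normalized so that, at almost all $v$, $W_{\pi_v}$ is the spherical vector equal to $1$ on $\G(\mathcal{O}_v)$. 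This normalization fixes the constant implicit in the factorization.

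Second, I would check that $W_{\sigma,s}$, defined in \eqref{WhittakerFoRKES}, factors as $\prod_v W_{\sigma_v,s}$. This holds because $f_s=\otimes_v f_{s,v}$, the section is determined by its values on the Levi through $\Psi$, and $W_\Psi=\prod_v W_{\sigma_v}$ by uniqueness of Whittaker models for $\GL_2$. At unramified places I would take $W_{\sigma_v,s}=W^{\rm o}_{s,v}$ as in \eqref{SphericalWhittakerforKES}.

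Third, the domain $U_\H(\A)Z_\H(\A)\backslash\H(\A)$ and its quotient measure are restricted products of the local quotients, with the local measures normalized so that $U_\H(\mathcal{O}_v)Z_\H(\mathcal{O}_v)\backslash \H(\mathcal{O}_v)$ has volume one for almost all $v$. Substituting both factorizations into Proposition \ref{unfolding} then yields the product over $v$ of the integrals $Z(s,W_{\pi_v},W_{\sigma_v,s})$, multiplied by the normalizing factor $L^S(2s,\sigma,{\rm Sym}^2\otimes\omega_\pi)$.

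The main obstacle is justifying the interchange of the adelic integral with the infinite product, that is, showing the product converges absolutely. I would handle this in the region ${\rm Re}(s)\gg0$. Via the Iwasawa decomposition, each local integral reduces to an integral over the torus of $\H$ modulo the center. For $v\notin S$, the Casselman--Shalika formula bounds the integrand by $|a/d|^{{\rm Re}(s)+1/2}$ times bounded Satake data, so the unramified local factors are $1+O(q_v^{-1-\epsilon})$ and their product converges. Combined with absolute convergence of the global unfolded integral for ${\rm Re}(s)$ large, Fubini's theorem gives the identity in that region, and meromorphic continuation extends it to all $s$.
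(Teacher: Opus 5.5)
Your proposal is correct and follows the paper's route. The paper simply observes that, by uniqueness of Whittaker models, $W_\varphi$ and $W_{\sigma,s}$ factor into local Whittaker functions, so the unfolded integral of Proposition \ref{unfolding} becomes the stated Euler product; your argument that the product converges absolutely for ${\rm Re}(s)\gg 0$, via the unramified computation, supplies the standard justification that the paper leaves implicit.
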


\subsection{Preparation to the unramified calculations}

Let $v$ be a finite place of $F$ which is unramified for $\pi_v$, $\sigma_v$, and $E/F$. Recall that we have denoted by $q_v$ the size of the quotient of $\mathcal{O}_v$ modulo its maximal ideal $\mathfrak{p}_v$ and fixed a uniformizer  $\varpi_v$ of $\mathcal{O}_v$ such that the norm $|\cdot |$ of $F_v$ is normalized by setting $|\varpi_v| = q_v^{-1}$. Let $W_{\pi_v}$ be the spherical vector in the Whittaker model of $\pi_v$, normalised so that it is equal to $1$ on $\G(\mathcal{O}_v)$. We also let  $W_{\sigma_v}$  be the spherical vector of $\mathcal{W}_{\sigma_v}$ and let $W_{s,v}^{\rm o}$ be the spherical vector of ${\rm Ind}_{P_{(1,2)}(F_v)}^{\H(F_v)}\left(((\omega_{\pi_v}\omega_{\sigma_v})^{-1}\otimes\mathcal{W}_{\sigma_v})\delta_{P_{(1,2)}}^{\tfrac{1}{2}(s-1/2)}\right)$, normalized as in \eqref{SphericalWhittakerforKES}. Using the Iwasawa decomposition $\H(F_v) = B_\H(F_v) \H(\mathcal{O}_v)$, the local integral  at $v$ of Corollary \ref{Eulerexp} reads as
\[Z(s,W_{\pi_v},W_{s,v}^{\rm o}) = L(2s, \sigma_v, {\rm Sym}^2\otimes \omega_{\pi_v} ) \int_{Z_\H(F_v)\setminus T_{\H}(F_v)}\delta_{B_\H}^{-1}(t) W_{s,v}^{\rm o}(t)W_{\pi_v}(t)dt.\]
From now on, we identify $Z_\H(F_v) \setminus T_{\H}(F_v)$ with $T_{1}(F_v) := \left\{t(ab,b;b) = \left(\begin{smallmatrix}ab& & & \\ &b& & \\ & &1& \\ & & &a^{-1}\end{smallmatrix}\right),\;a,b\in F_v^{\times}\right\}$. 
\begin{lemma}\label{stupidbounds}
If $W_{s,v}^{\rm o}(t(ab,b;b))W_{\pi_v}(t(ab,b;b))\neq 0$, then $a,b\in\mathcal{O}_v$.
\end{lemma}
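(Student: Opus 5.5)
The argument is the usual support computation for unramified Whittaker functions. We use the equivariance of $W_{\pi_v}$ under $U_\G(F_v)$ with respect to the principal character $\chi$ of Definition \ref{def:principal:character}, together with its right $\G(\mathcal{O}_v)$-invariance. Similarly, we use the equivariance of $W^{\rm o}_{s,v}$ coming from \eqref{SphericalWhittakerforKES}. Fix $t=t(ab,b;b)$.

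First consider $W^{\rm o}_{s,v}$. By \eqref{SphericalWhittakerforKES}, with Levi component $a_1=ab$, $r=\mathrm{diag}(b,1)$ and $d=a^{-1}$, we get
\[W^{\rm o}_{s,v}(t)=|a^2b|^{s+1/2}(\omega_{\pi_v}\omega_{\sigma_v})^{-1}(a^{-1})\,W_{\sigma_v}(\mathrm{diag}(b,1)).\]
The $\GL_2$ spherical Whittaker function $W_{\sigma_v}(\mathrm{diag}(b,1))$ vanishes unless $|b|\le 1$. This is the standard argument: for $x\in\mathcal{O}_v$ we have
\[W_{\sigma_v}(\mathrm{diag}(b,1))=W_{\sigma_v}\left(\left(\begin{smallmatrix}1&bx\\&1\end{smallmatrix}\right)\mathrm{diag}(b,1)\right)=\psi_v(bx)\,W_{\sigma_v}(\mathrm{diag}(b,1)),\]
and $\psi_v$ is unramified. (If the conductor of $\psi_v$ is not $\mathcal{O}_v$, we assume the paper's convention that it is at unramified places.) Hence $b\in\mathcal{O}_v$.

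Next consider $W_{\pi_v}$. Take $u$ in the one-parameter subgroup of $U_\H\subset U_\G$ attached to the long simple root of $\GSp_4$, i.e. $n(y)$ with entry $y$ in position $(2,3)$, with $y\in\mathcal{O}_v$. Conjugation gives $t\,n(y)\,t^{-1}=n(by)$, since the $(2,3)$ entry is scaled by $b/1$. Therefore
\[W_{\pi_v}(t)=W_{\pi_v}(t\,n(y))=\chi(n(by))\,W_{\pi_v}(t)=\psi_v(-by)\,W_{\pi_v}(t),\]
which again forces $b\in\mathcal{O}_v$. For the short simple root, the element with entries $x$ in position $(1,2)$ and the compatible entry in position $(3,4)$ lies in $\H(\mathcal{O}_v)$ (in the unitary case $x\in F_v$). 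Conjugating by $t$ scales it by $ab/b=a$. The principal character evaluates on this element to $\psi_v(ax)$ (in the $\GL_4$ case $\psi(a x-0+a x)=\psi(2ax)$; in the $\GU_{2,2}$ case via $\mathrm{Tr}_{E/F}(\delta a x)$). Since $v$ is odd and unramified in $E/F$, letting $x$ range over $\mathcal{O}_v$ gives nonvanishing only if $a\in\mathcal{O}_v$. In the $\GU_{2,2}$ case, $\mathrm{Tr}_{E/F}(\delta\mathcal{O}_{E_v})=\mathcal{O}_v$ for unramified $v$, possibly after choosing $\delta$ a unit. This is the only mildly delicate point, which we handle by allowing $x\in\mathcal{O}_{E_v}$, in which case the element lies in $\G(\mathcal{O}_v)$.

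The main obstacle is exactly this bookkeeping of constants ($2$, $\delta$, the conductor of $\psi$) at a place $v$ where everything is unramified. We would simply enlarge $S$ to include the places above $2$ and those where $\psi_v$ or $\delta$ is ramified, so that the characters are trivial precisely on $\mathcal{O}_v$. Combining the two steps gives $a,b\in\mathcal{O}_v$ whenever the product is nonzero.
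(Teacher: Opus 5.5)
Your overall strategy is the paper's. You get $b\in\mathcal{O}_v$ from $W^{\rm o}_{s,v}$ via the long root; your $\GL_2$ argument is the paper's use of $u'$ with $y'$ in position $(2,3)$. The bound $a\in\mathcal{O}_v$ must then come from $W_{\pi_v}$ via the short root, since $W^{\rm o}_{s,v}(t)$ imposes no condition on $a$.

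\textbf{The gap.} The step producing $a\in\mathcal{O}_v$ fails as you wrote it. The short-root element of $\H$ does not carry $x$ in both positions $(1,2)$ and $(3,4)$. The condition ${}^tgJ_4g=J_4$ forces the $(3,4)$-entry to be $-x$; compare the paper's parametrization of $U_\G$, whose $(3,4)$-entry is $-\overline{x}$. Consequently:
\begin{itemize}
\item in the $\GL_4$ case the principal character gives $\psi_v(ax-0-ax)=1$, not $\psi_v(2ax)$;
\item in the $\GU_{2,2}$ case with $x\in F_v$ you get $\psi_v(\mathrm{Tr}_{E/F}(\delta ax))=\psi_v(ax\,\mathrm{Tr}_{E/F}(\delta))=1$, because $\overline{\delta}=-\delta$.
\end{itemize}

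This is not a bookkeeping accident about constants. $W_{\sigma_v,s}$ is left invariant under $U_{(1,2)}$, which contains the short root group. The product $W_{\sigma_v,s}W_{\pi_v}$ must be left $U_\H$-invariant for the integral in Corollary \ref{Eulerexp} to make sense. So $\chi|_{U_\H}$ is necessarily trivial on every root group except the long one. Hence no element of $U_\H(\mathcal{O}_v)$ can ever detect $a$, and enlarging $S$ to exclude places above $2$ does not help.

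\textbf{The fix.} Leave $\H$ and use the full right $\G(\mathcal{O}_v)$-invariance of $W_{\pi_v}$.
\begin{itemize}
\item For $\GL_4$, take the elementary unipotent $u$ with $x\in\mathcal{O}_v$ only in position $(1,2)$. Then $tut^{-1}$ has entry $ab\,x/b=ax$ there and $\chi(tut^{-1})=\psi_v(ax)$. So $W_{\pi_v}(t)=\psi_v(ax)W_{\pi_v}(t)$ for all $x\in\mathcal{O}_v$, which forces $a\in\mathcal{O}_v$. No factor $2$ and no oddness assumption on $v$ are needed.
\item For $\GU_{2,2}$, take $x\in\mathcal{O}_{E_v}$. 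You mention this as a fallback, but it is essential, and it is exactly the paper's argument.
\end{itemize}

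With this change your proof coincides with the paper's. Your additional long-root computation for $W_{\pi_v}$ is correct but redundant.
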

\begin{proof} 
We show the Lemma only when $\G = \GU_{2,2}$, as the proof for the other case is almost identical. Write $u = \left(\begin{smallmatrix}1&x &y &z \\ &1 & & \overline{y}\\ & &1&-\overline{x} \\ & & & 1\end{smallmatrix}\right) \in U_{{\G}}(\mathcal{O}_v)$ and $u' = \left(\begin{smallmatrix}1& & & \\ &1 &y' & \\ & &1& \\ & & & 1\end{smallmatrix}\right)\in U_{\H}(\mathcal{O}_v)$. By \eqref{SphericalWhittakerforKES}, we have that 
\begin{align*}
W_{s,v}^{\rm o}(t(ab,b;b)) &= W_{s,v}^{\rm o}(t(ab,b;b)u') \\ 
&= W_{s,v}^{\rm o}(t(ab,b;b)u't(ab,b;b)^{-1} t(ab,b;b)) \\ 
&= \psi_v(b y')W_{s,v}^{\rm o}(t(ab,b;b)).
\end{align*}  
If $W_{s,v}^{\rm o}(t(ab,b;b)) \ne 0$, then $\psi_v(b y')=1$ for every $y' \in \mathcal{O}_v$. This implies that $b \in \mathcal{O}_v$. Similarly, since
\begin{align*}
W_{\pi_v}(t(ab,b;b)) &= W_{\pi_v}(t(ab,b;b)u) = \psi_v({\rm Tr}_{(E \otimes_F F_v)/F_v}(\delta_v a x))W_{\pi_v}(t(ab,b;b)), 
\end{align*}  
if $W_{\pi_v}(t(ab,b;b)) \ne 0$, then $a \in \mathcal{O}_v$. These two facts imply the desired result.
\end{proof}
\noindent Lemma \ref{stupidbounds} let us write $Z(s,W_{\pi_v},W_{s,v}^{\rm o})$ as
\begin{align*} L(2s, \sigma_v, {\rm Sym}^2\otimes \omega_{\pi_v} )  \sum_{n,m \geq 0} \delta_{B_\H}^{-1}(t(\varpi_v^{n+m},\varpi_v^m;\varpi_v^m))W_{s,v}^{\rm o}\left(t(\varpi_v^{n+m},\varpi_v^m;\varpi_v^m)\right)W_{\pi_v}(t(\varpi_v^{n+m},\varpi_v^m;\varpi_v^m)).\end{align*}
By \eqref{SphericalWhittakerforKES}, 
\[W_{s,v}^{\rm o}\left(t(\varpi_v^{n+m},\varpi_v^m;\varpi_v^m)\right) = q_v^{-(2n+m)(s+1/2)} (\omega_{\pi_v}\omega_{\sigma_v})(\varpi_v^n) W_{\sigma_v}(\hat{t}(\varpi_v^m,1)),\]
where $\hat{t}(\alpha,\beta) := \left(\begin{smallmatrix} \alpha& \\ & \beta \end{smallmatrix}\right)\in \GL_2(F_v)$. Using the action on the models by the center of $\H(F_v)$ and the equality $\delta_{B_\H}^{-1}(t(\varpi_v^{n+m},\varpi_v^m;\varpi_v^m)) = q_v^{4n+3m}$,   $Z(s,W_{\pi_v},W_{s,v}^{\rm o})$ is seen to be  
\begin{align}\label{WhittakerLocalExpr} L(2s, \sigma_v, {\rm Sym}^2\otimes \omega_{\pi_v} )   \sum_{n,m \geq 0} q_v^{4n+3m-(2n+m)(s+1/2)} W_{\sigma_v}(\hat{t}(\varpi_v^{n+m},\varpi_v^{n}))W_{\pi_v}(t(\varpi_v^{2n+m},\varpi_v^{n+m};\varpi_v^{2n+m})).\end{align}

\subsection{Symmetric algebra factorizations and local unramified zeta integrals}\label{subsec_localzeta1}

We keep the notation of the previous section. In order to evaluate the local zeta integrals explicitly, we crucially use two key results appearing in the Appendix of \cite{ExplicitGSR}. With these in hand and some simple manipulation, we prove the following result.

\begin{theorem}\label{zetaintegralfinalthm}
    Let $\varphi = \otimes_v\varphi_v$ and $f_{s} = \otimes_v f_{s,v}$ be pure tensors in $\pi$ and $I_{P_{(1,2)}}(s,\sigma, \omega_\pi)$, respectively. Let $S$ be a finite set of places of $F$ containing the archimedean places and the ramified places for $\varphi$, $f_s$, and $E/F$ (if $\G = \GU_{2,2}$). Then we have
    \[Z(s,\varphi,f_s) = L^S(s,\pi \otimes \sigma, \wedge^2 \otimes  \mathrm{std}_2)\prod_{v\in S}Z(s,W_{\pi_v},W_{\sigma_v,s}).\]
\end{theorem}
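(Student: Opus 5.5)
By Corollary \ref{Eulerexp}, it suffices to prove the unramified identity
\[Z(s,W_{\pi_v},W_{s,v}^{\rm o}) = L(s,\pi_v\otimes\sigma_v,\wedge^2\otimes\mathrm{std}_2)\]
for every finite $v\notin S$. At such a place the symmetric square factor $L(2s,\sigma_v,{\rm Sym}^2\otimes\omega_{\pi_v})$ is absorbed into the local integral, which is how \eqref{WhittakerLocalExpr} is written. The global statement then follows by collecting the places in $S$.

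The plan is to start from \eqref{WhittakerLocalExpr} and substitute explicit formulas for the two Whittaker functions. For $W_{\sigma_v}$, Shintani's formula gives
\[W_{\sigma_v}(\hat t(\varpi^{n+m},\varpi^n)) = q_v^{-m/2}(\xi_1\xi_2)^n(\varpi)\, s_{m}(\xi_1,\xi_2),\]
a Schur polynomial of $\mathrm{Sym}^m$. For $W_{\pi_v}$, the Casselman--Shalika formula (Shintani in the $\GL_4$ case, and its unitary analogue for $\GU_{2,2}$) expresses $W_{\pi_v}(t(\varpi^{2n+m},\varpi^{n+m};\varpi^{2n+m}))$ as $\delta_{B_\G}^{1/2}$ times the character of the irreducible representation of ${}^L\G^\circ$ with the corresponding highest weight.

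For $\GL_4$ this weight is $\lambda=(2n+m,\,n+m,\,n,\,0)$, after renormalising by the central character. Once the $q_v$-powers are checked to cancel the modulus characters, the sum becomes
\[L(2s,\sigma_v,{\rm Sym}^2\otimes\omega_{\pi_v})\sum_{n,m\ge0}\mathrm{tr}\,V_{(2n+m,n+m,n,0)}(s_{\pi_v})\,\mathrm{tr}\,\mathrm{Sym}^m(s_{\sigma_v})\,\omega_{\sigma_v}(\varpi)^n\, X^{2n+m},\]
with $X=q_v^{-s}$, up to central character twists.

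The key input is the pair of symmetric algebra decompositions from the Appendix of \cite{ExplicitGSR}. Under $\GL_4\times\GL_2$, the symmetric algebra $\mathrm{Sym}^\bullet(\wedge^2\otimes\mathrm{std}_2)$ decomposes as the multiplicity-free sum
\[\bigoplus V_{(2n+m,n+m,n,0)}\otimes\mathrm{Sym}^m\otimes\det{}^n,\]
tensored with a polynomial algebra generated by $\mathrm{Sym}^2(\mathrm{std}_2)\otimes(\text{invariant}/\det\text{ part of }\wedge^2)$. The latter is exactly the part accounted for by the factor $L(2s,{\rm Sym}^2)$.

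Taking graded traces, multiplying the generating series by $L(2s,\sigma_v,{\rm Sym}^2\otimes\omega_{\pi_v})$ yields $\det(1-(\wedge^2\otimes\mathrm{std}_2)(s_{\pi_v}\times s_{\sigma_v})X)^{-1}$. For $\GU_{2,2}$ one argues the same way, using the twisted $\wedge^2$ of \S\ref{subsec_dualgroups} and the unitary Casselman--Shalika formula; since $E_v/F_v$ is unramified, the weights behave as in the inert or split case.

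I expect the main obstacle to be the bookkeeping rather than any single conceptual step. One must match the highest weights, central character twists and modulus powers exactly so that the double sum over $(n,m)$ becomes precisely the graded character of $\mathrm{Sym}^\bullet$ divided by the ${\rm Sym}^2$ factor. One must also verify the Appendix identities in the correct normalization, in particular that the "$\det^n$" generator corresponds to $\omega_{\pi_v}\omega_{\sigma_v}$. I would verify this first on the low-degree terms $(n,m)=(0,1),(1,0)$ before appealing to the general identity.
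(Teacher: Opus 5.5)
For $\G=\GL_4$, and at places split in $E$ when $\G=\GU_{2,2}$, your argument is essentially the paper's. There $\wedge^2$ factors through $\GSO_6(\C)$, and the harmonic constituents of $S(\C^{6\times 2})$ are $V_{(m+n,n,0,0)}\otimes W_{(m+n,n)}$, which are your $V_{(2n+m,n+m,n,0)}\otimes\mathrm{Sym}^m\otimes\det^n$. The $\SO_6$-invariants, generated by the entries of ${}^tXQX$, produce exactly $L(2s,\sigma_v,{\rm Sym}^2\otimes\omega_{\pi_v})$.

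The gap is at places inert in $E$, which you dispose of with ``one argues the same way''.

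\emph{Why the split-case decomposition does not apply.} At such $v$ the Frobenius class of $\pi_v$ lies in the non-identity component $(\GL_4(\C)\times\GL_1(\C))\rtimes\tau$ of ${}^L\GU_{2,2}$. Its image under the twisted $\wedge^2$ has eigenvalues $\chi_0(\varpi_v)\cdot(1,\chi_1,\chi_2,\chi_1\chi_2,x,-x)$ with $x^2=(\chi_1\chi_2)(\varpi_v)$, the last two lying on an anisotropic plane. It therefore lies in $\GO_6(\C)\setminus\GSO_6(\C)$, and the $\GSO_6\times\GL_2$ (equivalently $\GL_4\times\GL_2$) decomposition of $\mathrm{Sym}^\bullet$ tells you nothing about its traces.

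\emph{Why the Whittaker side changes too.} The Casselman--Shalika formula at such $v$ is not a $\GL_4$-character formula. By \cite[(3.9)]{FurusawaMorimoto}, $W_{\pi_v}(\varpi_v^{\mathbf k})\delta_{B_\G}^{-1/2}(\varpi_v^{\mathbf k})=\mathrm{Tr}(s_{\pi_v'}|V_{\mathbf k})$. Here $V_{\mathbf k}$ is a $\GSp_4(\C)$-representation and $\pi_v'$ is the unramified constituent of the restricted principal series of $\GSp_4(F_v)$.

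\emph{How the paper closes this.} It applies the $\GSp_4\times\GL_2$ decomposition \cite[(A.1.1)]{ExplicitGSR} for the $8$-dimensional $r$, whose invariants contribute only $L(2s,\omega_{\pi_v'}\omega_{\sigma_v})$. This turns the sum into $L(2s,\omega_{\pi_v'}\omega_{\sigma_v})^{-1}L(2s,\sigma_v,{\rm Sym}^2\otimes\omega_{\pi_v})L(s,\pi_v'\otimes\sigma_v,r)$. The $12$-dimensional representation exceeds $r$ by the four eigenvalues $\pm\chi_0x\xi_i$. These contribute $\prod_{i=1}^2(1-(\omega_{\pi_v}\xi_i^2)(\varpi_v)q_v^{-2s})^{-1}=L(2s,\sigma_v,{\rm Sym}^2\otimes\omega_{\pi_v})/L(2s,\omega_{\pi_v}\omega_{\sigma_v})$, which closes the identity.

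\emph{What your route would require.} You would have to decompose $\mathrm{Sym}^\bullet$ under $\GO_6\times\GL_2$. You would then need to prove that the traces of these non-identity-component elements on the $\mathrm{O}_6$-harmonic constituents equal the $\GSp_4$-characters above. That twisted-character identity is the real content at inert places, and your proposal does not address it.
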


\begin{proof}
  This is the combination of Corollary \ref{Eulerexp} with Corollaries \ref{ZtoLtensorNonSplit} and \ref{Lrequaltowedge}, which are proved below. 
\end{proof}

\noindent The unramified computation for $\GL_4$ will follow from the one of $\GU_{2,2}$ at places which split in $E$. Therefore, in what follows we can assume that $\G = \GU_{2,2}$. Let $v \not \in S$. We distinguish the cases of $v$ inert or split in $E$.

\subsubsection{The inert case}

Let $\pi_v$ and $\sigma_v$ be unramified smooth representations of $\G(F_v)$ and $\GL_2(F_v)$. We have that $\pi_v$ is the unique unramified subquotient of $\mathrm{Ind}_{B_{\G}(F_v)}^{\G(F_v)}\chi$ with $\chi$ an unramified character of $T_{\G}(F_v)$. Denote by $\pi_v'$ the unique unramified subquotient of $\mathrm{Ind}_{B_{\H}(F_v)}^{\H(F_v)}\chi_{|_{T_{\H}}}$. Recall that the Casselman--Shalika formula for the Whittaker coefficient of $\pi_v$ involves the Satake parameters $s_{\pi_v'}$ of $\pi_v'$  (\textit{cf}. \cite[Proposition 5.2.1]{GanHundley}). It is thus natural to try to relate the integral \eqref{WhittakerLocalExpr} firstly to an $L$-factor for $\pi_v' \otimes \sigma_v$. Recall that the dual group of $\H$ is isomorphic to itself. Let \[\Lambda^+:= \{ \mathbf{k} = (k_1,k_2,k_3) \in \Z^3,\, k_1 \geq k_2 \geq 0 \}\] be the set of dominant weights for $\H(\C)$ and let $\varpi_v^\mathbf{k} = t(\varpi_v^{k_1+k_2+k_3},\varpi_v^{k_1+k_3};\varpi_v^{k_1+k_2+2k_3})$. Then, the Casselman--Shalika formula (\textit{cf}. \cite[(3.9)]{FurusawaMorimoto}) gives that \begin{align}\label{eq_ref_CS_GU_inert}
     W_{\pi_v}(\varpi_v^\mathbf{k}) \cdot \delta_{B_\G}^{-1/2}(\varpi_v^\mathbf{k}) = W_{\pi_v'}(\varpi_v^\mathbf{k}) \cdot \delta_{B_\H}^{-1/2}(\varpi_v^\mathbf{k}) = {\rm Tr}(s_{\pi_v'}| V_\mathbf{k}),
\end{align}  
where $V_\mathbf{k}$ is the irreducible representation of $\H(\C)$ of highest weight $\mathbf{k} \in \Lambda^+$. Applying this formula to the infinite series of \eqref{WhittakerLocalExpr} we get back the local integral of \cite{Novodvorsky}, which calculates $L(2s,\omega_{\pi_v'}\omega_{\sigma_v})L(s,\pi_v' \otimes \sigma_v, r),$  as the following Proposition shows. Here we have denoted $r: \H(\C) \times \GL_2(\C) \to \GL_8(\C)$ the tensor product of the two standard representations.

\begin{prop}\label{theononsplit1}
 We have that \[Z(s,W_{\pi_v},W_{s,v}^{\rm o}) = L(2s,\omega_{\pi_v'}\omega_{\sigma_v})^{-1} L(2s,\sigma_v,{\rm Sym}^2 \otimes \omega_{\pi_v}) L(s,\pi_v' \otimes \sigma_v, r). \]   
\end{prop}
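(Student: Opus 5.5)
The plan is to start from the explicit series \eqref{WhittakerLocalExpr} and rewrite both Whittaker values as characters. The normalizing factor $L(2s,\sigma_v,{\rm Sym}^2\otimes\omega_{\pi_v})$ appears on both sides, so it suffices to show that the double series equals $L(2s,\omega_{\pi_v'}\omega_{\sigma_v})^{-1}L(s,\pi_v'\otimes\sigma_v,r)$.

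\textbf{Step 1: identify the torus element.} The element $t(\varpi_v^{2n+m},\varpi_v^{n+m};\varpi_v^{2n+m})$ is $\varpi_v^{\mathbf k}$ with $k_1+k_2+k_3=2n+m$, $k_1+k_3=n+m$ and $k_1+k_2+2k_3=2n+m$. Solving gives $k_3=0$, $k_1=n+m$, $k_2=n$, so $\mathbf k=(n+m,n,0)\in\Lambda^+$.

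\textbf{Step 2: substitute the Casselman--Shalika formulas.} Apply \eqref{eq_ref_CS_GU_inert} to get $W_{\pi_v}(\varpi_v^{\mathbf k})=\delta_{B_\H}^{1/2}(\varpi_v^{\mathbf k})\,{\rm Tr}(s_{\pi_v'}|V_{(n+m,n,0)})$. For $\GL_2$, use Shintani's formula: $W_{\sigma_v}(\hat t(\varpi_v^{n+m},\varpi_v^n))=q_v^{-m/2}\,\omega_{\sigma_v}(\varpi_v)^n\,{\rm Tr}(s_{\sigma_v}|{\rm Sym}^m)$.

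\textbf{Step 3: check the powers of $q_v$ cancel.} With $\delta_{B_\H}^{1/2}(\varpi_v^{\mathbf k})=q_v^{-2n-3m/2}$ (half of the exponent $4n+3m$ computed above), the total power is
\[4n+3m-(2n+m)(s+\tfrac12)-2n-\tfrac32 m-\tfrac m2=-(2n+m)s.\]
The series therefore becomes
\[\sum_{n,m\ge0}\omega_{\sigma_v}(\varpi_v)^n\,{\rm Tr}(s_{\pi_v'}|V_{(n+m,n,0)})\,{\rm Tr}(s_{\sigma_v}|{\rm Sym}^m)\,X^{2n+m},\qquad X=q_v^{-s}.\]
I need to keep track of how the central character of $\pi_v'$ enters through the weight component $k_3$. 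Presumably the spherical Whittaker function on $Z_\H\backslash T_\H$ picks up the central character, which is why the answer contains $\omega_{\pi_v'}$. If needed, I will absorb this via $\omega_{\pi_v}(\varpi_v)$ by using the central action, as the paper did in deriving \eqref{WhittakerLocalExpr}.

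\textbf{Step 4: sum the series.} This is the main obstacle. We must show
\[\sum_{n,m\ge0}(\omega X^2)^n\,{\rm Tr}\,V_{(n+m,n,0)}(g)\,{\rm Tr}\,{\rm Sym}^m(h)\,X^m=\frac{1-\omega X^2}{\det\big(1-X\,(g\otimes h)\big)},\]
where $\omega$ is the product of the central characters.

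The approach is the symmetric algebra factorization results from the Appendix of \cite{ExplicitGSR}. Expand $\det(1-X\,g\otimes h)^{-1}=\sum_N X^N\,{\rm Tr}\,{\rm Sym}^N(\mathrm{std}_4\otimes\mathrm{std}_2)$. Then decompose ${\rm Sym}^N(\mathrm{std}_4\otimes\mathrm{std}_2)$ under $\GSp_4\times\GL_2$ by Cauchy's formula. First decompose over $\GL_4\times\GL_2$ as $\bigoplus_{\lambda}S_\lambda(\C^4)\otimes S_\lambda(\C^2)$, where $\lambda$ has at most two rows. Then branch $S_{(a,b)}(\C^4)$ from $\GL_4$ to $\GSp_4$ using Littlewood's rule, which gives $V_{(a-j,b-j)}$ twisted by the similitude to the power $j$, for $0\le j\le b$. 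Meanwhile, $S_{(a,b)}(\C^2)=\det^b\otimes{\rm Sym}^{a-b}$.

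Regroup with $m=a-b$. The inner sum over $j$ factors as a geometric series in $\omega X^2$ times the desired series. Multiplying by $1-\omega X^2$ then leaves exactly the double sum above.

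Alternatively, one can compare with Novodvorsky's unramified computation \cite{Novodvorsky}, which yields the same generating identity, and simply cite it.
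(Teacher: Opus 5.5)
Your strategy is the paper's: Casselman--Shalika on both Whittaker functions, reducing \eqref{WhittakerLocalExpr} to $\sum_{n,m}q_v^{-s(2n+m)}\mathrm{Tr}(s_{\pi_v'}\otimes s_{\sigma_v}|V_{(n+m,n,0)}\otimes W_{(n+m,n)})$, followed by the symmetric-algebra identity of Lemma~\ref{SymmetricAlgFact}. However, Step~2 uses the wrong modulus character.

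In \eqref{eq_ref_CS_GU_inert}, the factor $\delta_{B_\H}^{1/2}$ goes with $W_{\pi_v'}$, the Whittaker function of the auxiliary $\GSp_4$-representation. The integrand instead contains $W_{\pi_v}$, the Whittaker function on $\G$, and that one carries $\delta_{B_\G}^{1/2}$. On $F$-rational torus elements, $\delta_{B_\G}(\mathrm{diag}(t_1,t_2,\nu/t_2,\nu/t_1))=|t_1|^6|t_2|^2|\nu|^{-4}$, because the root spaces for $t_1/t_2$ and $t_1t_2/\nu$ are $E$-lines; this agrees with $\delta_{B_{\GL_4}}$. Hence $W_{\pi_v}(\varpi_v^{\mathbf k})=q_v^{-3n-2m}\,\mathrm{Tr}(s_{\pi_v'}|V_{(n+m,n,0)})$, not $q_v^{-2n-3m/2}\,\mathrm{Tr}(\cdots)$.

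Your Step~3 arithmetic then hides the error. With your values the exponent is
\[4n+3m-(2n+m)(s+\tfrac12)-2n-\tfrac32m-\tfrac m2=-(2n+m)(s-\tfrac12),\]
so carrying out your computation honestly would give the $L$-factors at $s-\tfrac12$ and $2s-1$. With $q_v^{-3n-2m}$ you get $4n+3m-(2n+m)(s+\tfrac12)-3n-2m-\tfrac m2=-(2n+m)s$, as required.

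In Step~4, the two occurrences of $\omega$ are different quantities, so the displayed identity is false as written. On the left you need $(\det(h)X^2)^n=(\omega_{\sigma_v}(\varpi_v)X^2)^n$. This is what your Step~3 series actually contains, and it is what $S_{(a,b)}(\C^2)=\det^b\otimes\mathrm{Sym}^{a-b}$ contributes after writing $b=n+j$. In the numerator $1-\omega X^2$ you need $\nu(g)\det(h)=(\omega_{\pi_v'}\omega_{\sigma_v})(\varpi_v)$, which comes from the factors $\nu^j$ in Littlewood's branching $S_{(a,b)}(\C^4)|_{\GSp_4(\C)}=\bigoplus_{j=0}^{b}\nu^j\otimes V_{(a-j,b-j,0)}$.

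In particular, $\omega_{\pi_v'}$ does not enter through $k_3$, which is $0$; it enters only through this geometric series in $j$. With these two corrections, your Cauchy-plus-Littlewood argument is a correct, self-contained proof of Lemma~\ref{SymmetricAlgFact}. It is equivalent to the harmonic-polynomial decomposition of \cite{ExplicitGSR} that the paper cites instead.
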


\begin{proof}
The Casselman--Shalika formula for $\GL_2$ says that \begin{align*}
    W_{\sigma_v}(\hat{t}(\varpi_v^{m+n},\varpi_v^n)) &=q_v^{-m/2}\cdot  \mathrm{Tr}(s_{\sigma_v}|W_{(m+n,n)}),
\end{align*} 
where $W_{(m+n,n)}$ is the irreducible representation of $\GL_2(\C)$ of highest weight $(m+n,n)$. On the other hand, \eqref{eq_ref_CS_GU_inert} applied to $\mathbf{k}=(m+n,n,0)$ gives that \begin{align*}
     W_{\pi_v}(t(\varpi_v^{2n+m},\varpi_v^{m+n};\varpi_v^{2n+m})) &= q_v^{-3n-2m}\cdot \mathrm{Tr}(s_{\pi_v'}|V_{(m+n,n,0)}).
\end{align*} 
Plugging in these formulas, \eqref{WhittakerLocalExpr} is equal to
\begin{align*} L(2s, \sigma_v, {\rm Sym}^2\otimes \omega_{\pi_v} )   \sum_{n,m \geq 0} q_v^{-s(2n+m)} \mathrm{Tr}(s_{\pi_v'} \otimes s_{\sigma_v}|V_{(m+n,n,0)} \otimes W_{(m+n,n)}).
\end{align*}

The result then follows from Lemma \ref{SymmetricAlgFact} below. 
\end{proof}

\begin{lemma}\label{SymmetricAlgFact}
We have 
\[L(s,\pi_v' \otimes \sigma_v, r) = L(2s,\omega_{\pi_v'}\omega_{\sigma_v}) \sum_{n,m \geq 0} q_v^{-s(2n+m)} \mathrm{Tr}(s_{\pi_v'} \otimes s_{\sigma_v}|V_{(m+n,n,0)} \otimes W_{(m+n,n)}),\]
where $V_{(m+n,n,0)} $, resp.  $W_{(m+n,n)}$, is the irreducible representation of $\GSp_4(\C)$, resp. $ \GL_2(\C)$, of highest weight $(m+n,n,0)$, resp. $(m+n,n)$. 
\end{lemma}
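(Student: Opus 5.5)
We will derive the identity from the Cauchy-type decomposition of the symmetric algebra of a tensor product, in the form used in the Appendix of \cite{ExplicitGSR}. Write $V=\C^4$ for the standard representation of $\GSp_4(\C)$ and $W=\C^2$ for that of $\GL_2(\C)$, so that $r=V\otimes W$. Then
\[L(s,\pi_v'\otimes\sigma_v,r)=\sum_{k\ge0}q_v^{-ks}\,\mathrm{Tr}\big(s_{\pi_v'}\otimes s_{\sigma_v}\,\big|\,\mathrm{Sym}^k(V\otimes W)\big).\]
Restricted to $\GL_4(\C)\times\GL_2(\C)$, Cauchy's formula gives
\[\mathrm{Sym}^k(V\otimes W)=\bigoplus_{\lambda=(\lambda_1\ge\lambda_2\ge 0),\,|\lambda|=k}S_\lambda(V)\otimes S_\lambda(W),\]
since Schur functors vanish on $W$ for partitions of length greater than $2$. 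With $\lambda=(m+n,n)$ we have $|\lambda|=2n+m$, and $S_\lambda(W)=W_{(m+n,n)}$. This already produces the weight $q_v^{-s(2n+m)}$ and the $\GL_2$ factor in the claimed formula.

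Next we restrict $S_{(m+n,n)}(V)$ from $\GL_4(\C)$ to $\GSp_4(\C)$. Here we use Littlewood's branching rule in the stable range for two-row partitions:
\[S_{(a,b)}(V)|_{\GSp_4}=\bigoplus_{j=0}^{b}V_{(a-j,b-j)}\otimes\nu^{j},\]
where $\nu$ is the similitude character. Equivalently, the contraction with the symplectic form removes one column of height two at a time, and each removal carries a factor of the similitude. We then check that, after matching the third coordinate of the weight $\mathbf{k}$ with the normalization of the central character, the similitude acts on the Satake parameter so that $\mathrm{Tr}(s_{\pi_v'}\mid\nu)=\omega_{\pi_v'}(\varpi_v)$. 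We also need $\det W=\omega_{\sigma_v}$. The extra factor $\det W^{j}$ arises because $W_{(a,b)}=W_{(a-j,b-j)}\otimes\det^{j}$.

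Substituting, we reindex with $n'=n-j$ and regroup:
\[\sum_{k}q_v^{-ks}\mathrm{Tr}(\mathrm{Sym}^k)=\sum_{j\ge0}\big(\omega_{\pi_v'}\omega_{\sigma_v}(\varpi_v)q_v^{-2s}\big)^{j}\sum_{n',m\ge0}q_v^{-s(2n'+m)}\mathrm{Tr}\big(V_{(m+n',n',0)}\otimes W_{(m+n',n')}\big).\]
The geometric series in $j$ is exactly $L(2s,\omega_{\pi_v'}\omega_{\sigma_v})$, which proves the lemma.

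The main obstacle is getting the branching step exactly right. This means verifying that Littlewood's restriction rule holds without modification for $\mathrm{Sp}_4$, and that the decomposition is multiplicity-free in each $j$. It is safe here because the partitions have length at most $2=\tfrac12\dim V$. It also means fixing the similitude bookkeeping: the third coordinate $k_3$ in $\mathbf{k}$, and the identification of $\nu$ with $\omega_{\pi_v'}$, should be compared against the normalization of $\varpi_v^{\mathbf{k}}$ used in \eqref{eq_ref_CS_GU_inert}. If the direct branching is awkward, the alternative is to verify the identity as a formal generating-function identity of characters via the Weyl character formula for $\GSp_4$. For this we would multiply both sides by the Weyl denominators and compare coefficients, as in \cite{ExplicitGSR}.
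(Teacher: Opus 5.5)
Your proof is correct, but it is organized differently from the paper's.

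\textbf{The paper's route.} The paper quotes the computation in the appendix of \cite{ExplicitGSR}, which writes the polynomial algebra on $\C^4\otimes\C^2$ as ($\Sp_4$-invariants)$\,\otimes\,$(harmonics). The invariants form the polynomial ring in the single pairing of the two columns, on which $\GSp_4(\C)\times\GL_2(\C)$ acts by $\nu\otimes\det$. The harmonics decompose as $\bigoplus_{m_1\ge m_2\ge 0}V_{(m_1,m_2,0)}\otimes W_{(m_1,m_2)}$. The factor $L(2s,\omega_{\pi_v'}\omega_{\sigma_v})$ is then just the Hilbert series of the invariant ring.

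\textbf{Your route.} You first decompose under $\GL_4(\C)\times\GL_2(\C)$ by Cauchy's formula and then restrict by Littlewood's rule. Your index $j$ is exactly the degree in the paper's invariant ring, and your reindexing $n'=n-j$ is the same regrouping.

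\textbf{The steps you flag as obstacles are sound.}
\begin{itemize}
\item $\ell(\lambda)\le 2$ is the stable range for $\Sp_4$.
\item The only even-column $\delta\subseteq(a,b)$ are $(j,j)$ with $0\le j\le b$. Since $(a,b)/(j,j)$ is a translate of $(a-j,b-j)$, one has $c^{(a,b)}_{(j,j),\mu}=\delta_{\mu,(a-j,b-j)}$, so the restriction is multiplicity free.
\item The twist by $\nu^{j}$ is forced by central characters, because $\GSp_4(\C)=\C^\times\cdot\Sp_4(\C)$. On $S_{(a,b)}(\C^4)$ the element $zI_4$ acts by $z^{a+b}$. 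In the paper's normalization ($\varpi_v^{\mathbf{k}}$ has similitude exponent $k_1+k_2+2k_3$), $V_{(a-j,b-j,0)}$ has central character $z^{a+b-2j}$ and $\nu=V_{(0,0,1)}$ has central character $z^2$. Casselman--Shalika at $\mathbf{k}=(0,0,1)$ gives $\nu(s_{\pi_v'})=\omega_{\pi_v'}(\varpi_v)$.
\end{itemize}

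\textbf{Comparison.} Your route rests only on classical, easily citable branching rules. It also makes the similitude bookkeeping fully explicit, whereas the paper delegates this to a footnote adapting \cite{ExplicitGSR} from $\Sp_{2n}$ to $\GSp_{2n}$. The paper's route is a one-step decomposition that it reuses essentially verbatim, via \cite{Ton-That}, for $\GSO_{\mathbb{H}^3}\times\GL_2$ in Lemma \ref{LfunctionFactSplit}. There your approach would instead need the orthogonal branching rule and some care with $\mathbf{O}_6$ versus $\GSO_6$.
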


\begin{proof}
This follows from the calculation of the right hand side of \cite[(A.1.1)]{ExplicitGSR} in \cite[Proof of Proposition A.1]{ExplicitGSR} at pages 139-140 and we only sketch it. By the Poincar\'e polynomial identity \cite[(A.1.2)]{ExplicitGSR}, we have 
\[L(s,\pi_v' \otimes \sigma_v, r) = \sum_{\ell = 0}^{\infty}\mathrm{Tr}(\mathbf{Sym}^\ell(s_{\pi_v'} \otimes s_{\sigma_v}))q_v^{-s\ell},\]
where $\mathbf{Sym}^\ell$ is the $\ell$-th symmetric power of $r$, one is left to describe the irreducible components of each representation $\mathbf{Sym}^\ell$. At page 140 of \emph{loc.cit.} it is shown that\footnote{The formula at page 140 of \cite{ExplicitGSR} is given for the standard representation of $\Sp_{2n} \times \GL_n$, however it extends to the one of $\GSp_{2n} \times \GL_n$ after taking into account that $\GSp_{2n}$ acts on the algebra $I(E^*)$ of \textit{loc.cit.} by the symplectic multiplier.}
\[ \mathrm{Tr}(\mathbf{Sym}^\ell(s_{\pi_v'} \otimes s_{\sigma_v})) = \sum_{2 i + j = \ell}  \mathrm{Tr}( \mathbf{Sym}^i( \nu(s_{\pi_v'})\Lambda^2( s_{\sigma_v}))) \cdot \sum_{\substack{m_1\geq m_2 \geq 0 \\  m_1 + m_2 = j }} \mathrm{Tr}(s_{\pi_v'}\otimes s_{\sigma_v}|V_{(m_1,m_2,0)} \otimes W_{(m_1,m_2)}), \]
where recall that $\nu:\H(\C) \to \C^\times$ denotes the symplectic multiplier. 
This and the Poincar\'e polynomial identity show that 
 \[L(s,\pi_v'\otimes \sigma_v, r) = L(2s,\omega_{\pi_v'}\omega_{\sigma_v}) \sum_{m_1\geq m_2\geq 0}\mathrm{Tr}(s_{\pi_v'}\otimes s_{\sigma_v}|V_{(m_1,m_2,0)} \otimes W_{(m_1,m_2)})q_v^{-s (m_1+m_2)},\]
as desired.
\end{proof}

\noindent We now relate the $L$-functions of $\pi_v' \otimes \sigma_v$ and $\pi_v \otimes \sigma_v$ and obtain the following.

\begin{cor}\label{ZtoLtensorNonSplit}
We have the equality 
\[Z(s,W_{\pi_v},W_{s,v}^{\rm o}) = L(s,\pi_v \otimes \sigma_v, \wedge^2 \otimes  \mathrm{std}_2). \]   
\end{cor}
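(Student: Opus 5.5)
By Proposition \ref{theononsplit1}, it suffices to prove the identity of local $L$-factors
\[
L(s,\pi_v\otimes\sigma_v,\wedge^2\otimes\mathrm{std}_2)=L(2s,\omega_{\pi_v'}\omega_{\sigma_v})^{-1}\,L(2s,\sigma_v,{\rm Sym}^2\otimes\omega_{\pi_v})\,L(s,\pi_v'\otimes\sigma_v,r).
\]
Both sides are inverse characteristic polynomials in $q_v^{-s}$. The plan is therefore to compute the Satake parameter of $\pi_v$ in ${}^L\GU_{2,2}$ (a Frobenius-twisted element) together with its image under the twisted exterior square $\wedge^2$, and then to match eigenvalues on both sides.

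First I will write the unramified character $\chi$ of $T_\G(F_v)$ explicitly. Since $v$ is inert, $T_\G(F_v)$ consists of $\mathrm{diag}(\alpha_1,\alpha_2,\lambda\bar\alpha_2^{-1},\lambda\bar\alpha_1^{-1})$ with $\alpha_i\in E_v^\times$ and $\lambda\in F_v^\times$. Its unramified characters factor through $|\alpha_1|_{E_v}$, $|\alpha_2|_{E_v}$ and $|\lambda|$, so they are determined by three parameters $x_1,x_2,z$. The Satake parameter of $\pi_v$ is then a class $(t,\lambda_0)\rtimes\tau$ in ${}^L\GU_{2,2}$ with $t$ diagonal. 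The relevant $L$-factor is $\det(1-\wedge^2((t,\lambda_0)\tau)\otimes s_{\sigma_v}q_v^{-s})^{-1}$. Because $\tau$ has order $2$ and $q_{E_v}=q_v^2$, the factor is governed by the eigenvalues of the element $((t,\lambda_0)\tau)$ acting through $A$ on $\wedge^2\C^4$.

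Next I will decompose $\wedge^2\C^4$ as a representation of this twisted element. Using the pairing $\Phi_4$, the outer automorphism $\tau$ preserves the symplectic form on $\C^4$. Consequently $\wedge^2\C^4=\C\omega\oplus\wedge^2_0$, where $\omega$ is the symplectic line and $\wedge^2_0$ is the $5$-dimensional complement. On $\wedge^2_0$ the restricted element behaves like the image of the $\GSp_4$ Satake parameter under $\GSp_4(\C)\to\GSO_5(\C)$, whereas the line $\C\omega$ picks up a sign and the multiplier.

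I expect the twisted action to pair up the weights $e_i\wedge e_j$. As a result, the four-dimensional "standard" piece $\mathrm{std}_4$ of $\GSp_4$, which is $s_{\pi_v'}$ with the appropriate $\lambda$-twist, must be compared with $\wedge^2$ eigenvalues of the form $\pm\sqrt{\ \cdot\ }$. The conclusion should be that the $12$ eigenvalues of $\wedge^2\otimes\mathrm{std}_2$ at $q_v^{-s}$ reproduce $\det(1-r(s_{\pi_v'}\otimes s_{\sigma_v})q_v^{-s})$, which has $8$ factors and contributes $L(s,\pi_v'\otimes\sigma_v,r)$. The remaining $4$ factors should pair into quadratic factors in $q_v^{-2s}$, and these should match $L(2s,\sigma_v,{\rm Sym}^2\otimes\omega_{\pi_v})\,L(2s,\omega_{\pi_v'}\omega_{\sigma_v})^{-1}$, which has $3-1$ effective factors of degree $2$ in $q_v^{-s}$. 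Concretely, I plan to verify the identity of characteristic polynomials
\[
\det(1-X\,\wedge^2\otimes\mathrm{std}_2)\cdot(1-\omega X^2)=\det(1-X\,r)\cdot(1-\xi_1^2\chi X^2)(1-\omega_\sigma\chi X^2)(1-\xi_2^2\chi X^2)
\]
by direct expansion in the variables $x_1,x_2,z,\xi_1,\xi_2$.

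The main obstacle is pinning down the Satake parameter normalization in the inert case, namely the relation between $\chi$, $\chi|_{T_\H}$ (which defines $\pi_v'$) and the twisted element of ${}^L\GU_{2,2}$, including the choice of $A$ with $\mathrm{Tr}(A)>0$. An incorrect choice flips signs of eigenvalues on the $\tau$-anti-invariant part. I would resolve this by following the conventions of \cite{BlasiusRogawski} and \cite{KimK}, and by checking the sign through a comparison of the degree-$1$ coefficient via the $n+m=1$ term of the series in Proposition~\ref{theononsplit1}.
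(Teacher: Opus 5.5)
Your reduction via Proposition~\ref{theononsplit1} is the same as the paper's, and the characteristic-polynomial identity you plan to expand is correct. The gap is that the only substantive input, the six eigenvalues of $\wedge^2$ of the twisted Satake parameter expressed through the parameter of $\pi'_v$, is left as an expectation, and the heuristic you give for it is backwards. The paper imports this list from Kim--Krishnamurthy: $\chi_0\,\mathrm{diag}(1,\chi_1,\chi_2,\chi_1\chi_2,x,y)$, with $x,y$ the roots of $X^2-\chi_1\chi_2$ (values at $\varpi_v$).

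Your $5+1$ splitting is fine for a diagonal representative $t\in\GSp_4(\C)\subset\GL_4(\C)$. However, $s_{\pi'_v}$ is not a twist of $t$ in its four-dimensional representation, and nothing needs to be matched with square roots. Here is the correct picture.
\begin{itemize}
\item Since $T_\H$ is the maximal split subtorus of $T_\G$, unramified characters of $T_\G(F_v)$ are determined by their restriction to $T_\H(F_v)$. For diagonal $(t,\lambda_0)$, the invariants under $u(t,\lambda_0)\tau(u)^{-1}$ are $t_1/t_4$, $t_2/t_3$, $\lambda_0t_3t_4$, and these are $\chi_1,\chi_2,\chi_0$ (up to the usual Weyl ambiguity).
\item The four-dimensional eigenvalues $\chi_0\{1,\chi_1,\chi_2,\chi_1\chi_2\}$ of $s_{\pi'_v}$ are the eigenvalues of $\lambda_0\wedge^2(t)A$ on the $A$-stable weight lines $e_3\wedge e_4,e_1\wedge e_3,e_2\wedge e_4,e_1\wedge e_2$. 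These lines lie in $\wedge^2_0$, and the four eigenvalues are four of its five weights.
\item On $\langle e_1\wedge e_4,e_2\wedge e_3\rangle$, which $A$ swaps, the eigenvalues are $\pm\lambda_0\sqrt{t_1t_2t_3t_4}=\pm\chi_0\sqrt{\chi_1\chi_2}$.
\end{itemize}
So the square-root pair is the leftover piece, not the partner of $\mathrm{std}_4$. Since $\chi_0^2\chi_1\chi_2=\omega_{\pi_v}(\varpi_v)=\omega_{\pi'_v}(\varpi_v)$, it contributes $\prod_i(1-\xi_i^2\omega_{\pi_v}(\varpi_v)q_v^{-2s})$, and your identity follows immediately. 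Likewise, if ``the $\GSp_4$ Satake parameter'' in your $\GSO_5$ sentence means $s_{\pi'_v}$, its image in $\GSO_5(\C)$ would produce the degree-$5$ standard eigenvalues, which is not what occurs.

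Do not fix the sign of $A$ by comparing with the $n+m=1$ coefficient of the zeta integral. That uses the identity you are proving, and the sign is already determined by the definition of $\wedge^2$:
\begin{itemize}
\item $\tau$ fixes the isometry group $\Sp_4(\C)$ of $\Phi_4$ pointwise, so $A$ commutes with $\wedge^2(\Sp_4(\C))$. By Schur, $A$ is a scalar on $\wedge^2_0$ and a scalar on $\C\omega$.
\item $A^2=1$ because $\tau^2=1$, and $A$ is not scalar because $\wedge^2\circ\tau\neq\wedge^2$. Hence the two scalars are opposite signs.
\item $\mathrm{Tr}(A)>0$ then forces $A=+1$ on $\wedge^2_0$ and $A=-1$ on $\C\omega$.
\end{itemize}
The opposite choice would flip all four eigenvalues matching $s_{\pi'_v}$, not only those on the anti-invariant line. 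It would replace $\det(1-q_v^{-s}r)$ by $\det(1+q_v^{-s}r)$.
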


\begin{proof}
The semi-simple Frobenius conjugacy class of $\wedge^2(s_{\pi_v})$ in $\GL_6(\C)$ can be represented by \[ \chi_0(\varpi_v) {\rm diag}(1,\chi_1(\varpi_v),\chi_2(\varpi_v),\chi_1(\varpi_v)\chi_2(\varpi_v), x , y ),\] with $x$ and $y$ the roots of $X^2 - \chi_1(\varpi_v)\chi_2(\varpi_v)$ (cf. \cite[p. 12]{KimK}), while the one of $\pi_v'$ can be chosen to be \[ \chi_0(\varpi_v) {\rm diag}(1,\chi_1(\varpi_v),\chi_2(\varpi_v),\chi_1(\varpi_v)\chi_2(\varpi_v)).\] Writing the semi-simple Frobenius conjugacy class of $s_{\sigma_v}$ as ${\rm diag}(\xi_1(\varpi_v),\xi_2(\varpi_v)) \in \GL_2(\C)$, we then have that \begin{align*}
     \frac{L(s,\pi_v' \otimes \sigma_v, r)}{L(s,\pi_v \otimes \sigma_v, \wedge^2 \otimes  \mathrm{std}_2)} &= \prod_{i=1}^2(1 - x\chi_0(\varpi_v)\xi_i(\varpi_v)q_v^{-s})(1 - y\chi_0(\varpi_v)\xi_i(\varpi_v) q_v^{-s}) \\ &= \prod_{i=1}^2(1 -(\xi_i^{2}\omega_{\pi_v})(\varpi_v) q_v^{-2s}) \\ &= L(2s,\omega_{\sigma_v} \omega_{\pi_v})L(2s,\sigma_v,{\rm Sym}^2\otimes \omega_{\pi_v})^{-1}  ,
 \end{align*} 
where we used that $\omega_{\pi_v}(\varpi_v) = \omega_{\pi_v'}(\varpi_v) =  (\chi_0^2\chi_1\chi_2)(\varpi_v)$ and the explicit formula \eqref{LfunctionSigma}. This and Proposition \ref{theononsplit1} give the result. 
\end{proof}

\subsubsection{The split case}
 
Let $v$ be a split prime in $E$ and let $\pi_v$ and $\sigma_v$ be unramified representations of $\G(F_v)$ and $\GL_2(F_v)$ respectively. The isomorphism $E \otimes_F F_v \simeq F_v \times F_v$ induces an isomorphism $\GL_4(E \otimes_F F_v  ) \simeq \GL_4(F_v) \times \GL_4( F_v)$. If $g$ maps to $(g_1,g_2)$, then $\bar{g}$ is sent to $(g_2,g_1)$. Hence, we have \begin{align*}
         \G(F_v)  \simeq  \{ (g_1 , g_2, m) \in \GL_4(F_v) \times \GL_4( F_v) \times \GL_1(F_v)\,:\, g_2 = m J {}^tg_1^{-1} J^{-1}  \},  
    \end{align*} and then, projection to the first factor induces an isomorphism  $\G(F_v)  \simeq  \GL_4( F_v ) \times \GL_1(F_v), \, (g_1, g_2, m) \mapsto (g_1,m)$. This allows us to regard $\pi_v$ as a representation $\pi_v' \otimes \chi_v$ of $\GL_4(F_v) \times \GL_1(F_v)$. Furthermore, the restriction to $F_v^\times$ of the central character $\omega_{\pi_v}$ is identified with $\omega_{\pi_v'}\chi_v^2$.

Let $\{ e_1, e_2, e_3, e_4\}$ denote a basis of the standard representation of $\GL_4(\C)$ and define the symmetric form $Q$ on $\wedge^2 \C^4$ by setting  $$e_i \wedge e_j \wedge e_k \wedge e_\ell = Q(e_i \wedge e_j , e_k \wedge e_\ell) e_1 \wedge e_2 \wedge e_3 \wedge e_4$$ and then extending it by linearity. With respect to the basis  $$\{ e_1 \wedge e_2, e_1 \wedge e_3, e_1 \wedge e_4, e_2 \wedge e_3, -e_2 \wedge e_4, e_3 \wedge e_4\},$$ the symmetric form $Q$ is given by the matrix $I'_6$. Hence, there are isomorphisms $\GO_{\mathbb{H}^3}(\C) \simeq \GO_{\wedge^2 \C^4}(\C)$ as well as  $\GSO_{\mathbb{H}^3}(\C) \simeq \GSO_{\wedge^2 \C^4}(\C)$. Then, the exterior square representation defines a morphism \[ \wedge^2: \GL_4(\C) \times \GL_1(\C) \to \GO_{\mathbb{H}^3}(\C),\, (g,\lambda) \mapsto \lambda \cdot \wedge^2\!g,\] as $^t(\wedge^2\!g)I_6'(\wedge^2\!g)= {\rm det}(g)I_6'$. Moreover, ${\rm det}(\lambda \cdot \wedge^2\!g)= \lambda^6\cdot {\rm det}(g)^3$ implies that the map $\wedge^2$ factors through $\GSO_{\mathbb{H}^3}(\C)$. Hence, if we let $s_{\pi_v}$ be the Frobenius conjugacy class of $\pi_v$ (or rather $\pi'_v \otimes \chi_v$), we regard $\wedge^2(s_{\pi_v}) \in \GSO_{\mathbb{H}^3}(\C)$.

\begin{lemma}\label{LfunctionFactSplit}
We have
\[L(s,\pi_v \otimes \sigma_v, \wedge^2 \otimes \mathrm{std}_2) = L(2s,\sigma_v,\mathrm{Sym}^2\otimes\omega_{\pi_v}) \sum_{m_1\geq m_2\geq 0}\mathrm{Tr}\left(\wedge^2(s_{\pi_v})\otimes s_{\sigma_v}|V_{(m_1,m_2)}\right)q_v^{-s (m_1+m_2)},\]
where $V_{(m_1,m_2)} := V_{(m_1,m_2,0,0)}\otimes W_{(m_1,m_2)}$ is the representation of $\GSO_{\mathbb{H}^3}(\C)\times \GL_2(\C)$ obtained as the tensor product of the two irreducible algebraic representations of highest weight $(m_1,m_2,0,0)$ and $(m_1,m_2)$ respectively.
\end{lemma}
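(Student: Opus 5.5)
This is the orthogonal analogue of Lemma \ref{SymmetricAlgFact}, so I would prove it the same way: expand the $L$-factor as a generating function of symmetric powers, then decompose those symmetric powers via Howe duality.

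First, write $A := \wedge^2(s_{\pi_v}) \in \GSO_{\mathbb{H}^3}(\C)$ and $B := s_{\sigma_v}\in\GL_2(\C)$. The Poincar\'e series identity gives
\[L(s,\pi_v\otimes\sigma_v,\wedge^2\otimes\mathrm{std}_2)=\sum_{\ell\ge0}\mathrm{Tr}\big(\mathbf{Sym}^\ell(A\otimes B)\big)q_v^{-s\ell},\]
where $\mathbf{Sym}^\ell$ is the $\ell$-th symmetric power of $\C^6\otimes\C^2$.

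Second, decompose $\mathbf{Sym}^\bullet(\C^6\otimes\C^2)$ under $\mathbf{O}_6(\C)\times\GL_2(\C)$. By classical $(\mathbf{O}_6,\GL_2)$ Howe duality (Goodman--Wallach, or the orthogonal counterpart of the computation in the appendix of \cite{ExplicitGSR}), the symmetric algebra is the tensor product of its $\mathbf{O}_6$-invariants $I$ with the harmonics $\mathcal{H}$.
\begin{itemize}
\item The invariants form a polynomial algebra generated by the quadratic invariants $\langle x_i,x_j\rangle$. These span $\mathrm{Sym}^2(\C^2)$ as a $\GL_2$-module, sit in degree $2$, and are scaled by the similitude $\nu'(A)$.
\item The harmonics decompose as $\mathcal{H}=\bigoplus_{m_1\ge m_2\ge0}V^{\mathbf{O}}_{(m_1,m_2,0)}\otimes W_{(m_1,m_2)}$, with the summand sitting in degree $m_1+m_2$.
\end{itemize}
Because the dimension is $6>2\cdot 2$, restricting $V^{\mathbf{O}}_{(m_1,m_2,0)}$ to $\SO_6$ keeps it irreducible: it is the representation of highest weight $(m_1,m_2,0)$. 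In the paper's convention for $\GSO_{\mathbb{H}^3}(\C)$ this is written $V_{(m_1,m_2,0,0)}$, the last entry recording the similitude. One must check that $\GSO_{\mathbb{H}^3}(\C)$ acts on the invariants through $\nu'$ and on the harmonics compatibly with this labeling. The argument is the same as the footnote to Lemma \ref{SymmetricAlgFact}: a similitude rescales the quadratic form, so it acts on each quadratic invariant by $\nu'$.

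Third, take traces. The graded trace of the invariant part is
\[\sum_i\mathrm{Tr}\big(\mathbf{Sym}^i(\nu'(A)\,\mathrm{Sym}^2B)\big)q_v^{-2si}=\det\big(1-\nu'(A)\,\mathrm{Sym}^2(B)\,q_v^{-2s}\big)^{-1}.\]
Here $\nu'(\wedge^2(s_{\pi_v}))$ is $\lambda^2\det(g)$, which is the Satake value of $\omega_{\pi_v}$ at $\varpi_v$; this uses the identification $\omega_{\pi_v}=\omega_{\pi'_v}\chi_v^2$. The determinant is therefore exactly $L(2s,\sigma_v,\mathrm{Sym}^2\otimes\omega_{\pi_v})$ by \eqref{LfunctionSigma}. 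Multiplying by the graded trace of the harmonic part gives the stated formula.

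The main obstacle is the second step: making the Howe duality decomposition precise for the similitude group $\GSO$ rather than $\mathbf{O}_6$, and in particular pinning down the highest-weight labels and the central twist. As a fallback, I would verify the identity directly on the torus, as a Cauchy-type identity for $\mathfrak{so}_6\times\mathfrak{gl}_2$ characters (Littlewood's identity). Through $\mathrm{SL}_4/\pm1\simeq\SO_6$, this could be cross-checked against the $\GSp_4$ case already treated in Lemma \ref{SymmetricAlgFact}.
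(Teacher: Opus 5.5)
Your proposal is correct and follows essentially the same route as the paper. Both arguments expand the $L$-factor as a Poincar\'e series and use the separation of variables $S(E^*)\simeq J(E^*)\otimes H(E^*)$ for $E=\C^{6\times 2}$, with the invariants being polynomials in $Y={}^tXI_6'X$ on which $\GSO_{\mathbb{H}^3}(\C)$ acts through $\nu'$ and $\GL_2(\C)$ through $\mathrm{Sym}^\ell(\mathrm{Sym}^2)$. They then use the harmonic decomposition $H(E^*)\simeq\bigoplus_{m_1\ge m_2\ge 0}V_{(m_1,m_2,0,0)}\otimes W_{(m_1,m_2)}$ and the identity $\nu'(\wedge^2 s_{\pi_v})=\omega_{\pi_v}(\varpi_v)$. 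The only cosmetic difference is that the paper cites Ton-That's Theorems 2.5 and 3.1 and works directly with $\SO_6(\C)$-invariants and $\GSO_{\mathbb{H}^3}(\C)$-harmonics, so it never needs your $\mathbf{O}_6\to\SO_6$ irreducibility step.
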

\begin{proof}
    Since $\wedge^2$ factors through $\GSO_{\mathbb{H}^3}(\C)$, this follows from a formula for the standard representation of $\GSO_{\mathbb{H}^3}(\C) \times \GL_2(\C)$ similar to \cite[(A.2.1)]{ExplicitGSR}. The proof is identical to the one of \emph{loc. cit.} and we only sketch it. Let $E = \C^{6 \times 2}$ and let $S(E^*)$ be the symmetric algebra of complex valued polynomial functions on $E$ equipped with the natural $\GSO_{\mathbb{H}^3}(\C)\times \GL_2(\C) $-action. Note that the factorization of $S(E^*)$ into irreducible $\GSO_{\mathbb{H}^3}(\C)\times \GL_2(\C) $-representations is equivalent to the one of $\mathbf{Sym}^\ell(\mathrm{std}_{\GSO_{\mathbb{H}^3}\times\GL_2})$ for all $\ell \geq 0$. Let $J(E^*)$ denote the subring of $S(E^*)$ consisting of $\SO_6(\C)$-invariant polynomial functions on $E$. Equivalently, $J(E^*)$ is the algebra of polynomials on the space of matrices of the form $Y={}^tX I_6' X$, with $X \in E$. Through this identification, a polynomial (in the variable $Y={}^tX I_6' X$) of degree $\ell$ corresponds to one of degree $2 \ell$ in the variable $X \in E$. Notice that $\GSO_{\mathbb{H}^3}(\C)$ acts on $J(E^*)$ via the multiplier. Indeed, if $g \in \GSO_{\mathbb{H}^3}(\C)$ then $g \cdot Y = {}^t(gX) \,I_6'\, gX=  {}^tX{}^tg\,I_6'\, gX= \nu'(g) Y$. The action of an element $g \in \GL_2(\C)$ on the subspace of homogeneous polynomials (in the variable $Y$) of degree $\ell$ is instead equivalent to ${\rm Sym}^\ell({\rm Sym}^2(g))$. If we let $H(E^*)$ denote the subspace of $S(E^*)$ of $\GSO_{\mathbb{H}^3}(\C)$-harmonic polynomials, by \cite[Theorem 2.5]{Ton-That} we have a ``separation of variables'' \[S(E^*) \simeq J(E^*) \otimes  H(E^*),\]
    where the isomorphism is as $ \GSO_{\mathbb{H}^3}(\C) \times \GL_2(\C)$-modules. The result then follows from decomposing $H(E^*)$ into its irreducible $ \GSO_{\mathbb{H}^3}(\C) \times \GL_2(\C)$-components, which goes as follows. Let $H(E^*)^{(m_1,m_2)}$ denote the subspace of $H(E^*)$ consisting of the polynomials which transform under $\GL_2(\C)$ according to the representation $W_{(m_1,m_2)}$ of highest weight $(m_1,m_2)$; then as a $\GSO_{\mathbb{H}^3}(\C)$-module $H(E^*)^{(m_1,m_2)}$ is equivalent to the irreducible representation $V_{(m_1,m_2,0,0)}$ by \cite[Theorem 3.1]{Ton-That}. This implies that \[ H(E^*) \simeq \bigoplus_{m_1 \geq m_2 \geq 0} V_{(m_1,m_2,0,0)} \otimes W_{(m_1,m_2)}. \] 
    Hence we have that $\mathrm{Tr}\left(\mathbf{Sym}^\ell \left(\wedge^2(s_{\pi_v})\otimes s_{\sigma_v}\right)\right)$ equals to
\[\sum_{2 i + j = \ell}  \mathrm{Tr}( \mathbf{Sym}^i( \nu'(\wedge^2 s_{\pi_v}){\rm Sym}^2( s_{\sigma_v})))  \sum_{\substack{m_1\geq m_2 \geq 0 \\  m_1 + m_2 = j }} \mathrm{Tr}( \wedge^2 (s_{\pi_v})\otimes s_{\sigma_v}|V_{(m_1,m_2,0,0)} \otimes W_{(m_1,m_2)}). \]
Since the multiplier $\nu'(\wedge^2 s_{\pi_v}) = \chi_v(\varpi_v)^2 {\rm det}(s_{\pi_v'}) = \chi_v(\varpi_v)^2 \omega_{\pi_v'}(\varpi_v) = \omega_{\pi_v}(\varpi_v)$, the result follows.
\end{proof}

We can now prove the following.

\begin{corollary}\label{Lrequaltowedge}
We have
\[Z(s,W_{\pi_v},W_{\sigma_v}) =L(s,\pi_v\otimes \sigma_v,\wedge^2 \otimes \mathrm{std}_2).\]
\end{corollary}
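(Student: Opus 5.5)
The plan is to start from the series expression \eqref{WhittakerLocalExpr} for $Z(s,W_{\pi_v},W_{s,v}^{\rm o})$, now at a split place, and match it term by term with the expansion of Lemma \ref{LfunctionFactSplit}. The factor $L(2s,\sigma_v,{\rm Sym}^2\otimes\omega_{\pi_v})$ appears in front of both expressions. So it suffices to show
\[
q_v^{4n+3m-(2n+m)(s+1/2)} W_{\sigma_v}(\hat t(\varpi_v^{n+m},\varpi_v^n))\,W_{\pi_v}(t(\varpi_v^{2n+m},\varpi_v^{n+m};\varpi_v^{2n+m})) = q_v^{-s(2n+m)}\,\mathrm{Tr}\bigl(\wedge^2(s_{\pi_v})\otimes s_{\sigma_v}\,|\,V_{(m+n,n)}\bigr)
\]
for all $n,m\ge 0$, after reindexing $m_1=m+n$, $m_2=n$. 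Then $m_1+m_2=2n+m$, and the pairs $(m_1,m_2)$ with $m_1\ge m_2\ge0$ correspond bijectively to the pairs $(n,m)$ with $n,m\ge0$.

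For the $\GL_2$ factor, the Casselman--Shalika formula already used in Proposition \ref{theononsplit1} gives $W_{\sigma_v}(\hat t(\varpi_v^{m+n},\varpi_v^n))=q_v^{-m/2}\mathrm{Tr}(s_{\sigma_v}|W_{(m+n,n)})$.

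For the $\GL_4$ factor, we use the identification $\G(F_v)\simeq \GL_4(F_v)\times\GL_1(F_v)$, $(g_1,g_2,m)\mapsto(g_1,m)$. Under it, the torus element $t(\varpi_v^{2n+m},\varpi_v^{n+m};\varpi_v^{2n+m})$ of $\GSp_4$ becomes a diagonal element of $\GL_4$ together with a similitude $\varpi_v^{2n+m}$. Its $\GL_4$-component is $\mathrm{diag}(\varpi_v^{2n+m},\varpi_v^{n+m},\varpi_v^{n},1)$, up to the normalisation of the similitude. The Casselman--Shalika formula for $\GL_4\times\GL_1$ then gives
\[
W_{\pi_v}=\delta_{B_{\GL_4}}^{1/2}\,\chi_v(\varpi_v)^{\ast}\,\mathrm{Tr}\bigl(s_{\pi'_v}|V^{\GL_4}_{(2n+m,\,n+m,\,n,\,0)}\bigr),
\]
where $\delta_{B_{\GL_4}}^{1/2}=q_v^{-(3(2n+m)+(n+m)-n)/2}=q_v^{-3n-2m}$, the same power as in the inert case.

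The crucial representation-theoretic input is the branching along $\wedge^2:\GL_4(\C)\times\GL_1(\C)\to\GSO_{\mathbb{H}^3}(\C)$, which is the isogeny $\GL_4\to\GSO_6$. Under it, the irreducible $\GSO_6$-representation of highest weight $(m_1,m_2,0,0)$ pulls back to the irreducible $\GL_4$-representation of highest weight $(m_1+m_2,m_1,m_2,0)$, twisted by $\lambda^{m_1+m_2}$ on the $\GL_1$-factor. To check this, I will compare highest weights. The weights of $\wedge^2\C^4$ are $e_i+e_j$, and the highest weight of $V_{(m_1,m_2,0,0)}$ is $m_1\epsilon_1+m_2\epsilon_2$, where $\epsilon_1=e_1+e_2$ and $\epsilon_2=e_1+e_3$. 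This gives $(m_1+m_2,m_1,m_2,0)$. With $m_1=m+n$ and $m_2=n$, this is $(2n+m,n+m,n,0)$, matching the torus element above.

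Plugging in both formulas, the powers of $q_v$ are
\[
q_v^{4n+3m-(2n+m)/2}\cdot q_v^{-m/2}\cdot q_v^{-3n-2m}=q_v^{0},
\]
leaving $q_v^{-s(2n+m)}$. The central character contributions should match $\chi_v^{m_1+m_2}$, using $\nu'(\wedge^2 s_{\pi_v})=\omega_{\pi_v}(\varpi_v)$.

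The main obstacle is bookkeeping. First, the $\GL_1$-factor and the central characters (through $Z_\H$ and the similitude) must agree on both sides. Second, the precise image of $t(\cdot)$ in $\GL_4$ under the chosen isomorphism depends on $J$, and a Weyl conjugation may be needed to make the weight dominant. I will handle both by computing on the dominant weight directly and using the $W$-invariance of characters.
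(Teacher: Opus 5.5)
Your proposal is correct and follows essentially the same route as the paper: start from \eqref{WhittakerLocalExpr}, apply the Casselman--Shalika formulas for $\GL_2$ and for $\GL_4\times\GL_1$ (with the twist $\chi_v(\varpi_v)^{2n+m}$ coming from the similitude), identify the $\GL_4$-weight $(2n+m,n+m,n,0)$ with the $\GSO_{\mathbb{H}^3}$-weight $(m+n,n,0,0)$ via $\wedge^2$, and conclude with Lemma \ref{LfunctionFactSplit}. Your highest-weight check ($\alpha_1\mapsto e_1+e_2$, $\alpha_2\mapsto e_1+e_3$, with twist $\lambda^{m_1+m_2}$) and the cancellation of the powers of $q_v$ are exactly the paper's bookkeeping; no Weyl conjugation is needed, since the $\GL_4$-component of the torus element is exactly $\mathrm{diag}(\varpi_v^{2n+m},\varpi_v^{n+m},\varpi_v^{n},1)$, with similitude $\varpi_v^{2n+m}$, and this weight is already dominant.
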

\begin{proof}

By the isomorphism $\G(F_v)\simeq \GL_4(F_v)\times \GL_1(F_v)$ described above, we get
\[W_{\pi_v}(t(\varpi_v^{2n+m},\varpi_v^{n+m};\varpi_v^{2n+m})) = \chi_v(\varpi_v)^{2n+m}W_{\pi_v'}(t(\varpi^{2n+m}_v,\varpi^{n+m}_v,\varpi^n_v,1)),\]
where we used the notation $t(a,b,c,d) := \mathrm{diag}(a,b,c,d)\in \GL_4(F_v)$. By the Casselman--Shalika formula we relate the values of this Whittaker functional to traces of representations of $\GSO_{\mathbb{H}^3}(\C)$ as follows. 
From now on we use the basis of $X^{\bullet}(T_{\GSO_{\mathbb{H}^3}})$ consisting of $\{\alpha_i\}_{i = 1}^4$, where when $1\leq i\leq 3$, $\alpha_i$ is the projection $T_{\GSO_{\mathbb{H}^3}} \to \GL_1$ of the $i$-th diagonal entry, and $\alpha_4$ is the similitude character. First of all, the isomorphism $X_{\bullet}(T_{\GL_4})\simeq X^{\bullet}(T_{\GL_4})$ identifies the character 
\begin{align*}\chi_{m,n}:T_{\GL_4}\to \GL_1,\; t(a,b,c,d)\mapsto a^{2n+m}b^{n+m}c^{n}\end{align*}
with the cocharacter \[\GL_1\to T_{\GL_4},\;a\mapsto t(a^{2n+m},a^{n+m},a^{n},1).\]
Under the exterior square map $\wedge^2:(\GL_4(\C) \times \GL_1(\C))/\langle (z I_4, z^{-2}),\, z \in \GL_1(\C) \rangle  \simeq \GSO_{\mathbb{H}^3}(\C)$, the character $\chi_{m,n}$ induces the character $\tilde{\chi}_{m,n}(t) := \alpha_1(t)^{m+n}\alpha_2(t)^{n}$ of $\GSO_{\mathbb{H}^3}(\C)$.
Then the Casselman--Shalika formula \cite{CasselmanShalikaII} implies that 
\begin{align*} W_{\pi_v'}(t(\varpi^{2n+m}_v,\varpi^{n+m}_v,\varpi^n_v,1)) = q_v^{-3n-2m}\, \mathrm{Tr}(\wedge^2(s_{\pi_v'})|V_{(m+n,n,0,0)}),\end{align*}
where $V_{(m+n,n,0,0)}$ is the representation of $\GSO_{\mathbb{H}^3}(\C)$ with highest weight $\tilde{\chi}_{m,n}$, and hence 
\begin{align}\label{SplitWhittGL4}W_{\pi_v}(t(\varpi_v^{2n+m},\varpi_v^{n+m};\varpi_v^{2n+m})) = q_v^{-3n-2m} \,\mathrm{Tr}(\wedge^2(s_{\pi_v})|V_{(m+n,n,0,0)}),\end{align}
since $\mathrm{Tr}(\wedge^2(s_{\pi_v})|V_{(m+n,n,0,0)}) = \chi_v(\varpi_v)^{2n+m} \mathrm{Tr}(\wedge^2(s_{\pi_v'})|V_{(m+n,n,0,0)})$.
On the other hand we have 
\begin{align}\label{SplitWhittGL2}
    W_{\sigma_v}(\hat{t}(\varpi_v^{n+m},\varpi_v^n)) = q_v^{-m/2}\, \mathrm{Tr}(s_{\sigma_v}|W_{(n+m,n)}).
\end{align} 
Using \eqref{WhittakerLocalExpr}, \eqref{SplitWhittGL2}, and \eqref{SplitWhittGL4} we see that $Z(s,W_{\pi_v},W_{\sigma_v})$ is equal to 
\begin{equation*}L(2s,\sigma_v,\mathrm{Sym}^2\otimes\omega_{\pi_v})   \sum_{n,m \geq 0} q_v^{-s(2n+m)}\, \mathrm{Tr}(\wedge^2(s_{\pi_v})|V_{(m+n,n,0,0)})\mathrm{Tr}(s_{\sigma_v}|W_{(m+n,n)}).\end{equation*}
The result then follows from Lemma \ref{LfunctionFactSplit}.
\end{proof}

\subsection{The ramified integrals}\label{ramified_integrals}

Recall that, for any place $v$ of $F$, we have set
\[Z(s,W_{\pi_v},W_{\sigma_v,s}) = \int_{U_\H(F_v)Z_\H(F_v)\setminus \H(F_v)} W_{\sigma_v,s}\left(g_v\right)W_{\pi_v}(g_v)dg_v,\]
where $W_{\sigma_v,s}$ was introduced in \S \ref{subsec_Weird_local_whitt}. Let $\Sigma_F$ denote the set of archimedean places of $F$. We assume that the product $\omega_{\pi_v} \cdot \omega_{\sigma_v}$ is unitary.

\begin{prop}\label{non:vanishing:and:abs:conv:zeta:bad}
\item[(i)]  There exists a constant $\varepsilon > 0$ so that $Z(s,W_{\pi_v},W_{\sigma_v,s})$ converges absolutely for $\mathrm{Re}(s)> 1/2-\varepsilon$.
\item[(ii)] For any finite place $v$ of $F$, there exist $W_{\pi_v}$ and $W_{\sigma_v,s}$ such that $Z(s,W_{\pi_v},W_{\sigma_v,s})$ is a non-zero constant independent of $s$. 
\item[(iii)] For any place $v \in \Sigma_F$, given $s_0 \in \C$ one can choose $W_{\pi_v}$ and $W_{\sigma_v,s}$ such that $Z(s,W_{\pi_v},W_{\sigma_v,s})$ is non-zero at $s = s_0$.
\end{prop}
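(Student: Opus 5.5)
For (i), I will reduce the local integral to the torus via the Iwasawa decomposition $\H(F_v)=U_\H(F_v)T_\H(F_v)K_v$. This gives
\[
Z(s,W_{\pi_v},W_{\sigma_v,s})=\int_{K_v}\int_{T_1(F_v)}\delta_{B_\H}^{-1}(t)\,W_{\sigma_v,s}(tk)\,W_{\pi_v}(tk)\,dt\,dk.
\]
Next I will use the standard gauge estimates for Whittaker functions. For $W_{\pi_v}$ on $\G(F_v)$ (Jacquet--Shalika type bounds), restricted to $t(ab,b;b)$, the function is bounded by a finite sum of $\xi(a,b)\,\phi(a,b)$. Here $\phi$ is Schwartz on $F_v^2$ and $\xi$ is a finite function whose exponents are controlled by the exponents of $\pi_v$. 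Similarly, by \eqref{SphericalWhittakerforKES}-type transformation laws, $W_{\sigma_v,s}(t(ab,b;b)k)$ equals $|a|^{s+1/2}$ (up to unitary characters) times $W_{\sigma_v}$ evaluated at a diagonal element $\hat t(\cdot,\cdot)$ of $\GL_2$. The latter is again bounded by a Schwartz function times finite functions.

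The support condition forcing $a,b$ essentially into $\mathcal{O}_v$ (the analogue of Lemma \ref{stupidbounds}) comes from the Schwartz factors. Absolute convergence therefore reduces to convergence near $0$ of integrals of the form $\int|a|^{\mathrm{Re}(s)\cdot 2+\alpha}|b|^{\mathrm{Re}(s)+\beta}\,d^\times a\,d^\times b$. The exponents $\alpha,\beta$ come from $\delta_{B_\H}^{-1}$ and from the exponents of $\pi_v,\sigma_v$. Using the unramified computation \eqref{WhittakerLocalExpr} as a model, the trivial-exponent case converges for $\mathrm{Re}(s)>0$. The known bounds towards Ramanujan (Luo--Rudnick--Sarnak for $\GL_n$, and correspondingly for $\GU_{2,2}$ via base change) keep the exponents strictly within $1/2$, which yields some $\varepsilon>0$ with convergence for $\mathrm{Re}(s)>1/2-\varepsilon$. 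The main obstacle is bookkeeping these exponents uniformly, including archimedean places; there I would quote the Jacquet asymptotic expansions for archimedean Whittaker functions.

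For (ii), at a finite place I will construct the data so that the integral localizes near the identity. I will choose $W_{\sigma_v,s}$ supported in $P_{(1,2)}(F_v)\cdot \mathcal{N}$, with $\mathcal{N}$ a small open compact neighbourhood in the opposite unipotent radical $\bar U_{(1,2)}$. On $\mathcal{N}$ it is given by $\phi(\bar u)$ times a fixed $W\in\mathcal{W}_{\sigma_v}$; such sections exist since $P\bar U$ is open. Unfolding the integral on $U_\H Z_\H\backslash P_{(1,2)}\bar U_{(1,2)}$ turns $Z$ into an integral over $\bar U_{(1,2)}$, the $\GL_1$-factor $a$, and $U_{\GL_2}\backslash\GL_2$. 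The dependence on $s$ appears only through $|a|^{s+1/2}$. Taking $\mathcal{N}$ small, and using Kirillov-model surjectivity for $\G(F_v)$ restricted to the mirabolic-type subgroup (Bernstein--Zelevinsky: $W_{\pi_v}$ restricted to $P$-type subgroups contains all compactly supported functions modulo $U$), I will choose $W_{\pi_v}$ whose restriction to $T_1\bar U$ is supported where $|a|=1$. I will also arrange that the $\GL_2$-integral pairs $W$ against a compactly supported function to give a nonzero result. The integral then becomes a nonzero constant independent of $s$. This mirrors the standard argument for Rankin--Selberg integrals.

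For (iii), the archimedean case, I will instead argue by density. Suppose $Z(s_0,\cdot,\cdot)$ vanished for all choices. By the same unfolding, one could then replace the $W_{\pi_v}$ and section data by smooth compactly supported functions, which are dense in the relevant spaces (Dixmier--Malliavin, together with the archimedean Kirillov-type results of Jacquet). Given (i) and the fact that the integral is continuous in the data, this would force $W_{\pi_v}\equiv0$ on an open set, a contradiction.
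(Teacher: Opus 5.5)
Your (i) and (ii) follow the paper's route.

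For (i), this means the Iwasawa reduction to $\mathrm{diag}(ab,b,1,a^{-1})$, followed by the Jacquet--Shalika/Furusawa--Morimoto expansions into Schwartz functions times finite functions. For (ii), it is the standard argument the paper imports from Gan--Savin and Gan--Gurevich:
take a section supported in $P_{(1,2)}\bar U_{(1,2)}$ with small $\bar U_{(1,2)}$-profile;
reduce to an integral over $U_{\H}Z_{\H}\backslash P_{(1,2)}$;
then make a Kirillov-type choice of $W_{\pi_v}$ supported where $\delta_{P_{(1,2)}}=1$.
Bernstein--Zelevinsky covers $\GL_4$; for $\GU_{2,2}$ at inert places you need an analogue.

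One correction in (i): bounds towards Ramanujan do not produce $\varepsilon$ the way you say. The $b$-exponent of $W_{\pi_v}(\mathrm{diag}(ab,b,1,a^{-1}))$ comes from the $(2,2)$-Jacquet module, i.e.\ from a product of two exponents of $\pi_v$, and it adds to an exponent of $\sigma_v$. Take $\pi_v=\nu^{\theta}\times\nu^{-\theta}\times 1\times 1$ and $\sigma_v=\nu^{\theta_\sigma}\times\nu^{-\theta_\sigma}$. Then the unramified formula shows that the integral does not converge absolutely for $\mathrm{Re}(s)<\theta+\theta_\sigma$. Luo--Rudnick--Sarnak does not force $\theta+\theta_\sigma<1/2$, and neither do the best known bounds, since $9/22+7/64>1/2$. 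So (i) needs some temperedness-type input at $v$. The paper's sketch asserts the range $\mathrm{Re}(s)>1/2-\varepsilon$ without addressing this either, so this weakness is shared rather than a departure from the paper.

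The genuine gap is in (iii).

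First, your contradiction argument relies on (i) and on continuity of $Z(s_0,\cdot,\cdot)$ in the data. At best it handles $s_0$ in the half-plane of absolute convergence, whereas (iii) is claimed for every $s_0\in\C$.

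More seriously, the mechanism does not work. After unfolding, the section contributes only a $\bar U_{(1,2)}$-profile and a vector of $\mathcal{W}_{\sigma_v}$. Its dependence on the $\GL_1$-direction of the Levi is the fixed character $\delta_{P_{(1,2)}}^{\frac12(s+\frac12)}$. Hence vanishing of all $Z(s_0,\cdot,\cdot)$ does not force $W_{\pi_v}$ to vanish on an open set. Also, Dixmier--Malliavin is a factorization statement, not a density statement.

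The missing step is as follows.
\begin{enumerate}
\item Take $W'$ in the smooth Whittaker model of $\pi_v$ whose restriction to $P_{(1,2)}$ is compactly supported modulo $U_{\H}Z_{\H}$; this exists by Jacquet's archimedean Kirillov theorem for $\GL_4$. Then the unfolded integral $\Lambda_s(W')$ over $U_{\H}Z_{\H}\backslash P_{(1,2)}$ is entire in $s$ and can be made non-zero at $s_0$.
\item Use Dixmier--Malliavin for $\bar U_{(1,2)}$ to write $W'=\sum_i\int_{\bar U_{(1,2)}}\phi_i(\bar u)\,\pi_v(\bar u)W_i\,d\bar u$.
\item Take sections $W^{(i)}_{\sigma_v,s}$ supported in the big cell with $\bar U_{(1,2)}$-profile $\phi_i$. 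Then $\sum_i Z(s,W_i,W^{(i)}_{\sigma_v,s})=\Lambda_s(W')$, first for $\mathrm{Re}(s)\gg0$ and then after continuation, so some term is non-zero at $s_0$.
\end{enumerate}
This is what the cited lemmas carry out. Your sketch names these ingredients but does not assemble them into a proof of (iii).
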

\begin{proof} The proof of (i) is rather standard (see for instance \cite[\S 7, Proposition 1]{JacquetShalikaExterior}) and uses the asymptotic expressions for $W_{\sigma_v,s}$ and $W_{\pi_v}$ as follows. Using the Iwasawa decomposition, it is enough to show that 
 \begin{align}\label{eq_zeta_ram_bound}\int_{(F_v^\times)^2} |a|^{2s-3}|b|^{s-5/2}(\omega_{\pi_v}\omega_{\sigma_v})^{-1}(a)W_{\sigma_v}\left(\begin{smallmatrix}
     b & \\ & 1
 \end{smallmatrix}\right)  W_{\pi_v}\left(\begin{smallmatrix}
     ab & & & \\ & b & & \\ &  & 1 & \\ & & & a^{-1} 
 \end{smallmatrix}\right)da db,\end{align}
for $W_{\sigma_v} \in \mathcal{W}_{\sigma_v}$, converges absolutely for ${\rm Re}(s) > 1/2 - \epsilon$ for some $\epsilon >0$. From the asymptotics given in \cite[Proposition 3, p. 177]{JacquetShalikaExterior} and \cite[Proposition 3.1]{FurusawaMorimoto}, we get that 
    \[W_{\pi_v}\left(\begin{smallmatrix}
     ab & & & \\ & b & & \\ &  & 1 & \\ & & & a^{-1} 
 \end{smallmatrix}\right)= |a|^3 |b|^2\sum_{\mu\in X}\phi_{\mu}(a,b)\mu(a,b),\]
    where $X$ is a finite set of finite functions, which are linear combinations of functions on $(F_v^\times)^2$ of the form 
    \[ \mu(a_1,a_2) = \prod_{i=1}^2 \mu_i(a_i)|a_i|^{u_i} (\log |a_i|)^{n_i}, \]
 with $\mu_i$ unitary characters, $u_i \in \R$, $n_i \in \Z_{\geq 0}$, and the $\phi_\mu$'s are Schwartz functions in $\mathcal{S}(F_v^2)$. Similarly,
 \[\mathcal{W}_{\sigma_v}\left(\begin{smallmatrix}
     b & \\ & 1
 \end{smallmatrix}\right)  = \sum_{\mu'\in X'}\phi_{\mu'}(b)\mu'(b),\]
where now $X'$ is a finite set of finite functions on $F_v^\times$ and the $\phi_{\mu'}$'s are Schwartz functions in $\mathcal{S}(F_v)$. Since $\omega_{\pi_v} \cdot \omega_{\sigma_v}$ is unitary, the absolute value of
\eqref{eq_zeta_ram_bound} is bounded by 
$$ \sum_{\mu\in X,\mu' \in X'} \int_{(F_v^\times)^2} |a|^{2s}|b|^{s-1/2}| \phi_\mu(a,b) \phi_{\mu'}(b) \mu(a,b)\mu'(b)|da db.$$ 
By density, we can assume that $\phi_\mu(a,b)$ factors as a product of Schwartz functions $\phi_{\mu,1}(a) $ and $ \phi_{\mu,2}(b)$ on $F_v$, so that we are left to bound 
$$ \sum_{\mu\in X,\mu' \in X'}\int_{(F_v^\times)^2} |a|^{2s}|b|^{s-1/2}| \phi_{\mu,1}(a) \phi_{\mu,2}(b)\phi_{\mu'}(b) \mu(a,b)\mu'(b)|da db.$$
We can look at each integral separately. The integral over $a$ converges if ${\rm Re}(2s) > - \nu,$ for some $\nu > 0$, while the integral over $b$ conveges if ${\rm Re}(s) > 1/2 - \epsilon,$ for some $\epsilon > 0$. This completes the proof of (i). 
    The proofs of (ii) and (iii) follow from applying the Dixmier--Malliavin theorem as in \cite[Lemma 9.1]{GanSavinSW} and \cite[Lemma 12.3]{GanGurevichCAP}. 
     
\end{proof}

\section{Archimedean computations for $\GL_4 \times \GL_2$}\label{sec:zeta:integral:arch}

Let $F = \R$ or  $\C$. In this section, we show the following. 

\begin{theorem}\label{Theorem_Arch_computation_split}
    Let $\pi_\infty$ and $\sigma_\infty$ be irreducible spherical principal series representations of $\GL_4(F)$ and $\GL_2(F)$, respectively, such that \[ \omega_{\sigma_\infty}\omega_{\pi_\infty} = 1.\] For the rapidly decreasing spherical Whittaker functions $W_{\pi_\infty}$ and $W_{\sigma_\infty}$ of $\pi_\infty$ and $\sigma_\infty$, we have 
    \[Z(s, W_{\pi_\infty}, W_{\sigma_\infty,s}) = \frac{L(s, \pi_\infty \otimes \sigma_\infty, \wedge^2 \otimes  \mathrm{std}_2)}{L(2s, \sigma_\infty, {\rm Sym}^2 \otimes \omega_{\pi_\infty})}, \]
    where $W_{\sigma_\infty,s} \in {\rm Ind}_{P_{(1,2)}(F)}^{\H(F)}\left((\mathbf{1} \otimes\mathcal{W}_{\sigma_\infty}) \delta_{P_{(1,2)}}^{\tfrac{1}{2}(s-1/2)}\right)$ is the spherical vector related to $W_{\sigma_\infty}$ by \eqref{SphericalWhittakerforKES}. 
\end{theorem}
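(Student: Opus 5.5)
We reduce the archimedean integral to a Mellin-type integral over the torus and evaluate it with the Ishii--Stade recursive formulas for spherical Whittaker functions. By the Iwasawa decomposition $\H(F)=B_\H(F)\H(\mathcal{O}_F)$, with $\mathcal{O}_F$ the maximal compact subgroup, and by the sphericity of both Whittaker functions, we get
\[Z(s,W_{\pi_\infty},W_{\sigma_\infty,s})=\int_{(F^\times)^2}|a|^{2s-3}|b|^{s-5/2}\,W_{\sigma_\infty}\!\left(\begin{smallmatrix} b&\\&1\end{smallmatrix}\right)W_{\pi_\infty}\!\left(\begin{smallmatrix} ab&&&\\&b&&\\&&1&\\&&&a^{-1}\end{smallmatrix}\right)da\,db,\]
exactly as in \eqref{eq_zeta_ram_bound}, using $\omega_{\pi_\infty}\omega_{\sigma_\infty}=1$. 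We then twist so that $\pi_\infty$ has parameters $\lambda=(\lambda_1,\dots,\lambda_4)$ and $\sigma_\infty$ has parameters $(\mu_1,\mu_2)$ with $\sum\lambda_i+\mu_1+\mu_2=0$. The target is
\[\prod_{i<j}\prod_{k}\Gamma_F(s+\lambda_i+\lambda_j+\mu_k)\Big/\prod_{k\le l}\Gamma_F(2s+\mu_k+\mu_l+\textstyle\sum\lambda_i).\]
The torus element is $\mathrm{diag}(ab,b,1,a^{-1})$, which after scaling by the center is $\mathrm{diag}(a^2b,ab,a,1)$. So we need the Mellin transform of the $\GL_4$ spherical Whittaker function restricted to a $2$-dimensional subtorus.

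First we Mellin-invert $W_{\sigma_\infty}$: for $\GL_2$, $W_{\sigma_\infty}\left(\begin{smallmatrix} b&\\&1\end{smallmatrix}\right)$ is a $K$-Bessel function with Mellin transform $\Gamma_F(w+\mu_1)\Gamma_F(w+\mu_2)$ up to normalization. Second, we use the Ishii--Stade formula \cite{IshiiStade} together with \cite{IshiiMultivariate}. This expresses $W_{\pi_\infty}$ on the full torus as a multiple Mellin--Barnes integral whose Mellin transform in the three variables $(y_1,y_2,y_3)$ is explicit. In particular, its Mellin transform is known in closed form against the characters needed for the Bump--Friedberg/Jacquet--Shalika type integrals. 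The substitution $y_1=ab$, $y_2=a$, $y_3=a$ (the ratios of consecutive diagonal entries) specializes this to a two-variable Mellin transform. Third, the $b$-integral pairs the $\GL_2$ Mellin variable with one $\GL_4$ variable, and we obtain a Barnes-type integral
\[\frac{1}{2\pi i}\int \Gamma_F(w+\mu_1)\Gamma_F(w+\mu_2)\,M_\lambda(s,w)\,dw,\]
where $M_\lambda$ is a product and ratio of Gamma functions coming from the $\GL_4$ side.

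The main obstacle is evaluating this last integral in closed form. We would first try to write $M_\lambda(s,w)$ as the Mellin transform in the $a$-direction, which is a Bump--Friedberg type integral giving $\prod_{i<j}\Gamma_F(\cdot+\lambda_i+\lambda_j)$-type factors. Then we would apply Barnes' second lemma, or its $F=\C$ analogue for $\Gamma_\C$, to collapse the $w$-integral. Barnes' second lemma produces a ratio of five or six Gamma functions over one, and the denominator $\Gamma_F(2s+\ldots)$ factors should arise from its denominators. If a single application does not suffice, we would iterate using the recursive structure $\GL_4\to\GL_3$ of Ishii--Stade. At each step we would check the balancing condition via $\sum\lambda_i+\mu_1+\mu_2=0$. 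Convergence and the interchange of integrals are justified for $\mathrm{Re}(s)$ large, and the identity then extends by analytic continuation. The complex case is parallel, with $\Gamma_\C$ replacing $\Gamma_\R$.
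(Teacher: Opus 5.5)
Your outline is the paper's strategy: reduce to the torus by the Iwasawa decomposition, insert Ishii's Mellin--Barnes formulas, apply Mellin inversion, then Barnes-type lemmas. Your reduced torus integral and your target Gamma product are both correct. But the proposal stops exactly where the content of the theorem begins, and the part you do sketch would not go through as written. Below I use the paper's notation: $\mu$ for $\pi_\infty$, $\nu$ for $\sigma_\infty$.

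First, the specialization to the subtorus is wrong. The consecutive ratios of $\mathrm{diag}(a^2b,ab,a,1)$ are $(a,b,a)$, not $(ab,a,a)$. So in Proposition~\ref{WhittakerSphericalForGL4}, $b$ couples to the middle Mellin variable $p_2$, while $a$ couples to $p_1+p_3$. Mellin inversion in $b$ pins the $\GL_2$ variable to $p_2$, and inversion in $a$ pins $p_1$ in terms of $p_3$. What remains is a four-fold integral over $p_2,p_3$ and the two auxiliary variables $q_1,q_2$ of $V^F_\mu$.

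Second, your claim that the $\GL_4$ side contributes a Gamma ratio $M_\lambda(s,w)$ is unjustified, and the order of evaluation it suggests fails. Ishii's formula gives the $\GL_4$ Mellin transform only as the double Barnes integral $V^F_\mu$. Suppose you freeze $w$ (equivalently $p_2$) and do the $p_3$-integral by Barnes' first lemma. The remaining $q_1$-integral then has the shape of Lemma~\ref{StadeLemma}, but its balancing condition holds only if $p_2=\mu_1+q_2$. The analogous check for $q_2$ also fails, so neither Barnes' second lemma nor Stade's lemma evaluates $M_\lambda$.

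The paper instead integrates the $\GL_2$-coupled variable first. It applies Barnes' first lemma in $p_2$, pairing the two $\GL_2$ Gamma factors with $\Gamma_F(p_2-q_1+\mu_1)\Gamma_F(p_2-q_2-\mu_1)$, and again in $p_3$. The denominator $\Gamma_F(2s-|\mu|-q_1-q_2)$ from the $p_2$-step cancels a numerator factor produced by the $p_3$-step. Only after this cancellation is the $q_1$-integral balanced.

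The missing ingredient is Stade's transformation (Lemma~\ref{StadeLemma}) in the $q_1$-variable. It rewrites the integrand so that the $q_2$-integral is exactly of Barnes'-second-lemma shape: its parameters sum to the denominator argument $3s-\tfrac{|\mu|}{2}-\mu_1-\tfrac{\nu_1}{2}$. A further application of Barnes' second lemma in $q_1$ then gives $\prod_{i<j}\Gamma_F(s-\tfrac{|\mu|}{2}+\mu_i+\mu_j\pm\tfrac{\nu_1}{2})$ divided by $\Gamma_F(2s)\Gamma_F(2s+\nu_1)\Gamma_F(2s-\nu_1)$. ``Iterating Barnes' second lemma'' or invoking the $\GL_4\to\GL_3$ recursion cannot replace this step, because that recursion is already built into $U^F_\mu$.
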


\begin{remark} \leavevmode
\begin{enumerate}
    \item In \cite{HiranoIshiiMiyazakiGL4}, the authors give explicit formulas of Whittaker functions on $\GL_4(\R)$ for all irreducible generic representations. We expect that these formulae will allow us to extend Theorem \ref{Theorem_Arch_computation_split} to any irreducible generic representation of $\GL_4(\R) \times \GL_2(\R)$. We will return to these calculations in a separate work.
    \item In \cite{CauchiGutiArchGU}, we carry out the archimedean computation of the zeta integral for generic discrete series on $\GU_{2,2}(\R) \times \GL_2(\R)$.
\end{enumerate}
\end{remark}

\subsection{Auxiliary formulae for Barnes integrals}

Recall that the Gamma function $\Gamma(s)$ is holomorphic on $\C \setminus \{0, -1,-2,\dots  \}$ and has simple poles at $s= -m$, for $m \in \Z_{\geq 0}$. Define 
$$\Gamma_\R(s) : = \pi^{-s/2} \Gamma(s/2), \, \,\,\, \Gamma_\C(s) := 2 (2 \pi)^{-s} \Gamma(s).$$
Note that $$\Gamma_\R(s) \Gamma_\R(s+1) = \Gamma_\C(s).$$

We now recall some identities for Barnes integrals, which are heavily used in the proof of Theorem \ref{Theorem_Arch_computation_split}. We mainly follow the expositions of \cite[\S 1]{IshiiMultivariate} and \cite[\S 8.2]{HiranoIshiiMiyazakiGL3}. 

\begin{lemma}[{Barnes' first Lemma - \cite[Lemma 8.5]{HiranoIshiiMiyazakiGL3}}]\label{BarnesFirst}
    For $a_1,a_2,b_1,b_2 \in \C$ such that ${\rm Re}(a_i + b_j) >0 $ for $1\leq i,j \leq 2$, we have that 
    \begin{align*}
        \frac{1}{4 \pi i} \int_z \Gamma_F(z + a_1) \Gamma_F(z + a_2) \Gamma_F(-z + b_1) \Gamma_F(-z + b_2) dz = \frac{\Gamma_F(a_1+b_1)\Gamma_F(a_1+b_2)\Gamma_F(a_2+b_1)\Gamma_F(a_2+b_2)}{\Gamma_F(a_1+a_2+ b_1 + b_2)},
    \end{align*}
    where the path of integration $\int_z$ is a vertical line from $\sigma - i \infty $ to $\sigma + i \infty $ such that $$ {\rm max} \{ - {\rm Re}(a_1), - {\rm Re}(a_2)\} <\sigma < {\rm min}\{{\rm Re}(b_1), {\rm Re}(b_2) \}.$$
\end{lemma}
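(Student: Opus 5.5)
The plan is to reduce the statement to the classical Barnes first lemma for the ordinary Gamma function, and then prove that classical identity by a Mellin--Parseval argument. Throughout, set $\Sigma := a_1+a_2+b_1+b_2$.

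\emph{Reduction for $F=\R$.} Substitute $z=2w$ and write $\alpha_i=a_i/2$, $\beta_j=b_j/2$.
\begin{itemize}
\item The powers of $\pi$ match. On the left, $\pi^{-(z+a_1)/2}\pi^{-(z+a_2)/2}\pi^{-(-z+b_1)/2}\pi^{-(-z+b_2)/2}=\pi^{-\Sigma/2}$, and this is independent of $z$. On the right, the numerator gives $\pi^{-\Sigma}$ and the denominator gives $\pi^{\Sigma/2}$, so the net factor is again $\pi^{-\Sigma/2}$.
\item Since $dz=2\,dw$, the prefactor $\frac{1}{4\pi i}$ becomes $\frac{1}{2\pi i}$.
\item The contour condition becomes $\max(-\mathrm{Re}\,\alpha_i)<\sigma/2<\min(\mathrm{Re}\,\beta_j)$.
\end{itemize}
After these substitutions the claim is exactly the classical identity
\[
\frac{1}{2\pi i}\int_{(c)}\Gamma(w+\alpha_1)\Gamma(w+\alpha_2)\Gamma(\beta_1-w)\Gamma(\beta_2-w)\,dw=\frac{\prod_{i,j}\Gamma(\alpha_i+\beta_j)}{\Gamma(\alpha_1+\alpha_2+\beta_1+\beta_2)}.
\]

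\emph{Reduction for $F=\C$.} Here $\Gamma_\C(s)=2(2\pi)^{-s}\Gamma(s)$ and no substitution is needed.
\begin{itemize}
\item On the left, the four factors contribute $2^4(2\pi)^{-\Sigma}$, again independent of $z$. With the prefactor $\frac{1}{4\pi i}=\frac12\cdot\frac1{2\pi i}$, the total constant is $8(2\pi)^{-\Sigma}$.
\item On the right, the numerator gives $2^4(2\pi)^{-2\Sigma}$ and the denominator $2(2\pi)^{-\Sigma}$, for a total of $8(2\pi)^{-\Sigma}$.
\end{itemize}
So the constants agree, and the claim is again the classical identity, this time with $\alpha_i=a_i$ and $\beta_j=b_j$.

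\emph{Proof of the classical identity.} I would use the Mellin pair
\[
\int_0^\infty x^{w+\alpha-1}(1+x)^{-\alpha-\beta}\,dx=\frac{\Gamma(w+\alpha)\Gamma(\beta-w)}{\Gamma(\alpha+\beta)},\qquad -\mathrm{Re}\,\alpha<\mathrm{Re}\,w<\mathrm{Re}\,\beta,
\]
which is a Beta integral. Set $f_j(x)=x^{\alpha_j}(1+x)^{-\alpha_j-\beta_j}$. The integrand $\Gamma(w+\alpha_1)\Gamma(\beta_1-w)\cdot\Gamma(w+\alpha_2)\Gamma(\beta_2-w)$ is then, up to the factor $\Gamma(\alpha_1+\beta_1)\Gamma(\alpha_2+\beta_2)$, the product $\tilde f_1(w)\tilde f_2(w)$ of two Mellin transforms. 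Mellin--Parseval turns $\frac{1}{2\pi i}\int_{(c)}\tilde f_1(w)\tilde f_2(w)\,dw$ into $\int_0^\infty f_1(x)f_2(1/x)\,\frac{dx}{x}$. Now
\[
f_2(1/x)=x^{\beta_2}(1+x)^{-\alpha_2-\beta_2},
\]
so the Parseval integral is
\[
\int_0^\infty x^{\alpha_1+\beta_2-1}(1+x)^{-\Sigma'}\,dx=B(\alpha_1+\beta_2,\alpha_2+\beta_1),\qquad \Sigma'=\alpha_1+\alpha_2+\beta_1+\beta_2,
\]
a Beta integral. Collecting the Gamma factors gives exactly the right-hand side. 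The hypotheses $\mathrm{Re}(a_i+b_j)>0$ are precisely what make all four Beta integrals convergent, and they guarantee that the admissible strip for $\sigma$ is nonempty.

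\emph{Main obstacle.} The only delicate point is justifying Parseval and the interchange of integrals. Stirling's formula shows that the integrand decays like $e^{-2\pi|\mathrm{Im}\,w|}$ times a power of $|\mathrm{Im}\,w|$. Hence the vertical integral converges absolutely, and $f_1,f_2$ lie in the weighted $L^1\cap L^2$ spaces needed for Mellin--Parseval on the line $\mathrm{Re}\,w=c$, which justifies Fubini.

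If this approach proved awkward, the fallback is the standard residue argument: shift the contour to the right, sum the residues at the poles $w=\beta_j+n$, recognise the two resulting ${}_2F_1(1)$ series, and evaluate them with Gauss's summation formula and the reflection formula. This route requires the same decay estimate to discard the far contour.
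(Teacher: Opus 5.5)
Your proposal is correct. The paper gives no proof of this lemma; it quotes it from \cite[Lemma 8.5]{HiranoIshiiMiyazakiGL3}, where it is the classical Barnes first lemma written in the $\Gamma_F$ normalization. Your argument spells out both parts of that citation.

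\textbf{The normalization step.} For $F=\R$ you substitute $z=2w$, and in both cases you match constants on the two sides: $\pi^{-\Sigma/2}$ for $F=\R$ and $8(2\pi)^{-\Sigma}$ for $F=\C$. This is exactly what the citation hides. I checked these constants and they are right. Writing them down is worthwhile, because the prefactor $\frac{1}{4\pi i}$ and the factor $2$ in $\Gamma_\C$ are where a normalization error would otherwise enter.

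\textbf{The classical identity.} You write the integrand as $\Gamma(\alpha_1+\beta_1)\Gamma(\alpha_2+\beta_2)\,\tilde f_1\tilde f_2$ and use
\[
\int_0^\infty f_1(x)f_2(1/x)\,\frac{dx}{x}=B(\alpha_1+\beta_2,\alpha_2+\beta_1).
\]
This Beta-integral/Mellin--Parseval proof is shorter than the residue/Gauss-summation fallback you mention. It also gives convergence on the whole admissible strip directly.

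\textbf{A simplification.} The justification is easier than you phrase it, and no $L^2$ theory is needed. You only need two facts:
\begin{itemize}
\item[(i)] $x^{c}f_1\in L^1(dx/x)$, which holds exactly for $-\mathrm{Re}\,\alpha_1<c<\mathrm{Re}\,\beta_1$;
\item[(ii)] $\tilde f_2\in L^1$ on the line $\mathrm{Re}\,w=c$, which follows from Stirling.
\end{itemize}
With these, Fubini lets you swap the integrals. Pointwise Mellin inversion for the continuous function $f_2$ then turns the inner integral into $f_2(1/x)$.

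In short, the paper's citation buys brevity, and your route buys a self-contained proof with the normalizations checked.
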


\begin{lemma}[{Barnes' second Lemma - \cite[Lemma 8.7]{HiranoIshiiMiyazakiGL3}}]\label{BarnesSecond}
    For $a_1,a_2,b_1,b_2,b_3 \in \C$ such that ${\rm Re}(a_i + b_j) >0 $ for $1\leq i \leq 2, 1 \leq j \leq 3$, we have that 
    \begin{align*}
        \frac{1}{4 \pi i} &\int_z \frac{\Gamma_F(z + a_1) \Gamma_F(z + a_2) \Gamma_F(-z + b_1) \Gamma_F(-z + b_2)\Gamma_F(-z + b_3)}{\Gamma_F(-z + a_1+a_2+b_1+b_2+b_3 )} dz \\ &= \frac{\Gamma_F(a_1+b_1)\Gamma_F(a_1+b_2)\Gamma_F(a_1+b_3)\Gamma_F(a_2+b_1)\Gamma_F(a_2+b_2)\Gamma_F(a_2+b_3)}{\Gamma_F(a_1+a_2+ b_1 + b_2)\Gamma_F(a_1+a_2+ b_1 + b_3)\Gamma_F(a_1+a_2+ b_2 + b_3)},
    \end{align*}
    where the path of integration $\int_z$ is a vertical line from $\sigma - i \infty $ to $\sigma + i \infty $ such that $$ {\rm max} \{ - {\rm Re}(a_1), - {\rm Re}(a_2)\} < \sigma < {\rm min}\{{\rm Re}(b_1), {\rm Re}(b_2),{\rm Re}(b_3) \}.$$
\end{lemma}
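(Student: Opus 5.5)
The plan is to reduce the statement to the classical Barnes second lemma for the ordinary Gamma function by a change of variables, and then to prove the classical statement from Lemma \ref{BarnesFirst} by a double-integral argument. Throughout, write
\[ c = a_1+a_2+b_1+b_2+b_3 . \]

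\textbf{Reduction to $\Gamma$.} For $F=\R$, substitute $z=2w$ and set $a_i'=a_i/2$, $b_j'=b_j/2$, so that each factor becomes $\Gamma_\R(\pm z+x)=\pi^{-(\pm z+x)/2}\Gamma(\pm w+x/2)$.
\begin{itemize}
\item The $z$-dependent powers of $\pi$ cancel. The two $(+z)$ factors, the three $(-z)$ factors in the numerator and the one $(-z)$ factor in the denominator contribute exponents $-z-\tfrac{3}{2}(-z)+\tfrac12(-z)=0$.
\item The remaining constant powers of $\pi$ match those on the right-hand side. Both sides are homogeneous of the same total degree in the parameters, which is a direct check.
\item Since $dz=2\,dw$, the prefactor $\frac{1}{4\pi i}$ becomes $\frac{1}{2\pi i}$.
\end{itemize}

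For $F=\C$, no substitution is needed. The factors $(2\pi)^{\mp z}$ cancel by the same count. The factors of $2$ give $2^5/2=2^4$ on the left against $2^6/2^3=2^3$ on the right, and $2^4/(4\pi i)=2^3/(2\pi i)$. The hypotheses $\mathrm{Re}(a_i+b_j)>0$ and the contour condition on $\sigma$ transform accordingly. In both cases we are reduced to proving
\[\frac{1}{2\pi i}\int_z \frac{\Gamma(a_1+z)\Gamma(a_2+z)\Gamma(b_1-z)\Gamma(b_2-z)\Gamma(b_3-z)}{\Gamma(c-z)}\,dz=\frac{\prod_{i,j}\Gamma(a_i+b_j)}{\prod_{j<k}\Gamma(c-b_\ell)},\]
where $\ell$ denotes the third index. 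Thus the denominator on the right is $\Gamma(a_1+a_2+b_j+b_k)$ with $\{j,k,\ell\}=\{1,2,3\}$.

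\textbf{Classical proof via the first lemma.} Apply Lemma \ref{BarnesFirst} (in its $\Gamma$-form, obtained by the same reduction) with parameters chosen so that its right-hand side equals $\Gamma(a_2+z)\Gamma(b_3-z)/\Gamma(c-z)$ times factors independent of $z$. A natural choice is $x_1=a_2+z$, $x_2=b_1+b_2-\ldots$, fixed by requiring $x_1+x_2+y_1+y_2=c-z$ and two of the products $\Gamma(x_i+y_j)$ to be $z$-free gammas, namely $\Gamma(b_1+a_1)$-type factors that will later cancel. This expresses the integrand as a $w$-integral of four Gammas.

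Next, insert this representation into the $z$-integral and interchange the order of integration. The inner $z$-integral then contains exactly four Gamma factors in $z$, two with $+z$ and two with $-z$. Evaluate it by Lemma \ref{BarnesFirst}. The resulting $w$-integral again has the shape of Lemma \ref{BarnesFirst}, up to a quotient of Gammas that cancels, and a second application gives the product formula. Finally, compare the gamma factors to match the stated right-hand side.

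\textbf{Main obstacle.} The delicate part is the analytic bookkeeping rather than the algebra. One has to choose $\mathrm{Re}(w)$ and $\mathrm{Re}(z)$ simultaneously so that every application of Lemma \ref{BarnesFirst} has a contour separating the increasing poles from the decreasing ones. One also has to justify Fubini by Stirling's formula, using the exponential decay $e^{-\pi|\mathrm{Im}|}$ of the integrand in both variables, and this decay must dominate the polynomial growth coming from the denominator $\Gamma(c-z)$.

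I would first prove the identity under stronger assumptions on $\mathrm{Re}(a_i)$ and $\mathrm{Re}(b_j)$, where such contours clearly exist. I would then extend to the full range $\mathrm{Re}(a_i+b_j)>0$ by analytic continuation in the parameters, since both sides are holomorphic there and the contour can be deformed continuously without crossing poles.
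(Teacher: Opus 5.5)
Your reduction from $\Gamma_F$ to the classical $\Gamma$ is correct. The $z$-dependent powers of $\pi$ (resp.\ $2\pi$) cancel, the constant powers agree on both sides, and the factors of $2$ turn $\frac{1}{4\pi i}$ into $\frac{1}{2\pi i}$ in both cases. The paper itself gives no proof and only quotes \cite[Lemma 8.7]{HiranoIshiiMiyazakiGL3}.

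The gap is in the key step of the classical argument. The ratio $\Gamma(a_2+z)\Gamma(b_3-z)/\Gamma(c-z)$ can \emph{not} be written as the right-hand side of Barnes' first lemma times $z$-free factors, so your choice ``$x_1=a_2+z$, $x_2=b_1+b_2-\ldots$'' cannot be completed. The right-hand side of the first lemma is $\Gamma(x_1+y_1)\Gamma(x_1+y_2)\Gamma(x_2+y_1)\Gamma(x_2+y_2)/\Gamma(x_1+x_2+y_1+y_2)$, and $(x_1+y_1)+(x_2+y_2)=(x_1+y_2)+(x_2+y_1)=x_1+x_2+y_1+y_2$. Hence the $z$-coefficients of the four numerator arguments must split into two complementary pairs, each summing to the $z$-coefficient $-1$ of $c-z$. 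The pattern $\{+1,-1,0,0\}$ you want admits no such splitting. Taken literally, $x_1=a_2+z$ would make two numerator factors and the denominator all carry $+z$.

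The strategy works if you represent the two $(-z)$-factors instead. With $(x_1,x_2;y_1,y_2)=(-z,\,a_1+a_2+b_1;\,b_2,b_3)$ the first lemma gives
\[ \frac{1}{2\pi i}\int_w \Gamma(w-z)\Gamma(a_1+a_2+b_1+w)\Gamma(b_2-w)\Gamma(b_3-w)\,dw=\frac{\Gamma(b_2-z)\Gamma(b_3-z)\Gamma(c-b_2)\Gamma(c-b_3)}{\Gamma(c-z)} . \]
After inserting this, the inner integral is
\[ \frac{1}{2\pi i}\int_z\Gamma(a_1+z)\Gamma(a_2+z)\Gamma(b_1-z)\Gamma(w-z)\,dz=\frac{\Gamma(a_1+b_1)\Gamma(a_2+b_1)\Gamma(a_1+w)\Gamma(a_2+w)}{\Gamma(a_1+a_2+b_1+w)} . \]
Its denominator cancels the factor $\Gamma(a_1+a_2+b_1+w)$. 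A third application of the first lemma to $\Gamma(a_1+w)\Gamma(a_2+w)\Gamma(b_2-w)\Gamma(b_3-w)$ then gives exactly the claimed product, since $a_1+a_2+b_2+b_3=c-b_1$.

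No continuation in the parameters is needed. Take $\mathrm{Re}(z)=\sigma$ and $\mathrm{Re}(w)=\tau$ with $\sigma<\tau<\min\{\mathrm{Re}(b_2),\mathrm{Re}(b_3)\}$. Then all three contour conditions hold; the condition $\tau>-\mathrm{Re}(a_1+a_2+b_1)$ follows from $\tau>\sigma>-\mathrm{Re}(a_2)$ and $\mathrm{Re}(a_1+b_1)>0$. By Stirling, and since $|\Gamma(w-z)|\le\Gamma(\tau-\sigma)$, the double integrand is $O\bigl(e^{-\frac{3\pi}{2}(|\mathrm{Im}\,z|+|\mathrm{Im}\,w|)}\bigr)$ up to polynomial factors, which justifies Fubini.

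Also, $1/\Gamma(c-z)$ grows exponentially, like $e^{\pi|\mathrm{Im}\,z|/2}$, not polynomially. The original integral converges because the numerator decays like $e^{-5\pi|\mathrm{Im}\,z|/2}$.
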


We conclude with the description of a formula due to Stade (see also \cite[Lemma 1.5]{IshiiMultivariate}).

\begin{lemma}[{ \cite[Lemma 2.1]{Stade}}]\label{StadeLemma}
    For $a_1,a_2,a_3,a_4,b_1,b_2,b_3,b_4 \in \C$, let $I(a_1,a_2,a_3,a_4,b_1,b_2,b_3,b_4)$ be the integral 
    
    \begin{align*}
        \frac{1}{4 \pi i} &\int_z \frac{\Gamma_F(z + a_1) \Gamma_F(z + a_2)\Gamma_F(z + a_3) \Gamma_F(-z + b_1) \Gamma_F(-z + b_2)\Gamma_F(-z + b_3)}{\Gamma_F(z + a_4 )\Gamma_F(-z + b_4)} dz,
    \end{align*}
    where the path of integration $\int_z$ is a vertical line that separates the increasing and decreasing sequences of poles. If $\sum_{i=1}^3 (a_i + b_i) = a_4 + b_4$, we have 
    $$ I(a_1,a_2,a_3,a_4,b_1,b_2,b_3,b_4) = I(a_1,a_2,b_4 - b_1 - b_2,a_1+a_2+b_3,b_1,b_2, a_4 - a_1 - a_2,b_1 + b_2 + a_3).$$
\end{lemma}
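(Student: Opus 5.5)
The plan is to reduce to the classical Gamma function and then prove the identity by a double-integral argument. First I would remove the dependence on $F$. For $F=\C$ one has $\Gamma_\C(s)=2(2\pi)^{-s}\Gamma(s)$. For $F=\R$ one has $\Gamma_\R(s)=\pi^{-s/2}\Gamma(s/2)$, and after the substitution $z\mapsto 2z$ and halving all parameters the integral is again a classical Barnes integral. In both cases the exponential factors $(2\pi)^{\mp z}$, resp. $\pi^{\mp z/2}$, cancel: there are three Gamma factors in $z+\ast$ upstairs and one downstairs, and likewise for $-z+\ast$, so the net power of $(2\pi)^{z}$ is $(2\pi)^{-3z+z+3z-z}=1$. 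The constant exponential factors depend only on the parameters. The balancing condition $\sum_{i=1}^3(a_i+b_i)=a_4+b_4$ should make these constants agree on both sides, since the new parameters on the right-hand side satisfy the same balancing condition with the same total. This is a routine check that I would do once. Hence it suffices to prove the lemma for $\Gamma$ itself, where it is a classical transformation of a balanced ${}_3F_2$-type Barnes integral.

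For the classical statement I would use Barnes' first lemma (Lemma \ref{BarnesFirst}) to open up the $z$-dependent ratio $\Gamma(z+a_3)/\Gamma(z+a_4)$ into an auxiliary integral. The idea is to pick parameters $a_1',a_2',b_1',b_2'$ such that $a_1'+a_2'+b_1'+b_2'=z+a_4$ and such that the four Gamma factors on the right-hand side of Lemma \ref{BarnesFirst} are $\Gamma(z+a_3)$ together with factors independent of $z$. Concretely, I would take $a_1'=z+a_3-c$, $b_1'=c$, with $c$ chosen so that the remaining three factors are constants such as $\Gamma(a_4-a_3)$. This writes
\[
\frac{\Gamma(z+a_3)}{\Gamma(z+a_4)}=C^{-1}\,\frac{1}{2\pi i}\int_w \Gamma(z+\alpha+w)\,\Gamma(\beta+w)\,\Gamma(-w+\gamma)\,\Gamma(-w+\delta)\,dw .
\]
Substituting this into $I$ and interchanging the order of integration, which is justified by Stirling's formula, the inner $z$-integral has the form
\[
\int_z \Gamma(z+a_1)\Gamma(z+a_2)\Gamma(-z+b_1)\Gamma(-z+b_2)\Gamma(-z+b_3)\,\Gamma(z+\alpha+w)/\Gamma(-z+b_4)\,dz .
\]
After relabelling $z\mapsto -z$, this should be exactly the integral in Barnes' second lemma (Lemma \ref{BarnesSecond}). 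The point to verify is that the balancing condition $\sum(a_i+b_i)=a_4+b_4$ forces the denominator $\Gamma(-z+b_4)$ to have precisely the argument required there. Evaluating the $z$-integral then leaves a single $w$-integral of the same shape as $I$, namely three Gammas up in each direction and one down in each direction. Reading off its parameters should give $I(a_1,a_2,b_4-b_1-b_2,a_1+a_2+b_3,b_1,b_2,a_4-a_1-a_2,b_1+b_2+a_3)$, possibly after one more application of Lemma \ref{BarnesFirst} in reverse to recombine constant factors, or after a shift $w\mapsto w+\text{const}$.

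I expect the main obstacle to be bookkeeping rather than ideas. The first task is choosing the auxiliary parameters so that the output lands exactly on the stated parameters, since Stade's formula is one specific element of the symmetry group of the balanced integral. The second is justifying the contour choices. I would first prove the identity for parameters in an open region where all real parts are such that straight vertical contours separate the increasing and decreasing pole sequences in both variables simultaneously, and where Stirling's exponential decay $|\Gamma(\sigma+it)|\sim|t|^{\sigma-1/2}e^{-\pi|t|/2}$ gives absolute convergence of the double integral. The net decay is $e^{-\pi|t|}$ in each variable for these balanced configurations. I would then extend the identity to all parameters by analytic continuation in the parameters, deforming the contours so that they continue to separate the pole sequences.
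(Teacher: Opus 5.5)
Your reduction from $\Gamma_F$ to the classical $\Gamma$ is fine: the exponential factors cancel identically on each side under the balancing condition. The core step, however, fails in two ways.

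\textbf{The Barnes first lemma step.} Lemma \ref{BarnesFirst} cannot represent $\Gamma(z+a_3)/\Gamma(z+a_4)$ times $z$-independent factors. Suppose its four parameters are affine in $z$ with total $z+a_4$, and let $x_1,x_2,y_1,y_2$ be their $z$-coefficients. You would need
\[
x_1+y_1=1,\qquad x_1+y_2=x_2+y_1=x_2+y_2=0,
\]
and these equations are inconsistent. Concretely, your displayed $w$-integral evaluates to
\[
\frac{\Gamma(z+\alpha+\gamma)\Gamma(z+\alpha+\delta)\Gamma(\beta+\gamma)\Gamma(\beta+\delta)}{\Gamma(z+\alpha+\beta+\gamma+\delta)},
\]
which has two $z$-dependent Gammas in the numerator, not one.

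\textbf{The inner $z$-integral.} Even granting that representation, your inner $z$-integrand has three factors $\Gamma(z+\cdot)$ and three factors $\Gamma(-z+\cdot)$ over $\Gamma(-z+b_4)$. That is again an integral of type $I$, not the two-and-three-over-one shape of Lemma \ref{BarnesSecond}, and the substitution $z\mapsto -z$ does not change this count. Moreover, the balancing required in Lemma \ref{BarnesSecond} (denominator argument equal to the sum of the five parameters) cannot hold for all $w$ when only $+w$ enters the numerator and $b_4$ is independent of $w$.

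\textbf{What works.} Absorb three factors, two upward and one downward, using Lemma \ref{BarnesSecond} itself in the auxiliary variable. Work with classical $\Gamma$ and the $\frac{1}{2\pi i}$ normalization, and set $c:=a_1+a_2+b_3-a_4=b_4-a_3-b_1-b_2$. Lemma \ref{BarnesSecond} with parameters $(z,\,a_4-a_1-a_2;\ c-z,\,a_1,\,a_2)$ gives
\[
\frac{\Gamma(z+a_1)\Gamma(z+a_2)\Gamma(-z+b_3)}{\Gamma(z+a_4)}=\frac{\Gamma(a_1+b_3)\Gamma(a_2+b_3)}{\Gamma(c)\Gamma(a_4-a_1)\Gamma(a_4-a_2)}\cdot\frac{1}{2\pi i}\int_w\frac{\Gamma(w+z)\Gamma(w+a_4-a_1-a_2)\Gamma(-w-z+c)\Gamma(-w+a_1)\Gamma(-w+a_2)}{\Gamma(-w+a_1+a_2+b_3)}\,dw .
\]
The inner $z$-integrand is then
\[
\frac{\Gamma(z+a_3)\Gamma(z+w)\Gamma(-z+b_1)\Gamma(-z+b_2)\Gamma(-z-w+c)}{\Gamma(-z+b_4)} .
\]
This is exactly of the type in Lemma \ref{BarnesSecond}, because $a_3+w+b_1+b_2+(c-w)=b_4$; this is where the balancing hypothesis is used. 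The double integrand decays like $e^{-\pi(|\mathrm{Im}\,z|+|\mathrm{Im}\,w|+|\mathrm{Im}(z+w)|)}$, so Fubini applies. Evaluating the $z$-integral and substituting $w\mapsto -w$ yields
\[
I(a_1,\dots,b_4)=\frac{\Gamma(a_1+b_3)\Gamma(a_2+b_3)\Gamma(a_3+b_1)\Gamma(a_3+b_2)}{\Gamma(a_4-a_1)\Gamma(a_4-a_2)\Gamma(b_4-b_1)\Gamma(b_4-b_2)}\; I(a_1,a_2,b_4-b_1-b_2,a_1+a_2+b_3,b_1,b_2,a_4-a_1-a_2,b_1+b_2+a_3).
\]
The same identity holds with $\Gamma_F$ in place of $\Gamma$, since the powers of $\pi$ and $2$ again cancel by balancing.

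\textbf{The prefactor is genuine.} The identity as printed, without this factor, is false, so your hope that constants ``recombine'' to give equality cannot be realized. To see this, take $a_3=a_4$: both sides become Barnes second-lemma integrals. For example, with $a_1=a_2=b_1=b_2=b_3=1$, $a_3=a_4=4$ and $b_4=5$, the left side equals $1/216$ while the right-hand integral equals $1/864$; the ratio is the prefactor, $4$.

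The paper only cites Stade for this lemma and gives no proof of its own. Its application in the proof of Theorem \ref{Theorem_Arch_computation_split} uses exactly this factor. There $\Gamma_F(2s-\mu_1+\mu_4)=\Gamma_F(a_3+b_1)$ appears, while $\Gamma_F(q_2+|\mu|-\mu_2)\Gamma_F(q_2+|\mu|-\mu_3)=\Gamma_F(a_4-a_1)\Gamma_F(a_4-a_2)$ drops out. You should aim to prove the corrected statement.
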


\subsection{Spherical principal series and their Whittaker functionals}\label{sec:SphWhittakerFormulae}

Define $\epsilon =$ 1 or 2 and put $K_{\GL_n(F)}$ equal to $\mathbf{O}_n$ or $\mathbf{U}_n$ (for $n \in \{2,4\}$), depending on whether $F = \R$ or  $\C$. 

\subsubsection{Principal series on $\GL_4(F)$}

Recall the Langlands decomposition $B_{\GL_4}(F) = M_{\GL_4}(F) A_{\GL_4} U_{\GL_4}(F)$ of the upper triangular Borel subgroup of $\GL_4(F)$, with \begin{align*}
    M_{\GL_4}(F)&:= \{{\rm diag}(m_1,m_2,m_3,m_4)\in \GL_4(F)\,:\, |m_i| = 1\,\,\, \forall i \}, \\
     A_{\GL_4} &:= \{{\rm diag}(a_1,a_2,a_3,a_4)\in \GL_4(\R)\,:\, a_i> 0 \,\,\, \forall i \}.
\end{align*} 
For $\mu=(\mu_1,\mu_2,\mu_3,\mu_4) \in \C^4,$ define the character $\chi_\mu$ on $M_{\GL_4}(F)A_{\GL_4}$ by $\chi_\mu(m a) := \prod_{i=1}^4 a_i^{\epsilon \mu_i}$. Let $\pi_\infty$ be an irreducible spherical principal series of $\GL_4(F)$ given by the (normalized) induction ${\rm Ind}_{B_{\GL_4}(F)}^{\GL_4(F)}(\chi_{\mu})$. The representation $\pi_\infty$ admits a unique rapidly decreasing right $K_{\GL_4(F)}$-invariant Whittaker functional up to scalars, which we denote by $W_{4,\mu}^F$. In view of the Iwasawa decomposition of $\GL_4(F)$, $W_{4,\mu}^F$ is determined by its restriction to $A_{\GL_4}$. We refer to \cite{IshiiStade} and \cite{IshiiMultivariate} (and the references therein) for more details on the following formula.

\begin{proposition}{\cite[Proposition 3.4]{IshiiMultivariate}}\label{WhittakerSphericalForGL4} Let $| \mu| : = \sum_i \mu_i$. Up to constant multiple, the $A_{\GL_4}$-radial part of $W_{4,\mu}^F$ is of the following form:
\begin{align*}
    W_{4,\mu}^F({\rm diag}(a_1,a_2,a_3,a_4)) &= \frac{2^{ \epsilon} a_1^{3 \epsilon/2} a_2^{\epsilon/2} a_3^{- \epsilon/2} a_4^{-3 \epsilon/2 + \epsilon|\mu|} }{(4 \pi i)^3}  \\ &\cdot \int_{p_1,p_2,p_3} V_\mu^F(p_1,p_2,p_3) \left(\frac{a_1}{a_2} \right)^{-\epsilon p_1}\left(\frac{a_2}{a_3} \right)^{-\epsilon p_2}\left(\frac{a_3}{a_4} \right)^{-\epsilon p_3} dp_1 dp_2 dp_3,
\end{align*}
    where 
    \begin{align*}
   V_\mu^F(p_1,p_2,p_3) &:= \frac{1}{(4 \pi i)^2} \int_{q_1,q_2} U_\mu^F(q_1,q_2) \Gamma_F(p_1 + \mu_1)\Gamma_F(p_1 - q_1)\Gamma_F(p_2 -q_1 + \mu_1)  \\ & \cdot \Gamma_F(p_2 -q_2 - \mu_1)\Gamma_F(p_3 - q_2)\Gamma_F(p_3 + |\mu| - \mu_1) dq_1 dq_2,
\end{align*}
with $$U_\mu^F(q_1,q_2) := \frac{\prod_{i=2}^4 \Gamma_F(q_1 +\mu_i)\Gamma_F(q_2+|\mu|-\mu_i)}{\Gamma_F(q_1 + q_2 +|\mu|)}.$$
\end{proposition}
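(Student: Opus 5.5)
This formula is quoted from \cite[Proposition 3.4]{IshiiMultivariate}, so in the paper we simply cite it. To prove it independently, I would use the recursive structure of spherical Whittaker functions on $\GL_n(F)$: the Ishii--Stade recursion writes $W_{4,\mu}^F$ restricted to $A_{\GL_4}$ as an integral transform of $W_{3,\mu'}^F$, where $\mu'=(\mu_2,\mu_3,\mu_4)$, against an explicit kernel. The plan has three steps. First, fix normalizations: write $W_{2}^F$ as a Mellin--Barnes integral of two $\Gamma_F$ factors, and $W_3^F$ in Bump's/Stade's Mellin--Barnes form with inner kernel $\prod\Gamma_F(q+\mu_i)$. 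Second, apply the $\GL_4$ recursion and compute the Mellin transform in $(a_1/a_2,a_2/a_3,a_3/a_4)$. This produces the $\Gamma_F(p_1+\mu_1)$, $\Gamma_F(p_3+|\mu|-\mu_1)$ factors, together with coupled factors in $p_i$ and the $\GL_3$ Mellin variables. Third, reorganize the $\GL_3$ inner kernel into the two-variable function $U_\mu^F(q_1,q_2)$, with the denominator $\Gamma_F(q_1+q_2+|\mu|)$. To do this, insert an auxiliary Barnes integral and collapse it using Barnes' first Lemma \ref{BarnesFirst} (and Lemma \ref{BarnesSecond} where a ratio of Gammas appears).

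The prefactor $a_1^{3\epsilon/2}a_2^{\epsilon/2}a_3^{-\epsilon/2}a_4^{-3\epsilon/2+\epsilon|\mu|}$ is $\delta_B^{1/2}$ times the central character contribution. I would verify it by checking that the integrand transforms correctly under scaling by the center and by the half-sum of positive roots. Uniqueness of the $K$-fixed moderate-growth Whittaker function then fixes the answer up to a constant. Equivalently, one can verify directly that the Mellin–Barnes expression is an eigenfunction of the Casimir/Capelli operators with the eigenvalues of $\chi_\mu$ and is of moderate growth. That is an alternative route, but it is more laborious.

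The main obstacle is the contour bookkeeping. One must choose vertical lines so that all $\Gamma_F$ poles are separated, and justify interchanging the $p$ and $q$ integrations via Stirling bounds. The second step is the hardest one: identifying which Barnes lemma applies after the change of variables, and matching the shift $\Gamma_F(p_2-q_1+\mu_1)\Gamma_F(p_2-q_2-\mu_1)$ exactly. I would first try the substitution $q\mapsto q-\mu_1$ and then apply Barnes' first lemma in the $\GL_3$ variable.
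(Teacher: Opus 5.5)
Your proposal is correct and matches the paper, which gives no argument beyond citing \cite[Proposition 3.4]{IshiiMultivariate}. Your sketch is the standard derivation behind that citation, namely one step of the Ishii--Stade recursion: $U_\mu^F(q_1,q_2)$ is exactly the $\GL_3$ Mellin--Barnes kernel for $(\mu_2,\mu_3,\mu_4)$ (in the normalization of the statement) evaluated at $(q_1,q_2+\mu_1)$, so if you start from Bump's closed form your third step needs no further Barnes collapsing.

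Keep in mind that checking the central character and the $\delta_B^{1/2}$ factor does not by itself identify the function with $W_{4,\mu}^F$. That identification comes from the recursion theorem (or the differential equations) together with uniqueness of the $K$-fixed moderate-growth Whittaker function.
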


\subsubsection{Principal series on $\GL_2(F)$}\label{subsec:Principal_GL2}
Here we closely follow \cite[\S 2]{IshiiMultivariate} and hence adopt their normalizations. Recall the Langlands decomposition $B_{\GL_2}(F) = M_{\GL_2}(F) A_{\GL_2} U_{\GL_2}(F)$ of the upper triangular Borel subgroup of $\GL_2(F)$, where we choose \begin{align*}
    M_{\GL_2}(F) &:= \{{\rm diag}(m_1,m_2)\in \GL_2(F)\,:\, |m_1|=|m_2| = 1\}, \\
     A_{\GL_2} &:= \{{\rm diag}(a_0a_1,a_1^{-1})\in \GL_2(\R)\,:\, a_0,a_1> 0 \}.
\end{align*} 
For $\nu=(\nu_0,\nu_1) \in \C^2$, we let $\chi_\nu$ be the character on $M_{\GL_2}(F) A_{\GL_2}$ given by $\chi_\nu(m a) := \prod_{i=0}^1 a_i^{\epsilon \nu_i}$. Let $\sigma_\infty$ be an irreducible spherical principal series of $\GL_2(F)$ of the form ${\rm Ind}_{B_{\GL_2}(F)}^{\GL_2(F)}(\chi_{\nu})$. It has a unique rapidly decreasing right $K_{\GL_2(F)}$-invariant Whittaker functional up to constant, which we denote by $W_{2,\nu}^F$. Note that $W_{2,\nu}^F$  is determined by its restriction to $A_{\GL_2}$. 

\begin{proposition}{\cite[Proposition 2.5]{IshiiMultivariate}} \label{WhittakerSphericalForGL2} Up to constant multiple, the $A_{\GL_2}$-radial part of $W_{2,\nu}^F$ is of the following form:
\begin{align*}
    W_{2,\nu}^F({\rm diag}(a_0a_1,a_1^{-1})) &= \frac{2^{ \epsilon} a_0^{\epsilon(\nu_0 - \nu_1/2) + \epsilon/2} a_1^{\epsilon} }{4 \pi i} \int_{p_0} \Gamma_F(p_0 + \tfrac{\nu_1}{2})\Gamma_F(p_0 - \tfrac{\nu_1}{2}) \left(a_0 a_1^2 \right)^{-\epsilon p_0} dp_0.
\end{align*}
\end{proposition}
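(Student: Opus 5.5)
The plan is to reduce to $\SL_2$ using the central character and then to recognize the $\SL_2$ spherical Whittaker function as a $K$-Bessel function, whose Mellin--Barnes representation gives the contour integral in the statement.

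\emph{Step 1: separate the central character.} Factor
\[{\rm diag}(a_0a_1,a_1^{-1}) = (a_0^{1/2}I_2)\cdot{\rm diag}(y^{1/2},y^{-1/2}), \qquad y:=a_0a_1^2 .\]
In the coordinates of $A_{\GL_2}$, the central element $tI_2$ corresponds to $a_0'=t^2$ and $a_1'=t^{-1}$. Hence $\chi_\nu(tI_2)=t^{\epsilon(2\nu_0-\nu_1)}$. Taking $t=a_0^{1/2}$, the central character contributes exactly the factor $a_0^{\epsilon(\nu_0-\nu_1/2)}$. It then remains to compute $W(y) := W_{2,\nu}^F({\rm diag}(y^{1/2},y^{-1/2}))$. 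Restricted to $\SL_2$, this is the spherical Whittaker function of ${\rm Ind}(\chi_{\nu_1/2})$, since $\chi_\nu({\rm diag}(y^{1/2},y^{-1/2})) = y^{\epsilon\nu_1/2}$.

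\emph{Step 2: solve the ODE.} The Casimir operator acts on $W$ by a scalar, and right $K$-invariance together with the left $\psi$-equivariance reduce this to an ordinary differential equation in $y$. For $F=\R$ this is Bessel's modified equation, giving
\[W(y)=c\,y^{1/2}K_{\nu_1/2}(2\pi y).\]
For $F=\C$ one gets analogously $W(y)=c\,y\,K_{\nu_1}(4\pi y)$. Uniqueness of the rapidly decreasing solution, which is multiplicity one for the moderate-growth Whittaker model, picks out $K$ rather than $I$. The prefactor $y^{\epsilon/2} = a_0^{\epsilon/2}a_1^{\epsilon}$ is exactly $\delta_B^{1/2}$, which matches the factor $a_0^{\epsilon/2}a_1^{\epsilon}$ in the statement. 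Alternatively, one can avoid the ODE and compute the Jacquet integral $\int_F f_\nu(w n(x)\,{\rm diag}(y^{1/2},y^{-1/2}))\psi(-x)\,dx$ for the $K$-fixed section directly, which also lands on a $K$-Bessel function.

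\emph{Step 3: Mellin inversion.} Insert the classical Mellin--Barnes formula
\[K_\mu(x)=\frac{1}{4\pi i}\int_{(\sigma)}\Gamma\!\left(s+\tfrac{\mu}{2}\right)\Gamma\!\left(s-\tfrac{\mu}{2}\right)\left(\tfrac{x}{2}\right)^{-2s}ds,\]
equivalently $\int_0^\infty K_\mu(x)x^{2s}\,\tfrac{dx}{x} = 2^{2s-2}\Gamma(s+\mu/2)\Gamma(s-\mu/2)$. Rewriting $\Gamma$ in terms of $\Gamma_\R$ resp.\ $\Gamma_\C$ and substituting $p_0$ for $s$ (after a linear rescaling) gives the integrand
\[\Gamma_F\!\left(p_0+\tfrac{\nu_1}{2}\right)\Gamma_F\!\left(p_0-\tfrac{\nu_1}{2}\right)y^{-\epsilon p_0}.\]
The powers of $\pi$ and $2$ from $\Gamma_F$ cancel the ones from the argument $2\pi y$ resp.\ $4\pi y$, up to a global constant; this constant, together with the explicit $2^\epsilon$, is absorbed into ``up to constant multiple.''

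\emph{Main obstacle.} The only delicate point is bookkeeping the normalizations. One has to check that the exponent conventions of $\chi_\nu$ (with $\epsilon$ and the parametrization $a_0a_1$, $a_1^{-1}$) produce Bessel index $\nu_1/2$ in terms of $\Gamma_F$, and not $\nu_1$ or $\epsilon\nu_1$. For $F=\C$ in particular, the duplication $\Gamma_\R(s)\Gamma_\R(s+1)=\Gamma_\C(s)$ must be applied correctly so that the contour $\sigma > |{\rm Re}(\nu_1)|/2$ separates the poles. I would check this first by comparing the leading asymptotic term as $y\to 0$, namely the residue at $p_0=\pm\nu_1/2$, which gives $y^{\epsilon/2\pm\epsilon\nu_1/2}$. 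This must agree with the exponents of the principal series $\delta_B^{1/2}\chi_{\nu}$ and its Weyl conjugate.
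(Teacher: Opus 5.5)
Your argument is correct. The following all check out:
\begin{itemize}
\item the central-character factor $a_0^{\epsilon(\nu_0-\nu_1/2)}$;
\item the prefactor $y^{\epsilon/2}=a_0^{\epsilon/2}a_1^{\epsilon}$ with $y=a_0a_1^2$;
\item the Bessel indices $\nu_1/2$ for $F=\R$ and $\nu_1$ for $F=\C$, which are confirmed by your asymptotic exponents $y^{\epsilon/2\pm\epsilon\nu_1/2}$;
\item the cancellation of the powers of $2$ and $\pi$ against $\Gamma_\R(p_0\pm\tfrac{\nu_1}{2})$ with $s=p_0/2$, and against $\Gamma_\C(p_0\pm\tfrac{\nu_1}{2})$ with $s=p_0$, which leaves only the constant $\tfrac14$.
\end{itemize}

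The paper gives no proof of its own here and simply quotes Ishii. What you have written is therefore a self-contained derivation of the cited formula, via the classical $K$-Bessel function and its Mellin--Barnes representation, rather than a competing argument. The citation buys brevity. Your version makes explicit one normalization that the statement silently depends on:
\begin{itemize}
\item the Bessel arguments $2\pi y$ and $4\pi y$ require $\psi_\R(x)=e^{2\pi i x}$ and $\psi_\C(z)=e^{2\pi i(z+\bar z)}$;
\item replacing $\psi$ by $x\mapsto\psi(cx)$ multiplies the integrand by $c^{-\epsilon p_0}$, and this factor is not absorbed into ``up to constant multiple,'' so the choice of $\psi$ should be stated.
\end{itemize}

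A minor point: for $F=\C$ you do not need the duplication formula $\Gamma_\R(s)\Gamma_\R(s+1)=\Gamma_\C(s)$. The definition $\Gamma_\C(s)=2(2\pi)^{-s}\Gamma(s)$ is applied directly.
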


\subsection{Archimedean $L$-factors for the $\wedge^2 \otimes {\rm std}$ $L$-functions}\label{subsec:ArchimedeanLfactors}

The main references for this section are \cite{KnappLLCA} and \cite[\S 6.1]{HiranoIshiiMiyazakiGL4}. 
Recall that the Weil group $W_\R$ of $\R$ is the non-split extension of $\C^\times$ by $\Z/2\Z$ given by $$ W_\R = \C^\times \cup \C^\times  {\rm j},$$
where $ {\rm j}^2 = -1$ and $ {\rm j} z  {\rm j}^{-1} = \overline{z}$ for all $z \in \C$ (with $\overline{\bullet}$ denoting complex conjugation). The Weil group of $\C$ is given by $W_\C = \C^\times$. 

We recall the theory of finite dimensional semisimple representation of $W_F$.

\subsubsection{Representations of $W_\R$}

Firstly, recall the irreducible representations of $W_\R$ and their $L$-factors. 

\begin{enumerate}
    \item Characters: Given $\nu \in \C$ and $\delta \in \{ 0,1\}$, let $\phi_\nu^\delta$ be the character of $W_\R$ given by $$ \phi_\nu^\delta(z) = |z|^{2\nu}\,\,\,(z \in \C), \,\,\,\,\,\,\, \phi_\nu^\delta({\rm j})=(-1)^\delta.$$
    To $\phi_\nu^\delta$, we attach an $L$-factor given by $$ L(s,\phi_\nu^\delta) := \Gamma_\R(s+ \nu + \delta).$$
    \item Two dimensional representations: Given $\nu \in \C, \kappa \in \Z_{\geq 1}$, let $\phi_{\nu,\kappa}: W_\R \to \GL_2(\C)$ defined by 
    \begin{align*}
        \phi_{\nu,\kappa}( r e^{i \theta}) = \left( \begin{smallmatrix}
            r^{2 \nu} e^{i \kappa \theta} & 0 \\ 0 & r^{2 \nu} e^{-i \kappa \theta}
        \end{smallmatrix} \right)\,\,\,(r \in \R_{>0},\,\theta \in \R),\,\,\, \phi_{\nu,\kappa}( {\rm j}) = \left( \begin{smallmatrix}
            0 & (-1)^\kappa \\ 1 & 0
        \end{smallmatrix} \right).
    \end{align*}
    To $\phi_{\nu,\kappa}$, we attach an $L$-factor given by $$ L(s,\phi_{\nu,\kappa}) := \Gamma_\C(s+ \nu + \tfrac{\kappa}{2}).$$
\end{enumerate}
The set of equivalence classes of irreducible representations of $W_\R$ is exhausted by this list. Moreover, by the Lemma in page 403 of \cite{KnappLLCA}, every finite dimensional semisimple complex representation of $W_\R$ is fully reducible, hence a direct sum of $\phi_\nu^\delta$'s or $\phi_{\nu,\kappa}$'s. For a finite dimensional semisimple complex representation $\phi$ of $W_\R$, we define $$L(s,\phi) = \prod_{i=1}^m L(s, \phi_i),$$ if $\phi \simeq \oplus_{i=1}^m \phi_i$ is the decomposition of $\phi$ into irreducibles. 

\subsubsection{Representations of $W_\C$}

For $z \in \C$, let $\boldsymbol{[} z \boldsymbol{]} = z /|z|$. Given $\ell \in \Z$ and $t \in \C$, let $\varphi_{\ell,t}$ be the character on $W_\C$ defined by $$\varphi_{\ell,t} \,:\, z \mapsto  \boldsymbol{[} z \boldsymbol{]}^\ell |z|^{2t}.$$
All irreducible representations of $W_\C$ are of this form. Moreover, since $W_\C$ is abelian, every  $n$-dimensional semisimple complex representation $\varphi$ of $W_\C$ can be written as the direct sum $\oplus_{j=1}^n\varphi_{\ell_j,t_j}$ with $(\ell_j,t_j) \in \Z \times \C$ for all $j \in \{1, \dots, n\}$.

The $L$-factor of $\varphi_{\ell,t}$ is given by $$L(s,\varphi_{\ell,t}) := \Gamma_\C(s+ t + \tfrac{\ell}{2}).$$ 
Thus, if $\varphi = \oplus_{j=1}^n\varphi_{\ell_j,t_j}$, define 
$$L(s,\varphi) := \prod_{j=1}^nL(s,\varphi_{\ell_j,t_j}).$$

\subsubsection{Local Langlands Correspondence and $L$-factors}

The local Langlands correspondence for $\GL_n(F)$ is a bijection between the set of infinitesimal equivalence classes of irreducible admissible representations of $\GL_n(F)$ and the set of equivalence classes of $n$-dimensional semisimple complex representations of $W_F$ (see \cite[Theorems 2 \& 5]{KnappLLCA}). For an irreducible admissible representation $\pi_\infty$ of $\GL_n(F)$, the corresponding representation  $\phi[\pi_\infty]$ of $W_F$ is called Langlands parameter of $\pi_\infty$. Define the $L$-factor for $\pi_\infty$ as

$$ L(s,\pi_\infty) := L(s, \phi[\pi_\infty]) =  \prod_{i=1}^m L(s, \phi_i) \text{ if } \phi[\pi_\infty] \simeq \oplus_{i=1}^m \phi_i.$$

Recall that $\wedge^2:\GL_4(\C) \to \GL_6(\C)$ denotes the exterior square representations and that we have a 12 dimensional complex representation on $\GL_4(\C)\times \GL_2(\C)$ given by the tensor product $\wedge^2 \otimes {\rm std}_2$. If $\pi_\infty$ and $\sigma_\infty$ are irreducible admissible representations on $\GL_4(F)$ and $\GL_2(F)$, respectively, we let $$L(s, \pi_\infty \otimes \sigma_\infty, \wedge^2 \otimes {\rm std}_2) = L(s, \wedge^2(\phi[\pi_\infty]) \otimes \phi[\sigma_\infty]).$$

\noindent We calculate this explicitly when $\pi_\infty = {\rm Ind}_{B_{\GL_4}(F)}^{\GL_4(F)}(\chi_{\mu})$ and $\sigma_\infty = {\rm Ind}_{B_{\GL_2}(F)}^{\GL_2(F)}(\chi_{\nu})$ are the irreducible spherical principal series defined in \S \ref{sec:SphWhittakerFormulae}. 

\begin{lemma}\label{L_factor_for_spherical_arch}
   Let $\pi_\infty = {\rm Ind}_{B_{\GL_4}(F)}^{\GL_4(F)}(\chi_{\mu})$ and $\sigma_\infty = {\rm Ind}_{B_{\GL_2}(F)}^{\GL_2(F)}(\chi_{\nu})$ be irreducible spherical principal series with $\mu = (\mu_1,\mu_2,\mu_3,\mu_4) \in \C^4$ and $\nu=(\nu_0,\nu_1) \in \C^2$. Then we have that
   \begin{enumerate}
       \item \begin{align*}
L(s, \pi_\infty \otimes \sigma_\infty, \wedge^2 \otimes {\rm std}_2) &=  \prod_{1\leq i< j \leq 4} \Gamma_F(s +\mu_i+\mu_j + \nu_0) \Gamma_F(s+\mu_i+\mu_j + \nu_0-\nu_1),
\end{align*}
\item  \begin{align*} L(s, {\rm Sym}^2(\sigma_\infty) \otimes \omega_{\pi_\infty} ) = \Gamma_F(s+ \nu_0 + |\mu|) \Gamma_F(s+ 2\nu_0 - 2\nu_1 + |\mu|)\Gamma_F(s+ 2\nu_0 - \nu_1 + |\mu|).
\end{align*}
   \end{enumerate}

\end{lemma}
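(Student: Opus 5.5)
The plan is to compute the Langlands parameters $\phi[\pi_\infty]$ and $\phi[\sigma_\infty]$ explicitly as sums of characters, apply $\wedge^2\otimes{\rm std}_2$ and ${\rm Sym}^2\otimes\omega_{\pi_\infty}$ at the level of these sums, and read off the $L$-factors from the definitions recalled above.

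\emph{Parameters.} For the spherical principal series ${\rm Ind}(\chi_\mu)$ of $\GL_4(F)$, the parameter is $\bigoplus_{i=1}^4\phi_i$. When $F=\R$ we take $\phi_i=\phi^0_{\mu_i}$, since $\chi_\mu$ is trivial on $M_{\GL_4}(\R)$ and $a_i^{\mu_i}$ corresponds to $|z|^{2\mu_i}$ under the norm map $W_\R^{ab}\simeq\R^\times$. When $F=\C$ we take $\phi_i=\varphi_{0,\mu_i}$, since $a_i^{2\mu_i}=|z|^{2\mu_i}$. The point is that the normalization $\chi_\mu(ma)=\prod a_i^{\epsilon\mu_i}$ is set up so that in both cases each summand has $L$-factor $\Gamma_F(s+\mu_i)$.

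For $\sigma_\infty$ we write ${\rm diag}(a_0a_1,a_1^{-1})={\rm diag}(x,y)$, so that $a_0=xy$ and $a_1=y^{-1}$. Then $\chi_\nu=a_0^{\epsilon\nu_0}a_1^{\epsilon\nu_1}=x^{\epsilon\nu_0}y^{\epsilon(\nu_0-\nu_1)}$. Hence $\phi[\sigma_\infty]$ is the sum of the two unramified characters with exponents $\nu_0$ and $\nu_0-\nu_1$, again with $L$-factors $\Gamma_F(s+\nu_0)$ and $\Gamma_F(s+\nu_0-\nu_1)$.

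\emph{Exterior square.} Because the relevant representations are sums of characters, $\wedge^2$ and tensor products are computed characterwise: exponents add, and the parity $\delta$ stays $0$ since all characters are trivial on ${\rm j}$ (respectively have $\ell=0$). Thus $\wedge^2\phi[\pi_\infty]$ is the sum over $i<j$ of the characters with exponent $\mu_i+\mu_j$. Tensoring with $\phi[\sigma_\infty]$ gives the twelve characters with exponents $\mu_i+\mu_j+\nu_0$ and $\mu_i+\mu_j+\nu_0-\nu_1$. Multiplying the $L$-factors yields (1).

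\emph{Symmetric square.} ${\rm Sym}^2\phi[\sigma_\infty]$ has exponents $2\nu_0$, $2\nu_0-\nu_1$ and $2\nu_0-2\nu_1$. The central character $\omega_{\pi_\infty}$ corresponds to $\det\phi[\pi_\infty]$, which has exponent $|\mu|$. Twisting gives the exponents $2\nu_0+|\mu|$, $2\nu_0-\nu_1+|\mu|$ and $2\nu_0-2\nu_1+|\mu|$.

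\emph{Expected obstacle.} The stated first factor in (2) is $\Gamma_F(s+\nu_0+|\mu|)$ rather than $\Gamma_F(s+2\nu_0+|\mu|)$. The main obstacle is therefore bookkeeping of normalizations, and here I expect the discrepancy to come from the hypothesis $\omega_{\sigma_\infty}\omega_{\pi_\infty}=1$ of Theorem \ref{Theorem_Arch_computation_split}, or from the precise convention in \cite[\S 2]{IshiiMultivariate} for $\nu_0$ (for instance the role of $a_0$ versus $a_0^{1/2}$).

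To resolve it, I would recheck how the central character of $\sigma_\infty$ reads in terms of $\nu$: evaluating at $a_0$ with $a_1=1$ gives $\omega_{\sigma_\infty}$ of exponent $2\nu_0-\nu_1$. I would then fix the identification against \cite{HiranoIshiiMiyazakiGL4} and adjust the statement or derivation of (2) accordingly. The case $F=\C$ is identical except for the $\epsilon=2$ bookkeeping $|z|_\C=|z|^2$, which I would verify cancels as indicated above.
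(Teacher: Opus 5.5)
Your route is the same as the paper's: write $\phi[\pi_\infty]$ and $\phi[\sigma_\infty]$ as sums of unramified characters ($\phi^0_{\mu_i}$ and $\phi^0_{\nu_0}\oplus\phi^0_{\nu_0-\nu_1}$, or $\varphi_{0,\cdot}$ over $\C$), apply $\wedge^2\otimes{\rm std}_2$ and ${\rm Sym}^2\otimes\omega_{\pi_\infty}$ characterwise, and read off the $\Gamma_F$-factors. Part (1) is fine as you have it. For (2), your result $\Gamma_F(s+2\nu_0+|\mu|)\Gamma_F(s+2\nu_0-\nu_1+|\mu|)\Gamma_F(s+2\nu_0-2\nu_1+|\mu|)$ is correct. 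It is also exactly what the paper's own proof produces: the paper writes ${\rm Sym}^2(\phi[\sigma_\infty])=\phi^0_{2\nu_0}\oplus\phi^0_{2\nu_0-\nu_1}\oplus\phi^0_{2\nu_0-2\nu_1}$ and then twists. The $\nu_0$ in the first factor of the statement is a typo for $2\nu_0$.

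You should commit to your computation instead of hedging, because the explanations you propose cannot account for the discrepancy.
\begin{itemize}
\item Part (1) already pins down the exponents of $\phi[\sigma_\infty]$ as $\nu_0$ and $\nu_0-\nu_1$. Any ${\rm Sym}^2$ therefore has exponents $2\nu_0$, $2\nu_0-\nu_1$, $2\nu_0-2\nu_1$, so no renormalization of $\nu_0$ (Ishii's conventions or otherwise) can produce $\nu_0+|\mu|$ while keeping (1).
\item The relation $2\nu_0-\nu_1=-|\mu|$ does not reconcile the two either. It turns the literal first factor into $\Gamma_F(s+\nu_1-\nu_0)$, whereas the corrected factor becomes $\Gamma_F(s+\nu_1)$.
\item Only the corrected version matches the proof of Theorem \ref{Theorem_Arch_computation_split}, which identifies $L(2s,\sigma_\infty,{\rm Sym}^2\otimes\omega_{\pi_\infty})$ with $\Gamma_F(2s)\Gamma_F(2s+\nu_1)\Gamma_F(2s-\nu_1)$.
\end{itemize}
So nothing in the derivation needs adjusting; only the statement does.

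One small slip: the central character is read off at ${\rm diag}(t,t)$, i.e.\ $a_0=t^2$, $a_1=t^{-1}$, not by setting $a_1=1$. The exponent $2\nu_0-\nu_1$ you state is nonetheless correct.
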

\begin{proof}
\begin{enumerate}
    \item Suppose that $F=\R$. Then, we have 
$$\phi[\pi_\infty] = \oplus_{i=1}^4 \phi_{\mu_i}^0, \, \, \,\;\;\;\;\;\; \phi[\sigma_\infty] = \phi^0_{\nu_0} \oplus \phi^0_{\nu_0-\nu_1}.$$
Hence, \begin{align*}
\wedge^2(\phi[\pi_\infty]) \otimes \phi[\sigma_\infty] &= \left(\bigoplus_{1\leq i< j \leq 4} \phi_{\mu_i+\mu_j}^0 \right) \otimes \left (   \phi^0_{\nu_0} \oplus \phi^0_{\nu_0-\nu_1} \right) \\ 
&= \bigoplus_{1\leq i< j \leq 4}\left( \phi_{\mu_i+\mu_j + \nu_0}^0 \oplus \phi_{\mu_i+\mu_j + \nu_0-\nu_1}^0 \right),
\end{align*}
and 

\begin{align*}
L(s, \pi_\infty \otimes \sigma_\infty, \wedge^2 \otimes {\rm std}_2) &=  \prod_{1\leq i< j \leq 4} L(s,\phi_{\mu_i+\mu_j + \nu_0}^0) \cdot L(s, \phi_{\mu_i+\mu_j + \nu_0-\nu_1}^0) \\ 
&= \prod_{1\leq i< j \leq 4} \Gamma_\R(s +\mu_i+\mu_j + \nu_0) \Gamma_\R(s+\mu_i+\mu_j + \nu_0-\nu_1).
\end{align*}

Similarly, when $F= \C$, we have that $$\phi[\pi_\infty] = \oplus_{i=1}^4 \varphi_{0,\mu_i}, \, \, \,\;\;\;\;\;\; \phi[\sigma_\infty] = \varphi_{0,\nu_0} \oplus \varphi_{0,\nu_0-\nu_1},$$
which shows that
\begin{align*}
L(s, \pi_\infty \otimes \sigma_\infty, \wedge^2 \otimes {\rm std}_2) &=  \prod_{1\leq i< j \leq 4} \Gamma_\C(s +\mu_i+\mu_j + \nu_0) \Gamma_\C(s+\mu_i+\mu_j + \nu_0-\nu_1).
\end{align*}
\item  We show the formula when $F = \R$, with the other case being identical.  Note that $$ {\rm Sym}^2(\phi[\sigma_\infty] ) = \phi^0_{2\nu_0} \oplus \phi^0_{2\nu_0-\nu_1} \oplus \phi^0_{2\nu_0-2\nu_1}.$$
Hence, twisting by $\omega_{\pi_\infty}$ gives the desired formula. 
\end{enumerate}
\end{proof}

\subsection{Proof of Theorem \ref{Theorem_Arch_computation_split}}

Let $\pi_\infty = {\rm Ind}_{B_{\GL_4}(F)}^{\GL_4(F)}(\chi_{\mu})$ and $\sigma_\infty = {\rm Ind}_{B_{\GL_2}(F)}^{\GL_2(F)}(\chi_{\nu})$ be irreducible spherical principal series whose product of central characters is trivial. This condition translates into saying that \begin{align}\label{eq:CC}
    2 \nu_0 - \nu_1 = -| \mu|.
\end{align}

Denote $\H = \GSp_4$. We will evaluate the archimedean local zeta integral at the rapidly decreasing spherical Whittaker functionals $W_{2,\nu}^F$ and $W_{4,\mu}^F$:

\[Z(s, W_{4,\mu}^F, W_{2,\nu,s}^F) =\int_{U_\H(F)Z_\H(F)\setminus \H(F)} W_{2,\nu,s}^F(g)W_{4,\mu}^F(g)dg,\]
where \[W_{2,\nu,s}^F \in {\rm Ind}_{P_{(1,2)}(F)}^{\H(F)}\left((\mathbf{1} \otimes\mathcal{W}_{\sigma_\infty}) \delta_{P_{(1,2)}}^{\tfrac{1}{2}(s-1/2)}\right)\]
is the spherical vector related to $W_{2,\nu}^F$ by \eqref{SphericalWhittakerforKES}. We claim that \[Z(s, W_{4,\mu}^F, W_{2,\nu,s}^F) = \frac{L(s, \pi_\infty \otimes \sigma_\infty, \wedge^2 \otimes  \mathrm{std}_2)}{L(2s, \sigma_\infty, {\rm Sym}^2 \otimes \omega_{\pi_\infty})}. \]
The proof of the claim consists mainly of two steps. In the first one, we write $Z(s, W_{4,\mu}^F, W_{2,\nu,s}^F)$ as an integral of Mellin--Barnes type by means of the explicit formulae for the class one Whittaker functionals  as given in Propositions \ref{WhittakerSphericalForGL4} and \ref{WhittakerSphericalForGL2}. The second step involves the repeated application of Barnes' first and second Lemmas (Lemmas \ref{BarnesFirst} and \ref{BarnesSecond}), together with an auxiliary result due to Stade (Lemma \ref{StadeLemma}). 

\subsubsection{Step 1: Reduction to a Mellin--Barnes integral} 
The Iwasawa decomposition for $\H(F)$, together with the fact that $\pi_\infty$ and $\sigma_\infty$ are spherical, let us write 
\[ Z(s, W_{4,\mu}^F, W_{2,\nu,s}^F) 
 = \int_{\R_{>0}\setminus A^\circ_{\H,\R}}\delta_{B_\H}^{-1}(t) W_{2,\nu,s}^F(t)W_{4,\mu}^F(t)dt,\]
 where $A^\circ_{\H,\R} =\{ {\rm diag}(a_1,a_2,a_0a_2^{-1},a_0a_1^{-1}) \in \H(\R)\,:\, a_0,a_1,a_2 >0  \}$. Here, we are normalizing the Haar measure $dg$ to give volume 1 on the maximal compact subgroup of $\H(F)$.  The quotient $\R_{>0}\setminus A^\circ_{\H,\R}$ can be written as $$\left\{a(y_1 y_2^{1/2},y_2^{1/2}) := {\rm diag}\left ( y_1 y_2^{1/2}, y_2^{1/2},y_2^{-1/2}, y_1^{-1} y_2^{-1/2}\right)\,:\, y_1,y_2 >0 \right\},$$ in which case $$ \delta_{B_\H}^{-1}(a(y_1 y_2^{1/2},y_2^{1/2})) = y_1^{-4\epsilon}y_2^{-3\epsilon}.$$
Thus, \[ Z(s, W_{4,\mu}^F, W_{2,\nu,s}^F) 
 = \int_{\R_{>0}^2}y_1^{-4\epsilon}y_2^{-3\epsilon} W_{2,\nu,s}^F(a(y_1 y_2^{1/2},y_2^{1/2}))W_{4,\mu}^F(a(y_1 y_2^{1/2},y_2^{1/2})) \frac{d y_1}{y_1} \frac{d y_2}{y_2}.\]
By \eqref{SphericalWhittakerforKES}, we have 
\begin{align}
     W_{2,\nu,s}^F(a(y_1 y_2^{1/2},y_2^{1/2})) &= \delta_{P_{(1,2)}}^{\tfrac{1}{2}(s+1/2)}(a(y_1 y_2^{1/2},y_2^{1/2})) \cdot W_{2,\nu}^F({\rm diag}(y_2^{1/2},y_2^{-1/2}))\nonumber \\
     &= \left(y_1 y_2^{1/2}\right)^{(2s+1)\epsilon}W_{2,\nu}^F({\rm diag}(y_2^{1/2},y_2^{-1/2})). \label{eq:formulasphericalarchimedeanwhittinductionQ}
\end{align}
Plugging  \eqref{eq:formulasphericalarchimedeanwhittinductionQ}, we write 
\[ Z(s, W_{4,\mu}^F, W_{2,\nu,s}^F) 
 = \int_{\R_{>0}^2}y_1^{(2s-3)\epsilon}y_2^{(s-5/2)\epsilon} W_{2,\nu}^F({\rm diag}(y_2^{1/2},y_2^{-1/2}))W_{4,\mu}^F(a(y_1 y_2^{1/2},y_2^{1/2})) \frac{d y_1}{y_1} \frac{d y_2}{y_2}.\]
We now use the explicit formulae of $W_{2,\nu}^F$ and $W_{4,\mu}^F$ given in Propositions \ref{WhittakerSphericalForGL4} and \ref{WhittakerSphericalForGL2}: 

\begin{align*}
    Z(s, W_{4,\mu}^F, W_{2,\nu,s}^F) 
 &= \frac{2^\epsilon}{4\pi i}\int_{y_1,y_2} \int_{p_0} y_1^{\epsilon(2s-3)}y_2^{\epsilon(s-2-p_0)} \Gamma_F(p_0 + \tfrac{\nu_1}{2})\Gamma_F(p_0 - \tfrac{\nu_1}{2})W_{4,\mu}^F(a(y_1 y_2^{1/2},y_2^{1/2})) d p_0\, \frac{d y_1}{y_1} \frac{d y_2}{y_2} \\
 &=\frac{2^{2\epsilon}}{(4\pi i)^6}\int_{y_1,y_2}\int_{p_0,p_1,p_2,p_3,q_1,q_2} \!\!\!\! \!\!\!\!y_1^{\epsilon(2s - |\mu|-p_1-p_3)} y_2^{\epsilon(s -\tfrac{|\mu|}{2} -p_0 - p_2)} U_\mu^F(q_1,q_2)\cdot \\
 &\cdot \Gamma_F(p_0 + \tfrac{\nu_1}{2})\Gamma_F(p_0 - \tfrac{\nu_1}{2}) \Gamma_F(p_1 + \mu_1)\Gamma_F(p_1 - q_1)\Gamma_F(p_2 -q_1 + \mu_1) \cdot \\ & \cdot \Gamma_F(p_2 -q_2 - \mu_1)\Gamma_F(p_3 - q_2)\Gamma_F(p_3 + |\mu| - \mu_1) d p_0 d p_1 d p_2 d p_3 dq_1 dq_2 \frac{d y_1}{y_1} \frac{d y_2}{y_2}.
\end{align*}
Firstly, we isolate the integrals over $y_2,p_0$ and $y_1,p_1$. By Mellin inversion Theorem, we have 

\begin{align*}
    \int_{y_2} \int_{p_0}y_2^{\epsilon(s -\tfrac{|\mu|}{2} -p_0 - p_2)}  \Gamma_F(p_0 + \tfrac{\nu_1}{2})\Gamma_F(p_0 - \tfrac{\nu_1}{2}) dp_0 \frac{d y_2}{y_2}  &=  2^{2-\epsilon} (\pi i) \Gamma_F(s -\tfrac{|\mu|}{2} - p_2 + \tfrac{\nu_1}{2})\Gamma_F(s -\tfrac{|\mu|}{2} - p_2 - \tfrac{\nu_1}{2}), \\ 
    \int_{y_1} \int_{p_1} y_1^{\epsilon(2s - |\mu|-p_1-p_3)}  \Gamma_F(p_1 + \mu_1)\Gamma_F(p_1 - q_1) dp_1 \frac{d y_1}{y_1}  &=  2^{2-\epsilon} (\pi i) \Gamma_F(2s -|\mu| - p_3 + \mu_1)\Gamma_F(2s -|\mu| - p_3-q_1).
\end{align*} 
Hence $Z(s, W_{4,\mu}^F, W_{2,\nu,s}^F) $ becomes

\begin{align}\label{eq:final_step_1}
   &\frac{1}{(4\pi i)^4}\int_{p_2,p_3,q_1,q_2} \!\!\!\! \!\!\!\! U_\mu^F(q_1,q_2) \Gamma_F(s -\tfrac{|\mu|}{2} - p_2 + \tfrac{\nu_1}{2})\Gamma_F(s -\tfrac{|\mu|}{2} - p_2 - \tfrac{\nu_1}{2})\Gamma_F(2s -|\mu| - p_3 + \mu_1)\cdot \nonumber \\
 &\cdot \Gamma_F(2s -|\mu| - p_3-q_1) \Gamma_F(p_2 -q_1 + \mu_1)  \Gamma_F(p_2 -q_2 - \mu_1)\Gamma_F(p_3 - q_2)\Gamma_F(p_3 + |\mu| - \mu_1) d p_2 d p_3 dq_1 dq_2.
\end{align}

\subsubsection{Step 2: Connection to $L$-factors}

In the following, we solve the integral \eqref{eq:final_step_1} explicitly. Firstly, we apply Barnes' first Lemma for the integrals over $p_2$ and $p_3$. By Lemma \ref{BarnesFirst}, we have
\small
\begin{align*}
    \frac{1}{4 \pi i} \int_{p_2}\Gamma_F(p_2 -q_1 + \mu_1)  \Gamma_F(p_2 -q_2 - \mu_1)\Gamma_F(s -\tfrac{|\mu|}{2} - p_2 + \tfrac{\nu_1}{2})\Gamma_F(s -\tfrac{|\mu|}{2} - p_2 - \tfrac{\nu_1}{2}) d p_2  =\frac{A_1(s,q_1,q_2)}{\Gamma_F(2s - |\mu| - q_1 -q_2)},
\end{align*}
\normalsize
where we have denoted $$A_1(s,q_1,q_2) = \Gamma_F(s - \tfrac{|\mu|}{2} + \tfrac{\nu_1}{2} - q_1 + \mu_1)\Gamma_F(s - \tfrac{|\mu|}{2} - \tfrac{\nu_1}{2} - q_1 + \mu_1)\Gamma_F(s - \tfrac{|\mu|}{2} + \tfrac{\nu_1}{2} - q_2 - \mu_1)\Gamma_F(s - \tfrac{|\mu|}{2} - \tfrac{\nu_1}{2} - q_2 - \mu_1),$$
and 
\small
\begin{align*}
    \frac{1}{4 \pi i} \int_{p_3}&\Gamma_F(p_3 - q_2)\Gamma_F(p_3 + |\mu| - \mu_1)\Gamma_F(2s -|\mu| - p_3 + \mu_1) \Gamma_F(2s -|\mu| - p_3-q_1)d p_3 = \frac{\Gamma_F(2s)A_2(s,q_1,q_2)}{\Gamma_F(4s - |\mu| - q_1 -q_2)},
\end{align*} 
\normalsize
where $$A_2(s,q_1,q_2) = \Gamma_F(2s - |\mu| + \mu_1 - q_2)\Gamma_F(2s - |\mu| - q_1 - q_2)\Gamma_F(2s -  q_1 - \mu_1).$$

\noindent These two equalities let us write $Z(s, W_{4,\mu}^F, W_{2,\nu,s}^F) $ as 
\begin{align}\label{eq:intermediate_step_archComp_afterB1L}
   &\frac{\Gamma_F(2s)}{(4\pi i)^2}\int_{q_1,q_2}  \frac{U_\mu^F(q_1,q_2) A_1(s,q_1,q_2)A_2(s,q_1,q_2)}{\Gamma_F(2s - |\mu| - q_1 -q_2)\Gamma_F(4s - |\mu| - q_1 -q_2)}  dq_1 dq_2.
\end{align}
Recall that $$U_\mu^F(q_1,q_2) = \frac{\prod_{i=2}^4 \Gamma_F(q_1 +\mu_i)\Gamma_F(q_2+|\mu|-\mu_i)}{\Gamma_F(q_1 + q_2 +|\mu|)}.$$

In order to apply Barnes' second Lemma for the two remaining integrals, we use Lemma \ref{StadeLemma} for the integral over the variable $q_1$, with  
    \begin{align*}
        a_1 &= \mu_2,a_2 = \mu_3,a_3=\mu_4,a_4= q_2 + |\mu|, \\  b_1 &= 2s-\mu_1,b_2 = s - \tfrac{\mu}{2} + \tfrac{\nu}{2} + \mu_1 ,b_3 = s - \tfrac{\mu}{2} - \tfrac{\nu}{2} + \mu_1 ,b_4 = 4s-q_2-|\mu|,  
    \end{align*} 
to write \eqref{eq:intermediate_step_archComp_afterB1L} as 
\small
\begin{align*}
    \frac{ \Gamma_F(2s)\Gamma_F(s + \mu_1 + \mu_2 - \tfrac{|\mu|}{2}-\tfrac{\nu_1}{2})\Gamma_F(s + \mu_1 + \mu_3 - \tfrac{|\mu|}{2}-\tfrac{\nu_1}{2})\Gamma_F(2s-\mu_1 + \mu_4)\Gamma_F(s + \mu_1 + \mu_4 - \tfrac{|\mu|}{2}+\tfrac{\nu_1}{2})}{(4\pi i)^2} \cdot \\ \int_{q_1,q_2} \frac{\Gamma_F(q_2+|\mu|-\mu_4)\Gamma_F(q_2 - q_1 + \mu_1 +\mu_4) \Gamma_F(-q_2+s - \tfrac{|\mu|}{2} - \mu_1 +\tfrac{\nu_1}{2})\Gamma_F(-q_2+s - \tfrac{|\mu|}{2} - \mu_1 -\tfrac{\nu_1}{2})}{\Gamma_F(-q_2 + 3s -  \tfrac{|\mu|}{2} - \mu_1 -\tfrac{\nu_1}{2})} \cdot \\ \frac{\Gamma_F(-q_2 + q_1 + s - \tfrac{|\mu|}{2} - \tfrac{\nu_1}{2}) \Gamma_F(q_1 + \mu_2)  \Gamma_F(q_1 + \mu_3)\Gamma_F(-q_1 + 2s - \mu_1) \Gamma_F(-q_1 + s -  \tfrac{|\mu|}{2} + \mu_1 +\tfrac{\nu_1}{2}) }{\Gamma_F(q_1 + s -  \tfrac{|\mu|}{2} + \mu_1 + \mu_2 + \mu_3 -\tfrac{\nu_1}{2})\Gamma_F(-q_1+ 3s - \tfrac{|\mu|}{2} + \tfrac{\nu_1}{2} + \mu_4)} dq_1 dq_2.
\end{align*}
\normalsize

We now apply Barnes' second Lemma (Lemma \ref{BarnesSecond}) to the integral over $q_2$, where
$$a_1= |\mu|-\mu_4, a_2=- q_1 + \mu_1 +\mu_4,b_1=s - \tfrac{|\mu|}{2} - \mu_1 +\tfrac{\nu_1}{2},b_2=s - \tfrac{|\mu|}{2} - \mu_1 -\tfrac{\nu_1}{2},b_3 = q_1 + s - \tfrac{|\mu|}{2} - \tfrac{\nu_1}{2}.$$
We can apply the Lemma as their sum equals to $ 3s -  \tfrac{|\mu|}{2} - \mu_1 -\tfrac{\nu_1}{2}$. This shows that \eqref{eq:intermediate_step_archComp_afterB1L} equals to

\small 
\begin{align}\label{eq:almost_final_step_arch_spherical}
    \tfrac{\Gamma_F(2s-\mu_1 + \mu_4)}{\Gamma_F(2s - \nu_1)}  \prod_{i=2}^3 \Gamma_F(s +  \tfrac{|\mu|}{2}- \mu_i - \mu_4 - \tfrac{\nu_1}{2}) \prod_{j=1}^2 \Gamma_F(s + \tfrac{|\mu|}{2} -  \mu_j - \mu_{5-j} -\tfrac{\nu_1}{2})\Gamma_F(s +  \tfrac{|\mu|}{2} - \mu_j -\mu_{5-j}+\tfrac{\nu_1}{2}) I_{q_1}(s), 
\end{align}\normalsize
where 
\begin{align*}
    I_{q_1}(s) = \frac{1}{4 \pi i}\int_{q_1} \tfrac{\Gamma_F(q_1+ \mu_2)\Gamma_F(q_1+ \mu_3)\Gamma_F(-q_1+s - \tfrac{|\mu|}{2} + \mu_1 + \tfrac{\nu_1}{2})\Gamma_F(-q_1+s - \tfrac{|\mu|}{2} +  \mu_4 - \tfrac{\nu_1}{2})\Gamma_F(-q_1+s - \tfrac{|\mu|}{2} +  \mu_4 + \tfrac{\nu_1}{2})}{\Gamma_F(-q_1 +3s -\tfrac{|\mu|}{2} + \tfrac{\nu_1}{2} + \mu_4)}  dq_1.
\end{align*}
We can (and do) apply Barnes' second lemma (Lemma \ref{BarnesSecond}) to solve $I_{q_1}(s)$: 
\small 
\begin{align*}
    I_{q_1}(s) = \frac{\prod_{i=2}^3\Gamma_F(s + \tfrac{|\mu|}{2} - \mu_i - \mu_4 + \tfrac{\nu_1}{2}) \prod_{j=2}^3\Gamma_F(s + \tfrac{|\mu|}{2} - \mu_1 - \mu_j - \tfrac{\nu_1}{2})\Gamma_F(s + \tfrac{|\mu|}{2} - \mu_1 - \mu_j + \tfrac{\nu_1}{2}) }{\Gamma_F(2s)\Gamma_F(2s + \nu_1)\Gamma_F(2s + \mu_4 - \mu_1)}.
\end{align*}
 \normalsize

 Hence,
 \begin{align}
     \text{\eqref{eq:almost_final_step_arch_spherical}} &= \frac{\prod_{1 \leq i <j\leq 4}\Gamma_F(s + \tfrac{|\mu|}{2} - \mu_i - \mu_j + \tfrac{\nu_1}{2})\Gamma_F(s + \tfrac{|\mu|}{2} - \mu_i - \mu_j - \tfrac{\nu_1}{2})}{\Gamma_F(2s)\Gamma_F(2s + \nu_1)\Gamma_F(2s - \nu_1)} \nonumber \\ 
     &= \frac{\prod_{1 \leq i <j\leq 4}\Gamma_F(s - \tfrac{|\mu|}{2} + \mu_i + \mu_j + \tfrac{\nu_1}{2})\Gamma_F(s - \tfrac{|\mu|}{2} + \mu_i + \mu_j - \tfrac{\nu_1}{2})}{\Gamma_F(2s)\Gamma_F(2s + \nu_1)\Gamma_F(2s - \nu_1)}. \label{eq:final_before_Lvalue_arch_spher}
 \end{align}
This is the desired ratio of $L$-factors. Indeed, the central character condition \eqref{eq:CC} and Lemma \ref{L_factor_for_spherical_arch} give 

 \begin{align*}
     \text{\eqref{eq:final_before_Lvalue_arch_spher}} &= \frac{\prod_{1 \leq i <j\leq 4}\Gamma_F(s + \mu_i + \mu_j +\nu_0)\Gamma_F(s  + \mu_i + \mu_j +\nu_0 - \nu_1)}{\Gamma_F(2s)\Gamma_F(2s + \nu_1)\Gamma_F(2s - \nu_1)}  \\
     &=  \frac{L(s, \pi_\infty \otimes \sigma_\infty, \wedge^2 \otimes {\rm std}_2)}{L(2s, \sigma_\infty, {\rm Sym}^2 \otimes \omega_{\pi_\infty})}, 
 \end{align*}
concluding the proof of Theorem \ref{Theorem_Arch_computation_split}.

\section{A note on Garrett's pullback formula}

In what follows, we extend Garrett's pullback formula for $\SL_2\times \Sp_4\hookrightarrow \Sp_6$ to the corresponding similitude groups and derive an alternative expression of our zeta integral involving degenerate Siegel Eisenstein series on $\GSp_6$. This will then be used to apply a Siegel-Weil formula of Kudla--Rallis and give a period relation for the central value of $L^S(s,\pi \otimes \sigma, \wedge^2 \otimes  \mathrm{std}_2)$ as well as to study the poles of this $L$-function. 

\subsection{The pullback formula of Garrett}\label{Section:Garrett:Pullback}
Garrett's pullback formula (\textit{cf.}  \cite[Theorem, p. 255]{GarrettIntegralRepEisenstein})  connects non-degenerate Klingen-type Eisenstein series on $\Sp_{2m}$ to the integral over $\Sp_{2n}$ of a degenerate Siegel Eisenstein series on $\Sp_{2(m+n)}$ against a cusp form on $\Sp_{2n}$. In what follows, we discuss how this formula extends to a relation between Eisenstein series for the corresponding similitude groups in the case when $m=2$ and $n=1$. This will allow us to relate the normalized Eisenstein series $E^*_{P_{(1,2)}}(g, s, f_s)$, as defined in \eqref{KES}, to an integral of the normalized degenerate Siegel Eisenstein series on $\GSp_6$. 

Recall that $W_6$ denotes the standard representation of $\GSp_6$ with basis 
  $\{e_1,e_2,e_3,f_3,f_2,f_1\}$; the decomposition of $W_6 = W_2 \oplus W_4$, with $W_2:=\langle e_1,f_1\rangle$ and $W_4:=\langle e_2,e_3, f_3, f_2\rangle$ induces an embedding of $\GL_2\boxtimes \GSp_4 = \{(h_1,h_2)\in \GL_2\times \GSp_4\,:\, {\rm det}(h_1) = \nu(h_2)\}$ into $\GSp_6$: 
\begin{align*}
    \iota:\GL_2\boxtimes \GSp_4 &\hookrightarrow \GSp_6,\\
    \left(\begin{smallmatrix}a&b\\c&d\end{smallmatrix}\right)\times g&\mapsto \left(\begin{smallmatrix}
        a& &b\\ &g& \\c& &d
    \end{smallmatrix}\right).
\end{align*}
We also denote by $\iota$ the corresponding embedding of  $\SL_2\times \Sp_4$ into $\Sp_6$.
For $n \in \{ 1,2,3 \}$, we recall that $P_{(n,0)}$, resp. $P_{(n,0)}^\circ$, denote the standard Siegel parabolic subgroup of $\GSp_{2n}$, resp. $\Sp_{2n}$. The associated flag variety parametrizes maximal isotropic subspaces of $W_{2n}$. We start by recalling the computation of the double coset \[P_{(3,0)}^\circ(F) \backslash \Sp_6(F) / (\SL_2\times \Sp_4)(F).  \]
 Let \begin{equation}\label{openorbitelement}\xi := \left(\begin{smallmatrix}
      1&&&&&\\&1&&&&\\&&1 &&& \\1&&&1&&\\&0&&&1&\\&&1&&&1
\end{smallmatrix}\right).\end{equation}

 \begin{lemma}\label{lemmaondoublecosetsimple} The double quotient 
     $P_{(3,0)}^\circ(F) \backslash \Sp_6(F) / (\SL_2\times \Sp_4)(F)$ consists of two elements $\{I_6,\xi\}$, with corresponding flag and stabilizer as follows: \begin{enumerate}
         \item  $P_{(3,0)}^\circ(F) \cdot I_6$ corresponds to the flag $\langle f_1,f_2,f_3 \rangle$ and has stabilizer given by $P_{(1,0)}^\circ(F) \times P_{(2,0)}^\circ(F)$;
         \item $P_{(3,0)}^\circ(F) \cdot \xi$ corresponds to the flag $\langle e_1 + f_3, e_3 + f_1, f_2 \rangle$ and has stabilizer in  $(\SL_2\times \Sp_4)(F)$ given by \[S_\xi : = \left \{ \left(\begin{smallmatrix}a & b \\ c & d\end{smallmatrix}\right) \times \left(\begin{smallmatrix}
      \alpha & &&\\&d&c&\\&b&a & \\&&&\alpha^{-1}
\end{smallmatrix}\right)\,:\, \alpha \in \GL_1(F), \left(\begin{smallmatrix}a & b \\ c & d\end{smallmatrix}\right) \in \SL_2(F) \right \} \cdot \left(\{ I_2\} \times U_{(1,2)}(F)\right). \]
     \end{enumerate}
 \end{lemma}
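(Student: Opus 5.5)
The plan is to identify $P_{(3,0)}^\circ(F)\backslash \Sp_6(F)$ with the variety of Lagrangian subspaces $L\subset W_6$. With the convention that $P_{(3,0)}^\circ$ stabilizes $\langle e_1,e_2,e_3\rangle$ under the right action on row vectors, the coset of $g$ corresponds to the Lagrangian $L_g$ attached to $g$, and the base point is $\langle f_1,f_2,f_3\rangle$ in the normalization of the statement. The double cosets are then the $(\SL_2\times\Sp_4)(F)$-orbits on Lagrangians, and we classify them using the decomposition $W_6=W_2\oplus W_4$.

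For a Lagrangian $L$, let $p_2,p_4$ be the projections to $W_2$ and $W_4$, and set $L_2:=L\cap W_2$. Isotropy forces $L_2^{\perp_{W_2}}\supseteq p_2(L)$, so $\dim L_2\le 1$, and $\dim L_2=1$ if and only if $p_2(L)=L_2$. This invariant $\dim L_2\in\{0,1\}$ is preserved by $\SL_2\times\Sp_4$, which gives the two cases.

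\textbf{Case $\dim L_2=1$.} Here $L=L_2\oplus (L\cap W_4)$, and $L\cap W_4$ is Lagrangian in $W_4$. The group $\SL_2$ is transitive on lines in $W_2$ and $\Sp_4$ is transitive on Lagrangians in $W_4$ (Witt's theorem), so we get a single orbit, that of $\langle f_1\rangle\oplus\langle f_2,f_3\rangle$, i.e. the identity coset. Its stabilizer is the product of the stabilizers of $\langle f_1\rangle$ and $\langle f_2,f_3\rangle$, namely $P^\circ_{(1,0)}(F)\times P^\circ_{(2,0)}(F)$.

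\textbf{Case $\dim L_2=0$.} Now $p_2:L\to W_2$ is surjective with kernel $L\cap W_4=:\ell$ of dimension $1$, and $p_4$ is injective. Hence $L$ is the graph of a map from $W_2$ to $p_4(L)\subseteq \ell^{\perp}/\ell$, determined by an isomorphism $\phi:W_2\to \ell^\perp/\ell$ (a $2$-dimensional symplectic space) with $\langle \phi x,\phi y\rangle=-\langle x,y\rangle$. The group $\Sp_4$ is transitive on lines $\ell$, and the stabilizer of $\ell$ surjects onto $\Sp(\ell^\perp/\ell)\cong\SL_2$. Any two anti-symplectic isomorphisms differ by an element of $\SL_2$, acting on either side, so this case is again a single orbit. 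The element $\xi$ realizes it: its flag $\langle e_1+f_3,\,e_3+f_1,\,f_2\rangle$ has $\ell=\langle f_2\rangle$ and $\phi$ sending $e_1\mapsto f_3$ and $f_1\mapsto e_3$, which we check is anti-symplectic with the paper's form $J_6$, this sign being the main point to verify.

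For the stabilizer $S_\xi$, an element $(h,g)$ fixes $L$ if and only if $g$ stabilizes $\ell=\langle f_2\rangle$, i.e. $g\in P_{(1,2)}$ up to the conjugation by the ordering of the basis, and the induced map $\bar g\in\SL(\ell^\perp/\ell)$ satisfies $\bar g\circ\phi=\phi\circ h$. Writing $g=m(\alpha,r)u$ with Levi component $m(\alpha,r)$, $\alpha\in\GL_1$, $r\in\SL_2$, and $u\in U_{(1,2)}$, the unipotent part $u$ acts trivially on $\ell^\perp/\ell$ and is free. The element $\alpha$ is free as well, and $r=\phi h\phi^{-1}$. In the basis $(e_3,f_3)$ of $\ell^\perp/\ell$, the map $\phi$ swaps the coordinates, which turns $h=\left(\begin{smallmatrix}a&b\\c&d\end{smallmatrix}\right)$ into the middle block $\left(\begin{smallmatrix}d&c\\b&a\end{smallmatrix}\right)$. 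This gives exactly the stated group $S_\xi$. The expected obstacle is only bookkeeping of conventions (row versus column vectors, and the sign placement in $J_6$), so that the block is $\left(\begin{smallmatrix}d&c\\b&a\end{smallmatrix}\right)$ and not a signed variant.
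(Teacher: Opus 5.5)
Your proposal is correct and follows the same route as the paper's proof: you split Lagrangians $L$ according to $\dim(L\cap W_2)\in\{0,1\}$ and use transitivity of $\SL_2\times\Sp_4$ in each case. Your description of the generic orbit as the graph of an anti-symplectic isomorphism $W_2\to \ell^\perp/\ell$ just makes explicit the paper's normalization ($v_3=f_2$, $v_1^4=f_3$, $v_2^4=e_3$), together with the stabilizer check $\bar g=h^w$ that the paper calls immediate; the one slip is in your opening sentence, since under the right action on row vectors the upper-triangular $P^\circ_{(3,0)}$ stabilizes $\langle f_1,f_2,f_3\rangle$ (it stabilizes $\langle e_1,e_2,e_3\rangle$ only for the left action on columns), which is precisely why the base point and the flag of $\xi$ come out as stated.
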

\begin{proof}
Recall that any flag in $P_{(3,0)}^\circ(F) \backslash \Sp_6(F)$ is given by $0 \subset S$, with $S=\langle v_1,v_2,v_3\rangle$ a maximal isotropic subspace of $W_6$. The intersection of $S$ with the standard representation $W_2$ of $\SL_2$ has dimension either 0 or 1. We look at the two cases separately. If ${\rm dim}\, S \cap W_2 = 1$, then, as $S$ is isotropic, we have that ${\rm dim}\, S \cap W_4 = 2$ and so $S \cap W_4$ defines a maximal isotropic subspace of $W_4$. We can then use the $\SL_2 \times \Sp_4$ to send $\langle v_1,v_2,v_3\rangle$ to $\langle f_1,f_2,f_3\rangle$. This implies that the set of flags $0 \subset S$ for which ${\rm dim}\, S \cap W_2 = 1$ constitutes the identity orbit $P_{(3,0)}^\circ(F) \cdot I_6 $ with stabilizer given by $P_{(3,0)}^\circ(F) \cap (\SL_2 \times \Sp_4)(F) = P_{(1,0)}^\circ(F) \times P_{(2,0)}^\circ(F)$. 
Now, suppose that ${\rm dim}\, S \cap W_2 = 0$, which implies that ${\rm dim}\, S \cap W_4 = 1$. Up to rescaling the basis, we can assume that $v_3 \in W_4$, while, if we write $v_i^j$ for the component of $v_i$ in $W_j$, we have that $v_1^2,v_2^2$ are linearly independent. Using the $\SL_2(F)$-action, we can assume $v_1^2=e_1,v_2^2=f_1$. Since the symplectic pairing has to map $v_1^4,v_2^4$ to $-1$, using the $\Sp_4(F)$-action, we can then assume that $v_3 = f_2$ and $v_1^4=f_3$, $v_2^4=e_3$. Thus there is a single $(\SL_2 \times \Sp_4)(F)$-orbit of flags for which ${\rm dim}\, S \cap W_2 = 0$, with a generator given by the flag $0 \subset \langle e_1 + f_3, e_3 + f_1, f_2 \rangle$. This flag corresponds to the matrix $\xi$ and it's  immediate to check that its stabilizer is of the prescribed shape.
\end{proof}

For any cuspidal automorphic representation $\sigma \subset \mathcal{A}_{\rm cusp}([\GL_2])$ with central character $\omega_\sigma$, for a Hecke character $\chi$ of $F^\times \backslash \A^\times$, and $s \in \C$, recall that 
$I_{P_{(1,2)}}(s,\sigma, \chi)$ denotes the $\GSp_4(\A)$-representation introduced in \S \ref{Section:Klingen:Eisenstein}. Given $f_s \in I_{P_{(1,2)}}(s,\sigma,\chi)$, we denote by $E_{P_{(1,2)}}(g,s,f_s)$ the unnormalized Klingen Eisenstein series associated to  $f_s$, \textit{i.e.} $$  E_{P_{(1,2)}}(g,s, f_s) ={\sum_{\gamma \in {P_{(1,2)}}(F) \backslash \H(F)}} f_s(\gamma g).$$
Similarly, let  $I_{P_{(3,0)}}\left(s , \chi\omega_\sigma \right)$ be the induced representation of $\GSp_{6}(\A)$ consisting of smooth functions $\phi_{s}:\GSp_{6}(\A)\to\C$ which satisfy
\begin{equation}\label{induction:Siegel}\phi_{s}(n m(h,\mu) g) =\chi\omega_\sigma(\mathrm{det}(h))(\chi\omega_\sigma)^{-2}(\mu)\delta_{P_{(3,0)}}^{\frac{1}{2}(s+1/2)}(m(h,\mu))\phi_{s}(g),\end{equation}
for each unipotent element $n \in U_{(3,0)}(\A)$, each $m(h,\mu) \in M_{(3,0)}(\A) \simeq \GL_3(\A) \times \GL_1(\A)$, and $g \in \GSp_6(\A)$; here $\delta_{P_{(3,0)}}$ is the modulus character of $P_{(3,0)}$: 
\[\delta_{P_{(3,0)}}: m(h,\mu) = \left( \begin{smallmatrix} h & \\ & \mu \cdot I_3' {}^th^{-1} I_3'
\end{smallmatrix} \right)     \mapsto |{\rm det}(h)|^4 |\mu|^{-6}. 
 \]
For any $\phi_s\in I_{P_{(3,0)}}\left(s , \chi\omega_\sigma \right)$, we denote the corresponding Siegel Eisenstein series by
\[E_{P_{(3,0)}}\left(g,s,\phi_s\right) ={\sum_{\gamma \in {P_{(3,0)}}(F) \backslash \GSp_6(F)}} \phi_s(\gamma g).\] 
Let $w= I_2' = \left( \begin{smallmatrix}
    &1 \\ 1 & 
\end{smallmatrix}\right)$ denote the non-trivial Weyl element for $\GL_2$ and, for any cusp form $\Psi$ of $\GL_2$, we let $\Psi^w(g):= \Psi(w g w)$. Given $\phi_s \in I_{P_{(3,0)}}(s ,\chi \omega_\sigma)$ and $\Psi \in \sigma$, set
$$ p_\Psi(\phi_s)(g_2) := \int_{Z_{\SL_2}(\A)\backslash \SL_2(\A)}\phi_s(\xi\iota(h_{\nu(g_2)} g_1,g_2))\Psi^w(h_{\nu(g_2)} g_1)dg_1,$$
where for each $g_2 \in \GSp_4(\A)$ we let $h_{\nu(g_2)} \in \GL_2(\A)$ denote any element such that ${\rm det}(h_{\nu(g_2)}) = {\nu(g_2)}$. For ${\rm Re}(s)$ big enough, the integral converges absolutely. Moreover, it is a direct calculation to check that $p_\Psi(\phi_s)$ is an element of $I_{P_{(1,2)}}(s,\sigma,\chi)$ and that its definition does not depend on the choice of $h_{\nu(g_2)} \in \GL_2(\A)$. Thus, for ${\rm Re}(s)>>0$, it defines a map
\begin{align}
    p_\Psi:I_{P_{(3,0)}}(s ,\chi \omega_\sigma) & \to I_{P_{(1,2)}}(s,\sigma,\chi),\label{PullbackMapRepresentation}\\ 
    \phi_s&\mapsto p_\Psi(\phi_s)(g_2).\nonumber
\end{align}

\begin{theorem}\label{PullbackFormulaNew} For any Hecke character $\chi$ of $F^\times \backslash \A^\times$, any cuspidal automorphic representation $\sigma$ of $\GL_2$ with cusp form $\Psi$ in $\sigma$, and a holomorphic section $\phi_s = \otimes_v \phi_{v,s} \in I_{P_{(3,0)}}(s,\chi \omega_\sigma)$, we have the following. \begin{enumerate}
    \item  There exists a section $f^{\Psi}_{\phi_s}\in I_{P_{(1,2)}}(s,\sigma,\chi)$, satisfying the equality $f^{\Psi}_{\phi_s} = p_{\Psi}(\phi_s)$ for ${\rm Re}(s)$ large enough, such that 
    \begin{align}\label{eq:basic_pullback}
        \int_{[Z_{\SL_2}\backslash \SL_2]} E_{P_{(3,0)}}(\iota( h_{\nu(g_2)} g_1  , g_2), s,\phi_s) \Psi^w( h_{\nu(g_2)} g_1  ) d g_1 = E_{P_{(1,2)}}(g_2,s,f^{\Psi}_{\phi_s}).
    \end{align}  
    \item  Let $S$ be a sufficiently large set of places containing the archimedean places of $F$ as well as the ramified places for $\Psi$ and $\phi_s$.  Suppose that $\phi_s$ is normalized so that $\phi_{v,s}(1)=1$ when $v \not \in S$. For ${\rm Re}(s)$ large enough, we have 
    \[ p_\Psi(\phi_s)(1) = \frac{L^S(2s,\sigma ,\mathrm{Sym}^2\otimes \chi)}{L^S(2s+1,\chi\omega_{\sigma})L^S(4s,\chi^2\omega_{\sigma}^2)} \int_{Z_{\SL_2}(\A_S)\backslash \SL_2(\A_S)}\phi_{S,s}(\xi\iota( g_1,1))\Psi^w(g_1)dg_1, \]
where $\A_S = \prod_{v \in S} F_v$ and $\phi_{S,s} = \otimes_{v \in S} \phi_{v,s}$. 
\end{enumerate}
\end{theorem}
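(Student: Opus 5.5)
We follow Garrett's original argument, adapted to similitudes. For part (1), first take ${\rm Re}(s)$ large so that everything converges absolutely. Unfold the Siegel Eisenstein series on $\GSp_6$ along the double coset decomposition $P_{(3,0)}(F)\backslash \GSp_6(F)/\iota(\GL_2\boxtimes\GSp_4)(F)$. Since $\nu$ restricted to $\iota(\GL_2\boxtimes\GSp_4)$ is surjective onto $\GL_1$, and $P_{(3,0)}$ contains the similitude torus, this reduces to the symplectic double cosets of Lemma \ref{lemmaondoublecosetsimple}. The two representatives are $I_6$ and $\xi$.

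Next we show that the identity orbit contributes zero. Its stabilizer contains $P^\circ_{(1,0)}\times P^\circ_{(2,0)}$, so its contribution is a sum over $P_{(2,0)}(F)\backslash\GSp_4(F)$ of integrals over $B_{\SL_2}(F)\backslash\SL_2(\A)$. Each such integral factors through the constant term of $\Psi^w$ along the unipotent radical of $B_{\GL_2}$. Since the section is left invariant under $U_{(1,0)}$, that constant term vanishes by cuspidality.

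For the orbit of $\xi$, we write the $\SL_2\times\Sp_4$-orbit sum as a sum over $S_\xi(F)\backslash(\SL_2\times\Sp_4)(F)$. The projection of $S_\xi$ to $\Sp_4$ is $P^\circ_{(1,2)}(F)$, and that projection is an isomorphism onto it (the $\SL_2$-component is read off from the middle block). So the sum becomes a sum over $P_{(1,2)}(F)\backslash\GSp_4(F)$ of $\gamma$, with an $\SL_2(F)$ sum absorbed into the integral over $[\SL_2]$. After the change of variables $g_1\mapsto$ (middle-block twist)$\cdot g_1$, which is why $\Psi^w$ rather than $\Psi$ appears, we obtain
\[\sum_{\gamma} \int_{Z_{\SL_2}(\A)\backslash\SL_2(\A)} \phi_s(\xi\iota(h g_1,\gamma g_2))\Psi^w(hg_1)\,dg_1 = E_{P_{(1,2)}}(g_2,s,p_\Psi(\phi_s)).\]
We must also check that $p_\Psi(\phi_s)$ has the right equivariance. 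Under the Levi $M_{(1,2)}$, the $\GL_2$-block acts through $\Psi$ by the stabilizer computation, and the character $(\chi\omega_\sigma)^{-1}(d)\delta^{(s+1/2)/2}$ comes from evaluating $\delta_{P_{(3,0)}}$ on $\xi\iota(\cdot)\xi^{-1}$. We must also check independence of the choice of $h_{\nu(g_2)}$. Both checks are routine.

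For ${\rm Re}(s)$ large we then set $f^\Psi_{\phi_s}=p_\Psi(\phi_s)$. Its meromorphic continuation to all $s$ comes from part (2): the unramified factor is an explicit ratio of $L$-functions, and the ramified integrals are finite products of local integrals, each with meromorphic continuation. Both sides of \eqref{eq:basic_pullback} are meromorphic, so the identity holds everywhere by uniqueness of continuation.

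Part (2) is the main obstacle: the local unramified computation at $v\notin S$. We would factor $p_\Psi(\phi_s)(1)$ as a product of local integrals $\int_{Z\backslash\SL_2(F_v)}\phi_{v,s}(\xi\iota(g_1,1))W(g_1)\,dg_1$. To do this, replace $\Psi^w$ by its Whittaker expansion, or better use the uniqueness of $\SL_2(F_v)$-invariant pairings, so that the local integral is a matrix-coefficient integral against the spherical vector. Using the Iwasawa decomposition $g_1=n(x)t(a)k$, compute $\phi_{v,s}(\xi\iota(n(x)t(a),1))$ explicitly by performing a Bruhat-type factorization of $\xi\iota(n(x)t(a))$ into $P_{(3,0)}$ times $\GSp_6(\mathcal{O}_v)$. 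This gives a function of $|a|$ and $\max(1,|x|)$. Integrating against the spherical matrix coefficient of $\sigma_v$, via Macdonald's formula, yields a doubling-type local integral, whose value is the standard result
\[\frac{L(2s,\sigma_v,{\rm Sym}^2\otimes\chi_v)}{L(2s+1,\chi_v\omega_{\sigma_v})\,L(4s,\chi_v^2\omega_{\sigma_v}^2)}.\]
This is the doubling method for $\SL_2$ with twist, as in Piatetski-Shapiro--Rallis, and we can cite it after matching the normalizations. Tracking the exact shifts in $s$ through $\delta_{P_{(3,0)}}$ and $\xi$ is where we expect errors.
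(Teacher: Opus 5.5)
Your proposal is correct and follows essentially the same route as the paper. Both arguments unfold along the two double cosets $\{I_6,\xi\}$ of Lemma \ref{lemmaondoublecosetsimple}, kill the identity orbit by cuspidality of $\Psi^w$, identify $S_\xi\backslash(\SL_2\times\Sp_4)(F)$ with $\SL_2(F)\times P^\circ_{(1,2)}(F)\backslash\Sp_4(F)$, and reduce part (2) to the unramified doubling-type integral for $\SL_2$. The one difference is in evaluating that local integral: the paper shows $g\mapsto\phi_{v,s}(\xi\iota(g,1))$ is bi-$\SL_2(\mathcal{O}_v)$-invariant, finds its value $(\chi_v\omega_{\sigma_v})(\varpi_v^n)q_v^{-n(2s+1)}$ on the Cartan representatives $\mathrm{diag}(\varpi_v^n,\varpi_v^{-n})$, and applies B\"ocherer's identity, which is cleaner than your Iwasawa-coordinate route.
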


\begin{proof} Let us sketch the proof of the statement, which closely follows \cite{GarrettIntegralRepEisenstein} (see also \cite{PitaleSchmidt}). Fix $g_2 \in \GSp_4(\A)$ and let $h_{\nu(g_2)} \in \GL_2(\A)$ be an element such that  ${\rm det}(h_{\nu(g_2)}) = {\nu(g_2)}$. For ${\rm Re}(s)$ large enough, after unfolding the Siegel Eisenstein series, Lemma \ref{lemmaondoublecosetsimple} let us write \begin{align*}
    \int_{[Z_{\SL_2}\backslash \SL_2]} E_{P_{(3,0)}}(\iota( h_{\nu(g_2)} &g_1  , g_2), s,\phi_s) \Psi^w( h_{\nu(g_2)} g_1  ) d g_1 \\ 
    &= \int_{[Z_{\SL_2}\backslash \SL_2]} {\sum_{\gamma \in (P_{(1,0)}^\circ(F) \times P_{(2,0)}^\circ(F))\backslash (\SL_2\times \Sp_4)(F)}} \phi_s(\gamma \iota( h_{\nu(g_2)} g_1  , g_2))\Psi^w( h_{\nu(g_2)} g_1  ) d g_1 \\ &+ \int_{[Z_{\SL_2}\backslash \SL_2]} {\sum_{\gamma \in S_\xi \backslash (\SL_2\times  \Sp_4)(F)}} \phi_s(\xi \gamma \iota( h_{\nu(g_2)} g_1  , g_2))\Psi^w( h_{\nu(g_2)} g_1  ) d g_1.
\end{align*}
We analyze each integral separately. The first one equals to 
\[ \int_{Z_{\SL_2}(\A)P_{(1,0)}^\circ(F) \backslash \SL_2(\A)} {\sum_{\gamma \in P_{(2,0)}^\circ(F)\backslash \Sp_4(F)}} \phi_s(\iota( h_{\nu(g_2)} g_1  , \gamma g_2))\Psi^w( h_{\nu(g_2)} g_1  ) d g_1.  \]
By collapsing the integral over the unipotent part of $P_{(1,0)}^{\circ}$, one sees that this integral vanishes because of the cuspidality of $\Psi$. We now look at the second integral. Firstly, notice that any element of $S_\xi$ can be written as  \begin{equation}\label{Form:Stab} \left(g, \left(\begin{smallmatrix} a &  &  \\ & g^w & \\  & & a^{-1} \end{smallmatrix} \right)\right) \cdot \left( I_2 , n \right), \end{equation}
with $g \in \SL_2(F)$, $g^w = w g w $, $a \in F^\times$, and $n \in U_{(1,2)}(F)$. Hence, the quotient $S_\xi\backslash (\SL_2 \times \Sp_4)(F)$ is isomorphic to $\SL_2(F) \times \left (P_{(1,2)}^\circ (F) \backslash \Sp_4(F) \right)$. This let us write the integral as
\begin{align*}
     \int_{[Z_{\SL_2}\backslash \SL_2]} {\sum_{\gamma \in S_\xi \backslash (\SL_2\times  \Sp_4)(F)}} &\phi_s(\xi \gamma \iota( h_{\nu(g_2)} g_1  , g_2))\Psi^w( h_{\nu(g_2)} g_1  ) d g_1 \\ &= \int_{Z_{\SL_2}(\A)\backslash \SL_2(\A)} {\sum_{\gamma \in P_{(1,2)}^\circ (F) \backslash \Sp_4(F)}} \phi_s(\xi  \iota( h_{\nu(g_2)} g_1  , \gamma g_2))\Psi^w( h_{\nu(g_2)} g_1  ) d g_1 \\ &= {\sum_{\gamma \in P_{(1,2)}^\circ (F) \backslash \Sp_4(F)}} \int_{Z_{\SL_2}(\A)\backslash \SL_2(\A)}  \phi_s(\xi  \iota( h_{\nu(g_2)} g_1  , \gamma g_2))\Psi^w( h_{\nu(g_2)} g_1  ) d g_1 \\ &= {\sum_{\gamma \in P_{(1,2)}^\circ (F) \backslash \Sp_4(F)}} p_\Psi(\phi_s)(\gamma g_2) \\ 
     &= E_{P_{(1,2)}}(g_2,s,p_\Psi(\phi_s)).
\end{align*}
The left hand side of \eqref{eq:basic_pullback} converges absolutely for $s \in \C$ away from the poles of $E_{P_{(3,0)}}(\cdot, s,\phi_s)$, thus we conclude the proof of (1) by analytic continuation. For (2), let $S$ be a finite set of places of $F$ containing the archimedean places of $F$ and the ramified places for $\sigma$ and $\chi$. Consider the local integral at $v \not \in S$ that defines $p_\Psi(\phi_s)(1)$:
\begin{equation}\label{Local:Zeta:Pullback:Aux} z(s, \phi_{v,s}, v_0):= \int_{Z_{\SL_2}(F_v)\backslash \SL_2(F_v)}  \phi_{v,s}(\xi  \iota( g_1  ,1)) \sigma_v(g_1^w) \cdot v_0\, d g_1, \end{equation}
with $v_0$ a spherical vector of $\sigma_v$. The integral $z(s, \phi_{v,s}, v_0)$ can be calculated following the proof of \cite[Proposition 4.1]{PitaleSchmidt}. Notice that if $k_1,k_2 \in K := \SL_2(\mathcal{O}_v)$, then, using \eqref{Form:Stab}, one can show that \[\phi_{v,s}( \xi \iota (k_1 g k_2, 1)) =  \phi_{v,s}( \xi \iota (g, 1)  \iota ( k_2, k_1^{\sharp,-1})) =   \phi_{v,s}( \xi \iota (g, 1)), \]
where $k_1^\sharp = \left(\begin{smallmatrix} 1 &  &  \\ & k_1^w & \\  & & 1 \end{smallmatrix} \right)$. Using this and the Cartan decomposition \[ \SL_2(F_v) = \sqcup_{n \geq 0} K{\rm diag}(\varpi_v^{n},\varpi_v^{-n})K, \]
we have 
\begin{align*}
    z(s, \phi_{v,s}, v_0) = \sum_{n \geq 0}\phi_{v,s}(\xi  \iota( {\rm diag}(\varpi_v^{n},\varpi_v^{-n})  ,1)) \int_{K} \int_{K} \sigma_v((k_1{\rm diag}(\varpi_v^{n},\varpi_v^{-n})k_2)^w) \cdot v_0\, d k_1\, d k_2.
\end{align*}
Using another matrix manipulation and again the invariance of $\phi_{v,s}$ by right multiplication of $\Sp_6(\mathcal{O}_v)$, one can show that \[\phi_{v,s}(\xi  \iota( {\rm diag}(\varpi_v^{n},\varpi_v^{-n})  ,1)) = (\chi_v \omega_{\sigma_v})(\varpi_v^{n})|\varpi_v|^{n(2s+1)}\phi_{v,s}(1) =(\chi_v \omega_{\sigma_v})(\varpi_v^{n})|\varpi_v|^{n(2s+1)}.\]
We can then use B\"ocherer's identity (\textit{cf}. \cite[(23)]{PitaleSchmidt} with $Y= (\chi_v \omega_{\sigma_v})(\varpi_v)|\varpi_v|^{2s+1}$) to deduce that \begin{equation}\label{Bocherer:Identity}z(s, \phi_{v,s}, v_0) = \frac{L(2s,\sigma_v,{\rm Sym}^2 \otimes \chi_v)}{L(2s+1, \chi_v\omega_{\sigma_v})L(4s, \chi_v^2\omega_{\sigma_v}^2) } \cdot v_0. \end{equation}
The proof of (2) then follows by induction on the set of places not in $S$ as in the case of zeta integrals
unfolding to non-unique models.
\end{proof}

Let $S$ be a finite set of places for $F$ including the archimedean places such that outside of $S$, all data is unramified; define the normalized Siegel Eisenstein series by \[E_{P_{(3,0)}}^*\left(g,s,\phi_s\right):= L^S(2s+1, \chi\omega_{\sigma})L^S(4s, \chi^2\omega_{\sigma}^2) E_{P_{(3,0)}}\left(g,s,\phi_s\right). \]
This normalization, together with Theorem \ref{PullbackFormulaNew}, let us give an alternative expression of the zeta integral studied earlier in the manuscript, which is better suited for the study of its central value and poles.
\begin{corollary}\label{CorPullbackZeta}
Keeping the notation of Theorem \ref{PullbackFormulaNew} and writing $\Psi = \otimes_v \Psi_v$, we have the identity  
\begin{align*} \int_{[Z_{\SL_2}\backslash \SL_2]} E_{P_{(3,0)}}^*(\iota( h_{\nu(g_2)} g_1  , g_2), s,\phi_s) \Psi^w( h_{\nu(g_2)} g_1  ) d g_1 = E_{P_{(1,2)}}^*(g_2,s,f^{\Psi}_{\phi_s}),\end{align*}
where  $f^\Psi_{\phi_s}$ is the section obtained in Theorem \ref{PullbackFormulaNew} (1).
In particular, if $\pi$ is a cuspidal automorphic representation of $\GL_4(\A)$ (resp.  $\GU_{2,2}(\A)$), $\varphi$ a cusp form in the space of $\pi$, and $\chi $ equals $ \omega_{\pi}$ (resp. $ (\omega_{\pi})_{|\A^\times}$), we have \begin{align}\label{eq_Cor_Pullback}
   Z(s,\varphi, f^\Psi_{\phi_s}) = \int_{Z_{\GSp_6}(\A)\backslash [\GL_2\boxtimes \GSp_4]} E_{P_{(3,0)}}^*(\iota( g_1  , g_2), s,\phi_s) \Psi^w( g_1  ) \varphi(g_2)\, dg_1\,dg_2.
\end{align}
\end{corollary}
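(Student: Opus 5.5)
The plan is to derive the first identity by multiplying the equality \eqref{eq:basic_pullback} of Theorem \ref{PullbackFormulaNew}(1) by the normalizing factor $L^S(2s+1,\chi\omega_\sigma)L^S(4s,\chi^2\omega_\sigma^2)$. The left-hand side then becomes the integral of $E^*_{P_{(3,0)}}$. On the right we obtain $L^S(2s+1,\chi\omega_\sigma)L^S(4s,\chi^2\omega_\sigma^2)\,E_{P_{(1,2)}}(g_2,s,f^\Psi_{\phi_s})$, and we must show this equals $E^*_{P_{(1,2)}}(g_2,s,f^\Psi_{\phi_s}) = L^S(2s,\sigma,\mathrm{Sym}^2\otimes\chi)\,E_{P_{(1,2)}}(g_2,s,f^\Psi_{\phi_s})$.

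The subtle point is that the normalization in \eqref{KES} is attached to a section, and the "right" section is the one whose unramified components are spherical and normalized. By Theorem \ref{PullbackFormulaNew}(2) and the unramified computation \eqref{Bocherer:Identity}, for $v\notin S$ the local component of $f^\Psi_{\phi_s}$ equals
\[
\frac{L(2s,\sigma_v,\mathrm{Sym}^2\otimes\chi_v)}{L(2s+1,\chi_v\omega_{\sigma_v})L(4s,\chi_v^2\omega_{\sigma_v}^2)}
\]
times the normalized spherical vector. Hence
\[
f^\Psi_{\phi_s} = \frac{L^S(2s,\sigma,\mathrm{Sym}^2\otimes\chi)}{L^S(2s+1,\chi\omega_\sigma)L^S(4s,\chi^2\omega_\sigma^2)}\, f^{\circ}_s ,
\]
where $f^\circ_s$ is normalized outside $S$ and carries the ramified data $\int_{Z_{\SL_2}(\A_S)\backslash\SL_2(\A_S)}\phi_{S,s}(\xi\iota(g_1,1))\Psi^w(g_1)\,dg_1$.

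Interpreting $E^*_{P_{(1,2)}}(\cdot,s,f^\Psi_{\phi_s})$ with the section factored in this way, the two $L$-factor normalizations match exactly. The first identity then follows for $\mathrm{Re}(s)\gg0$, and by meromorphic continuation in general. This bookkeeping of the normalization is the only real content, and it is the step I expect to need the most care: the exact convention in \eqref{KES} must be read consistently with how $f^\Psi_{\phi_s}$ is factored.

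For \eqref{eq_Cor_Pullback}, take $\chi=\omega_\pi$ (or its restriction to $\A^\times$), multiply the first identity by $\varphi(g_2)$, and integrate over $\H(F)Z_\H(\A)\backslash\H(\A)$. The right-hand side becomes $Z(s,\varphi,f^\Psi_{\phi_s})$ by definition. On the left, we combine the $g_1$-integral over $[Z_{\SL_2}\backslash\SL_2]$, translated by $h_{\nu(g_2)}$, with the $g_2$-integral into a single integral over $Z_{\GSp_6}(\A)\backslash[\GL_2\boxtimes\GSp_4]$. This uses the fibration $\SL_2\to\GL_2\boxtimes\GSp_4\to\GSp_4$ given by $(g_1,g_2)\mapsto g_2$, together with the identification of the centre $\{(z,z)\}$ with $Z_{\GSp_6}$ restricted to the image of $\iota$. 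Because $\det(g_1)=\nu(g_2)$, the rational points also fibre correctly: $\GL_2(F)$-elements of determinant $\nu(\gamma_2)$ exist for $\gamma_2\in\GSp_4(F)$.

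Invariance of the integrand under the centre follows from the central characters: $\phi_s$ transforms by $\chi\omega_\sigma(z^3)(\chi\omega_\sigma)^{-2}(z^2)=\chi\omega_\sigma(z)$, $\Psi^w$ by $\omega_\sigma(z)$, and $\varphi$ by $\omega_\pi(z)$. Their product is $\chi\omega_\sigma^2\omega_\pi(z)$, which is trivial provided $\chi=\omega_\pi$ and $\omega_\pi\omega_\sigma=1$. Since the statement as written sets only $\chi=\omega_\pi$, I would have to confirm that triviality of $\omega_\pi\omega_\sigma$ is indeed assumed here, perhaps via a conjugate character from $\Psi^w$ in the paper's conventions. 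Finally, Fubini applies because $\varphi$ is rapidly decreasing on $\H(\A)$ and the Eisenstein series has moderate growth.
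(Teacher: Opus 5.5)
Your argument is the one the paper has in mind. The corollary is given no separate proof there; it is read off from Theorem \ref{PullbackFormulaNew}. You multiply \eqref{eq:basic_pullback} by $L^S(2s+1,\chi\omega_\sigma)L^S(4s,\chi^2\omega_\sigma^2)$, use \eqref{Bocherer:Identity} and part (2) to identify the unramified components of $f^\Psi_{\phi_s}$, then integrate against $\varphi$, fibring $\GL_2\boxtimes\GSp_4$ over $\GSp_4$ with fibre $\SL_2$.

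Your reading of the normalization is also the right one. Applied literally to $f^\Psi_{\phi_s}$, \eqref{KES} would leave a surplus factor $L^S(2s,\sigma,\mathrm{Sym}^2\otimes\chi)/(L^S(2s+1,\chi\omega_\sigma)L^S(4s,\chi^2\omega_\sigma^2))$. So what is actually proved is the identity for $E^*_{P_{(1,2)}}$ of the section $f^\circ_s$, whose components outside $S$ are the normalized spherical vectors. This is exactly how the paper later combines the corollary with Theorem \ref{zetaintegralfinalthm}.

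The one thing to fix is your central character check, which contains an arithmetic slip.
\begin{itemize}
\item The central element $zI_6$ is $m(zI_3,z^2)$. By \eqref{induction:Siegel}, $\phi_s$ transforms by $(\chi\omega_\sigma)(z^3)\,(\chi\omega_\sigma)^{-2}(z^2)=(\chi\omega_\sigma)(z)^{3-4}=(\chi\omega_\sigma)^{-1}(z)$, not by $\chi\omega_\sigma(z)$. The modulus factor is $|z^3|^{4}|z^2|^{-6}=1$.
\item Together with $\omega_\sigma(z)$ from $\Psi^w$ and $\omega_\pi(z)$ from $\varphi$, the integrand transforms by $\chi^{-1}\omega_\pi$. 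This is trivial as soon as $\chi=\omega_\pi$.
\end{itemize}
So no hypothesis on $\omega_\pi\omega_\sigma$ is needed, and none should be imposed. The corollary is later applied exactly when $\omega_\pi\omega_\sigma\neq 1$, in Proposition \ref{First:Term:Application:pole}. The remark after Proposition \ref{Proposition:Poles:L} uses it for arbitrary central characters.

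You could also have seen this from equivariance. The map $p_\Psi$ lands in $I_{P_{(1,2)}}(s,\sigma,\chi)$, whose central character is $\chi^{-1}$. Moreover, $Z(s,\varphi,f_s)$ is defined for every $f_s\in I_{P_{(1,2)}}(s,\sigma,\omega_\pi)$ with no condition on $\omega_\pi\omega_\sigma$. With this corrected, your argument proves the statement as written, in the normalization reading above.
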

\section{The degree $12$ central $L$-value and the generalized Shalika period}\label{Section:Period:Central:Value}
We let $\pi$ be a cuspidal automorphic representation of either $\GL_4(\A)$ or $\GU_{2,2}(\A)$ and let $\sigma$ be a cuspidal automorphic representation of $\GL_2(\A)$. We suppose that they are unramified outside a finite set $S$ of places of $F$. In what follows, we give a period condition for the vanishing of the central value (at $s=1/2$) of the partial $L$-function $L^S(s,\pi \otimes \sigma, \wedge^2 \otimes  \mathrm{std}_2)$. Throughout this section, we assume that as a Hecke character on $F^\times \backslash \A^\times$ \begin{equation}\label{TrivialCentralCharacters}
    \omega_\pi \omega_\sigma = 1.
\end{equation}
In Corollary \ref{CorPullbackZeta}, we have shown that 
\[Z(s,\varphi, f^\Psi_{\phi_s}) = \int_{Z_{\GSp_6}(\A)\backslash [\GL_2\boxtimes \GSp_4]} E_{P_{(3,0)}}^*(\iota( g_1  , g_2), s,\phi_s) \Psi^w( g_1  ) \varphi(g_2)\, dg_1\,dg_2,\]
where $\varphi$ is in the space of $\pi$, $\Psi$ in the space of $\sigma$ and, under the assumption \eqref{TrivialCentralCharacters}, $\phi_s$ is a function in $I_{P_{(3,0)}}(s, \mathbf{1})$. We now recall a Siegel--Weil formula which relates the value at $s=1/2$ of $E_{P_{(3,0)}}^*(\iota( g_1  , g_2), s,\phi_s)$
to a theta integral for dual reductive pairs of the form $(\mathbf{O}(V),\Sp_6)$, where $V$ is a quaternionic quadratic space of dimension $4$. We note that, according to our definition, $E_{P_{(3,0)}}^*(g, s,\phi_s)$ corresponds to the normalized Siegel Eisenstein series of Kudla--Rallis in \cite{KudlaRallis} up to a twist on $s$ by $1/2$; in particular, the value at $s=1/2$ of (the restriction to $\Sp_6$ of) $E_{P_{(3,0)}}^*(g, s,\phi_s)$ corresponds to the value at $s=0$ of the normalized Siegel Eisenstein series defined in \cite[p. 4]{KudlaRallis}. The extension of the Siegel--Weil formula of \cite{KudlaRallis} to the case of similitude groups is considered in  \cite[Theorem 4.1]{KudlaHarrisJacquet} and we now recall it.

\subsection{Siegel-Weil formula for similitudes}\label{subsec:SW:Similitudes}

Let $V_{1,v}$ denote the totally split quadratic space of dimension $4$ over $F_v$ and let $V_{2,v}$ be the anisotropic quadratic space of dimension $4$ over $F_v$, defined by the definite quaternion algebra over $F_v$ endowed with its norm as in \S \ref{Exceptional:Isom:Section}. For $i = 1,2$, the local theta correspondence of the identity for the dual reductive pairs $(\mathbf{O}(V_{i,v}),\Sp_6(F_v))$ is realized by the maps
\begin{align*}\lambda_{V_{i,v}}:\mathcal{S}(V_{i,v}^3)&\to \tilde{I}_{P_{(3,0)}}\left(1/2,\mathbf{1}\right)_v,\, \, f_v \mapsto \omega(g)f_v(0),\end{align*}
where $\tilde{I}_{P_{(3,0)}}\left(1/2,\mathbf{1}\right)_v$ is the representation consisting of the sections of $I_{P_{(3,0)}}\left(1/2,\mathbf{1}\right)_v$ restricted to $\Sp_6(F_v)$  and $\omega$ denotes the action via the Weil representation of $(1, g) \in  \mathbf{O}(V_{i,v})\times\Sp_6(F_v)$ on $\mathcal{S}(V_{i,v}^3)$ . We denote by $\tilde{\Pi}(V_{i,v})$ the image of $\lambda_{V_{i,v}}$. \cite[Theorem 2.2 (iii)]{KudlaRallis} shows that, for any non-archimedean place $v$, we have \[\tilde{I}_{P_{(3,0)}}\left(1/2,\mathbf{1}\right)_v \simeq \tilde{\Pi}(V_{1,v})\oplus\tilde{\Pi}(V_{2,v}).\]
This generalizes globally as follows. Given a family $V=\{V_v\}_v$ of quadratic spaces, with $V_v$ either equal to $V_{1,v}$ or $V_{2,v}$, let $V(\A) := \otimes_v V_v$ and consider the map $\lambda_{V} := \otimes_v\lambda_{V_v}:\mathcal{S}(V(\A)^3)\to \tilde{I}_{P_{(3,0)}}\left(1/2,\mathbf{1}\right)$, whose image is denoted by $\tilde{\Pi}(V)$. One has a decomposition of $\tilde{I}_{P_{(3,0)}}\left(1/2,\mathbf{1}\right)$ into irreducible representations of $\Sp_6(\A)$ as follows: 
\begin{align}\label{thetadecSp6}
\tilde{I}_{P_{(3,0)}}\left(1/2,\mathbf{1}\right) \simeq \left(\bigoplus_{V\text{ coherent}} \tilde{\Pi}(V)\right)\oplus \left(\bigoplus_{\mathcal{V}\text{ incoherent}} \tilde{\Pi}(\mathcal{V})\right).
\end{align}
Here, the first sum runs over the isomorphism classes of global quadratic spaces $V$ of dimension $4$ over $F$ such that $V \otimes_F F_v \in \{ V_{1,v},V_{2,v}\}$ for each place $v$ of $F$. We refer to such quadratic spaces as \textit{quaternionic}. The second sum is over the isomorphism classes of incoherent quadratic spaces $\mathcal{V} = \otimes_v \mathcal{V}_v$, \textit{i.e.} quadratic spaces $\otimes_v \mathcal{V}_v$, with $\mathcal{V}_v \in \{ V_{1,v},V_{2,v}\}$, which do not arise as the localizations of a global quadratic space $V$ over $F$. 
For instance, if $\phi \in \tilde{I}_{P_{(3,0)}}\left(1/2,\mathbf{1}\right)$ is such that $\phi_v \in \tilde{\Pi}(V_{1,v})$ for all places $v$, then $\phi$ is in the image of $\lambda_{\mathbb{H}^2}$, where $\mathbb{H}^2 = \otimes_v V_{1,v}$ denotes the totally split quadratic space of dimension $4$ defined over $F$.

\begin{remark}
According to \cite[p. 8 (ii)]{KudlaRallis}, sections associated to incoherent collections produce trivial zeroes at the value $s=1/2$ of the corresponding Eisenstein series. 
\end{remark}

As explained in \cite[p. 9]{KudlaHarrisJacquet}, the restriction map $I_{P_{(3,0)}}\left(1/2,\mathbf{1}\right) \to \tilde{I}_{P_{(3,0)}}\left(1/2,\mathbf{1}\right)$ is an isomorphism and has an inverse of the form $\phi \mapsto \phi^{\sim}$, where \[\phi^{\sim}(g) = |\nu(g)|^{-3}\phi(\tilde{g}),\] with $\tilde{g} = \left(\begin{smallmatrix}I_3 & \\ &\nu(g)^{-1}I_3\end{smallmatrix}\right)g\in \Sp_6(\A)$. Thus, the decomposition \eqref{thetadecSp6} gives
\[I_{P_{(3,0)}}\left(1/2,\mathbf{1}\right) \simeq \left(\bigoplus_{V\text{ coherent}} \Pi(V)\right)\oplus \left(\bigoplus_{\mathcal{V}\text{ incoherent}} \Pi(\mathcal{V})\right),\]
where $\Pi(V)$, resp. $\Pi(\mathcal{V})$, denote the inverse image of $\tilde{\Pi}(V)$, resp. $\tilde{\Pi}(\mathcal{V})$, under the restriction map.  

Given a quadratic space $V$ over $F$ of dimension $4$ defined as above, let 
\begin{align}\label{GSp6+def}\GSp_6^+(\A) &:= \{g\in \GSp_6(\A),\;:\;\nu(g)\in \nu' \left(\GO_V(\A)\right)\} \\&= \{g\in \GSp_6(\A),\;s.t.\;\nu(g_v)>0 \text{ when }v \text{ is real and }V_v\text{ is anisotropic}\}.\nonumber\end{align}

If $V$ is anisotropic over $F$, then, for $g \in \GSp_6^+(\A)$ and $f \in \mathcal{S}(V(\A)^3)$, we can consider the theta integral \[ I(g, f) = \int_{[\mathbf{O}_V]} \theta(h_1 h, g ; f)\, d h_1,\]
with $h \in \mathbf{GO}(V)(\A)$ such that $\nu'(h) = \nu(g)$ and $\theta$ is the theta kernel associated to the dual reductive pair $(\mathbf{O}_V,\Sp_6)$ (\emph{cf.} \cite[(3.7)]{KudlaRallis}). If $V$ is totally split, then the theta integral, which we still denote by $I(g, f)$, is defined by regularization using the operators of \cite[\S 5.1]{KudlaRallis} or \cite[\S 1]{IchinoSW}. The definition of $I(g, f)$ does not depend on the choice of $h \in \mathbf{GO}_V(\A)$. Moreover, 
$I(g, f)$ is invariant under $\GSp_6^+(\A) \cap \GSp_6(F)$ and has trivial central character. Finally, notice that, as $\GSp_6(\A) =\GSp_6(F) \GSp_6^+(\A)$, $I(g, f)$ extends uniquely to a left $\GSp_6(F)$-invariant function on $\GSp_6(\A)$. 

The description of $\GSp_6^+(\A)$ above, together with
\begin{equation*} \GSp_6(\A) = \GSp_6(F)Z_{\GSp_6}(\A)\Sp_6(\A)\GSp_6(\A_f),\end{equation*}
serve as crucial elements in the proof of \cite{KudlaHarrisJacquet} of the extension of the Siegel--Weil formula to similitude groups.
\begin{theorem}[{\cite[Theorem 4.2]{KudlaHarrisJacquet}}]\label{Siegel-Weilnew}\leavevmode 
\begin{enumerate}
    \item  Let $\phi_s \in I_{P_{(3,0)}}\left(s,\mathbf{1}\right)$ be a section such that $ \phi = \phi_{1/2} \in \Pi(V)$ for a quaternionic quadratic space $V$ over $F$, and let $f \in \mathcal{S}(V(\A)^3)$ be such that $\phi  = \lambda_V(f)^{\sim}$.  We have 
     \[E_{P_{(3,0)}}(g, 1/2,\phi) = 2 \cdot I(g, f).\]
     \item If $\phi_s \in I_{P_{(3,0)}}\left(s,\mathbf{1}\right)$ is a section for which $\phi_{1/2} \in \Pi(\mathcal{V})$, for any incoherent collection $\mathcal{V}$, then 
     \[E_{P_{(3,0)}}(g, 1/2,\phi_{1/2}) = 0.\]
\end{enumerate}
\end{theorem}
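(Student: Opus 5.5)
The plan is to reduce the similitude statement to the isometry Siegel--Weil formula of Kudla--Rallis on $\Sp_6(\A)$, and then spread it to all of $\GSp_6(\A)$ using the structure of the group. Both sides are left $\GSp_6(F)$-invariant with trivial central character. For the Eisenstein series this holds because the inducing data is trivial and the section is normalized at $s=1/2$. For $I(g,f)$ it is the extension described just before the statement.

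Concretely, the decomposition
\[\GSp_6(\A) = \GSp_6(F)Z_{\GSp_6}(\A)\Sp_6(\A)\GSp_6(\A_f)\]
suggests first proving the identity on $\GSp_6^+(\A)$, and then further restricting to elements of the form $g = g_0 d(\nu)$. Here $g_0 \in \Sp_6(\A)$ and $d(\nu) = \mathrm{diag}(I_3,\nu I_3)$, with $\nu \in \nu'(\GO_V(\A))$.

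\textbf{Step 1: the case $g\in\Sp_6(\A)$.} Restricting $\phi_s$ to $\Sp_6(\A)$ gives a section of $\tilde I_{P_{(3,0)}}(s,\mathbf 1)$. Its value at $s=1/2$ is $\lambda_V(f)$, via the inverse map $\phi\mapsto\phi^{\sim}$.

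The Eisenstein sum over $P_{(3,0)}(F)\backslash\GSp_6(F)$ agrees with the sum over $P^\circ_{(3,0)}(F)\backslash\Sp_6(F)$, since $P_{(3,0)}$ surjects onto $\GL_1$ through $\nu$. Hence $E_{P_{(3,0)}}(g,s,\phi_s)$ restricted to $\Sp_6$ is the Kudla--Rallis Siegel Eisenstein series, after the shift $s\mapsto s-1/2$ noted above. The theta integral $I(g,f)$ restricted to $\Sp_6$, where we may take $h=1$, is exactly their (possibly regularized) theta integral.

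For $V$ anisotropic, the classical Siegel--Weil formula in the form of \cite{KudlaRallis} gives $E(g,1/2,\phi)=2I(g,f)$. The factor $2$ is the usual one for $m=4$, $n=3$: we are at the boundary $m=n+1$ and integrate over $\mathbf O_V$ rather than $\SO_V$.

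For $V=\mathbb H^2$ split, we invoke the regularized Siegel--Weil formula \cite[Theorem 6.12 / \S5]{KudlaRallis}, in Ichino's formulation \cite{IchinoSW}. In the first term range this gives the same identity, with $I$ defined by regularization. This is the step I expect to be the main obstacle. We must check that the regularization constants (the operator $z$, the choice of normalization of $E^*$ versus $E$) match, so that the constant is exactly $2$ with no extra Laurent term. The approach is to use that at this point $s_0=0$ in their normalization the Eisenstein series is holomorphic, and the leading term is what is being compared.

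\textbf{Step 2: extension to $\GSp_6^+(\A)$.} Take $g=g_0 d(\nu)$ with $h\in\GO_V(\A)$, $\nu'(h)=\nu$. By \eqref{eq_normalization_Weil_similitudes} and \cite[Lemma 3.2]{RobertsTheta}, $\theta(h_1h,g;f)=\theta(h_1,\tilde g;\omega(h,d(\nu))f)$. The similitude normalization is designed so that $\lambda_V(\omega(h,d(\nu))f)^{\sim}$ equals the right translate of $\phi$ by $d(\nu)$, including the factor $|\nu|^{-3}$.

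On the Eisenstein side, $E(g_0d(\nu),1/2,\phi)=E(g_0,1/2,\rho(d(\nu))\phi)$. Applying Step 1 to the translated section therefore gives the identity on $\GSp_6^+(\A)$.

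\textbf{Step 3: extension to $\GSp_6(\A)$.} Use $\GSp_6(\A)=\GSp_6(F)\GSp_6^+(\A)$ together with left $\GSp_6(F)$-invariance of both sides.

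\textbf{Part (2).} This follows by restricting to $\Sp_6(\A)$ and applying \cite[p.~8 (ii)]{KudlaRallis}: the incoherent Eisenstein series vanishes at the center. Left invariance and translation by $d(\nu)$, as in Steps 2 and 3, then give the vanishing everywhere, since $\Pi(\mathcal V)$ is $\GSp_6(\A)$-stable.
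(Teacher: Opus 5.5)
Your strategy is the Harris--Kudla argument the paper cites: restrict to $\Sp_6(\A)$, apply Kudla--Rallis there, then transport the identity to similitudes using the equivariance $\rho(g')\lambda_V(f)^{\sim}=\lambda_V(\omega(h,g')f)^{\sim}$ and left $\GSp_6(F)$-invariance. Your check of that equivariance for $g'=d(\nu)$ is correct: the $|\nu|^{-3}$ coming from \eqref{eq_normalization_Weil_similitudes} matches the one in $\phi^{\sim}$.

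The step that does not go through as written is Step~2. There you apply Step~1 to the translated data $\rho(d(\nu))\phi$ and $\omega(h,d(\nu))f$ for arbitrary $\nu\in\nu'(\GO_V(\A))$, archimedean components included. Right translation by $d(\nu_\infty)$ does not preserve $K_\infty$-finiteness. The Kudla--Rallis statements you use in Step~1 are formulated for $K$-finite standard sections: holomorphy of the Siegel Eisenstein series at $s=1/2$, and identification of its value with the (possibly regularized) theta integral. So unless you separately extend them to smooth data, Step~2 uses Step~1 outside the range where it is proved. Relatedly, the point you flag as the main obstacle (the constant in the split case) is simply the first term identity at $m=n+1$, with $I$ defined through the same regularization. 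The delicate part is the passage to similitudes.

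This is why the paper singles out $\GSp_6(\A)=\GSp_6(F)Z_{\GSp_6}(\A)\Sp_6(\A)\GSp_6(\A_f)$, which you write down but never use; the paper argues this way in the proof of Theorem~\ref{SWcentralSplit}. Write $g=\gamma z g_1 g_f$ accordingly. Then:
\begin{itemize}
\item both sides are left $\GSp_6(F)$-invariant with trivial central character;
\item $g_1g_f$ lies in $\GSp_6^+(\A)$ automatically, since its archimedean multiplier is $1$;
\item $\nu(g_f)\in\nu'(\GO_V(\A_f))$, because reduced norms are surjective at finite places.
\end{itemize}
Hence $E(g,\tfrac12,\phi)=E(g_1,\tfrac12,\rho(g_f)\phi)$ and $I(g,f)=I(g_1,\omega(h_f,g_f)f)$, with $h_f\in\GO_V(\A_f)$ of multiplier $\nu(g_f)$. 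Translation by $g_f$ preserves $K_\infty$-finiteness, so Step~1 applies.

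In this arrangement archimedean similitudes are never pushed into the section. Positive multipliers go into $Z_{\GSp_6}(\A)$, since $d(t^2)\in t\cdot\Sp_6$. Signs go into $\GSp_6(F)$, which is where the description of $\GSp_6^+(\A)$ enters.

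Part~(2) needs the same repair. In addition, your claim that $\Pi(\mathcal V)$ is $\GSp_6(\A)$-stable fails at a real place where $\mathcal V_v$ is definite: $d(-1)_v$ carries $\mathrm{im}\,\lambda_{\mathcal V_v}$ to the theta image of the negative definite form. Only finite-adelic translations are needed, so this is harmless once the argument is reorganized.

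A minor slip: in Step~2 the theta kernel should be evaluated at $g_0$, not at $\tilde g=d(\nu)^{-1}g_0d(\nu)$.
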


The proof of the following Lemma, which will be used in \S \ref{sec:SVvsLcentral}, is an adaptation of the one of \cite[Theorem 4.1(ii)]{Kudla:Integral:Borcherds}. 

\begin{lemma}\label{Reduction:To:SO}
  We have that
     $I(g,f) = \frac{1}{2}\int_{[\mathbf{SO}_V]}\theta(h_1h,g;f)dh_1$.
\end{lemma}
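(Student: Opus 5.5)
The plan is to use the coset decomposition $\mathbf{O}_V = \mathbf{SO}_V \sqcup \mathbf{SO}_V\epsilon$, where $\epsilon \in \mathbf{O}_V(F)$ is a rational reflection of determinant $-1$. Such an $\epsilon$ exists because $V$ is a nondegenerate quadratic space over $F$ and therefore has anisotropic rational vectors. This gives $[\mathbf{O}_V] \simeq [\mathbf{SO}_V]\rtimes\{1,\epsilon\}$ adelically, in the sense that
\[
\mathbf{O}_V(F)\backslash\mathbf{O}_V(\A)=\mathbf{SO}_V(F)\backslash\big(\mathbf{SO}_V(\A)\,\mathbf{O}_V(F)\big)\sqcup\cdots .
\]
More precisely, I would use that $\mathbf{O}_V(\A)=\mathbf{SO}_V(\A)\rtimes\mu_2(\A)$. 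The quotient $\mathbf{O}_V(F)\backslash \mathbf{O}_V(\A)$ then fibres over $\mu_2(F)\backslash\mu_2(\A)$ with fibre $[\mathbf{SO}_V]$, and $\mu_2(F)\backslash\mu_2(\A)$ has total volume $1$.

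Following Kudla's argument, I would first write
\[
I(g,f)=\int_{\mu_2(F)\backslash \mu_2(\A)} \int_{[\mathbf{SO}_V]} \theta(h_1 e h, g; f)\, dh_1\, de,
\]
with the Tamagawa-type normalization in which $[\mathbf{O}_V]$ and $[\mathbf{SO}_V]$ are compatible. The key point is to show that the inner integral $J(e):=\int_{[\mathbf{SO}_V]}\theta(h_1 e h,g;f)\,dh_1$ does not depend on $e\in\mu_2(\A)$. Once this is known, $I(g,f)$ equals $J(1)$ times the volume factor. The factor $\tfrac12$ would then come from the measure normalization: with the paper's measure on $[\mathbf{O}_V]$ (total volume normalized as in Kudla--Rallis), $[\mathbf{SO}_V]$ has volume $2$ while $[\mathbf{O}_V]$ has volume $1$, and I would track this explicitly.

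To prove the independence, I would write $e=\prod_v e_v$ and reduce to the local statement that for each place $v$ the function $f\mapsto\int_{[\mathbf{SO}_V]}\theta(h_1 e_v h,g;f)\,dh_1$ is unchanged when $e_v$ is replaced by $\epsilon_v$. Left invariance of $\theta$ under $\mathbf{O}_V(F)$ lets one replace $e$ by $\epsilon^{-1}e$. The plan is then, as in Kudla's Theorem 4.1(ii), to use the Siegel--Weil identification: the $\mathbf{SO}_V$-integral is an automorphic form in $g$ that lies in the $\mathbf{O}_V(\A)$-invariant subspace of the image of the theta lift of the trivial representation. Concretely, I would use that for $\dim V=4$ and $n=3$ the local spaces of $\mathbf{SO}_{V_v}$-coinvariants and $\mathbf{O}_{V_v}$-coinvariants of $\mathcal{S}(V_v^3)$ coincide. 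Equivalently, $\epsilon_v$ acts trivially on $\mathcal{S}(V_v^3)_{\mathbf{SO}_{V_v}}$, because the trivial representation of $\mathbf{SO}_{V_v}$ extends to $\mathbf{O}_{V_v}$ only via the trivial character in the theta correspondence: the $\det$-twist lifts only to $\Sp_{2m}$ with $m\ge 4$ by the conservation relation. Hence $J(e)=J(\epsilon_v e)$ for every $v$, and the claim follows by continuity and density, since only finitely many components of $e$ differ from $1$ on a dense set. In the split case, where $I$ is regularized, the same argument applies after commuting the regularizing operator $z$ of Kudla--Rallis with the $\mathbf{O}_V(\A)$-action, since that operator acts only on the $\Sp_6$-side.

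The main obstacle is the local invariance step, namely showing that $\det$-twisted $\mathbf{SO}_{V_v}$-integrals vanish, equivalently that $\Hom_{\mathbf{O}_{V_v}}(\mathcal{S}(V_v^3),\det)=0$. For this I would invoke the first-occurrence/conservation results of Kudla--Rallis: $\theta(\det)$ first occurs on $\Sp_{2m}$ with $m=\dim V_v-m_0\geq 4>3$. Apart from that, the argument only requires careful bookkeeping of the Haar measure constants to produce exactly $\tfrac12$.
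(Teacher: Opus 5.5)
Your proposal is correct and follows the paper's proof: you fibre $[\mathbf{O}_V]$ over $\mu_2(F)\backslash\mu_2(\A)$ with fibre $[\mathbf{SO}_V]$, you use the conservation relation (the sign character of $\mathbf{O}_{V_v}$ first occurs only at $\Sp_8$) to see that $\mu_2(\A)$ acts trivially on $\mathbf{SO}_V(\A)$-invariant functionals on $\mathcal{S}(V(\A)^3)$, and you read off $\tfrac12$ from the volumes. One bookkeeping slip: if $\mathrm{vol}[\mathbf{SO}_V]=2$ and $\mathrm{vol}[\mathbf{O}_V]=1$ with compatible measures, then $\mu_2(F)\backslash\mu_2(\A)$ has volume $\tfrac12$, not $1$, and this is exactly the factor the paper extracts as $\int_{[\mu_2]}dc=\tfrac12$.
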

\begin{proof}
  We first note that the group $$\mu_2(\A) := \mathbf{O}_V(\A)/\mathbf{SO}_V(\A)$$ acts trivially on the space of $\mathbf{SO}_V(\A)$-invariant linear functionals on $\mathcal{S}(V(\A)^3)$. This can be verified as follows. For any place $v$, $\mu_2(F_v) = \mathbf{O}_V(F_v)/\mathbf{SO}_V(F_v)$ acts on the space 
  $$\mathrm{Hom}_{\SO_V(F_v)}(\mathcal{S}(V(F_v)^3),\C)$$
  either trivially or via the sign character ${\rm sgn}_v$ of $\mathbf{O}_V(F_v)$. If it acts via ${\rm sgn}_v$, then ${\rm sgn}_v$ would occur in the local theta correspondence for the dual reductive pair $(\mathbf{O}_V(F_v),\Sp_6(F_v))$. However, the conservation relation stated in \cite[p. 11]{Conservation:Relation:Theta} shows that the sign character  does not occur in the theta correspondence for $(\mathbf{O}_V(F_v),\Sp_6(F_v))$. This implies that $\mu_2(F_v)$ has to act trivially on $\mathrm{Hom}_{\SO_V(F_v)}(\mathcal{S}(V(F_v)^3),\C)$, as claimed. 
   
   Since $$\left(f\mapsto \int_{[\SO_V]}\theta(h_1h,g;f) dh\right)\in \mathrm{Hom}_{\SO_V(\A)}(\mathcal{S}(V(\A)^3),\C),$$ the group $\mu_2(\A)$ acts trivially on it and we have
    \begin{align*}
    I(g,f) &= \int_{[\mu_2]}\int_{[\SO_V]}\theta(h_1 c h,g;f)dh dc\\
    &= \int_{[\mu_2]} d c \int_{[\SO_V]}\theta(h_1 h,g;f)dh\\
    &= \frac{1}{2}\int_{[\SO_V]}\theta(h_1 h,g;f)dh.
    \end{align*}
\end{proof}

We now discuss the Siegel--Weil formula in the special case where $V$ equals the four dimensional split quadratic space $\mathbb{H}^2= \otimes_v V_{1,v}$. Suppose that $\phi_{1/2} \in \Pi(\mathbb{H}^2)$, then the Siegel--Weil formula can be understood as an identity between the value $E_{P_{(3,0)}}^*(g, 1/2,\phi_{1/2})$ and a term in the Laurent series expansion of an Eisenstein series of $\GSp_6$, associated with the parabolic subgroup $P_{(2, 2)}$. For a detailed discussion on this, see \cite[\S 5.5 \& Theorem 6.1]{KudlaRallis}. We now outline how the formula of Kudla--Rallis extends to the similitude groups.  Firstly, we recall the following facts on $P_{(2, 2)}$.

\begin{remark}
 $P_{(2, 2)} = M_{(2, 2)}\, U_{(2, 2)}$ is the parabolic subgroup of $\GSp_6$ with Levi isomorphic to $\GL_2\times\GL_2$ and unipotent subgroup 
\[U_{(2, 2)} = \left\{ \left(\begin{smallmatrix} I_2 &  \star & \star \\ & I_2 & \star \\ & & I_2 \end{smallmatrix} \right)\in \GSp_6 \right\} \supseteq [U_{(2, 2)} , U_{(2, 2)} ] = \left\{ \left(\begin{smallmatrix} I_2 &  & \star \\ & I_2 & \\ & & I_2 \end{smallmatrix} \right)\right\}. \]
Note the following: \begin{itemize}
    \item It has modulus character $\delta_{P_{(2, 2)} } : P_{(2, 2)} (\A) \to \C$ given by \[ M_{(2, 2)}(\A) \ni  m(g_1,g_2) := \left(\begin{smallmatrix} g_1 &   &  \\ & g_2 &  \\ & & {\rm det}(g_2) I_2'{}^t g_1^{-1} I_2' \end{smallmatrix} \right) \mapsto \left| \frac{{\rm det}(g_1)}{{\rm det}(g_2)} \right|^5. \]
    \item The flag variety $P_{(2, 2)}  \backslash \GSp_6$ parametrizes isotropic planes in the standard representation $W_6$ of $\GSp_6$, with $P_{(2,2)}$ being the stabilizer of the isotropic plane $\langle f_2, f_1\rangle \subset W_6$. 
\end{itemize}

\end{remark}

Let $I_{P_{(2,2)}}(s, \mathbf{1})$ be the induced representation of $\GSp_{6}$ consisting of smooth functions $F_{s}:\GSp_{6}(\A)\to\C$ that satisfy
\begin{equation}\label{Induction:22}F_{s}(n m(g_1,g_2) -) =\delta_{P_{(2,2)}}^{\frac{s}{5} + \frac{1}{2} }(m(g_1,g_2))F_{s}(-),\end{equation}
for all $n \in U_{(2,2)}(\A)$ and $m(g_1,g_2) \in M_{{(2,2)}}(\A)$.  
 For any $F_s\in I_{P_{(2,2)}}(s, \mathbf{1})$, we define the Eisenstein series \[E_{P_{(2,2)}}(g,s,F_s) := \sum_{\gamma\in P_{(2,2)}(F)\setminus \GSp_6(F)}F_s(\gamma g).\] 

Given $\varphi\in \mathcal{S}(\mathbb{H}^2(\A)^3)$, we define the Fourier transform $\hat{\varphi} \in \mathcal{S}(W_6(\A)^2)$ by (see equation \cite[(5.3.3)]{KudlaRallis}) 
\[\hat{\varphi}(u,v) := \int_{M_{2\times 3}(\A)}\varphi\left(\begin{smallmatrix}x\\u\end{smallmatrix}\right)\psi(\mathrm{Tr}({}^tv x))dx,\]
where we have written any element in $W_6(\A)^2 \simeq M_{2\times 6}(\A)$ as $(u,v)$, with $u,v \in M_{2\times 3}(\A)$. The map sending $\varphi\mapsto \hat{\varphi}$ gives an intertwining map $\mathcal{S}(\mathbb{H}^2(\A)^3)\to \mathcal{S}(W_6(\A)^2)$.
\begin{lemma}\label{lemma_constructing_sectionsforR}
    Let $\phi \in I_{P_{(3,0)}}\left(1/2,\mathbf{1}\right)$ be a section such that $ \phi \in \Pi(\mathbb{H}^2)$ and let $\varphi \in \mathcal{S}(\mathbb{H}^2(\A)^3)$ be such that $\phi  = \lambda_V(\varphi)^{\sim}$, \textit{i.e.}  $\phi(g) = \omega(h,g)\varphi(0)$ for some $h\in \GO_{\mathbb{H}^2}(\A)$ with $\nu(h) = \nu(g)$. Set 
     \[F(\phi,g,s):= | \nu(g)|^{-s+1/2}\int_{\GL_2(\A)}\omega(h,g)\hat{\varphi}
    ({}^{t}aw_0)|\det(a)|^{s+5/2} d a,\]
    where $w_0 = (0,0,I_2)\in (F^{6})^2$. For ${\rm Re}(s)>-1/2$,  $F(\phi,g,s)$ converges absolutely and belongs to $I_{P_{(2,2)}}(s, \mathbf{1})$. 
\end{lemma}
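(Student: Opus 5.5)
The plan is to reduce to the symplectic case treated in \cite[\S 5.3--5.5]{KudlaRallis} and then check the extra equivariance coming from the similitude factor. The first step is to understand how $\GO_{\mathbb{H}^2}(\A)\ltimes \Sp_6(\A)$ acts on the Fourier-transformed model $\mathcal{S}(W_6(\A)^2)$. The partial Fourier transform $\varphi\mapsto\hat\varphi$ moves from the Schr\"odinger model attached to $\mathbb{H}^2\otimes W_6^+$ to the model attached to $(\mathbb{H}^2)^+\otimes W_6$, which is $W_6^2$. This is the mixed model. In it, $\Sp_6(\A)$ acts linearly by right translation, $\omega(1,g)\hat\varphi(x)=\hat\varphi(xg)$ for $x\in M_{2\times 6}(\A)$. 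The Levi $\GL_2$ of the Siegel parabolic $Q_{(2,0)}$ of $\GSO_{\mathbb{H}^2}$ acts by $\hat\varphi(x)\mapsto |\det a|^{3}\hat\varphi({}^ta^{-1}x)$, or similar, with the similitude normalisation of \eqref{eq_normalization_Weil_similitudes} adding a power of $|\nu'(h)|$. I would record these formulas first, as routine verifications.

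Convergence for $\mathrm{Re}(s)>-1/2$ comes next. Since $\hat\varphi$ is Schwartz on $M_{2\times 6}(\A)$ and ${}^taw_0=(0,0,{}^ta)$, the integral has the shape $\int_{\GL_2(\A)}\Phi({}^ta)|\det a|^{s+5/2}\,da$ with $\Phi$ Schwartz on $M_2(\A)$. This is a Godement--Jacquet zeta integral for the trivial representation of $\GL_2$. It converges absolutely when $\mathrm{Re}(s+5/2)>2$, that is, $\mathrm{Re}(s)>-1/2$, because the trivial character gives the abscissa $\mathrm{Re}(s')>n=2$ for $\int\Phi(a)|\det a|^{s'}da$. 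The same bound holds uniformly as $g$ ranges over compacta, since $\omega(h,g)\hat\varphi$ then stays in a bounded family of Schwartz functions.

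The main step is the equivariance, and I expect it to be the main obstacle. Let $\tilde g\in\Sp_6(\A)$ be as in \S\ref{subsec_weil_rep}. The key identity $\omega(h,g)\hat\varphi(x)=|\nu(g)|^{c}\,\hat\varphi'(x\,\cdot)$ reduces $g$ to $\tilde g$, and $c$ must be computed. For $p=n\,m(g_1,g_2)\in P_{(2,2)}(\A)$ I would proceed in three steps.
\begin{enumerate}
\item Write $\omega(h,pg)$ through the mixed-model action and use $w_0p=(0,0,\det(g_2)I_2'\,{}^tg_1^{-1}I_2')$, since the last block row of $p$ is $(0,0,\det(g_2)I_2'{}^tg_1^{-1}I_2')$. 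Then $U_{(2,2)}$ fixes $w_0$, and the Levi acts on $w_0$ by that block.
\item Change variables $a\mapsto a\cdot(\text{that block})^{-1}$ in $\GL_2(\A)$. This produces $|\det g_1|^{s+5/2}|\det g_2|^{-(s+5/2)}$. It also produces extra factors: $|\nu(p)|^{-s+1/2}=|\det g_2|^{-s+1/2}$, and the similitude normalisation, which contributes a power of $|\det g_2|$ because the needed $h$ satisfies $\nu'(h)=\det g_2$.
\item Check that the total equals $|\det g_1/\det g_2|^{s+5/2}=\delta_{P_{(2,2)}}^{s/5+1/2}(m)$, using $\delta_{P_{(2,2)}}(m)=|\det g_1/\det g_2|^5$.
\end{enumerate}
The prefactor $|\nu(g)|^{-s+1/2}$ in the definition of $F$ is presumably chosen exactly to cancel the similitude discrepancy, so the obstacle is bookkeeping of the exponents $-n\dim V/4=-3$ and of the $d(\nu)$-twist relating $g$ and $\tilde g$. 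If the exponents fail to cancel I would recheck the Fourier-transform normalisation, which intertwines $L(h)$ only up to $|\nu'(h)|$-powers. Smoothness and independence of the choice of $h$ follow from the $\mathbf{O}_{\mathbb{H}^2}(\A)$-invariance of the integral over $\GL_2(\A)$, since $\mathbf{O}$ only changes $h$ within its $\nu'$-fibre. Together with the convergence above, this shows $F(\phi,\cdot,s)\in I_{P_{(2,2)}}(s,\mathbf 1)$.
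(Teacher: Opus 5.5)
Your route is essentially the paper's, done by hand instead of by citation. You get convergence from a Godement--Jacquet integral with $\mathrm{Re}(s+5/2)>2$, which is a fine substitute for \cite[Lemma 5.5.2(i)]{KudlaRallis}. Equivariance then comes from a substitution in the mixed model, where the prefactor $|\nu(g)|^{-s+1/2}$ absorbs the normalisation $|\nu|^{-3}$ of \eqref{eq_normalization_Weil_similitudes}. The paper quotes \cite[(5.5.15)]{KudlaRallis} for $P^\circ_{(2,2)}$ and checks by hand only $m(I_2,g)$, with $h=\mathrm{diag}(1,1,\eta,\eta)$. However, two points in your equivariance step are wrong as stated.

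\textbf{The $\mathbf{O}_{\mathbb{H}^2}(\A)$-invariance claim is false for $s\neq 1/2$.} In the mixed model, an element $m(a_0)$ of the Levi of the Siegel parabolic of $\SO_{\mathbb{H}^2}$ acts by $\hat\varphi(y)\mapsto|\det a_0|^{3}\hat\varphi(a_0y)$, up to convention. The substitution ${}^tb=a_0\,{}^ta$ then multiplies $\int_{\GL_2(\A)}\hat\varphi({}^taw_0)|\det a|^{s+5/2}da$ by $|\det a_0|^{1/2-s}$. So $F$ really depends on the choice of $h$ in its $\nu'$-fibre. You cannot invoke invariance to match the $h$ used for $pg$ with the one used for $g$; you must fix $h(g)$ multiplicatively in $\nu(g)$. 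The paper's choice $h(g)=\mathrm{diag}(1,1,\nu(g),\nu(g))$ works. Transporting $L(h(g))$ through the partial Fourier transform (it only rescales the $u$-variable) gives
\[
\omega(h(g),g)\hat\varphi(y)=|\nu(g)|^{-3}\,\hat\varphi\bigl(\nu(g)^{-1}y\,g\bigr).
\]
Since $w_0\,d(\nu^{-1})=\nu^{-1}w_0$, on $y={}^taw_0$ this equals $|\nu(g)|^{-3}\hat\varphi(y\,\tilde g)$.

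\textbf{Your exponent count does not close.} If you use $w_0p=(0,0,\det(g_2)I_2'\,{}^tg_1^{-1}I_2')$, that block has determinant $\det(g_2)^2/\det(g_1)$. The substitution therefore produces $|\det g_1|^{s+5/2}|\det g_2|^{-2(s+5/2)}$, not $|\det g_1/\det g_2|^{s+5/2}$. After multiplying by $|\det g_2|^{-s+1/2}$ and $|\det g_2|^{c}$, you would need $c=2s+2$. No $s$-independent power of $|\nu'(h)|$ can supply that, so rechecking the Fourier normalisation would not rescue the argument. The missing $|\det g_2|^{s+5/2}$ comes from the dilation by $\nu(p)^{-1}$ above: you must use $w_0\tilde p=(0,0,I_2'\,{}^tg_1^{-1}I_2')$. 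Then the substitution gives only $|\det g_1|^{s+5/2}$, and $|\det g_2|^{-s+1/2}\cdot|\det g_2|^{-3}=|\det g_2|^{-s-5/2}$. Together these give exactly $\delta_{P_{(2,2)}}^{s/5+1/2}(m(g_1,g_2))$. With these two corrections your direct computation proves the lemma.
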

 
\begin{proof}
    According to \cite[Lemma 5.5.2 (i)]{KudlaRallis}, $F(\phi,g,s)$ converges absolutely for ${\rm Re}(s)>-1/2$.  Moreover, by \cite[(5.5.15)]{KudlaRallis}, we have that 
    \[F(\phi,g,s)|_{\Sp_6(\A)}\in \mathrm{Ind}_{P_{(2,2)}^\circ(\A)}^{\Sp_6(\A)}(|\cdot|^s\circ \mathrm{det}),\]
    where $|\cdot|^s\circ \mathrm{det}$ denotes the character of the Levi $\GL_2 \times \SL_2$ of $P_{(2,2)}^\circ$ which sends $$  \left(\begin{smallmatrix} g_1 &   &  \\ & g_2 &  \\ & &  I_2'{}^t g_1^{-1} I_2' \end{smallmatrix} \right) \mapsto | {\rm det}(g_1)|^s.$$
    Then, the Proposition follows from showing that $$F(\phi,m(I_2,g),s) = | {\rm det}(g)|^{-s -5/2 } F(\phi,I_6,s), \, \forall g \in \GL_2(\A).$$
 Let $g \in \GL_2(\A)$ and write $\eta : = {\rm det}(g)$ and $h := \left(\begin{smallmatrix} 1 & & & \\ & 1 & & \\ & & \eta & \\ & & & \eta \end{smallmatrix}\right) \in \GO_{\mathbb{H}^2}(\A)$. Then, using the explicit action of $\omega(h,m(I_2,g))$ on $\varphi$ (using \eqref{eq_normalization_Weil_similitudes}), we have

\begin{align*}
        F(\phi,m(I_2,g),s) &= | \eta |^{-s+1/2}  \int_{\GL_2(\A)}\omega(h,m(I_2,g))\hat{\varphi}
    (^{t}a w_0)|\det(a)|^{s+5/2}da \\ &= | \eta |^{-s+1/2} \int_{\GL_2(\A)}\int_{M_{2\times 3}(\A)}\omega(h,m(I_2,g))\varphi\left(\begin{smallmatrix}x_1&x_2&x_3\\ x_4&x_5&x_6\\ 0& 0& 0\\ 0& 0& 0\end{smallmatrix}\right)\psi\left(\mathrm{Tr}\left(\left(\begin{smallmatrix}0&0\\a_1&a_2\\ a_3& a_4\end{smallmatrix}\right)x\right)\right)|\det(a)|^{s+5/2}dx_i \,da \\ &= | \eta |^{-s-5/2} \int_{\GL_2(\A)}\int_{M_{2\times 3}(\A)}\omega(\widetilde{m}(I_2,g))\varphi\left(\begin{smallmatrix}x_1&x_2&x_3\\ x_4&x_5&x_6\\ 0& 0& 0\\ 0& 0 & 0\end{smallmatrix}\right)\psi\left(\mathrm{Tr}\left(\left(\begin{smallmatrix}0&0\\a_1&a_2\\ a_3& a_4\end{smallmatrix}\right)x\right)\right)|\det(a)|^{s+5/2}dx_i \,da \\ 
    &= | \eta |^{-s-5/2}\int_{\GL_2(\A)} \omega(\widetilde{m}(I_2,g))\hat{\varphi}
        \left({}^{t}aw_0\right)|\det(a)|^{s+5/2}da,
    \end{align*} 
    where we have denoted any $a = \left(\begin{smallmatrix}a_1&a_2\\a_3&a_4\end{smallmatrix}\right)\in \GL_2(\A)$ and $\widetilde{m}(I_2,g) := \left(\begin{smallmatrix}I_3& \\ &\eta^{-1}I_3\end{smallmatrix}\right) m(I_2,g)\in \Sp_6(\A)$. Finally, by \cite[(5.5.15)]{KudlaRallis}, we have that \[\int_{\GL_2(\A)} \omega(\widetilde{m}(I_2,g))\hat{\varphi}
        \left( {}^{t}aw_0\right)|\det(a)|^{s+5/2}da = \int_{\GL_2(\A)} \hat{\varphi}
        \left({}^{t}aw_0\right)|\det(a)|^{s+5/2}da,\]
        hence, we get \begin{align*}
      F(\phi,m(I_2,g),s) &= |\eta|^{- s - 5/2}F(\phi,I_6,s),
    \end{align*}
    as desired.
\end{proof}

Lemma \ref{lemma_constructing_sectionsforR} allows us to define the following:
\begin{align*}
    F_s:\Pi(\mathbb{H}^2) & \to I_{P_{(2,2)}}(s, \mathbf{1}),\\ \phi(g) = \omega(h,g)\varphi(0)&\to F(\phi,g,s).
\end{align*}

Thanks to the results of \cite[\S 5.5]{KudlaRallis} (see in particular their equation (5.5.23) and Lemma 5.5.6), if $\phi \in \Pi(\mathbb{H}^2)$, the Eisenstein series $E_{P_{(2,2)}}(g, s, F_s(\phi))$ might attain a pole at $s=1/2$. By extending \cite[Theorem 6.12]{KudlaRallis} to similitude groups, the Siegel--Weil formula of Theorem \ref{Siegel-Weilnew}(i) can be reinterpreted as follows.

\begin{theorem}\label{SWcentralSplit}
    Let $\phi_s \in I_{P_{(3,0)}}\left(s,\mathbf{1}\right)$ be a section such that $ \phi = \phi_{1/2} \in \Pi(\mathbb{H}^2)$. There exists a non-zero constant $c$ such that  
    \[E_{P_{(3,0)}}\left(g,1/2,\phi\right) =c \cdot   \mathrm{Res}_{s = 1/2}E_{P_{(2,2)}}\left(g,s,F_s(\phi)\right).\]
\end{theorem}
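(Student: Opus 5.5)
The plan is to reduce the statement to the symplectic case, i.e. to \cite[Theorem 6.12]{KudlaRallis} (together with \cite[(5.5.23), Lemma 5.5.6]{KudlaRallis}), and then spread the identity from $\Sp_6(\A)$ to all of $\GSp_6(\A)$. The spreading uses the decomposition $\GSp_6(\A) = \GSp_6(F)Z_{\GSp_6}(\A)\Sp_6(\A)\GSp_6(\A_f)$, in the same way \cite{KudlaHarrisJacquet} derive Theorem \ref{Siegel-Weilnew} from the Siegel--Weil formula for $\Sp_6$.

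\emph{Step 1: the isometry case.} Restrict everything to $g\in\Sp_6(\A)$. Under the restriction isomorphism $\phi\mapsto\phi|_{\Sp_6}$, the section $F_s(\phi)$ of Lemma \ref{lemma_constructing_sectionsforR} restricts to exactly the Kudla--Rallis section $\int_{\GL_2(\A)}\omega(g)\hat\varphi({}^ta w_0)|\det a|^{s+5/2}da$ in $\mathrm{Ind}_{P^\circ_{(2,2)}(\A)}^{\Sp_6(\A)}(|\cdot|^s\circ\det)$, by (5.5.15) of \emph{loc.\,cit.} Next I compare the sums over $P_{(2,2)}(F)\backslash\GSp_6(F)$ and $P^\circ_{(2,2)}(F)\backslash\Sp_6(F)$ (and likewise for $P_{(3,0)}$); these coincide because $\nu$ is already surjective on the Levi. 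This identifies $E_{P_{(2,2)}}(g,s,F_s(\phi))|_{\Sp_6(\A)}$ with the Kudla--Rallis Eisenstein series attached to $\hat\varphi$, and $E_{P_{(3,0)}}(g,1/2,\phi)|_{\Sp_6(\A)}$ with their Siegel Eisenstein series at $s_0=0$, after the shift by $1/2$ already noted in the text.

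\emph{Step 2: compare residues.} Invoke \cite[Theorem 6.12]{KudlaRallis}. For the split quaternionic $V=\mathbb{H}^2$ (where $m=4=n+1$ with $n=3$), it states that the Siegel Eisenstein value equals a non-zero explicit constant times the leading (residual) term at $s=1/2$ of the $P^\circ_{(2,2)}$-Eisenstein series built from $\hat\varphi$; both sides are expressed through the regularized theta integral. The resulting constant $c$ depends only on measure normalizations and on the rescaling of $s$, not on $\phi$ or $g$. This gives the identity for $g\in\Sp_6(\A)$.

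\emph{Step 3: extend to $\GSp_6(\A)$.} Both sides are left $\GSp_6(F)$-invariant, and both have trivial central character: $I_{P_{(3,0)}}(s,\mathbf 1)$ and $I_{P_{(2,2)}}(s,\mathbf 1)$ are trivial on the center at the relevant points, as one checks from the modulus characters. Both sides are also right $\GSp_6(\A)$-equivariant in $\phi$. For $\phi$ this is clear; for $F_s$ it must be verified, namely that $F_s(\omega(h',g')\phi)=R(g')F_s(\phi)$ for $g'\in\GSp_6(\A_f)$, which follows from the definition through $\omega(h,g)$ and the factor $|\nu(g)|^{-s+1/2}$. Given this, for $g=\gamma z g_1 g_f$ we replace $\phi$ by $R(g_f)\phi\in\Pi(\mathbb{H}^2)$ and apply Step 2 at $g_1$.

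The main obstacle is the equivariance of $F_s$ under similitudes, i.e.\ the check that the normalizing factor $|\nu(g)|^{-s+1/2}$ together with \eqref{eq_normalization_Weil_similitudes} makes $F_s$ intertwine the full $\GSp_6(\A_f)$-action rather than only the $\Sp_6$-action. I would verify this directly on the elements $d(x)$, as in the proof of Lemma \ref{lemma_constructing_sectionsforR}, using the change of variables $a\mapsto a\cdot x$ in the $\GL_2(\A)$-integral.
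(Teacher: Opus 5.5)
Your proposal follows the paper's route. You apply Kudla--Rallis (Theorem 6.12, with (5.5.12)) on $\Sp_6(\A)$ to the translated section $r(g_f)\phi$, then spread the identity to $\GSp_6(\A)=\GSp_6(F)Z_{\GSp_6}(\A)\Sp_6(\A)\GSp_6(\A_f)$ using compatibility of $\phi\mapsto F_s(\phi)$ with right translation by $\GSp_6(\A_f)$.

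The one thing to correct is the form of that compatibility: it holds only at $s=1/2$, not for all $s$. Compute $F$ with $h=\mathrm{diag}(1,1,\nu,\nu)$ as in the paper, and represent $r(g_f)\phi$ by $\omega(h_f,g_f)\varphi$. Then for $u\in\Sp_6(\A)$ you get
$F(\phi,ug_f,s)=|\nu(g_f)|^{-s+1/2}\int_{\GL_2(\A)}\omega(h_f,ug_f)\hat\varphi({}^taw_0)|\det a|^{s+5/2}\,da$.
Meanwhile $F(r(g_f)\phi,u,s)$ is literally the same integral without the prefactor. So $r'_s(g_f)F_s(\phi)=|\nu(g_f)|^{-s+1/2}F_s(r(g_f)\phi)$.

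The normalizing factor $|\nu(g)|^{-s+1/2}$ you planned to use is exactly what breaks exact equivariance. Since both sides are the same integral, no change of variables $a\mapsto ax$ can absorb it.

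This is harmless for the statement. The extra factor is holomorphic and equals $1$ at $s=1/2$, and the residue at $s=1/2$ depends only on the value of the section there, provided the pole is at most simple. The paper tacitly uses this when it checks $r'_{1/2}(g_f)F_{1/2}(\phi)=F_{1/2}(r(g_f)\phi)$ only at $s=1/2$.

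So state and verify the compatibility only at $s=1/2$, and add that remark about the residue. With that change, your Step 3 is exactly the paper's argument.
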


\begin{proof}
Let us factor any element $g\in\GSp_6(\A) = \GSp_6(F)Z_{\GSp_6}(\A)\Sp_6(\A)\GSp_6(\A_f)$ as $g = g_Fg_Z\tilde{g}g_f$. By equation (5.5.12) and Theorem 6.12 of \cite{KudlaRallis}, there exists a non-zero constant $c$ such that \begin{equation}\label{equalityEisensteinClassic}E_{P_{(3,0)}}\left(\tilde{g},\frac{1}{2}, r(g_f)\phi \right) =c \cdot  \mathrm{Res}_{s = 1/2} E_{P_{(2,2)}}\left(\tilde{g},s,F_s(r(g_f)\phi)\right),\end{equation}
where $r$ denotes the action by right translation in the induced representation  $I_{P_{(3,0)}}\left(1/2,\mathbf{1}\right)$.
    Then the statement follows from showing that 
    \begin{align}\label{intermediate_08/08} r_{1/2}'(g_f)F_{1/2}(\phi) = F_{1/2}( r(g_f) \phi),
    \end{align}
    where $r_s'$ denotes the action by right translation in the induced representation $I_{P_{(2,2)}}(s, \mathbf{1})$, \emph{i.e.} from proving the $\GSp_6(\A)$-equivariance of the maps $$ I_{P_{(3,0)}}(1/2,\mathbf{1}) \xleftarrow{\lambda_{\mathbb{H}^2}(-)^\sim} \Pi(\mathbb{H}^2) \xrightarrow{F_{1/2}(-)} I_{P_{(2,2)}}(1/2, \mathbf{1}).$$
If we let $u \in\Sp_6(\A)$, we have 
$$(r(g_f) \phi)(u) = r(g_f) \lambda_{\mathbb{H}^2}(\varphi)^\sim(u) = | \nu(g_f)|^{-3} \cdot \omega(u g'_f)\varphi(0),$$
for $\varphi \in \mathcal{S}(\mathbb{H}^2(\A)^3)$ and $g'_f = \left(\begin{smallmatrix}I_3& \\ &\nu(g_f)^{-1}I_3\end{smallmatrix}\right) g_f\in \Sp_6(\A)$. Thus, 
the right hand side
of \eqref{intermediate_08/08} equals  \begin{align*} F_{1/2}( r(g_f) \phi)(u) &= F\left(r(g_f) \phi, u, 1/2\right) \\ &= | \nu(g_f)|^{-3} \int_{\GL_2(\A)}\omega(u g_f')  \hat{\varphi}
    (^{t}aw_0)|\det(a)|^{3} d a.
\end{align*}
The left hand side
of \eqref{intermediate_08/08} is 
\begin{align*} r_{1/2}'(g_f)F_{1/2}(\phi)(u) &= F\left( \phi, u g_f, 1/2\right) \\ &= \int_{\GL_2(\A)} \omega(h_f,u g_f) \hat{\varphi}
    (^{t}aw_0)|\det(a)|^{3} d a \\  &= | \nu(g_f)|^{-3}\int_{\GL_2(\A)} \omega(u g'_f)\hat{\varphi}({}^{t}aw_0)|\det(a)|^{3}da,
\end{align*}
where we chose $h_f = \left(\begin{smallmatrix} 1 & & & \\ & 1 & & \\ & & \nu(g_f) & \\ & & & \nu(g_f) \end{smallmatrix}\right) \in \GO_{\mathbb{H}^2}(\A_f)$ and, for the last equality, we argued as in Lemma \ref{lemma_constructing_sectionsforR}. Comparing both sides, we obtain the desired equality. 
\end{proof}

\subsection{On the non-vanishing of the central $L$-value}\label{subsec_non_vanishing_central_L_value_degree_12}

We now apply Theorems \ref{Siegel-Weilnew} and \ref{SWcentralSplit} to give a criterion for the non-vanishing of the value at $s=1/2$ of the zeta integral, hence of the central value of the degree $12$ $L$-function.

 By Theorem \ref{Siegel-Weilnew}, if we let $V$ range through the (finite) set of global quadratic spaces of dimension $4$ over $F$ for which the image of $\phi_{1/2}$ via the projection ${\rm pr}_{\Pi(V)}: I_{P_{(3,0)}}(1/2, \mathbf{1}) \to  \Pi(V)$ is non-zero, we have
\[E_{P_{(3,0)}}^*(g, 1/2,\phi_{1/2}) = 2 \,\zeta^S_F(2)^2  \sum_V I(g, f^V),\]
where $f^V \in \mathcal{S}(V(\A)^3)$ is such that ${\rm pr}_{\Pi(V)} \phi_{1/2}  = \lambda_V(f^V)^{\sim}$.

\begin{remark}\label{RemarkonHowManyVAppear}
The quaternion algebras associated to these $V$'s are split away from $S$ as $\phi_s$ is unramified at each $v \not \in S$.     
\end{remark}
\noindent Plugging this in the zeta integral, we have 
\begin{align*}
    Z(\tfrac{1}{2},\varphi,f^\Psi_{\phi_{1/2}}) &= 2 \,\zeta^S_F(2)^2  \sum_V \int_{Z_{\GSp_6}(\A)\backslash [\GL_2\boxtimes \GSp_4]} I(\iota( g_1  , g_2), f^V) \Psi^w( g_1 ) \varphi(g_2)\, dg_1\,dg_2 \\ &= 2 \,\zeta^S_F(2)^2  \sum_V \int_{Z_{\GSp_6}(\A)\backslash [\GL_2\boxtimes \GSp_4]} \int_{[\mathbf{O}_V]} \theta(h h_1, \iota( g_1  , g_2) ; f^V) \Psi^w( g_1 ) \varphi(g_2) \, d h \, dg_1\,dg_2,
\end{align*}
where $h_1 \in \mathbf{GO}_V(\A)$ has multiplier $\nu(h_1) = \nu (\iota( g_1  , g_2))$.

We now introduce the algebraic group $\mathbf{J}$ over $F$ given by
\[\mathbf{J}(R) := (\GL_2\boxtimes \GL_2)(R) = \{(h_1,h_2)\in \GL_2(R)\times \GL_2(R)\;:\;\mathrm{det}(h_1) = \mathrm{det}(h_2)\},\]
for any $F$-algebra $R$. We embed it into $\GSp_4$ via \begin{align*} j:\mathbf{J} &\to \GSp_4, \\ (h_1,h_2)&\mapsto  j(h_1,h_2) := \left(\begin{smallmatrix}
    d& &c\\ &h_2& \\ b& &a
\end{smallmatrix}\right),\end{align*}
where we denoted $h_1 = \left(\begin{smallmatrix}a&b\\c&d\end{smallmatrix}\right)$. We thus embed $\mathbf{J}$ into $\GL_2\boxtimes \GSp_4$ using the map
\begin{align}\label{Embedding:On:GSp4} \mathbf{J} \to \GL_2\boxtimes \GSp_4, \,\, (h_1,h_2)\mapsto  (h_1, j(h_1,h_2)).\end{align}
Let 
\begin{align}\label{def_open_orbit_forJR}
    \tau_{1} := \left(\begin{smallmatrix}0&0&1&&&\\0&1&0&&&\\1&0&0&&&\\ &&&0&0&1\\&&&0&1&0\\&&&1&0&0 \end{smallmatrix}\right),\, \tau_{2} := \left(\begin{smallmatrix}1&0&0&&&\\0&1&0&&&\\-1&0&1&&&\\ &&&1&0&0\\&&&0&1&0\\&&&1&0&1 \end{smallmatrix}\right),\,  \tau_{\rm op} := \left(\begin{smallmatrix}1&0&&&&\\0&1&&&&\\&&1&0&&\\ &&0&1&&\\1&0&&&1&0\\0&1&&&0&1 \end{smallmatrix}\right).
\end{align} 

\begin{lemma}\label{doublecosetPtilde}
    The double quotient $P_{(2,2)}(F)\setminus \GSp_6(F)/\iota\left(\GL_2\boxtimes\GSp_4\right)(F)$ consists of four elements represented by $I_6, \tau_1, \tau_2$, and $\tau_{\rm op}$, with corresponding flag and stabilizer as follows:
    \begin{enumerate}
        \item $P_{(2,2)}(F)\cdot I_6$ corresponds to the flag $0 \subset \langle f_1,f_2 \rangle $ and has stabilizer $\iota(P_{(1,0)}\boxtimes P_{(1,2)})(F)$.
        \item $P_{(2,2)}(F)\cdot \tau_1$ corresponds to the flag $0 \subset \langle f_2,f_3 \rangle $ and has stabilizer $\iota\left(\GL_2\boxtimes P_{(2,0)}\right)(F)$.
        \item $P_{(2,2)}(F)\cdot \tau_2$ corresponds to the flag $0 \subset \langle f_1+f_3,f_2 \rangle $ and has stabilizer 
        \[\left \{ \left(\begin{smallmatrix}a & b \\  & d\end{smallmatrix}\right) \times \left(\begin{smallmatrix}
      \alpha &  &&\\&a&&\\&&d & \\&&&\delta 
\end{smallmatrix}\right)\,:\, ad = \alpha \delta \right \} \cdot \left(\{ I_2\} \times U_{\GSp_4}(F)\right).\]
        \item $P_{(2,2)}(F) \cdot \tau_{\rm op}$ corresponds to $0 \subset \langle e_2+f_1, e_1+f_2 \rangle$ and has stabilizer $ \mathbf{J}(F).$
    \end{enumerate}
    
\end{lemma}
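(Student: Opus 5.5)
We will classify the isotropic planes $L\subset W_6$ up to the action of $\iota(\GL_2\boxtimes\GSp_4)(F)$, following the method of Lemma \ref{lemmaondoublecosetsimple}. Here $P_{(2,2)}(F)\backslash\GSp_6(F)$ is identified with the variety of isotropic planes via $g\mapsto \langle f_2,f_1\rangle g$, and we use the decomposition $W_6=W_2\oplus W_4$. For $i=2,4$ let $p_i$ denote the projection onto $W_i$. The invariants of an orbit are $\dim(L\cap W_2)$, $\dim(L\cap W_4)$ and $\dim p_2(L)$. Since $W_2$ is a symplectic plane, an isotropic $L$ meets $W_2$ in dimension at most $1$.

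We split into cases.

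\emph{Case $\dim L\cap W_2=1$.} Isotropy of $L$ forces the second basis vector to lie in $W_4$, modulo $L\cap W_2$, so $L=\ell_2\oplus\ell_4$ with $\ell_2\subset W_2$ and $\ell_4\subset W_4$ both lines. The group $\GL_2$ is transitive on lines in $W_2$, and $\GSp_4$ is transitive on lines in $W_4$. The fiber product condition on similitudes is harmless, since each factor already contains its own $\SL_2$ or $\Sp_4$ acting transitively. This gives the orbit of $\langle f_1,f_2\rangle$, whose stabilizer is $\iota(P_{(1,0)}\boxtimes P_{(1,2)})$.

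\emph{Case $L\cap W_2=0$ and $\dim p_2(L)=0$.} Then $L\subset W_4$ is a Lagrangian of $W_4$. These form a single $\Sp_4$-orbit, giving $\tau_1$ with flag $\langle f_2,f_3\rangle$. The stabilizer is $\GL_2\boxtimes P_{(2,0)}$.

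\emph{Case $\dim p_2(L)=1$ with $L\cap W_2=0$.} Then $L\cap W_4$ is a line $\ell$. The second vector has the form $x+y$ with $0\neq x\in W_2$, $y\in W_4$ and $\langle x,x\rangle=0$. Isotropy of $L$ requires $\langle y,\ell\rangle=0$ and $y\notin \ell$, because $L\cap W_2=0$. Using $\GL_2$ we move $x$ to $f_1$, and using $\GSp_4$ we move $(\ell,y)$ to $(f_2,f_3)$. This is possible because isotropic planes $\langle \ell,y\rangle$ with a marked line and a marked vector modulo $\ell$ form one orbit; the scalar on $y$ is absorbed by rescaling $x$ through the torus of $\GL_2$. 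This gives $\tau_2$ with flag $\langle f_1+f_3,f_2\rangle$.

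\emph{Case $\dim p_2(L)=2$.} Then $p_2|_L$ is an isomorphism onto $W_2$, so $L$ is the graph of a linear map $T\colon W_2\to W_4$. Isotropy of $L$ says $\langle Tx,Ty\rangle=-\langle x,y\rangle$, so $T$ is an anti-symplectic embedding and $T(W_2)$ is a symplectic plane in $W_4$. $\GSp_4$ acts transitively on symplectic planes, and the choice of $T$ is then fixed up to $\GL_2$ acting on $W_2$. Matching similitudes so that the pair lies in $\GL_2\boxtimes\GSp_4$ uses exactly the determinant/multiplier condition. This gives a single orbit, represented by $\langle e_2+f_1,e_1+f_2\rangle$, which is $\tau_{\rm op}$.

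Completeness follows because the cases exhaust all possibilities.

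For the stabilizers, cases (1) and (2) are immediate. In case (3), an element must preserve $\langle f_1\rangle\subset W_2$ (Borel of $\GL_2$) and the flag $\langle f_2\rangle\subset\langle f_2,f_3\rangle$ (Borel of $\GSp_4$). It must also match the action on $f_1$ with that on $f_3$ modulo $f_2$. Writing this out gives the stated torus relation together with the full $U_{\GSp_4}$. For case (4), the stabilizer consists of pairs $(h,g)$ with $g|_{T(W_2)}=ThT^{-1}$ and $g$ arbitrary on $T(W_2)^\perp$, a $\GL_2$ with the determinant matching the similitude. This is $\mathbf{J}$ embedded via $j$. The identification with the matrix $j(h_1,h_2)$ is a direct computation with $\tau_{\rm op}$, and I expect this matrix bookkeeping (the anti-diagonal and $w$-conjugations) to be the fiddliest step. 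The orbit analysis itself is routine.
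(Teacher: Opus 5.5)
Your proof is correct and follows essentially the same route as the paper. The paper also sorts isotropic planes by $\dim(L\cap W_2)$ and $\dim(L\cap W_4)$; your invariant $\dim p_2(L)$ is just $2-\dim(L\cap W_4)$, and you arrive at the same four representatives and stabilizers. Your description of the open stratum as graphs of anti-symplectic embeddings $T\colon W_2\to W_4$ makes explicit the transitivity on that stratum, which the paper simply asserts.
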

\begin{proof}
Recall that we have fixed the basis 
  $\{e_1,e_2,e_3,f_3,f_2,f_1\}$ of standard representation $W_6$ of $\GSp_6$ and that the embedding $\iota$ is induced by the decomposition of $W_6 = W_2 \oplus W_4$, with $W_2=\langle e_1,f_1\rangle$ and $W_4= \langle e_2,e_3,f_3,f_2\rangle$. 
Any flag in $P_{(2,2)}(F)\setminus  \GSp_6(F)$ is represented by $0 \subset S$, with $S = \langle v_1, v_2 \rangle$ an isotropic plane in the standard representation $W_6$ of $\GSp_6$. There are 4 orbits with respect to the action of $\iota\left(\GL_2\boxtimes\GSp_4\right)(F)$ which can be described in terms of the dimensions of the intersections $S \cap W_2$ and $S \cap W_4$, as we now explain. We start by distinguishing two cases, depending on whether $S$ is in $W_4$ or not. 
If $S \subset W_4$, then we can use the action of $\GSp_4(F)$ to move $\langle v_1, v_2 \rangle$ to $\langle f_2, f_3 \rangle$. Thus, the set of flags $0 \subset S$ such that $S \subset W_4$ forms an orbit $P_{(2,2)}(F) \cdot \tau_1$ with stabilizer $\iota\left(\GL_2\boxtimes P_{(2,0)}\right)(F)$. 

If $S \not\subset W_4$, then $\dim S \cap W_2$ is either 1 or 0. If $\dim S \cap W_2 = 1$, then we can assume that $v_1 \in W_2$. Using the action of the $\GL_2$-block we can assume that $v_1 = f_1$. Since $S$ is isotropic, we can then assume that $v_2$ belongs to $W_4$ and, using the action of $\GSp_4(F)$, we can move $v_2$ to $f_2$. Thus, the set of flags $0 \subset S$ such that $S \not\subset W_4$ and $\dim S \cap W_2 = 1$ forms an orbit represented by the identity matrix $I_6$ with stabilizer $$P_{(2,2)}(F) \cap \iota(\GL_2 \boxtimes \GSp_4)(F) = \iota(P_{(1,0)} \boxtimes P_{(1,2)})(F).$$

If $S\not\subset W_4$ and $\dim S \cap W_2 = 0$, we might assume that $v_1 = f_1 + f_3$. As $S$ is isotropic, $v_2 \not \in W_2$. If $v_2$ is in $W_4$, it can be mapped to $f_2$; the flag $0 \subset \langle f_1+f_3,f_2 \rangle$ represents the orbit of flags $0 \subset S$ such that $\dim S \cap W_2=0$ and $\dim S \cap W_4 =1$. It is equal to $P_{(2,2)}(F) \cdot \tau_2$. 
By an explicit matrix calculation, we obtain the stabilizer given in the statement of the Lemma. Finally, the set of flags $0 \subset S$ such that $\dim S \cap W_2 = \dim S \cap W_4 =0$ forms the remaining orbit. We choose it to be represented by $0 \subset \langle e_2+f_1, e_1+f_2 \rangle$, which corresponds to the element $\tau_{\rm op}$ defined in \eqref{def_open_orbit_forJR}. The calculation of the stabilizer follows immediately. 
\end{proof}

\begin{theorem}\label{Final:Theorem:Central:Value}
    Let $\pi$ be a globally generic cuspidal automorphic representation of either $\GL_4(\A)$ or $\GU_{2,2}(\A)$ and let $\sigma$ be a cuspidal automorphic representation of $\GL_2(\A)$ such that as a Hecke character on $F^\times \backslash \A^\times$ \begin{equation*}
    \omega_\pi \omega_\sigma = 1.
\end{equation*}
We also let $S$ be a finite set of places containing the archimedean places for $F$ and the ramified places for $\pi$, $\sigma$, and $E/F$. Let $\varphi = \otimes_v \varphi_v$ and $\Psi = \otimes_v \Psi_v$ be cusp forms in $\pi$ and $\sigma$, and let  $\phi_{s} = \otimes_v \phi_{v,s} \in I_{P_{(3,0)}}(s,\mathbf{1})$. If 
$$ L^S(1/2, \pi \otimes \sigma, \wedge^2 \otimes  \mathrm{std}_2) \prod_{v \in S} Z\left(1/2,W_{\pi_v}, W_{\sigma_v,f_{\phi_{1/2,v}}^{\Psi_v}} \right) \ne 0,$$ then either 
\begin{align}
\mathcal{P}_{\mathbf{J}}( \varphi \otimes \Psi) : = \int_{Z_{\GSp_6}(\A)  \backslash[ \mathbf{J}]} \Psi(g_1) \varphi\left( \left(\begin{smallmatrix}
    a& &b\\ &g_2& \\ c& &d
\end{smallmatrix}\right) \right )  dg_1 \, dg_2 \ne 0,  
\end{align}
 where we denoted $g_1 = \left(\begin{smallmatrix}a&b\\c&d\end{smallmatrix}\right)$, or 
\begin{align}\label{FormulaWithTheta}
    \int_{Z_{\GSp_6}(\A)\backslash [\GL_2\boxtimes \GSp_4]} \int_{[\mathbf{O}(D)]} \theta(h h_1, \iota( g_1  , g_2) ; f^D) \Psi^w( g_1 ) \varphi(g_2) \, d h \, dg_1\,dg_2 \ne 0,
\end{align}
where $\mathbf{O}(D)$ is the orthogonal group attached to the norm form of a non-split quaternion algebra $D/F$ and $f^D \in \mathcal{S}(D(\A)^3)$ is such that ${\rm pr}_{\Pi(D)} \phi_{1/2}  = \lambda_D(f^D)^{\sim}$. 
\end{theorem}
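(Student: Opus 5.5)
The hypothesis says exactly that $Z(\tfrac12,\varphi,f^\Psi_{\phi_{1/2}})\neq 0$, and I would work from this. By Theorem \ref{zetaintegralfinalthm}, applied to $f_s=f^\Psi_{\phi_s}$, the zeta integral factors as the partial $L$-function times the local integrals at $v\in S$. Here $S$ may need to be enlarged so that $\phi_s$ is unramified outside it. At those extra places the local factors are $1$ for the normalized spherical data, by Corollaries \ref{ZtoLtensorNonSplit} and \ref{Lrequaltowedge} combined with Theorem \ref{PullbackFormulaNew}(2), since the normalization of $E^*_{P_{(3,0)}}$ absorbs the $L^S(2s+1)L^S(4s)$ factor. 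By Proposition \ref{non:vanishing:and:abs:conv:zeta:bad}(i), everything is holomorphic at $s=1/2$.

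Next, Corollary \ref{CorPullbackZeta} rewrites $Z(\tfrac12,\cdots)$ as the integral of $E^*_{P_{(3,0)}}(\iota(g_1,g_2),\tfrac12,\phi_{1/2})\Psi^w(g_1)\varphi(g_2)$ over $Z_{\GSp_6}(\A)\backslash[\GL_2\boxtimes\GSp_4]$. I would decompose $\phi_{1/2}$ along \eqref{thetadecSp6}. The incoherent components contribute zero by Theorem \ref{Siegel-Weilnew}(2). The coherent components give $2\zeta_F^S(2)^2\sum_V I(g,f^V)$, where $V$ runs over norm forms of quaternion algebras split outside $S$. If some non-split $D$ contributes non-trivially to the sum, then \eqref{FormulaWithTheta} holds for that $D$. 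So we may assume that only the split space $V=\mathbb{H}^2$ gives a non-zero term.

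In that case I would use Theorem \ref{SWcentralSplit} to replace $E_{P_{(3,0)}}(g,\tfrac12,\phi)$ by $c\cdot\mathrm{Res}_{s=1/2}E_{P_{(2,2)}}(g,s,F_s(\phi))$. I would then integrate the Eisenstein series $E_{P_{(2,2)}}$ for $\mathrm{Re}(s)$ large against $\Psi^w\otimes\varphi$ and unfold it using the four double cosets of Lemma \ref{doublecosetPtilde}, handling each orbit as follows.
\begin{enumerate}
\item For the orbit of $I_6$, the stabilizer contains $\iota(U_{(1,0)}\times U_{(1,2)})$. Integrating over the unipotent radical of the Borel of $\GL_2$ yields a constant term of $\Psi$, which vanishes by cuspidality.
\item For $\tau_1$, the stabilizer contains the Siegel unipotent radical of $\GSp_4$, so the term is killed by the cuspidality of $\varphi$.
\item For $\tau_2$, the stabilizer contains $\{I_2\}\times U_{\GSp_4}$ together with the unipotent $b$ in the $\GL_2$-factor. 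I would integrate over the Siegel-type subgroup of $U_{\GSp_4}$, which gives a constant term of $\varphi$ and hence vanishes.
\item Only the open orbit of $\tau_{\rm op}$ survives, with stabilizer $\mathbf{J}(F)$.
\end{enumerate}
The open orbit gives
\[\int_{Z_{\GSp_6}(\A)\mathbf{J}(F)\backslash(\GL_2\boxtimes\GSp_4)(\A)} F_s(\phi)(\tau_{\rm op}\iota(g_1,g_2))\Psi^w(g_1)\varphi(g_2)\,dg_1\,dg_2 .\]
Factoring the domain through $\mathbf{J}(\A)$, this equals the integral over $\mathbf{J}(\A)\backslash(\GL_2\boxtimes\GSp_4)(\A)$ of $F_s(\phi)(\tau_{\rm op}\iota(g))$ times the inner period
\[\int_{Z_{\GSp_6}(\A)\backslash[\mathbf{J}]}\Psi^w(h_1g_1)\varphi(j(h_1,h_2)g_2)\,dh .\]
For this to make sense I need to check that $F_s(\tau_{\rm op}\,\cdot)$ is left $\mathbf{J}(\A)$-invariant up to the modulus characters that match. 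The inner period is $\mathcal{P}_{\mathbf{J}}$ of the right translates $\sigma(g_1)\Psi$ and $\pi(g_2)\varphi$, after conjugating by $w$ to pass from $\Psi^w$ to the coordinates $(a,b;c,d)$ of the statement. The resulting identity extends to $s=1/2$ by meromorphic continuation, and the residue commutes with the outer integral because of convergence for $\mathrm{Re}(s)$ near $1/2$. Therefore, if $\mathcal{P}_{\mathbf{J}}$ vanished identically on $\pi\otimes\sigma$, the split contribution would vanish for all $s$, hence also its residue, contradicting $Z(\tfrac12,\cdots)\neq0$.

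I expect the main obstacle to be the analytic justification of the open-orbit unfolding. This requires showing that the outer integral over $\mathbf{J}(\A)\backslash(\GL_2\boxtimes\GSp_4)(\A)$ converges for $\mathrm{Re}(s)$ large. It also requires that the residue of the global integral equals the integral of the residue. For the latter I would use the rapid decay of $\varphi$ and $\Psi$ against the moderate growth of $E_{P_{(2,2)}}$, following \cite[\S5.5]{KudlaRallis} and \cite{GinzburgRallis}.
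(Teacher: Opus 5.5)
The overall route is the paper's: Theorem \ref{zetaintegralfinalthm}, the pullback formula (Corollary \ref{CorPullbackZeta}), the split/non-split dichotomy from Theorem \ref{Siegel-Weilnew}, Theorem \ref{SWcentralSplit}, and unfolding $E_{P_{(2,2)}}$ via Lemma \ref{doublecosetPtilde} so that only $\tau_{\rm op}$ survives. The gap is in how you dispose of the orbits $\tau_1$ and $\tau_2$. There you invoke cuspidality of $\varphi$ along unipotent radicals of parabolic subgroups of $\GSp_4$. That is not available: $\varphi$ is a cusp form on $\GL_4$ (resp. $\GU_{2,2}$), and its restriction to $\GSp_4$ is not cuspidal.

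Concretely, the Siegel unipotent radical $U_{(2,0)}$ of $\GSp_4$ is the $3$-dimensional subgroup $\{n(X): I_2'X={}^t(I_2'X)\}$ of the $4$-dimensional Siegel unipotent radical $N$ of $\GL_4$. Fourier expansion along $[N]$ therefore gives
\[\int_{[U_{(2,0)}]}\varphi(ug)\,du=\sum_{t\in F^\times}\varphi_{\psi_{tT_0}}(g),\qquad T_0=\mathrm{diag}(1,-1).\]
Here the $t=0$ term, the genuine constant term, is the only one that cuspidality kills. The surviving terms are Shalika-type coefficients attached to invertible $T$. They do not vanish in general; in fact they are nonzero whenever $\mathcal{S}_{\psi_S}|_{\pi\otimes\sigma}\not\equiv0$, which is exactly the situation the theorem is meant to detect (cf. Theorem \ref{Final:Theorem:Shalika}). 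The same happens for $\GU_{2,2}$, and the proof of Proposition \ref{unfolding} itself relies on $\int_{[U_{(1,2)}]}\varphi\neq0$. So your steps (2) and (3) do not show that these terms vanish.

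The terms do vanish, but through the cuspidality of $\Psi$, as in the paper.

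For $\tau_2$, use the element $\bigl(\left(\begin{smallmatrix}1&b\\&1\end{smallmatrix}\right),I_4\bigr)$ of the stabilizer that you already noticed. Its $\tau_2$-conjugate is a unipotent element of $P_{(2,2)}$, so $F(\phi,\tau_2\,\cdot\,,s)$ is invariant under its adelic points. Collapsing then produces a constant term of $\Psi^w$, which is zero.

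For $\tau_1$, the stabilizer $\iota(\GL_2\boxtimes P_{(2,0)})(F)$ contains the normal subgroup $\iota(\SL_2\times\{1\})(F)$. Conjugation by $\tau_1$ sends $\iota(h,1)$, for $h\in\SL_2(\A)$, into the $\SL_2$-part of the $\GSp_2$-factor of the Levi of $P_{(2,2)}$, where $\delta_{P_{(2,2)}}$ is trivial. Hence the $\tau_1$-term contains $\int_{[\SL_2]}\Psi^w(hg_1)\,dh$. This vanishes because a cusp form on $\GL_2$ is orthogonal to functions of the determinant.

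With these two orbits repaired, the rest of your argument agrees with the paper's proof. That includes the open-orbit computation, where $\delta_{P_{(2,2)}}$ is indeed trivial on $\tau_{\rm op}\mathbf{J}(\A)\tau_{\rm op}^{-1}$, and the interchange of residue and integral.
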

   
\begin{proof}
By Theorem \ref{zetaintegralfinalthm}, we have $$ Z(1/2,\varphi, f^\Psi_{\phi_{1/2}}) =L^S(1/2, \pi \otimes \sigma, \wedge^2 \otimes  \mathrm{std}_2) \prod_{v \in S} Z\left(1/2,W_{\pi_v}, W_{\sigma_v,f_{\phi_{1/2,v}}^{\Psi_v}} \right).$$ Moreover, the Siegel-Weil formula gives
\begin{align*}
    Z(1/2,\varphi, f^\Psi_{\phi_{1/2}}) &= 2 \,\zeta^S_F(2)^2  \sum_V \int_{Z_{\GSp_6}(\A)\backslash [\GL_2\boxtimes \GSp_4]} I(\iota( g_1  , g_2), f^V) \Psi^w( g_1 ) \varphi(g_2)\, dg_1\,dg_2 \\ &= 2 \,\zeta^S_F(2)^2  \sum_V \int_{Z_{\GSp_6}(\A)\backslash [\GL_2\boxtimes \GSp_4]} \int_{[\mathbf{O}_V]} \theta(h h_1, \iota( g_1  , g_2) ; f^V) \Psi^w( g_1 ) \varphi(g_2) \, d h \, dg_1\,dg_2,
\end{align*}
hence the non-vanishing of $Z(1/2,\varphi, f^\Psi_{\phi_{1/2}})$ implies the non-vanishing of at least one of the terms of the sum. To prove the Theorem, we are thus left to show that, if the term in the sum corresponding to the totally split quadratic space $\mathbb{H}^2$ \begin{align}\label{eqn_7_7_2025}
    \int_{Z_{\GSp_6}(\A)\backslash [\GL_2\boxtimes \GSp_4]} \int_{[\mathbf{O}_{\mathbb{H}^2}]} \theta(h h_1, \iota( g_1  , g_2) ; f^{\mathbb{H}^2}) \Psi^w( g_1 ) \varphi(g_2) \, d h \, dg_1\,dg_2 
\end{align} is non-zero for some choice of data, then the functional $\mathcal{P}_\mathbf{J}|_{\pi\otimes\sigma}$ is not identically zero.
To show that, we use Theorem \ref{SWcentralSplit} (and Theorem \ref{Siegel-Weilnew}(1)) to write \eqref{eqn_7_7_2025} as 
  \begin{align}\label{eqn2_7_7_2025}
    c \cdot \int_{Z_{\GSp_6}(\A)\backslash [\GL_2\boxtimes \GSp_4]} {\rm Res}_{s=1/2} E_{P_{(2,2)}}(\iota(g_1,g_2), s, F_s(\phi)) \Psi^w( g_1 ) \varphi(g_2) \, d h \, dg_1\,dg_2, 
\end{align}  
where $\phi = \lambda_{\mathbb{H}^2}(f^{\mathbb{H}^2})^{\sim}$ and $c$ is a non-zero constant. We can now prove our claim. Assume that \eqref{eqn_7_7_2025}, hence \eqref{eqn2_7_7_2025}, is non-zero for some choice of data. This implies that for ${\rm Re}(s)$ large enough the integral 
$$Z'(\varphi, \Psi, F_s(\phi)): = \int_{Z_{\GSp_6}(\A)\backslash [\GL_2\boxtimes \GSp_4]} E_{P_{(2,2)}}(\iota(g_1,g_2), s, F_s(\phi)) \Psi^w( g_1 ) \varphi(g_2) \, d h \, dg_1\,dg_2$$
is not identically zero. We use Lemma \ref{doublecosetPtilde} to unfold $Z'(\varphi, \Psi, F_s(\phi))$ and write it as
\begin{align*}
    \sum_{\tau}\int_{Z_{\GSp_6}(\A)\backslash [\GL_2\boxtimes \GSp_4]} \sum_{\gamma \in S_\tau(F) \backslash (\GL_2\boxtimes \GSp_4)(F) } F(\phi, \tau \gamma \iota (g_1,g_2) , s) \Psi^w( g_1 ) \varphi(g_2) \, d h \, dg_1\,dg_2,
\end{align*}
where $\tau$ runs through the set $\{I_6, \tau_1, \tau_2, \tau_{\rm op} \}$ and $S_\tau(F)$ denotes the stabilizer of $\tau$ in $\left(\GL_2\boxtimes\GSp_4\right)(F)$. When $\tau = I_6, \tau_2$, we note that, after collapsing the the sum over the unipotent part of $S_\tau(F)$, the corresponding integral vanishes because of cuspidality of $\Psi$. Similarly, the integral corresponding to $\tau = \tau_1$ contains 
$$\int_{\SL_2(F)\backslash \SL_2(\A)} \Psi^w(h g_1)\, dh ,$$
which is again zero for any cusp form $\Psi$. We thus have 

$$Z'(\varphi, \Psi, F_s(\phi)) =  \int_{Z_{\GSp_6}(\A) \mathbf{J}(F)\backslash (\GL_2\boxtimes \GSp_4)(\A)} F(\phi, \tau_{\rm op}\iota (g_1,g_2) , s) \Psi^w( g_1 ) \varphi(g_2) \, d h \, dg_1\,dg_2.$$
Collapse the sum over $[\mathbf{J}]$ to write the integral as
\small
$$Z'(\varphi, \Psi, F_s(\phi)) =  \int_{\mathbf{J}(\A)\backslash (\GL_2\boxtimes \GSp_4)(\A)} \int_{Z_{\GSp_6}(\A) \mathbf{J}(F)\backslash \mathbf{J}(\A)}F(\phi, \tau_{\rm op} \,\, \iota (h_1g_1,j(h_1,h_2)g_2) , s) \Psi^w( h_1 g_1 ) \varphi(j(h_1,h_2)g_2) \, d h \, dg.$$
\normalsize
The modulus character $\delta_{P_{(2,2)}}$ is trivial on $\tau_{\rm op} \mathbf{J}(\A) \tau_{\rm op}^{-1}$, hence 
\small
\begin{align*}
    Z'(\varphi, \Psi, F_s(\phi)) &=  \int_{\mathbf{J}(\A)\backslash (\GL_2\boxtimes \GSp_4)(\A)}F(\phi, \tau_{\rm op} \, \iota (g_1,g_2) , s)  \int_{Z_{\GSp_6}(\A) \mathbf{J}(F)\backslash \mathbf{J}(\A)}\Psi^w( h_1 g_1 ) \varphi(j(h_1,h_2)g_2) \, d h \, dg \\ 
    &=  \int_{\mathbf{J}(\A)\backslash (\GL_2\boxtimes \GSp_4)(\A)}F(\phi, \tau_{\rm op} \, \iota (g_1^w,g_2) , s)  \int_{Z_{\GSp_6}(\A) \mathbf{J}(F)\backslash \mathbf{J}(\A)}  \Psi\left(\left(\begin{smallmatrix}
    a &b\\ c &d
\end{smallmatrix}\right) g_1\right) \varphi\left( \left(\begin{smallmatrix}
    a& &b\\ &h_2& \\ c& &d
\end{smallmatrix}\right) g_2 \right ) \, d h \, dg,
\end{align*}
\normalsize
where we write $h_1 = \left(\begin{smallmatrix}
    a &b\\ c &d
\end{smallmatrix}\right)$ and $g_1^w = w g_1 w$. Thus, $Z'(\varphi, \Psi, F_s(\phi))$ contains the period $\mathcal{P}_{\mathbf{J}}(\varphi \otimes \Psi)$ as an inner integral. Since, by hypothesis,  $ Z'(\varphi, \Psi, F_s(\phi))$ is not identically zero, it follows that $\mathcal{P}_{\mathbf{J}}$ is not identically zero on the space of $\sigma \otimes \pi$. This finishes the proof.  
\end{proof}

In the course of the proof of Corollary \ref{Main:Corollary:Non:Spherical}, we will need the following auxiliary Lemma on the properties of the map $p_\Psi$ introduced in \eqref{PullbackMapRepresentation}.

\begin{lemma}\label{Non:Vanishing:Pullback}
   \leavevmode \begin{enumerate}
       \item Given $s_0 \in \C$, there exist data $\phi_{s_0}\in I_{P_{(3,0)}}(s_0,\mathbf{1})$ and $\Psi$ such that $p_\Psi(\phi_{s_0})$ is not identically zero.
       \item If $\sigma$ is unramified everywhere, one can choose $\phi_{1/2}\in \Pi(\mathbb{H}^2)$ and $\Psi$ so that $p_\Psi(\phi_{1/2})$ is a non-trivial unramified section of $I_{P_{(1,2)}}(\tfrac{1}{2},\sigma)$.
   \end{enumerate} 
\end{lemma}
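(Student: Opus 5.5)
Both parts reduce to a local computation of $p_\Psi(\phi_s)(1)$, which then propagates to non-vanishing of the section. Since $p_\Psi(\phi_s)$ lies in $I_{P_{(1,2)}}(s,\sigma,\chi)$ and is determined by its values on $\H(\A)$, it is enough to make $p_\Psi(\phi_s)(1)$ (or a translate) non-zero. By Theorem \ref{PullbackFormulaNew}(2), for factorizable data unramified outside $S$ this value is a partial $L$-ratio times a finite product of local integrals
\[ z_v(s,\phi_{v,s},\Psi_v)=\int_{Z_{\SL_2}(F_v)\backslash\SL_2(F_v)}\phi_{v,s}(\xi\iota(g_1,1))\,\sigma_v(g_1^w)\Psi_v\,dg_1 ,\]
viewed as vectors in $\sigma_v$ and evaluated at the identity. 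The partial $L$-ratio is handled by meromorphic continuation in $s$: we may move $s_0$ slightly, or argue at the level of sections via $f^\Psi_{\phi_s}$. So part (1) reduces to showing that each local map $\phi_{v,s}\mapsto z_v(\cdot)$ can be made non-zero at $s=s_0$.

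For the local non-vanishing in (1), I would exploit the open-cell structure. The map $g_1\mapsto P^\circ_{(3,0)}\xi\iota(g_1,1)$ embeds $Z_{\SL_2}\backslash\SL_2$ into $P^\circ_{(3,0)}\backslash \Sp_6$; this follows from the stabilizer computation in Lemma \ref{lemmaondoublecosetsimple}, since $S_\xi\cap(\SL_2\times 1)$ is trivial modulo the center. Take $\phi_{v,s}$ supported in $P_{(3,0)}(F_v)\,\xi\,\iota(U,1)\,\mathcal{N}$, where $U\subset\SL_2(F_v)$ is a small open neighbourhood of $1$ and $\mathcal{N}$ is a small open set transversal to $\xi\iota(\SL_2,1)$. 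Such sections are holomorphic and their shape is independent of $s$. Then $z_v$ becomes $\int_U \beta(g_1)\,\sigma_v(g_1^w)\Psi_v\,dg_1$ for an arbitrary smooth bump $\beta$. By Dixmier--Malliavin (nonarchimedean: take $\Psi_v$ fixed by $U$ and $\beta=\mathbf 1_U$), this produces any prescribed vector, and in particular one that is non-zero at $1$. Combined with the unramified computation (Bocherer's identity \eqref{Bocherer:Identity}) at $v\notin S$, this gives (1).

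For (2) the constraint is that $\phi_{1/2}$ must lie in $\Pi(\mathbb{H}^2)$, which rules out the bump sections above. At every place I would take $\phi_{v,1/2}=\lambda_{V_{1,v}}(\mathbf 1_{M_{4\times 3}(\mathcal O_v)})^{\sim}$, which is the normalized spherical vector at all finite places. At archimedean places I would take the Gaussian, giving the $K$-fixed (weight-zero) vector. Any such $\phi$ extends to a standard section $\phi_s$ that is spherical everywhere and lies in $\Pi(\mathbb{H}^2)$ at $s=1/2$. This uses that the spherical vector of $I_{P_{(3,0)}}(1/2,\mathbf 1)_v$ lies in the split-space constituent, by the decomposition \eqref{thetadecSp6}; at real places this requires the archimedean analogue from Kudla--Rallis. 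With $\Psi$ spherical, $p_\Psi(\phi_{1/2})$ is $\H(\widehat{\mathcal O})K_\infty$-invariant, hence a multiple of the unramified section. The multiple is the ratio in Theorem \ref{PullbackFormulaNew}(2) with $S$ consisting of the archimedean places only, times archimedean integrals $z_\infty$.

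The main obstacle is showing that this multiple is non-zero. This means excluding a zero of $L^S(2s,\sigma,\mathrm{Sym}^2)/(\zeta^S(2s+1)\zeta^S(4s))$ at $s=1/2$, and of the archimedean integrals. The finite part equals $L^S(1,\sigma,\mathrm{Sym}^2)/(\zeta^S(2)^2)$, which is non-zero and finite by Gelbart--Jacquet together with the non-vanishing of $L(s,\mathrm{Sym}^2)$ on $\mathrm{Re}(s)=1$. For $z_\infty$ at $s=1/2$, I would evaluate it through the Cartan decomposition as in the finite-place computation, giving an explicit Gamma-function ratio, and check that this ratio is non-zero for spherical $\sigma_\infty$ within the unitary range. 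If that evaluation proves unwieldy, the fallback is a positivity argument: the integrand is a matrix coefficient of $\sigma_\infty$ paired against a positive spherical function.
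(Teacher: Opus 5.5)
\textbf{Part (1).} Your argument is essentially the paper's. Sections supported on the open orbit $P_{(3,0)}\xi\iota(\SL_2,\Sp_4)$ turn the local integral into $\sigma_v(\beta)$ applied to a vector, and a suitable bump makes this non-zero. One caveat, which the paper also leaves implicit: ``moving $s_0$ slightly'' does not prove the claim at the prescribed $s_0$. The unramified factor $L^S(2s,\sigma,\mathrm{Sym}^2\otimes\chi)/(L^S(2s+1,\chi\omega_\sigma)L^S(4s,\chi^2\omega_\sigma^2))$ can vanish or have a pole at particular points. At $s_0=1/2$, the only value used later, it is finite and non-zero, as you note.

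\textbf{Part (2): the gap.} Your argument rests entirely on $z_\infty(\tfrac12,\phi^0_\infty,\Psi^0_\infty)\neq 0$ for the spherical archimedean section, and this is announced, not shown. The positivity fallback does not work. The factor $\phi^0_\infty(\xi\iota(g,1))$ is positive, but $\langle\sigma_\infty(g)\Psi^0_\infty,\Psi^0_\infty\rangle$ is the spherical function of $\sigma_\infty$. For tempered $\sigma_\infty$ with non-zero spectral parameter this is a conical Legendre function, which changes sign infinitely often, so positivity gives nothing. Until that archimedean integral is actually evaluated, (2) is not established.

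\textbf{The missing idea.} The condition $\phi_{1/2}\in\Pi(\mathbb{H}^2)$ does not exclude open-cell bump sections at archimedean places. At a real place the paper takes $\Phi_{\varphi_1\otimes\varphi_2}$ with $\varphi_1\in\mathcal{C}^\infty_c(\SL_2(\R))$ right $\U(1)$-invariant and $\varphi_2$ right $\U(2)$-invariant. By \cite[Proposition 2.8]{KudlaRallis}, every $\U(3)$-type of $\Pi(D)$ has highest weight $\lambda=(2+2\alpha,2+2\beta,2+2\gamma)$. By the branching rule \eqref{branching_law}, the $\U(1)$-characters occurring in such a type are $\det^{k}$ with $k=|\lambda|-\mu_1-\mu_2\geq\lambda_3\geq 2$. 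Hence $\Pi(D)$ has no $\U(1)$-fixed vectors, the $\Pi(D)$-component of such a section vanishes, and the section lies in $\Pi(\mathbb{H}^2_\R)$. The local integral is then $\varphi_2(I_4)\langle\sigma_v(\varphi_1)v_1,v_2\rangle$, which is non-zero for a suitable $\varphi_1$, and the $\U(1)\times\U(2)$-invariance gives an unramified output. At complex places $I_{P_{(3,0)}}(\tfrac12,\mathbf{1})_v=\Pi(V_\C)$, so there is no constraint and one argues as in (1). At finite places one uses the spherical section and B\"ocherer's identity, exactly as you do.

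The paper thus replaces your archimedean computation by a $K$-type argument. Your fully spherical choice may well also work, since the integral is a spherical transform that plausibly gives a Gamma ratio finite and non-zero at $s=1/2$. It would yield a slightly cleaner statement, but that computation is precisely what is missing.
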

\begin{proof} \leavevmode
   \begin{enumerate} 
    \item The result follows from showing that, for a given  $s_0\in\C$, one can choose  $\alpha \in \sigma_v$ and $\phi_{v,s_0}$  at each place $v$ of $F$ such that the  local zeta integral $z(s, \phi_{v,s_0}, \alpha)$ of \eqref{Local:Zeta:Pullback:Aux} is non-zero. This is analogous to the existence of $\phi_{v,s_0}$ and $v_1,v_2\in\sigma_v$ such that 
    \begin{align}\label{lemma710_int_step}
        \int_{Z_{\SL_2}(F_v)\setminus \SL_2(F_v)}\phi_{v,s_0}(\xi\iota( g_1,1))\langle \sigma_v(g_1)v_1,v_2\rangle dg_1\neq 0.
    \end{align}
    The non-vanishing of the latter follows essentially from \cite[Theorem 3.2.2 (ii)]{Kudla:Rallis:lost}. We include a proof here for completeness, as we use it in the second part of the Lemma.
    Fix an isomorphism $I_{P_{(3,0)}}(s_0,\mathbf{1})_v\simeq \mathcal{C}^{\infty}(P_{(3,0)}(F_v)\setminus \GSp_6(F_v))$. From Lemma \ref{lemmaondoublecosetsimple}, we deduce that 
    \begin{equation*}P_{(3,0)}(F_v)\setminus \GSp_6(F_v)\simeq \Omega_{\rm op} \sqcup \Omega_{\rm cl},
    \end{equation*}
    where $\Omega_{\rm op} = \SL_2(F_v)\times P^{\circ}_{(1,2)}(F_v)\setminus \Sp_4(F_v)$ is open and dense and $\Omega_{\rm cl} = (P^{\circ}_{(1,0)}(F_v)\times P^{\circ}_{(2,0)}(F_v))\setminus(\SL_2\times \Sp_4)(F_v)$ is closed. The inverse image of $\Omega_{\rm op}$ in $\GSp_6(F_v)$ is $$P_{(3,0)}(F_v)\xi\iota(\SL_2(F_v),\Sp_4(F_v)).$$ This implies that any function $\varphi := \varphi_1\otimes\varphi_2\in \mathcal{C}^{\infty}_c(\SL_2(F_v))\otimes\mathcal{C}^{\infty}_c( \Sp_4(F_v))$ can be uniquely extended to a section $\Phi_{\varphi}\in I_{P_{(3,0)}}(s_0,\mathbf{1})_v$. Hence,  for any $v_1,v_2\in \sigma_v$, we obtain 
    \begin{align*}\int_{Z_{\SL_2}(F_v)\setminus \SL_2(F_v)}\!\!\!\!\!\!\Phi_{\varphi}(\xi\iota( g_1,I_4))\langle \sigma_v(g_1)v_1,v_2\rangle dg_1 &= \varphi_2(I_4)\int_{Z_{\SL_2}(F_v)\setminus \SL_2(F_v)}\!\!\!\!\!\varphi_1(g_1)\langle \sigma_v(g_1)v_1,v_2\rangle dg_1\\ &= \varphi_2(I_4)\langle\sigma_v(\varphi_1)v_1,v_2\rangle.\end{align*}
    Choosing $\varphi_1$ so that $\sigma_v(\varphi_1)v_1\neq 0$, then there exists $v_2$ such that the integral does not vanish.
    
    \item Let $\Psi = \otimes_v \Psi_v$ be a spherical cusp form in $\sigma$. The statement follows from choosing, at each place $v$ of $F$, a section $\phi_{1/2,v} \in \Pi(\mathbb{H}_{F_v}^2)$, with $\mathbb{H}_{F_v} = \mathbb{H} \otimes_F {F_v}$, such that $p_\Psi(\phi_{1/2})_v$ is a non-trivial spherical section. At non-archimedean places, this follows from Theorem \ref{PullbackFormulaNew}(2). We are thus left to show it at archimedean places of $F$. Firstly, suppose that $F_v = \R$. By \cite[Proposition 5.3(ii)]{GanQiuTakeda},  $$I_{P_{(3,0)}}(\tfrac{1}{2},\mathbf{1})_v =\Pi(\mathbb{H}^2_\R) \oplus \Pi(D),$$ where $\mathbb{H}^2_\R$ denotes hyperbolic plane over $\R$ and $D$ denotes the anisotropic quadratic space of dimension $4$ over $\R$. According to \cite[Proposition 2.8, (ii)]{KudlaRallis}, the $\U(3)$-types in the representation $\Pi(\mathbb{H}^2_\R)$ are precisely those with highest weight of the form
    \[(2a,2b,-2c)\;\;\text{and}\;\;(2\alpha,-2\beta,-2\gamma),\]
 where $a\geq b\geq 0$, $c\geq 0$ and $\alpha \geq 0$,  $\gamma \geq \beta \geq 0$. By \emph{loc.cit.}, the $\U(3)$-types in $\Pi(D)$ are instead of the form $$
    (2+2\alpha,2+2\beta, 2+2\gamma),$$ with $\alpha\geq \beta\geq \gamma\geq 0$. Since $\U(n)$ is a compact real form of $\GL_n(\C)$, the branching law for $\GL_3(\C)\downarrow \GL_2(\C)$ given in \cite[Theorem 1]{Branching:Law} induces one for $\mathbf{U}(3)\downarrow \mathbf{U}(2)$. By matching the action of the center, we deduce that a representation $V_{\lambda}$ of $\U(3)$ of highest weight $\lambda = (\lambda_1,\lambda_2,\lambda_3)$, with $\lambda_1\geq \lambda_2\geq \lambda_3$, decomposes into a sum of irreducible $\U(1)\times \U(2)$-representations as follows:
    \begin{align}\label{branching_law}
        V_{\lambda}|_{\U(1)\times \U(2)}\simeq \bigoplus_{\substack{\mu_1,\mu_2\in \Z\\\lambda_1\geq\mu_1\geq \lambda_2\geq \mu_2\geq\lambda_3}}\mathrm{det}^{|\lambda|-\mu_1-\mu_2}\otimes V_{(\mu_1,\mu_2)},
    \end{align}
    where $|\lambda| := \lambda_1+\lambda_2+\lambda_3$ and $V_{(\mu_1,\mu_2)}$ denotes the representation of $\U(2)$ of highest weight $(\mu_1,\mu_2)$.  By \eqref{branching_law}, any $\U(3)$-type of the form $(2a,2b,0)$ or $(0,-2\beta,-2\gamma)$ in $\Pi(\mathbb{H}^2_\R)$ contains a unique sub $\U(1)\times \U(2)$-representation, with $\U(1)$-component being the trivial representation. In contrast, no $\U(3)$-type in $\Pi(D)$ has this property. Hence, if we choose $\varphi_1 \in \mathcal{C}^{\infty}_c(\SL_2(\R))$ to be right $\U(1)$-invariant, we have $\Phi_{\varphi_1 \otimes \varphi_2}\in \Pi(\mathbb{H}^2_\R)$ for any $\varphi_2 \in \mathcal{C}^{\infty}_c(\Sp_4(\R))$. Moreover, if  $\varphi = \varphi_1 \otimes \varphi_2$ is $\U(1)\times \U(2)$-right invariant, the integral $z(1/2, \Phi_{\varphi}, \Psi_v)$ is non-zero because, following the proof of (1), we can choose $v_1,v_2\in \sigma_v$ such that $\langle\sigma_v(\varphi_1)v_1,v_2\rangle\neq 0$. 
    
    When $F_v = \C$, \cite[Theorem 1, Case IId]{Soo:Bo} shows that $$I_{P_{(3,0)}}(\tfrac{1}{2},\mathbf{1})_v =\Pi(V_\C),$$ with $V_\C \simeq (M_2(\C), {\rm det})$; the non-vanishing follows from (1). To conclude the proof, take $\phi_{1/2} = \otimes_v \phi_{1/2,v}\in \Pi(\mathbb{H}^2)$ such that $\phi_{1/2,v}$ is the unramified section at any finite or complex place $v$ and equals $\Phi_{\varphi}$, with $\varphi$ being $\U(1)\times \U(2)$-right invariant, at any real place. Then, the section $p_{\Psi}(\phi_{1/2})$, attached to such $\phi_{1/2}$ and the spherical cusp form $\Psi$, is unramified and non-zero by the discussion above.
    \end{enumerate}
\end{proof}

We now discuss what Theorem \ref{Final:Theorem:Central:Value} says in the case of cuspidal automorphic representations on $\GL_4(\A) \times \GL_2(\A)$ which are unramified everywhere. 

\begin{corollary}\label{Main:Corollary:Non:Spherical}
     Let $\pi$ and $\sigma$ be cuspidal automorphic representations of $\GL_4(\A)$ and $\GL_2(\A)$ which have all local components unramified and such that $\omega_\pi \omega_\sigma = 1.$ Then, 
\[L(\tfrac{1}{2}, \pi \otimes \sigma, \wedge^2 \otimes  \mathrm{std}_2) \neq 0 \Longrightarrow {\mathcal{P}_{ \mathbf{J}}}|_{\pi \otimes \sigma} \not \equiv 0,\]
where we recall that $L(s, \pi \otimes \sigma, \wedge^2 \otimes  \mathrm{std}_2)$ denotes the completed $L$-function.
\end{corollary}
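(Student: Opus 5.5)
The plan is to apply Theorem \ref{Final:Theorem:Central:Value} with $S$ equal to the set of archimedean places, and with every datum chosen spherical. Then we have to rule out the alternative \eqref{FormulaWithTheta} coming from non-split quaternion algebras.

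First we pick the data. Let $\varphi$ and $\Psi$ be the spherical cusp forms in $\pi$ and $\sigma$. By Lemma \ref{Non:Vanishing:Pullback}(2), we choose $\phi_{1/2}\in\Pi(\mathbb{H}^2)$ so that $f^\Psi_{\phi_{1/2}}=p_\Psi(\phi_{1/2})$ is a non-trivial unramified section of $I_{P_{(1,2)}}(\tfrac12,\sigma)$. We extend it to a holomorphic section $\phi_s$, unramified at all finite places and, at real places, given by the $\U(1)\times\U(2)$-invariant construction of that lemma.

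Next we compare the zeta integral with the completed $L$-function. At finite places the section is unramified, so $Z(\tfrac12,\varphi,f^\Psi_{\phi_{1/2}})$ equals $L^{S_\infty}(\tfrac12,\pi\otimes\sigma,\wedge^2\otimes\mathrm{std}_2)$ times the archimedean local integrals, by Theorem \ref{zetaintegralfinalthm}. The only concern is the normalisation of $p_\Psi(\phi_s)$, which by Theorem \ref{PullbackFormulaNew}(2) is a nonzero multiple of the normalised spherical vector. At each archimedean place the local Whittaker function $W_{\sigma_v,s}$ attached to $p_\Psi(\phi_s)_v$ is spherical and nonzero. It is therefore a nonzero multiple $c_v(s)$ of the standard spherical vector, and Theorem \ref{Theorem_Arch_computation_split} gives
\[Z(\tfrac12,\ldots)_v=c_v(\tfrac12)\,\frac{L(\tfrac12,\pi_v\otimes\sigma_v,\wedge^2\otimes\mathrm{std}_2)}{L(1,\sigma_v,\mathrm{Sym}^2\otimes\omega_{\pi_v})}.\]
The denominator is a product of $\Gamma$-values. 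Provided it is finite (no poles), it is harmless; temperedness bounds would help here if needed. Hence $Z(\tfrac12,\varphi,f^\Psi_{\phi_{1/2}})$ is a nonzero multiple of the completed value $L(\tfrac12,\pi\otimes\sigma,\wedge^2\otimes\mathrm{std}_2)$, and this multiple is nonzero by the hypothesis of the corollary.

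Finally we run the Siegel--Weil step. Since $\phi_{1/2}\in\Pi(\mathbb{H}^2)$ by construction, its projections to $\Pi(V)$ vanish for every $V\neq\mathbb{H}^2$ in the sum of Theorem \ref{Siegel-Weilnew}. This is also consistent with Remark \ref{RemarkonHowManyVAppear} at finite places; at real places it is exactly what Lemma \ref{Non:Vanishing:Pullback}(2) arranged. So only the split term \eqref{eqn_7_7_2025} survives and it is nonzero, and alternative \eqref{FormulaWithTheta} cannot occur. The second half of the proof of Theorem \ref{Final:Theorem:Central:Value} then yields $\mathcal{P}_{\mathbf J}|_{\pi\otimes\sigma}\not\equiv0$.

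The main obstacle is the archimedean step. We must check that the section produced by Lemma \ref{Non:Vanishing:Pullback}(2) really is $K$-spherical with nonzero constant $c_v(\tfrac12)$ at $s=\tfrac12$, and that the $\Gamma$-factor $L(1,\sigma_v,\mathrm{Sym}^2\otimes\omega_{\pi_v})$ has no pole there. I would first try to compute $p_\Psi(\phi_{s})_v$ at the identity directly on the open orbit, as in the proof of Lemma \ref{Non:Vanishing:Pullback}(1).
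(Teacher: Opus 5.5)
Your proposal is correct and is essentially the paper's own argument. You pick $\phi_{1/2}\in\Pi(\mathbb{H}^2)$ via Lemma~\ref{Non:Vanishing:Pullback}(2), identify $Z(\tfrac12,\varphi,p_\Psi(\phi_{1/2}))$ with the completed $L$-value through Theorems~\ref{zetaintegralfinalthm} and~\ref{Theorem_Arch_computation_split}, note that only the split term survives in the Siegel--Weil formula, and unfold as in Theorem~\ref{Final:Theorem:Central:Value}; the archimedean normalisation you worry about (which the paper suppresses when it writes $Z(\tfrac12,\varphi,p_\Psi(\phi_{1/2}))=L(\tfrac12,\pi\otimes\sigma,\wedge^2\otimes\mathrm{std}_2)$) is harmless, since by \eqref{eq:final_before_Lvalue_arch_spher} the denominator at $s=\tfrac12$ is $\Gamma_F(1)\Gamma_F(1+\nu_1)\Gamma_F(1-\nu_1)$, which is finite because unitarity of $\sigma_v$ forces $|\mathrm{Re}\,\nu_1|<1$.
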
 
\begin{proof}
By Lemma \ref{Non:Vanishing:Pullback}, we can choose $\phi_{1/2}\in \Pi(\mathbb{H}^2)$ so that $p_\Psi(\phi_{1/2})$ is a non-trivial unramified section of $I_{P_{(1,2)}}(\tfrac{1}{2},\sigma)$. Thus,  by Theorems \ref{zetaintegralfinalthm} and \ref{Theorem_Arch_computation_split}, we have $$ Z(1/2,\varphi, p_\Psi(\phi_{1/2})) =L(1/2, \pi \otimes \sigma, \wedge^2 \otimes  \mathrm{std}_2).$$
On the other hand, since $\pi$ and $\sigma$ are unramified everywhere, the Siegel Weil formula gives 
$$ Z(1/2,\varphi, p_\Psi(\phi_{1/2})) = 2 \,\zeta_F(2)^2  \int_{Z_{\GSp_6}(\A)\backslash [\GL_2\boxtimes \GSp_4]} \int_{[\mathbf{O}_{\mathbb{H}^2}]} \theta(h h_1, \iota( g_1  , g_2) ; f^{\mathbb{H}^2}) \Psi^w( g_1 ) \varphi(g_2) \, d h \, dg_1\,dg_2,$$
where $\phi_{1/2} = \lambda_{\mathbb{H}^2}(f^{\mathbb{H}^2})^\sim$. Arguing as in Theorem \ref{Final:Theorem:Central:Value}, the result follows.
\end{proof}
\subsection{On the generalized Shalika period}\label{Shalika_period_subsec}
Consider the embedding 
\begin{align*}
    m: \GL_2 &\to \GL_4,\\
    g &\mapsto  \mathrm{diag}(g,g) = \left(\begin{smallmatrix}
        g& \\ &g\end{smallmatrix}\right).
\end{align*}
We also let $N$ denote the unipotent radical of the Siegel parabolic of $\GL_4$, defined by 
$$ N(R) = \left \{n(A) : =  \left(\begin{smallmatrix}
    I_2 & A \\ & I_2
\end{smallmatrix} \right),\, \text{with }A \in M_{2}(R) \right\},$$
for any $F$-algebra $R$. Observe that $\GL_2$ embeds via $m$ into the Levi of the Siegel parabolic, thus acts by conjugation on $N$. The Shalika subgroup of $\GL_4$ is given by $S : = \GL_2 \,  N$. Fix the character $\psi_S: [S] \to \C^\times$, given by \[ m(g)\, n(A) \mapsto \psi\left({\rm Tr}(A)\right).  \]

\begin{definition}\label{definition_shalika_generalized}
   Given cusp forms $\varphi \in \mathcal{A}_{{\rm cusp}}(\GL_4)$, $\Psi \in \mathcal{A}_{{\rm cusp}}(\GL_2)$ with trivial product of central characters, define the generalized Shalika period by
   \[\mathcal{S}_{\psi_S}({\varphi,\Psi}):= \int_{\A^\times \backslash[\GL_2]} \int_{[N]}  \varphi(n\, m(g))\Psi(g) \psi_S^{-1}(n) dn\,dg.\]
\end{definition}

For any character $\chi$ of $[\GL_1]$ and cusp forms $\varphi \in \mathcal{A}_{{\rm cusp}}(\GL_4)$, $\Psi \in \mathcal{A}_{{\rm cusp}}(\GL_2)$ with trivial product of central characters, we also define 
\[\mathcal{P}_{\GL_2\times\GL_2}(\varphi,\Psi,\chi) := \int_{Z_{\GL_4}(\A)\backslash[\GL_2\times \GL_2]}\varphi(\mathrm{diag}(g_1,g_2))\Psi(g_2)\chi(\mathrm{det}(g_1)/\mathrm{det}(g_2))dg_1dg_2,\]
where $\mathrm{diag}(g_1,g_2) = \left(\begin{smallmatrix}
        g_1& \\ &g_2\end{smallmatrix}\right)$. This is the $\chi$-twisted automorphic period associated with the spherical variety   \[(\GL_2\times \GL_2)\setminus (\GL_4\times \GL_2).\]
The period $\mathcal{P}_{\GL_2\times\GL_2}(\varphi,\Psi,\chi)$ is readily seen to contain $\mathcal{P}_{\mathbf{J}}(\varphi\otimes\Psi)$ as an inner integral. Moreover, it also unfolds to the generalized Shalika period, as the following (well-known) Lemma shows.

\begin{lemma}\label{GL2:period:To:Shal}
    Let $\chi$ be a character of $F^{\times}\backslash \A^{\times}$, then $\mathcal{P}_{\GL_2\times\GL_2}(\varphi,\Psi,\chi)$ equals
    \begin{align*}\int_{\Delta(\GL_2(\A))\setminus (\GL_2(\A)\times \GL_2(\A))}\mathcal{S}_{\psi_S}(\pi(\mathrm{diag}(g_1,g_2))\varphi,\sigma(g_2)\Psi)\chi(\mathrm{det}(g_1)/\mathrm{det}(g_2))dg_1dg_2,
    \end{align*}
\end{lemma}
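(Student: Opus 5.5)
We follow the standard route from the linear period to the Shalika period, as in Jacquet--Shalika and Bump--Friedberg: Fourier expansion of $\varphi$ along the unipotent radical $N$ of the Siegel parabolic, followed by an orbit analysis of the Levi $\GL_2\times\GL_2$ on the characters of $[N]$.

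\textbf{Fourier expansion.} Since $\varphi$ is invariant under $N(F)\simeq M_2(F)$ and $[N]$ is compact abelian, we write
\[\varphi(h)=\sum_{X\in M_2(F)}\varphi_X(h),\qquad \varphi_X(h)=\int_{[N]}\varphi(n(A)h)\,\psi^{-1}({\rm Tr}(XA))\,dA .\]
The Levi acts on the index $X$ by $(g_1,g_2)\cdot X=g_2Xg_1^{-1}$, up to transposition conventions, so the orbits are determined by ${\rm rank}(X)\in\{0,1,2\}$.

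\textbf{Vanishing of the degenerate orbits.} The rank-$0$ term is the constant term along $N$ and vanishes by cuspidality. For a rank-$1$ orbit, the stabilizer in $\GL_2\times\GL_2$ contains a unipotent subgroup of the $g_2$-factor, together with unipotent elements that lie in a larger unipotent radical. Concretely, one takes $X=e_{11}$ and unfolds the sum over $(\GL_2\times\GL_2)(F)/{\rm Stab}(X)$ into the period integral. One then shows the contribution vanishes in one of two ways: either $\varphi_X$ integrated over the relevant unipotent subgroup becomes a constant term of $\varphi$ along a non-maximal parabolic (namely the $(1,2,1)$ or $(1,3)$ type), which is zero by cuspidality; or the inner integral over $[N_{\GL_2}]$ in the $g_2$-variable produces the constant term of $\Psi$, which is zero. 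I expect this step to be the main obstacle. The difficulty is bookkeeping: identifying exactly which unipotent subgroup to integrate over so that a genuine cuspidal constant term appears, while keeping the character $\chi(\det g_1/\det g_2)$ and $\Psi(g_2)$ under control. If needed, I would first collapse over the mirabolic-type stabilizer and iterate the Fourier expansion, as in Jacquet--Shalika.

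\textbf{Open orbit.} The rank-$2$ orbit is represented by $X=I_2$, whose stabilizer is the diagonal $\Delta\GL_2(F)$. The term $\varphi_{I_2}$ satisfies $\varphi_{I_2}(m(\gamma)h)=\varphi_{I_2}(h)$ for $\gamma\in\GL_2(F)$ and transforms by $\psi_S$ under $N(\A)$. After absolute convergence is justified (by rapid decay of cusp forms), the unfolding gives
\[\int_{Z_{\GL_4}(\A)\Delta\GL_2(F)\backslash(\GL_2\times\GL_2)(\A)}\varphi_{I_2}({\rm diag}(g_1,g_2))\Psi(g_2)\,\chi\!\left(\tfrac{\det g_1}{\det g_2}\right)dg_1\,dg_2 .\]

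\textbf{Factoring through the diagonal.} Next we write $(g_1,g_2)=(hg_1',hg_2')$ with $h$ in the diagonal $\Delta\GL_2(\A)$. Modulo $\A^\times\Delta\GL_2(F)$, the inner integral over $h$, combined with the $[N]$-integral defining $\varphi_{I_2}$, is exactly
\[\mathcal{S}_{\psi_S}\bigl(\pi({\rm diag}(g_1',g_2'))\varphi,\ \sigma(g_2')\Psi\bigr).\]
This uses that $\chi(\det g_1/\det g_2)$ is invariant under the diagonal action, and that the centers match because $\omega_\pi\omega_\sigma=1$. The remaining integral over $\Delta(\GL_2(\A))\backslash(\GL_2(\A)\times\GL_2(\A))$ then gives the stated formula. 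We use Fubini for the exchange, which is legitimate after unfolding, and one should check that $\psi_S$ is conjugated consistently under ${\rm diag}(g_1,g_2)$.
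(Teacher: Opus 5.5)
Your proposal is correct and follows the paper's own argument. Both expand $\varphi$ along $[N]$, use the three rank-orbits of $\GL_2(F)\times\GL_2(F)$ on the characters, kill the rank $0$ and rank $1$ contributions by cuspidality, unfold the open orbit with stabilizer $\Delta(\GL_2(F))$, and collapse over $[\Delta(\GL_2)]$ to obtain $\mathcal{S}_{\psi_S}$. For the rank-one step, which the paper also only asserts, your first route is the one that works: collapsing over the unipotent radical of the $g_1$-component of the stabilizer of $e_{11}$ combines with $N$ into a Weyl conjugate of $U_{(1,1,2)}$ carrying a character trivial on the unipotent radical of $P_{(1,3)}$, so cuspidality of $\varphi$ alone kills it; your $\Psi$-route does not work as stated, because $\varphi_{e_{11}}$ is not invariant under adelic translation by the unipotent of the $g_2$-component.
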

\begin{proof}
    We first Fourier expand the cusp form $\varphi$ along $[N]$, obtaining
    \[\varphi(g) = \sum_{\psi_{T}:[N]\to \C^{\times}}\varphi_{\psi_{T}}(g),\]
where $\psi_T:[N]\to \C^{\times}$ is of the form 
    \[\psi_{T}(n(X)) := \psi(\mathrm{Tr}(TX)),\;T\in M_2(F).\]
    The group $\GL_2(F)\times \GL_2(F)$ acts on the set of these characters with three orbits, indexed by the possible ranks of $T$. Representatives of each orbit  can be chosen as follows: \begin{itemize}
        \item $\psi_{0_2}$ with stabilizer $\GL_2(F)\times\GL_2(F)$,
        \item $\psi_{\left(\begin{smallmatrix}1& \\ & 0\end{smallmatrix}\right)}$ with stabilizer $\overline{B}_{\GL_2}(F)\times_{\rm det} B_{\GL_2}(F)$, where $\overline{B}_{\GL_2}$ denotes the lower triangular Borel subgroup of $\GL_2$. 
        \item $\psi_S = \psi_{I_2}$ with stabilizer $\Delta(\GL_2(F))$, \textit{i.e.} the group $\GL_2(F)$ diagonally embedded into $\GL_2(F)\times \GL_2(F)$. 
    \end{itemize}  By the cuspidality of both $\varphi$ and $\Psi$, one thus shows that the integral unfolds to 
    \begin{align*}
    \int_{\A^\times \backslash[\GL_2\times \GL_2]}&\varphi(\mathrm{diag}(g_1,g_2))\Psi(g_2)\chi(\mathrm{det}(g_1)/\mathrm{det}(g_2))dg_1dg_2 \\&= \int_{\Delta(\GL_2(F))\A^\times \backslash\GL_2(\A)\times \GL_2(\A)}\varphi_{\psi_{I_2}}(\mathrm{diag}(g_1,g_2))\Psi(g_2)\chi(\mathrm{det}(g_1)/\mathrm{det}(g_2))dg_1dg_2.
    \end{align*}
    Finally, collapsing the integral over $[\Delta(\GL_2)]$ yields the desired equality.
\end{proof}
\begin{proposition}\label{J:Period:To:GL2}
    Let $\pi$ and $\sigma$ be cuspidal automorphic representations of $\GL_4(\A)$ and $\GL_2(\A)$ such that $\omega_\pi \omega_\sigma = 1.$ Then $$P_{\mathbf{J}}|_{\pi\otimes\sigma}\not \equiv 0 \Longrightarrow \mathcal{S}_{\psi_S}|_{\pi\otimes\sigma}\not \equiv 0.$$
\end{proposition}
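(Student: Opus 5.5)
The plan is to realize $\mathcal{P}_{\mathbf{J}}$ as a "fibre" of the twisted linear period $\mathcal{P}_{\GL_2\times\GL_2}(\cdot,\cdot,\chi)$ and then invoke Lemma \ref{GL2:period:To:Shal}.

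\emph{Step 1: reduction to a block-diagonal embedding.} Inside $\GL_4$, the image of $(h_1,h_2)\in\mathbf{J}$ under $j$ has $h_1$ sitting on the coordinates $\{1,4\}$ (with its entries arranged as $\left(\begin{smallmatrix} d&c\\ b&a\end{smallmatrix}\right)$) and $h_2$ on $\{2,3\}$. Let $w'\in\GL_4(F)$ be the permutation matrix sending these blocks to $\mathrm{diag}(h_2,h_1^{\ast})$, where $h_1^\ast$ is $h_1$ conjugated by $w$. The outer block in the definition of $\mathcal{P}_{\mathbf{J}}$ is written directly with entries $a,b,c,d$, so the precise twist by $w$ has to be checked against the statement of $\mathcal{P}_{\mathbf{J}}$; in any case it can be absorbed by replacing $\Psi$ with $\Psi^w$, which lies in the same space $\sigma$. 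Left $\GL_4(F)$-invariance of $\varphi$ then gives
\[\mathcal{P}_{\mathbf{J}}(\varphi\otimes\Psi)=\int_{Z(\A)\backslash[\mathbf{J}']}\varphi'(\mathrm{diag}(g_1,g_2))\,\Psi'(g_2)\,dg_1\,dg_2,\]
with $\varphi'=\pi(w'^{-1})\varphi\in\pi$, $\Psi'\in\sigma$, and $\mathbf{J}'=\{(g_1,g_2):\det g_1=\det g_2\}$. So it suffices to show: if this integral is nonzero for some $\varphi'\in\pi$, $\Psi'\in\sigma$, then $\mathcal{S}_{\psi_S}|_{\pi\otimes\sigma}\not\equiv0$.

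\emph{Step 2: spectral decomposition in the determinant ratio.} The map $(g_1,g_2)\mapsto\det(g_1)/\det(g_2)$ identifies $Z_{\GL_4}(\A)(\GL_2\times\GL_2)(F)\mathbf{J}'(\A)\backslash(\GL_2\times\GL_2)(\A)$ with $F^\times\backslash\A^\times$. Define
\[\Phi(t):=\int_{Z(\A)\backslash[\mathbf{J}']}\varphi'(\mathrm{diag}(g_1\,d(t),g_2))\,\Psi'(g_2)\,dg_1\,dg_2,\qquad d(t)=\mathrm{diag}(t,1).\]
This is a smooth function on $F^\times\backslash\A^\times$ with $\Phi(1)\neq0$. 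Because $\varphi'$ is rapidly decreasing on Siegel domains, $\Phi$ decays rapidly as $|t|\to0,\infty$. Folding $\mathcal{P}_{\GL_2\times\GL_2}$ along the fibration gives
\[\mathcal{P}_{\GL_2\times\GL_2}(\varphi',\Psi',\chi)=\int_{F^\times\backslash\A^\times}\Phi(t)\,\chi(t)\,d^\times t,\]
which converges absolutely for every unitary $\chi$. Writing $F^\times\backslash\A^\times\simeq\R_{>0}\times(F^\times\backslash\A^1)$, Fourier analysis on the compact factor and Mellin inversion on $\R_{>0}$ show that a nonzero integrable $\Phi$ cannot have vanishing integral against every character $\chi|\cdot|^{it}$. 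Hence $\mathcal{P}_{\GL_2\times\GL_2}(\varphi',\Psi',\chi)\neq0$ for some Hecke character $\chi$.

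\emph{Step 3: conclusion.} By Lemma \ref{GL2:period:To:Shal}, $\mathcal{P}_{\GL_2\times\GL_2}(\varphi',\Psi',\chi)$ is an integral of $\mathcal{S}_{\psi_S}(\pi(\mathrm{diag}(g_1,g_2))\varphi',\sigma(g_2)\Psi')$ over $\Delta\backslash(\GL_2\times\GL_2)(\A)$. If it is nonzero, the integrand is nonzero for some $(g_1,g_2)$, which gives $\mathcal{S}_{\psi_S}|_{\pi\otimes\sigma}\not\equiv0$.

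The main obstacle is analytic and sits in Step 2. One needs enough decay of $\Phi$ for the Mellin inversion to apply and for the interchange of integrals inside Lemma \ref{GL2:period:To:Shal} to be justified for unitary $\chi$. I would derive this decay from the rapid decrease of the cusp form $\varphi'$ restricted to the Levi $\GL_2\times\GL_2$, as in the classical treatment of the linear period of Bump--Friedberg. Any failure of absolute convergence can be handled by shifting to $\chi|\cdot|^{s}$ with $\mathrm{Re}(s)$ in a suitable strip and then using meromorphic continuation.
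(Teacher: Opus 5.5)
Your proposal is correct and follows essentially the same route as the paper: the paper also fibres $\mathcal{P}_{\GL_2\times\GL_2}(\varphi,\Psi,\chi)$ over $[\GL_1]$ via $(g_1,g_2)\mapsto \det(g_1)/\det(g_2)$, obtaining the integral of translates of $\mathcal{P}_{\mathbf{J}}$ against $\chi$. It then uses injectivity of the Fourier transform on $F^\times\backslash\A^\times$ to find a $\chi$ with non-vanishing twisted linear period, and concludes with Lemma \ref{GL2:period:To:Shal}; your Step 1 (conjugating the outer-block embedding of $\mathbf{J}$ to block-diagonal form by a permutation in $\GL_4(F)$) and your remarks on the decay of $\Phi$ just make explicit what the paper treats as ``readily seen''.
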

\begin{proof}
    Observe that the map \begin{align}\label{isom:Quotient:Shalika:To:J}
    \mathbf{J}\setminus \GL_2\times\GL_2&\to \GL_1,\\
    (g_1,g_2)&\mapsto \mathrm{det}(g_1)/\mathrm{det}(g_2),\nonumber
    \end{align}
    is an isomorphism. Hence, collapsing the integral over $[\mathbf{J}]$ yields
    \[\mathcal{P}_{\GL_2\times\GL_2}(\varphi,\Psi,\chi) = \int_{[\GL_1]}\mathcal{P}_{\mathbf{J}}(\pi(\mathrm{diag}(h_{1,t},h_{2,t})) \cdot   \varphi \otimes \sigma(h_{2,t}) \cdot \Psi) \chi(t)dt,\]
    where $(h_{1,t},h_{2,t})\in \GL_2(\A)\times\GL_2(\A)$ is any element which maps to $t$ via \eqref{isom:Quotient:Shalika:To:J}. By the injectivity of the Fourier transform  (see \cite[Corollary 4.34]{Folland:Harmonic:Analysis}), if $P_{\mathbf{J}}|_{\pi\otimes\sigma}\not \equiv 0$, there exists $\varphi \in \pi, \Psi \in \sigma,$ and $\chi$ such that $\mathcal{P}_{\GL_2\times\GL_2}(\varphi,\Psi,\chi) \ne 0$. The result then follows from Lemma \ref{GL2:period:To:Shal}.
\end{proof}

Finally, we prove Conjecture \ref{conj:Wan:Zhang:+:Bessel:GGP}(1) for representations which are unramified everywhere.

\begin{theorem}\label{Final:Theorem:Shalika}
    Let $\pi$ and $\sigma$ be cuspidal automorphic representations of $\GL_4(\A)$ and $\GL_2(\A)$ which have all local components unramified and such that $\omega_\pi \omega_\sigma = 1.$ Then, 
    \[L(\tfrac{1}{2}, \pi \otimes \sigma, \wedge^2 \otimes  \mathrm{std}_2) \neq 0 \Longleftrightarrow \mathcal{S}_{\psi_S}|_{\pi \otimes \sigma} \not \equiv 0,\]
    where we recall that $L(s, \pi \otimes \sigma, \wedge^2 \otimes  \mathrm{std}_2)$ denotes the completed $L$-function.
\end{theorem}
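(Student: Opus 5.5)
The plan is to prove the two implications separately. Both directions assume that $\pi$ and $\sigma$ are unramified everywhere and that $\omega_\pi\omega_\sigma=1$.

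\emph{($\Rightarrow$).} This direction is a direct concatenation of results already established. If $L(\tfrac12,\pi\otimes\sigma,\wedge^2\otimes\mathrm{std}_2)\neq 0$, then Corollary \ref{Main:Corollary:Non:Spherical} gives $\mathcal{P}_{\mathbf{J}}|_{\pi\otimes\sigma}\not\equiv 0$. Proposition \ref{J:Period:To:GL2} then yields $\mathcal{S}_{\psi_S}|_{\pi\otimes\sigma}\not\equiv 0$. The substantive input has already been done: Lemma \ref{Non:Vanishing:Pullback}(2) supplies a section in $\Pi(\mathbb{H}^2)$ whose pullback is the unramified section. Theorems \ref{zetaintegralfinalthm} and \ref{Theorem_Arch_computation_split} identify the zeta integral with the completed $L$-value. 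The open-orbit unfolding of Theorem \ref{Final:Theorem:Central:Value} then exhibits $\mathcal{P}_{\mathbf{J}}$ as an inner integral. I only need to note that no non-split $D$ contributes, because all data are unramified (Remark \ref{RemarkonHowManyVAppear}). In particular no real place has anisotropic $V_v$, given the choice of $\U(1)\times\U(2)$-invariant vectors.

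\emph{($\Leftarrow$).} This is the direction recorded in \eqref{resultPanYan}. I would invoke \cite[Theorem 5.11]{PanYan}, which says that non-vanishing of the generalized Shalika (Bessel-type) period on $\GSpin_6\times\GSpin_3$, realized here as $\mathcal{S}_{\psi_S}$ on $\GL_4\times\GL_2$, forces $L^S(\tfrac12,\pi\otimes\sigma,\wedge^2\otimes\mathrm{std}_2)\neq0$. To pass from the partial to the completed $L$-value, I would use that at unramified finite places the local factors are $\det(1-A q_v^{-1/2})^{-1}$, which are finite and nonzero. At archimedean places the local factor is a product of $\Gamma_F$-values (Lemma \ref{L_factor_for_spherical_arch}), which never vanish. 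Hence $L(\tfrac12)=L^S(\tfrac12)\cdot\prod_{v\in S}L_v(\tfrac12)$ is nonzero provided no archimedean $\Gamma$-factor has a pole at $\tfrac12$.

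\emph{Main obstacle.} The delicate step is in ($\Leftarrow$): I must check that the hypotheses of \cite{PanYan} are satisfied, namely the identification of their Bessel period with $\mathcal{S}_{\psi_S}$ through $\GSpin_6\to\GL_4$ and the central character normalization. If instead a pole of $L_v$ at $\tfrac12$ were possible, the completed value could fail to match the partial value. Against this I would argue that the Shalika period is a relative-character statement whose local Euler factors are holomorphic at $\tfrac12$. The argument is that unitarity of the generic unramified components together with the Jacquet--Shalika bounds puts the exponents $\mu_i+\mu_j+\nu_0$ (and the analogous ones involving $\nu_1$) inside $|\mathrm{Re}|<\tfrac12$, so every archimedean $\Gamma_F$-factor is finite at $s=\tfrac12$. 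With that checked, the equivalence follows.
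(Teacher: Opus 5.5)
Your two implications are the paper's. ($\Rightarrow$) is Corollary~\ref{Main:Corollary:Non:Spherical} followed by Proposition~\ref{J:Period:To:GL2}. ($\Leftarrow$) is Pan Yan's theorem (the paper quotes \cite[Theorem 1.3]{PanYan} with $m'=7$, $k=2$, $\ell=0$) together with the identification of the Bessel subgroup of $\GSpin_6$ with the Shalika subgroup of $\GL_4$. The paper uses no bound toward Ramanujan anywhere here.

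You should drop what you call the ``main obstacle'': its justification is wrong, and the step is unnecessary.

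\emph{Why the justification fails.} The Jacquet--Shalika bound controls each exponent separately: $|\mathrm{Re}\,\mu_i|<\tfrac12$ for unitary $\pi_v$, and likewise for $\sigma_v$. It therefore only gives $|\mathrm{Re}(\mu_i+\mu_j+\nu_0)|<\tfrac32$, not $<\tfrac12$, so it cannot rule out archimedean poles at $s=\tfrac12$.

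\emph{Why nothing needs ruling out.}
\begin{enumerate}
\item Here $S$ is just the set of archimedean places. The factors missing from $L^S$ are products of $\Gamma_F$-values (Lemma~\ref{L_factor_for_spherical_arch}), and $\Gamma$ has no zeros.
\item So $L^S(\tfrac12,\pi\otimes\sigma,\wedge^2\otimes\mathrm{std}_2)\neq 0$ can never turn into a zero of the completed $L$-function. An archimedean pole could at worst make the completed value infinite.
\item Even that is excluded. By Theorems~\ref{zetaintegralfinalthm} and \ref{Theorem_Arch_computation_split}, the completed $L$-function equals the zeta integral $Z(s,\varphi,f_s)$ for spherical data, multiplied by the archimedean factors $L_v(2s,\sigma_v,\mathrm{Sym}^2\otimes\omega_{\pi_v})=L_v(2s,\mathrm{Ad}\,\sigma_v)$.
\item $Z(s,\varphi,f_s)$ is entire, because $E^*_{P_{(1,2)}}$ is entire when $\omega_\pi\omega_\sigma=1$. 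The adjoint factors are finite at $s=\tfrac12$.
\end{enumerate}
With that replacement your argument is complete.
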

\begin{proof}
    The implication $(\Longrightarrow)$ follows from a combination of Proposition \ref{J:Period:To:GL2} and Corollary \ref{Main:Corollary:Non:Spherical}.

    \noindent The implication $(\Longleftarrow)$ follows from \cite[Theorem 1.3]{PanYan} in the case where $m'=7$, $k=2$, and $\ell=0$. Indeed, under the double covering map $\GSpin_6 \to \GL_4$, the Bessel subgroup of $\GSpin_6$, as defined in \cite[\S 2.2]{PanYan}, is isomorphic to the Shalika subgroup of $\GL_4$.   
\end{proof}

\section{The degree $20$ central $L$-value and the linear period}\label{sec:SVvsLcentral}

In what follows, we restrict ourselves to the case of $\GL_4 \times \GL_2$.

\subsection{Global theta liftings for general linear groups}
We recall some properties of the theta lift for the type $\mathrm{II}$ dual reductive pair $(\GL_4 ,\GL_4)$. We invite the reader to consult \cite{Watanabe:Global:Theta} for more details. 

Let $\mathcal{S}(M_4(\A))$  be the space of Schwartz function on $M_4(\A)$, where $M_4(\A)$ denotes the space of $4\times 4$-matrices with entries in $\A$. The Weil representation $\omega_{4,4}$ of  $\GL_4 (\A)\times \GL_4(\A)$ on $\mathcal{S}(M_4(\A))$ is defined by $$(\omega_{4,4}(h,g)f)(x) = | {\rm det}(h)|_\A^{-2} |{\rm det}(g)|_\A^2 f(h^{-1} x g),$$ for every $g,h \in \GL_4(\A)$ and $f \in \mathcal{S}(M_4(\A))$.

Denote by $\mathcal{S}_0(M_4(\A))$ the space of  $K_{\GL_4}\times K_{\GL_4}$-finite vectors of $\mathcal{S}(M_4(\A))$, where $K_{\GL_4}$ is the standard maximal compact subgroup of $\GL_4(\A)$.
Let $\pi$ be a cuspidal representation of $\GL_4(\A)$. For a cusp form $\varphi\in\pi$, a function $f\in \mathcal{S}_0(M_4(\A))$, and a complex number $z\in \C$, define
\begin{align} \varphi^z_f(h) &:= \int_{[\GL_4]}\varphi(g)|\det(g)|^{z}_{\A}\sum_{\substack{x\in M_4(F)\\x\neq 0}}(\omega_{4,4}(h,g)f)(x)dg \nonumber \\
&\,=|\det(h)|^{-2}_{\A}\int_{[\GL_4]}\varphi(g)|\det(g)|^{z+2}_{\A}\sum_{\substack{x\in M_4(F)\\x\neq 0}}f(h^{-1}xg)dg. \label{Theta:Lift:Expression}
\end{align}

\noindent This integral is absolutely convergent for $\mathrm{Re}(z)$ big enough and can be analytically continued to all $\C$ as an entire function, which we still call $\varphi^z_f$, by \cite[Lemma 3]{Watanabe:Global:Theta}. Moreover, $\varphi^z_f$ is a (possibly zero) automorphic form on $\GL_4$ and is a cusp form for $\mathrm{Re}(z)$ big enough (see \cite[p. 707]{Watanabe:Global:Theta}).
For a fixed $z \in \C$, let $\Theta^z(\pi)$ be the space spanned by $\varphi^z_f(h)$ as $\varphi$ varies in the space of $\pi$ and $f$ among the subspace of $\mathcal{S}(M_4(\A))$ of $K_{\GL_4}\times K_{\GL_4}$-finite vectors. If $\omega_\pi$ denotes the central character of $\pi$, then $\Theta^z(\pi)$ has central character $\omega_\pi(\cdot)| \cdot |^{4z}_\A$. 

\begin{proposition}[{\cite[Theorem 1]{Watanabe:Global:Theta}}]\label{Watanabe:Theta:Main}
Let $L(z,\pi,{\rm std}_4)$ be the standard $L$-function of $\pi$. Then, the space $\Theta^0(\pi)$ is non-zero if and only if $L(\tfrac{1}{2},\pi,{\rm std}_4) \ne 0$. In this case, $\Theta^0(\pi)$ coincides with $\pi$.  
\end{proposition}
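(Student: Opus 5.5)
The plan is to unfold the theta integral \eqref{Theta:Lift:Expression} directly, identify the lift with a Godement--Jacquet operator on $\pi$, and read off both the nonvanishing criterion and the identification $\Theta^0(\pi)=\pi$ from Godement--Jacquet theory.

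\emph{Step 1: unfolding for $\mathrm{Re}(z)\gg 0$.} Take $\mathrm{Re}(z)$ large, so that \eqref{Theta:Lift:Expression} converges absolutely, and partition the sum over $x\in M_4(F)\setminus\{0\}$ according to the rank $r\in\{1,2,3,4\}$ of $x$. For $r<4$, the orbit of $x$ under $\GL_4(F)$ acting by right multiplication has a stabilizer containing the unipotent radical of a proper parabolic of $\GL_4$. Unfolding the $g$-integral along that orbit therefore produces an inner constant term of $\varphi$, which vanishes because $\varphi$ is cuspidal. The rank $4$ orbit is $\GL_4(F)$ itself, acting simply transitively, so that term unfolds to
\[
|\det h|_\A^{-2}\int_{\GL_4(\A)}\varphi(g)\,|\det g|_\A^{z+2}\,f(h^{-1}g)\,dg
= |\det h|_\A^{z}\int_{\GL_4(\A)} f(g)\,|\det g|_\A^{z+2}\,\varphi(hg)\,dg .
\]
Hence $\varphi^z_f=|\det|^{z}\cdot\pi(f_z)\varphi$, where $f_z:=f\,|\det|_\A^{z+2}$ and $\pi(f_z)$ is the usual convolution operator. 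In particular, for $\mathrm{Re}(z)\gg0$ every $\varphi^z_f$ lies in $\pi\otimes|\det|^z$, and by uniqueness of analytic continuation this remains true for the continued family.

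\emph{Step 2: reduction to Godement--Jacquet.} Pair $\pi(f_z)\varphi$ with a vector $\tilde\varphi$ in the contragredient. This gives $\int_{\GL_4(\A)} f(g)\,|\det g|^{z+2}\,\langle\pi(g)\varphi,\tilde\varphi\rangle\,dg$. For factorizable data this is the Godement--Jacquet zeta integral $Z(z+\tfrac12,f,\beta)$ at the shifted parameter, where $\beta$ is a matrix coefficient of $\pi$; the shift of $2=\tfrac{n}{2}$ exactly absorbs the half-sum normalization $\tfrac{n-1}{2}$. Unramified places contribute $L(z+\tfrac12,\pi_v,{\rm std}_4)$. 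At each place in $S$, Godement--Jacquet shows that the quotient $Z_v/L(z+\tfrac12,\pi_v)$ is entire, and the ideal generated by the local integrals is exactly $L(z+\tfrac12,\pi_v)\,\C[q_v^{\pm z}]$ (at archimedean places, the analogous statement for the gcd). Consequently the family of operators factors as $L(z+\tfrac12,\pi,{\rm std}_4)\cdot T(z)$ with $T$ entire. Moreover, for each fixed $z_0$ one can choose $f$ and the vectors so that the value $T(z_0)$ paired with $\tilde\varphi$ is nonzero. Since the standard $L$-function of a cuspidal $\pi$ on $\GL_4$ is entire, setting $z=0$ gives: $\Theta^0(\pi)\neq0$ if and only if $L(\tfrac12,\pi,{\rm std}_4)\neq0$.

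\emph{Step 3: $\Theta^0(\pi)=\pi$.} By Step 1, $\Theta^0(\pi)\subset\pi$, and it is $\GL_4(\A)$-stable because $\omega_{4,4}(h,1)$ intertwines $f\mapsto f(h^{-1}\,\cdot)$. Since $\pi$ is irreducible, $\Theta^0(\pi)$ is either $0$ or all of $\pi$. Combined with Step 2, this proves the proposition.

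The main obstacle I expect is Step 2. One needs the \emph{exact} gcd statement at the ramified and archimedean places, so that $T(0)$ is not identically zero, together with $K$-finiteness of $f$. One also has to justify that the analytic continuation of the full theta sum coincides with that of the rank-$4$ term, for which I would rely on Watanabe's \cite[Lemma 3]{Watanabe:Global:Theta}. The step I would try first is to invoke the Godement--Jacquet gcd results place by place with $K$-finite Schwartz functions.
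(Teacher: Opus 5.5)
Your argument is correct and is essentially Watanabe's proof. The paper only cites that proof, but it quotes your two ingredients elsewhere: the unfolding $\varphi^z_f=|\det|^{z}\,\pi(f\,|\det|^{z+2})\varphi$, in the form $W^z_{\varphi,\eta}(g)=|\det g|^{z}\int W_\varphi(gh)|\det h|^{z+2}\eta(h)\,dh$, and the Godement--Jacquet factorization $L(z+\tfrac12,\pi_v,{\rm std}_4)\sum_j\Xi_{v,j}(z)W_{v,j}$ with $\Xi_{v,j}$ entire; using matrix coefficients instead of Whittaker functions changes nothing. Two small points to tidy: the continuation stays in $\pi$ because $K\times K$-finiteness of $f$ places $|\det|^{-z}\varphi^z_f$ in a fixed finite-dimensional subspace of $\pi$ for $\mathrm{Re}(z)\gg0$; and $\Theta^0(\pi)$ is only a $(\mathfrak{g},K_\infty)\times\GL_4(\A_f)$-submodule of $\pi$, not $\GL_4(\A)$-stable, which is all the irreducibility argument in Step 3 needs.
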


\noindent The Proposition implies that, if $L(\tfrac{1}{2},\pi,{\rm std}_4)\neq 0$, for any cusp form $\varphi' \in \pi$, there exist a cusp form $\varphi\in \pi$ and a $K_{\GL_4}\times K_{\GL_4}$-finite function $f\in \mathcal{S}(M_4(\A))$ such that $\varphi' = \varphi^{0}_f$. Thanks to the following Lemma, one can choose such $f \in \mathcal{S}(M_4(\A))$ with support away from $0$.

\begin{lemma}\label{Support:Away:From:0}
    For any cuspidal automorphic representation $\pi$ of $\GL_4(\A)$ with $\Theta^0(\pi) = \pi$ and any $\varphi' \in \pi$, there exists $\varphi \in \pi$ and $f\in \mathcal{S}(M_4(\A))$ with $0\not\in\mathrm{supp}(f)$ such that $\varphi' = \varphi^0_f$.
\end{lemma}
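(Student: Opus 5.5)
The idea is to modify a Schwartz function $f_0$ with $\varphi' = \varphi^0_{f_0}$ so that it vanishes near $0$, without changing the lift. By Proposition \ref{Watanabe:Theta:Main}, since $\Theta^0(\pi)=\pi$, there exist $\varphi\in\pi$ and a $K_{\GL_4}\times K_{\GL_4}$-finite $f_0\in\mathcal{S}(M_4(\A))$ with $\varphi'=\varphi^0_{f_0}$. From \eqref{Theta:Lift:Expression}, the lift only involves the sum over $x\in M_4(F)\setminus\{0\}$. So any modification of $f_0$ that does not change the values on $M_4(F)\setminus\{0\}$, or whose effect on those values integrates to zero against $\varphi$, leaves $\varphi^0_f$ unchanged.

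I would work at a single finite place $v_0$ where everything is unramified and $f_0=f_{0,v_0}\otimes f_0^{v_0}$ is a pure tensor with $f_{0,v_0}=\mathbf{1}_{M_4(\mathcal{O}_{v_0})}$; by linearity it suffices to treat pure tensors. Set
$$f_{v_0}:=\mathbf{1}_{M_4(\mathcal{O}_{v_0})}-\mathbf{1}_{\varpi_{v_0}^{k}M_4(\mathcal{O}_{v_0})}$$
for some $k\ge1$, and $f=f_{v_0}\otimes f_0^{v_0}$. Then $0\notin\mathrm{supp}(f)$. The difference $f_0-f=\mathbf{1}_{\varpi^kM_4(\mathcal{O}_{v_0})}\otimes f_0^{v_0}$ equals $\omega_{4,4}(h_k,1)f_0$ up to a constant, where $h_k=\varpi_{v_0}^{-k}I_4$ is central at $v_0$. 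Hence
$$\varphi^0_{f_0-f} = c\cdot \varphi^0_{\omega(h_k,1)f_0}=c\cdot\pi(h_k)\varphi'=c\,\omega_\pi(\varpi_{v_0})^{-k}\varphi',$$
with $c=|\det h_k|^{2}$, by equivariance of the lift. Here one checks the constant: the factor $|\det h|^{-2}$ in $\omega_{4,4}$ gives $c=q_{v_0}^{-4k}$ up to sign conventions, and since $\omega_\pi$ is unitary the total factor has absolute value $q_{v_0}^{-4k}\neq1$. Therefore
$$\varphi^0_f=(1-c\,\omega_\pi(\varpi_{v_0})^{-k})\varphi',$$
with a nonzero scalar, and rescaling $f$ gives the claim.

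The main obstacle is justifying the equivariance identity $\varphi^0_{\omega(h,1)f}=\pi(h)\varphi^0_f$ at $z=0$, which lies outside the range of absolute convergence. I would prove it for $\mathrm{Re}(z)\gg0$ by the change of variables $x\mapsto h^{-1}x$ in the sum; this is trivial for central $h$, since scalars in $F_{v_0}$ act compatibly through the idelic central character. I would then extend it to $z=0$ by the analytic continuation of \cite[Lemma 3]{Watanabe:Global:Theta}. For $z\ne0$ the twist by $|\det|^z$ changes the factor to $c_z$, and one must check that $1-c_z\omega_\pi(\varpi)^{-k}$ is nonzero at $z=0$, which holds for the reason given above.

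If a non-pure-tensor $f_0$ is needed, I would apply the construction termwise, using the same $v_0$, which is unramified for all terms.
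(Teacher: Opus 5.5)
Your proof is correct, but it takes a different route from the paper's.

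\textbf{The paper's route.} The paper uses Hecke-equivariance of $(f,\varphi)\mapsto\varphi^0_f$ in the $\GL_4$-variable on which $\varphi$ lives. It convolves $f'$ with a Hecke function $\alpha$ of total mass zero, so that $f'*\alpha$ vanishes at $0$, and replaces $\varphi$ by $\alpha*\varphi$.

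\textbf{Your route.} You use equivariance in the target variable, $\varphi^0_{\omega_{4,4}(h,1)f}=R(h)\varphi^0_f$. This holds for every $h$, directly from $\omega_{4,4}(h',g)\omega_{4,4}(h,1)=\omega_{4,4}(h'h,g)$, and it persists to $z=0$ by analytic continuation. Taking $h$ central at a single finite place, the correction term lifts to an explicit scalar multiple of $\varphi'$, and $\varphi$ itself never changes.

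\textbf{What each buys.} On your side, nonvanishing of the resulting scalar is automatic from the unitary central character, and the support condition is visible by inspection. On the paper's side, one must move $\alpha$ from the Schwartz-function slot to the cusp-form slot and then know that it acts on $\varphi$ by a nonzero scalar. The paper's displayed identity $(\alpha*\varphi)^0_{f*\alpha}=\varphi^0_f$ passes over this point; it cannot hold for all $\alpha$, since its left side is homogeneous of degree two in $\alpha$. In effect, your construction is the special case in which $\alpha$ is the identity minus a rescaled central translate, where this issue disappears. The paper's formulation allows any zero-mass Hecke operator, but it is less explicit. Your construction also works verbatim for $f_0=\mathbf{1}_{M_4(\mathcal{O}_{v_0})}\otimes F^{v_0}$ with arbitrary $F^{v_0}$, so the reduction to pure tensors is unnecessary.

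\textbf{Two small slips, both harmless.}
\begin{enumerate}
\item To produce $\mathbf{1}_{\varpi_{v_0}^{k}M_4(\mathcal{O}_{v_0})}$ you need $h_k=\varpi_{v_0}^{k}I_4$. Your choice $h_k=\varpi_{v_0}^{-k}I_4$ gives $\mathbf{1}_{\varpi_{v_0}^{-k}M_4(\mathcal{O}_{v_0})}$ instead.
\item With the corrected $h_k$, one has $c=|\det h_k|^{2}=q_{v_0}^{-8k}$, not $q_{v_0}^{-4k}$. The conclusion becomes $\varphi^0_f=(1-q_{v_0}^{-8k}\,\omega_{\pi}(\varpi_{v_0})^{k})\,\varphi'$, which is still nonzero because $\omega_\pi$ is unitary.
\end{enumerate}
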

\begin{proof}
    Let $\mathcal{H}_{K_{\GL_4}} :=\mathcal{C}^{\infty}_c(\GL_4(\A)//K_{\GL_4})$ denote the Hecke algebra of $\GL_4(\A)$. The Weil representation $\omega_{4,4}$ defines a representation of $\mathcal{H}_{K_{\GL_4}}\times \mathcal{H}_{K_{\GL_4}}$ on the space $\mathcal{S}_0(M_4(\A))$. By \cite[Proof of Theorem 2]{Watanabe:Global:Theta}, the map
    \begin{align*}
        \mathcal{S}_0(M_{4}(\A)) \otimes \pi \otimes \pi^\vee&\to \C,\\        (f,\varphi_1,\varphi_2)&\mapsto \langle\varphi_{1,f}^0,\varphi_2\rangle, 
    \end{align*}
    with $\langle\,,\,\rangle$ denoting the Petersson inner product, is $\mathcal{H}_{K_{\GL_4}}\times \mathcal{H}_{K_{\GL_4}}$-equivariant. By duality, we thus get a $\mathcal{H}_{K_{\GL_4}}\times \mathcal{H}_{K_{\GL_4}}$-equivariant map   \begin{align*}
        \mathcal{S}_0(M_{4}(\A)) \otimes \pi \to \pi,\,\,\,        (f,\varphi)\mapsto \varphi_{f}^0, 
    \end{align*}
    such that $(\beta * f * \alpha , \alpha * \varphi) \mapsto  (\alpha * \varphi)^0_{\beta * f * \alpha} = \beta * (\alpha * \varphi)^0_{f * \alpha} =\beta * \varphi^0_{f} $ for any $\alpha, \beta \in \mathcal{H}_{K_{\GL_4}}$.
    Suppose now that $\varphi' = \varphi^0_{f'}$ for some $\varphi \in \pi$ and $f '\in \mathcal{S}_0(M_{4}(\A))$. For any non-trivial $\alpha\in \mathcal{H}_{K_{\GL_4}}$ such that $\int_{\GL_4(\A)}\alpha(r)dr = 0$, we have that $0\not\in\mathrm{supp}(f'*\alpha)$. By Hecke equivariance, it thus follows that $\varphi' = (\alpha * \varphi)^0_{f'*\alpha}$, as desired. 
\end{proof}
\subsection{The two variable zeta integral}\label{subsec:TwoVariableZeta}

In this section, we use the theta correspondence for $(\GL_4,\GL_4)$ to upgrade the zeta integral of \S \ref{sec:The_zeta_integral} to a two complex variable integral. 

Fix $\pi \subset \mathcal{A}_{\rm cusp}([\GL_4])$ and $\sigma \subset \mathcal{A}_{\rm cusp}([\GL_2])$. Let $s, z \in \C$. Given a cusp form $\varphi = \otimes_v \varphi_v\in \pi$, a section $f_s = \otimes_v f_{s,v} \in I_{P_{(1,2)}}(s,\sigma,\omega_{\pi})$, and a factorizable Schwartz function $\eta$ on $M_4(\A)$, define \[J(s,z,\varphi,f_s, \eta) := \int_{{\GSp_4}(F) Z_{\GSp_4}(\A)\setminus {\GSp_4}(\A)} \!\!\!\!\!\! E^*_{P_{(1,2)}}(g,s, f_s)\varphi_\eta^z(g)| {\rm det}(g)|_\A^{-z} dg.\]

\noindent For ${\rm Re}(z)$ large enough, this is well defined as $\varphi_\eta^z$ is a cusp form of central character $\omega_\pi(\cdot)| \cdot |^{4z}_\A$. Fix a finite set of places $S$ outside of which the data $(\varphi,f_s, \eta)$ is unramified. Then we have the following. 
\begin{lemma}\label{unfolding_for_2_vars}
    For big enough ${\rm Re}(s)$ and ${\rm Re}(z)$, we have \[J(s,z,\varphi,f_s, \eta) = L^S(2s, \sigma, {\rm Sym}^2 \otimes \omega_\pi)\int_{U_{\GSp_4}(\A)Z_{\GSp_4}(\A)\setminus {\GSp_4}(\A)} \!\!\!\!\!\!\!\!\!\!\!\!W_{\sigma,s}\left(g\right)W_{\varphi,\eta}^z(g)| {\rm det}(g)|_\A^{-z}dg,\]
    where $W_{\sigma,s}$ is as in \eqref{WhittakerFoRKES}, while $W_{\varphi,\eta}^z$ denotes the Whittaker model of $\varphi_\eta^z$.
\end{lemma}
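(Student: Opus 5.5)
The plan is to repeat the unfolding of Proposition \ref{unfolding}, applied to the function $\Phi(g) := \varphi_\eta^z(g)|\det(g)|_\A^{-z}$ on $\GL_4(\A)$ in place of $\varphi$. The first step is to check that $\Phi$ has the properties that proof used. For $\mathrm{Re}(z)$ large, $\varphi^z_\eta$ is a cusp form on $\GL_4$ by \cite[p. 707]{Watanabe:Global:Theta}. Since $|\det(\cdot)|_\A^{-z}$ is a character trivial on $\GL_4(F)$ and on every unipotent subgroup, $\Phi$ is still left $\GL_4(F)$-invariant and cuspidal. Its central character is $\omega_\pi(\cdot)|\cdot|_\A^{4z}\cdot|\cdot|_\A^{-4z} = \omega_\pi$. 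Hence, on $Z_{\GSp_4}(\A)$, the product $E^*_{P_{(1,2)}}(g,s,f_s)\Phi(g)$ is invariant, since $f_s \in I_{P_{(1,2)}}(s,\sigma,\omega_\pi)$ has central character $\omega_\pi^{-1}$. So the integral defining $J$ makes sense on $\GSp_4(F)Z_{\GSp_4}(\A)\backslash \GSp_4(\A)$. Moreover, $\Phi|_{\GSp_4(\A)}$ is rapidly decreasing and the Eisenstein series has moderate growth, so $J$ converges absolutely.

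Next, for $\mathrm{Re}(s)$ large, I unfold the Klingen Eisenstein series exactly as before. This replaces the integral by
\[ L^S(2s,\sigma,{\rm Sym}^2\otimes\omega_\pi)\int_{P_{(1,2)}(F)Z_\H(\A)\backslash \H(\A)} f_s(g)\Phi(g)\,dg. \]
I then collapse over $[U_{(1,2)}]$ and Fourier expand the resulting function along $U_{(1,2)}\backslash N_{\G}$, where $N_{\G}$ is now the unipotent radical of the $(1,2,1)$-parabolic of $\GL_4$. The $M_{(1,2)}(F)$-orbits on characters are again the trivial one and one open orbit, and the trivial term vanishes by cuspidality of $\Phi$. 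After that, I insert the expansion \eqref{FourExpf} of $f_s$, collapse the sum over $t\in F^\times$ against $L_\delta(F)$, and integrate over $[N_\delta]$. This produces the full Whittaker coefficient of $\Phi$ with respect to the principal character $\chi$ of Definition \ref{def:principal:character}.

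The remaining step is to identify that coefficient. Because $|\det(ng)|_\A = |\det(g)|_\A$ for $n\in U_{\GL_4}(\A)$, the factor $|\det(g)|_\A^{-z}$ comes out of the unipotent integral, and
\[ W_\Phi(g) = W^z_{\varphi,\eta}(g)\,|\det(g)|_\A^{-z}. \]
This gives the stated formula, first for $\mathrm{Re}(s)$ and $\mathrm{Re}(z)$ large.

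The only potential obstacle is justifying the interchanges of sum and integral, since the proof needs $\varphi^z_\eta$ to be cuspidal and rapidly decreasing. This is why the statement is restricted to large $\mathrm{Re}(z)$, where Watanabe's cuspidality result holds. Once cuspidality is in hand, the Fubini arguments of Proposition \ref{unfolding} go through verbatim, using absolute convergence of $E_{P_{(1,2)}}$ for large $\mathrm{Re}(s)$.
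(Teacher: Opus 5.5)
Your proposal is correct and follows the paper's proof, which says only that the argument is identical to that of Proposition \ref{unfolding}. Your version makes this explicit: you run that unfolding with $\varphi^z_\eta(g)|\det(g)|_\A^{-z}$ in place of $\varphi$, which for large $\mathrm{Re}(z)$ is cuspidal with central character $\omega_\pi$, and then pull the factor $|\det(g)|_\A^{-z}$ out of the unipotent integral defining the Whittaker coefficient.
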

\begin{proof}
    The proof is identical to the one of Proposition \ref{unfolding}.
\end{proof}

  By the proof of \cite[Theorem 1]{Watanabe:Global:Theta},  the Whittaker model of $\varphi_\eta^z$ can be expressed as 
$$W_{\varphi,\eta}^z(g) = | {\rm det}(g)|_\A^{z} \int_{\GL_4(\A)} W_\varphi(g h)| {\rm det}(h)|_\A^{z+2} \eta(h) d h. $$
Write $\varphi = \otimes_v ' \varphi_v$ and $\eta = \prod_v \eta_v$, then 
$$W_{\varphi,\eta}^z(g) = \prod_v W_{\varphi_v,\eta_v}^z(g_v),$$
   where
\begin{align}\label{eq_local_whittaker_theta}
W_{\varphi_v,\eta_v}^z(g_v) =  | {\rm det}(g_v)|_v^{z} \int_{\GL_4(F_v)} W_{\varphi_v}(g_v h_v)| {\rm det}(h_v)|_v^{z+2} \eta_v(h_v) d h_v. 
\end{align}
Recall also that $W_{\sigma,s}(g)$ decomposes as $\prod_v W_{\sigma_v,s}(g_v)$, with $W_{\sigma_v,s}(g_v)$ equal to the spherical vector $W_{s,v}^{\rm o}(g_v)$ of \eqref{SphericalWhittakerforKES} at almost all $v$.
\begin{theorem}\label{THM_two_var_integral} For big enough ${\rm Re}(s)$ and ${\rm Re}(z)$, we have the equality 
    \[ J(s,z,\varphi,f_s, \eta) = L^S(z+ \tfrac{1}{2},\pi,{\rm std}_4) L^S(s,\pi \otimes \sigma,\wedge^2 \otimes \mathrm{std}_2) \prod_{v \in S} J(s,z,\varphi_v,f_{s,v},\eta_v),\]
    where \[J(s,z,\varphi_v,f_{s,v},\eta_v) :=  \int_{U_{\GSp_4}(F_v)Z_{\GSp_4}(F_v)\setminus {\GSp_4}(F_v)} \!\!\!\!\!\!\!\!\!\!\!\!W_{\sigma_v,s}\left(g_v\right)W_{\varphi_v,\eta_v}^z(g_v)| {\rm det}(g_v)|_v^{-z}dg_v.\]
\end{theorem}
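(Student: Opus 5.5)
The plan is to start from Lemma \ref{unfolding_for_2_vars}, which writes $J(s,z,\varphi,f_s,\eta)$ as $L^S(2s,\sigma,{\rm Sym}^2\otimes\omega_\pi)$ times an adelic integral of $W_{\sigma,s}(g)W^z_{\varphi,\eta}(g)|{\rm det}(g)|^{-z}$. Both Whittaker functions factor: $W_{\sigma,s}$ by \S\ref{subsec_Weird_local_whitt}, and $W^z_{\varphi,\eta}$ by \eqref{eq_local_whittaker_theta}. Hence, for ${\rm Re}(s),{\rm Re}(z)$ large, where all local integrals converge absolutely, the global integral is the product over $v$ of the local integrals $J(s,z,\varphi_v,f_{s,v},\eta_v)$. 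It then suffices to show that for $v\notin S$, with $\varphi_v,\eta_v,f_{s,v}$ unramified and $\eta_v=\mathbf{1}_{M_4(\mathcal{O}_v)}$,
\[
J(s,z,\varphi_v,f_{s,v},\eta_v)=L(z+\tfrac12,\pi_v,{\rm std}_4)\,\frac{L(s,\pi_v\otimes\sigma_v,\wedge^2\otimes{\rm std}_2)}{L(2s,\sigma_v,{\rm Sym}^2\otimes\omega_{\pi_v})},
\]
since the ${\rm Sym}^2$ factors then cancel against the global normalization, exactly as in Corollary \ref{Eulerexp} and Theorem \ref{zetaintegralfinalthm}.

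For the unramified computation, substitute \eqref{eq_local_whittaker_theta} into the local integral. The factor $|{\rm det}(g_v)|^{z}$ cancels the $|{\rm det}(g_v)|^{-z}$, leaving
\[
\int_{U_\H Z_\H\backslash \H(F_v)} W_{\sigma_v,s}(g)\int_{\GL_4(F_v)} W_{\varphi_v}(gh)\,|{\rm det}(h)|^{z+2}\,\mathbf{1}_{M_4(\mathcal{O}_v)}(h)\,dh\,dg.
\]
The inner integral is the action of the spherical Hecke operator $\mathbf{1}_{M_4(\mathcal{O}_v)\cap\GL_4(F_v)}|{\rm det}|^{z+2}$ on the spherical vector $W_{\varphi_v}$. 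By the classical Tamagawa/Godement--Jacquet computation of the standard $L$-factor, this operator acts on the unramified vector by the scalar $L(z+\tfrac12,\pi_v,{\rm std}_4)$. The exponent normalization $|{\rm det}|^{z+2}=|{\rm det}|^{(z+1/2)+(4-1)/2}$ is exactly the Godement--Jacquet shift for $n=4$. Concretely, one writes $M_4(\mathcal{O}_v)\cap\GL_4(F_v)=\bigsqcup K\varpi^\lambda K$ and uses $\sum_\lambda q^{-\langle\lambda\rangle(z+1/2)}\,\mathrm{vol}\cdot\omega_{\pi_v}(\mathbf{1}_{K\varpi^\lambda K})=L(z+\tfrac12,\pi_v)$.

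Therefore $W^z_{\varphi_v,\eta_v}(g)|{\rm det}(g)|^{-z}=L(z+\tfrac12,\pi_v,{\rm std}_4)\,W_{\varphi_v}(g)$. The remaining integral is precisely $Z(s,W_{\pi_v},W^{\rm o}_{s,v})/L(2s,\sigma_v,{\rm Sym}^2\otimes\omega_{\pi_v})$, which Corollary \ref{Lrequaltowedge} evaluates (via \eqref{WhittakerLocalExpr}) as the $\wedge^2\otimes{\rm std}_2$ factor divided by the ${\rm Sym}^2$ factor.

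The main obstacle is justifying the exchanges of integrals and the factorization. For ${\rm Re}(z)$ large, $\int W_{\varphi_v}(gh)|{\rm det}h|^{z+2}\eta_v(h)\,dh$ converges absolutely, uniformly in the gauge estimates of $W_{\varphi_v}$. Together with the bounds in Proposition \ref{non:vanishing:and:abs:conv:zeta:bad}(i), suitably shifted, this gives absolute convergence of the double integral, so Fubini applies. One also needs that the global Whittaker function of $\varphi^z_\eta$ is the one stated from \cite{Watanabe:Global:Theta}, which we take as given. Finally, the Hecke eigenvalue identity must be checked against the chosen normalization of the measure on $\GL_4(F_v)$, volume one on $\GL_4(\mathcal{O}_v)$.
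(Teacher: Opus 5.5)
Your proposal is correct and follows the paper's own proof essentially step for step. You unfold via Lemma \ref{unfolding_for_2_vars} and factor into local integrals; for $v\notin S$ you use the Godement--Jacquet unramified computation to get $W^z_{\varphi_v,\eta_v}(g_v)=L(z+\tfrac12,\pi_v,{\rm std}_4)\,|{\rm det}(g_v)|_v^{z}\,W_{\varphi_v}(g_v)$, then invoke the unramified evaluation of Corollary \ref{Lrequaltowedge} so that the ${\rm Sym}^2$ factors cancel; your added remarks on absolute convergence and measure normalization are details the paper leaves implicit.
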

\begin{proof}

Lemma \ref{unfolding_for_2_vars} implies that, for big enough ${\rm Re}(s)$ and ${\rm Re}(z)$, \[\frac{J(s,z,\varphi,f_s, \eta)}{L^S(2s, \sigma, {\rm Sym}^2 \otimes \omega_\pi)} = \prod_v J(s,z,\varphi_v,f_{s,v},\eta_v).\] 

Let now $v \not \in S$. Then $W_{\varphi_v}$ is the spherical Whittaker function and $\eta_v = {\rm char}(M_4(\mathcal{O}_v))$. By \cite[Lemma 6.10]{GodementJacquet}, together with \cite[(1.5) \& (2.1)]{Watanabe:Global:Theta}, we have 
\begin{align}\label{eq_Godement_Jacquet}
W_{\varphi_v,\eta_v}^z(g_v) = L(z + \tfrac{1}{2}, \pi_v,{\rm std}_4)   | {\rm det}(g_v)|_v^{z}  W_{\varphi_v}(g_v).
\end{align}
We plug \eqref{eq_Godement_Jacquet} into $J(s,z,\varphi_v,f_{s,v},\eta_v)$ to write 
\begin{align*}
    J(s,z,\varphi_v,f_{s,v},\eta_v) &= L(z+ \tfrac{1}{2},\pi_v,{\rm std}_4) \int_{U_{\GSp_4}(F_v)Z_{\GSp_4}(F_v)\setminus {\GSp_4}(F_v)} \!\!\!\!\!\!\!W_{s,v}^{\rm o}\left(g_v\right)W_{\varphi_v}(g_v)dg_v.
    \end{align*}
The integral of the right hand side equals $L(2s, \sigma_v, {\rm Sym}^2\otimes \omega_{\pi_v} )^{-1} \cdot Z(s,W_{\pi_v},W_{s,v}^{\rm o})$, where $Z(s,W_{\pi_v},W_{s,v}^{\rm o})$ is the integral calculated in \S \ref{subsec_localzeta1}. Hence,  Corollaries \ref{ZtoLtensorNonSplit} and \ref{Lrequaltowedge} give 
\begin{align*}
    J(s,z,\varphi_v,f_{s,v},\eta_v) &= \frac{L(z+ \tfrac{1}{2},\pi_v,{\rm std}_4)L(s,\pi_v\otimes \sigma_v,\wedge^2 \otimes \mathrm{std}_2)}{L(2s, \sigma_v, {\rm Sym}^2\otimes \omega_{\pi_v} )}, 
    \end{align*}
    thus 
     \[ J(s,z,\varphi,f_s, \eta) =L^S(z+ \tfrac{1}{2},\pi,{\rm std}_4)L^S(s,\pi \otimes \sigma,\wedge^2 \otimes \mathrm{std}_2) \prod_{v \in S} J(s,z,\varphi_v,f_{s,v},\eta_v),\]
     as desired.
\end{proof}

\begin{lemma}\label{J_ram_lemma} Suppose that $v \in S$, then, as a function of $z$, $$J^\star(s,z,\varphi_v,f_{s,v},\eta_v) :=L(z + \tfrac{1}{2}, \pi_v,{\rm std}_4)^{-1} J(s,z,\varphi_v,f_{s,v},\eta_v)$$ has holomorphic continuation to all $\C$ and there exists a constant $\epsilon > 0$ such that it converges absolutely for ${\rm Re}(s) > 1/2 - \epsilon$. 
\end{lemma}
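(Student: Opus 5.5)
The plan is to swap the order of integration in $J(s,z,\varphi_v,f_{s,v},\eta_v)$, so that the $\GL_4(F_v)$-integral defining $W^z_{\varphi_v,\eta_v}$ becomes the outer one. Substituting \eqref{eq_local_whittaker_theta}, the factors $|\det(g_v)|_v^{\pm z}$ cancel and we get
\[
J(s,z,\varphi_v,f_{s,v},\eta_v)=\int_{\GL_4(F_v)} \eta_v(h)\,|\det h|_v^{z+2}\, Z\bigl(s,\pi_v(h)W_{\varphi_v},W_{\sigma_v,s}\bigr)\,dh ,
\]
valid first where both integrals converge absolutely. Here $Z(s,\cdot,\cdot)$ is the local integral of Corollary \ref{Eulerexp}.

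The analysis of the inner function is as follows.

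\begin{itemize}
\item By Proposition \ref{non:vanishing:and:abs:conv:zeta:bad}(i), for $\mathrm{Re}(s)>1/2-\epsilon$ the function $\Lambda_s(W):=Z(s,W,W_{\sigma_v,s})$ is an absolutely convergent linear form on the Whittaker model of $\pi_v$.
\item Its bound comes from the asymptotic expansion of $W_{\pi_v}$. If $W$ ranges over the translates $\pi_v(h)W_{\varphi_v}$, that expansion is controlled by a continuous seminorm, and the proof of Proposition \ref{non:vanishing:and:abs:conv:zeta:bad} gives a bound $|\Lambda_s(\pi_v(h)W_{\varphi_v})|\le C\,\|h\|^{N}$ for some $N$, locally uniformly in $s$.
\item Hence $h\mapsto \Lambda_s(\pi_v(h)W_{\varphi_v})$ is a smooth function of moderate growth. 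Since $W_{\varphi_v}$ is $K$-finite, it is a finite sum of matrix-coefficient-type functions $h\mapsto \lambda(\pi_v(h)w)$ with $\lambda\in\pi_v^\vee$ (contragredient).
\end{itemize}

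This is exactly the shape of a Godement--Jacquet zeta integral. For $v$ non-archimedean, write $\Lambda_s(\pi_v(h)W_{\varphi_v})=\langle \pi_v(h)w,\lambda_s\rangle$, with $\lambda_s$ a smooth vector of $\pi_v^\vee$ (contragredient) if $\Lambda_s$ is smooth. By $K$-finiteness we can project onto finitely many $K$-types, so each piece is a genuine matrix coefficient. Then
\[
\int \eta_v(h)\,|\det h|^{z+2}\,\langle\pi_v(h)w,\lambda_s\rangle\,dh=Z^{GJ}\bigl(z+\tfrac12,\eta_v,\text{coefficient}\bigr),
\]
with the normalization of \cite{Watanabe:Global:Theta}: the shift $+2=\tfrac{n}{2}$ for $n=4$ produces $z+\tfrac12$.

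Godement--Jacquet theory \cite{GodementJacquet} then shows:
\begin{itemize}
\item this integral converges for $\mathrm{Re}(z)\gg0$;
\item it is a finite combination of functions in $L(z+\tfrac12,\pi_v,{\rm std}_4)\cdot\C[q_v^{\pm z}]$ at finite places;
\item at archimedean places it is $L(z+\tfrac12,\pi_v,{\rm std}_4)$ times an entire function.
\end{itemize}
Consequently $J^\star$ extends holomorphically to all $z$. Absolute convergence in $s$ for $\mathrm{Re}(s)>1/2-\epsilon$ follows by Fubini from the uniform bound above, after fixing $\mathrm{Re}(z)$ large. The continuation in $z$ preserves holomorphy in $s$, since the Godement--Jacquet coefficients depend holomorphically on the parameter $\lambda_s$.

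The main obstacle is archimedean. There one must justify that $\Lambda_s$ is continuous for the Fr\'echet topology, with moderate growth uniform in $s$, so that the Godement--Jacquet archimedean theory applies to the resulting generalized matrix coefficient rather than a $K$-finite one. My approach would be:
\begin{itemize}
\item use Dixmier--Malliavin to write $\eta_v$ as a finite sum of convolutions $\alpha*\eta'$;
\item move $\alpha$ onto $W_{\varphi_v}$, which reduces to smooth vectors;
\item invoke the Jacquet--Shalika uniform asymptotic bounds for Whittaker functions, \cite[Proposition 3]{JacquetShalikaExterior}, which are continuous in the Fr\'echet topology.
\end{itemize}
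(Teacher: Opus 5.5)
Your overall mechanism matches the paper's: Godement--Jacquet in the $\GL_4$-variable produces $L(z+\tfrac12,\pi_v,{\rm std}_4)$ times something entire, and Proposition \ref{non:vanishing:and:abs:conv:zeta:bad}(i) controls $s$. However, interchanging the two integrals first leaves two steps unjustified.

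\emph{The matrix-coefficient step.} You argue that, because $W_{\varphi_v}$ is $K$-finite, $h\mapsto\Lambda_s(\pi_v(h)W_{\varphi_v})$ is a finite sum of matrix coefficients $\lambda(\pi_v(h)w)$ with $\lambda\in\pi_v^\vee$. This is wrong as argued. There is no reason for $\Lambda_s$ to be a smooth functional on $\pi_v$: it is built from the $\GSp_4$-integral and is not fixed by an open compact subgroup of $\GL_4(F_v)$ in general. Right $K$-finiteness of $W_{\varphi_v}$ says nothing about the left behaviour in $h$. So Godement--Jacquet theory does not apply to this function as it stands. What actually makes it work is the \emph{left} invariance of $\eta_v$ under an open compact $\Omega\subset\GL_4(F_v)$ (left $K$-finiteness at archimedean places). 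This lets you replace $\Lambda_s$ by its $\Omega$-average, which lies in the finite-dimensional space $(\pi_v^\vee)^{\Omega}$.

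\emph{The uniform bound.} Your Fubini step and the convergence for ${\rm Re}(s)>1/2-\epsilon$ rely on a bound for $\int|W_{\sigma_v,s}(g)\,W_{\varphi_v}(gh)|\,dg$ that is polynomial in $\|h\|$. That requires the asymptotic expansion of all translates $\pi_v(h)W_{\varphi_v}$ with uniformly controlled coefficients. Proposition \ref{non:vanishing:and:abs:conv:zeta:bad}(i) is proved for a single fixed Whittaker function and gives no such uniformity. You assert it at every place and only sketch a remedy at archimedean ones.

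Both issues disappear if you do not interchange, which is the paper's proof. By the left $\Omega$-invariance of $\eta_v$ (at archimedean places, left $K$-finiteness and admissibility play this role), the vector $\int\eta_v(h)|{\rm det}(h)|_v^{z+2}\pi_v(h)W_{\varphi_v}\,dh$ lies in the finite-dimensional space of $\Omega$-invariant Whittaker functions, with basis $W_{v,1},\dots,W_{v,m}$. By Godement--Jacquet, via \cite[(1.5) \& (2.1)]{Watanabe:Global:Theta}, its coordinates are $L(z+\tfrac12,\pi_v,{\rm std}_4)\,\Xi_{v,j}(z)$ with $\Xi_{v,j}$ entire. Hence $J^\star(s,z,\varphi_v,f_{s,v},\eta_v)=\sum_j\Xi_{v,j}(z)\,Z(s,W_{v,j},W_{\sigma_v,s})$. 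Proposition \ref{non:vanishing:and:abs:conv:zeta:bad}(i), applied to finitely many \emph{fixed} Whittaker functions, then gives holomorphy in $z$ and absolute convergence for ${\rm Re}(s)>1/2-\epsilon$, for every $z$.

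Once you insert the $\Omega$-averaging, the coordinates of your $\lambda_s$ in the basis of $(\pi_v^\vee)^{\Omega}$ dual to $\{W_{v,j}\}$ are exactly $Z(s,W_{v,j},W_{\sigma_v,s})$. So your argument collapses to this one, and the interchange and the uniform-in-$h$ estimates become unnecessary.
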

\begin{proof}
When $v \in S$, let $\Omega$ be an open compact subgroup of $\GL_4(F_v)$ such that $\eta_v(k_v g_v) =\eta_v(g_v)$ for all $k_v \in \Omega$ and fix a basis $\{ W_{v,1}, \dots, W_{v,m}\}$ of the $\Omega$-invariant vectors in the Whittaker model of $\pi_v$. The results \cite[(1.5) \& (2.1)]{Watanabe:Global:Theta} give (see also p. 708 of \emph{loc.cit.}) 
$$W_{\varphi_v,\eta_v}^z(g_v) = L(z + \tfrac{1}{2}, \pi_v,{\rm std}_4)   | {\rm det}(g_v)|_v^{z}\sum_j \Xi_{v,j}(z)W_{v,j}(g_v),$$ where  $ \Xi_{v,j}(z)$ are entire functions on $z$. Hence, $$ J(s,z,\varphi_v,f_{s,v},\eta_v) =  L(z + \tfrac{1}{2}, \pi_v,{\rm std}_4) \sum_j \Xi_{v,j}(z) Z(s, W_{v,j}, W_{\sigma_v}),$$
where $Z(s, W_{v,j}, W_{\sigma_v,s})$ denotes the integral of \S \ref{ramified_integrals}. The result follows from this equality, together with Proposition \ref{non:vanishing:and:abs:conv:zeta:bad}(i). 
\end{proof}

\begin{corollary}\label{Corollary:Multi:Variable:Integral}
Suppose that  $\omega_\pi  \omega_\sigma =1$. Given a cusp form $\varphi = \otimes_v \varphi_v\in \pi$, a section $f =  f_{v} \in I_{P_{(1,2)}}(1/2,\sigma,\omega_{\pi})$, and a Schwartz function $\eta = \otimes_v \eta_v \in \mathcal{S}(M_4(\A))$, we have 
    \[ J(1/2, 0,\varphi, f, \eta) =L( \tfrac{1}{2},\pi,{\rm std}_4)L^S(\tfrac{1}{2},\pi \otimes \sigma,\wedge^2 \otimes \mathrm{std}_2) \prod_{v \in S} J^\star(1/2,0,\varphi_v,f_{v},\eta_v).\]
\end{corollary}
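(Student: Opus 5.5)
The plan is to obtain the statement from Theorem \ref{THM_two_var_integral} and Lemma \ref{J_ram_lemma} by analytic continuation in $(s,z)$ and then specialisation to $(s,z)=(1/2,0)$.

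First we rewrite the identity of Theorem \ref{THM_two_var_integral}. At each $v\in S$ we use $J(s,z,\varphi_v,f_{s,v},\eta_v)=L(z+\tfrac12,\pi_v,{\rm std}_4)\,J^\star(s,z,\varphi_v,f_{s,v},\eta_v)$. Multiplying the local factors $L(z+\tfrac12,\pi_v,{\rm std}_4)$ for $v\in S$ into $L^S(z+\tfrac12,\pi,{\rm std}_4)$ gives, for ${\rm Re}(s),{\rm Re}(z)\gg0$,
\[
J(s,z,\varphi,f_s,\eta)=L(z+\tfrac12,\pi,{\rm std}_4)\,L^S(s,\pi\otimes\sigma,\wedge^2\otimes{\rm std}_2)\prod_{v\in S}J^\star(s,z,\varphi_v,f_{s,v},\eta_v).
\]
Here $L(z+\tfrac12,\pi,{\rm std}_4)$ is the completed standard $L$-function.

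Next we continue both sides meromorphically. On the right, the completed $L(z+\tfrac12,\pi,{\rm std}_4)$ is entire in $z$ by Godement--Jacquet. The partial $L^S(s,\pi\otimes\sigma,\wedge^2\otimes{\rm std}_2)$ is meromorphic in $s$; by Theorem \ref{zetaintegralfinalthm} it is a ratio of the meromorphic global integral $Z$ and the local integrals. Each $J^\star$ is entire in $z$ and absolutely convergent for ${\rm Re}(s)>1/2-\epsilon$ by Lemma \ref{J_ram_lemma}.

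On the left we fix $s$ and regard $J$ as a function of $z$. The map $z\mapsto\varphi^z_\eta$ is entire by \cite[Lemma 3]{Watanabe:Global:Theta}. Pairing it against the Eisenstein series $E^*_{P_{(1,2)}}(g,s,f_s)$, which has moderate growth, then gives a holomorphic function of $z$ whenever the integral converges.

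The main obstacle is this convergence at $z=0$. For ${\rm Re}(z)\gg0$ the function $\varphi^z_\eta$ is cuspidal, but at $z=0$ it is only known to be an automorphic form. It is cuspidal when $L(\tfrac12,\pi,{\rm std}_4)\ne0$, by Proposition \ref{Watanabe:Theta:Main}, since then $\Theta^0(\pi)=\pi$. When $L(\tfrac12,\pi,{\rm std}_4)=0$, both sides should vanish or be defined by continuation.

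To handle this we would first try Watanabe's description of $\varphi^z_\eta$. It is a finite sum $\sum_j\Xi_j(z)\,|\det|^{z}\varphi_j$ with $\varphi_j\in\pi$ cusp forms and $\Xi_j$ entire, up to the global factor $L(z+\tfrac12,\pi,{\rm std}_4)$; this is the global analogue of the local expansion in Lemma \ref{J_ram_lemma}. Granting this, $J(s,z,\ldots)$ becomes an entire-in-$z$ combination of the integrals $Z(s,\varphi_j,f_s)$. These integrals converge for all $s$ away from the poles of the Eisenstein series, so the identity extends to all $(s,z)$ by uniqueness of analytic continuation in each variable.

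Finally we specialise. We set $z=0$, use $\omega_\pi\omega_\sigma=1$, and take $s=1/2$, where $f=f_{1/2}$ and the $J^\star$ converge by Lemma \ref{J_ram_lemma}. The continued identity at this point is exactly the displayed equality of the corollary.
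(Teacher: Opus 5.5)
Your proposal is correct and follows the paper's own argument. You rewrite Theorem \ref{THM_two_var_integral} via the $J^\star$ of Lemma \ref{J_ram_lemma}, continue in $s$ and in $z$ (using entirety of $L(z+\tfrac12,\pi,{\rm std}_4)$ and of the $\Xi_{v,j}$), and evaluate at $(s,z)=(1/2,0)$. For the continuation in $s$, say explicitly that you fix a holomorphic section $f_s$ with $f_{1/2}=f$, and that $\omega_\pi\omega_\sigma=1$ is what makes $E^*_{P_{(1,2)}}(g,s,f_s)$ entire, so the left side is holomorphic at $s=1/2$.

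Your additional step writes $\varphi^z_\eta|\det|^{-z}=L(z+\tfrac12,\pi,{\rm std}_4)\sum_J\Xi_J(z)\varphi_J$ with $\varphi_J\in\pi$, as the product over places of \eqref{eq_Godement_Jacquet} and Watanabe's local expansion. This is a sound way to make explicit what the paper leaves implicit: the continued value at $z=0$ is the integral against the (possibly zero) cusp form $\varphi^0_\eta$.
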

\begin{proof}
    Choose a standard holomorphic section $f_s \in I_{P_{(1,2)}}(s,\sigma, \omega_\pi)$ such that $f_{1/2}=f$. Thanks to Theorem \ref{THM_two_var_integral}, we have that, for large enough ${\rm Re}(s)$ and ${\rm Re}(z)$, 
    \begin{align}\label{eq_goingto01over2}
        J(s,z,\varphi,f_s, \eta) = L(z+ \tfrac{1}{2},\pi,{\rm std}_4) L^S(s,\pi \otimes \sigma,\wedge^2 \otimes \mathrm{std}_2) \prod_{v \in S} J^\star(s,z,\varphi_v,f_{s,v},\eta_v).
    \end{align} 
     The Klingen Eisenstein series $E^*_{P_{(1,2)}}(g,s, f_s)$ is entire when $\omega_{\pi}\omega_{\sigma}= 1$ (see \S\ref{Section:Klingen:Eisenstein}), thus $J(s,z,\varphi,f_s, \eta)$ has holomorphic continuation in the variable $s$ to all $\C$. Moreover, by Lemma \ref{J_ram_lemma} and the holomorphicity of the standard $L$-function of $\pi$, we deduce that both sides have holomorphic continuation in the variable $z$. By Lemma \ref{J_ram_lemma}, we also know that the ramified zeta factor $J^\star(s,z,\varphi_v,f_{s,v},\eta_v)$ converges absolutely whenever ${\rm Re}(s) > 1/2 - \epsilon$, for some $\epsilon >0$. Thus, we obtain the desired formula by evaluating \eqref{eq_goingto01over2} at $s = 1/2$ and $z = 0$.
\end{proof}

\subsection{The interplay between two seesaw identities}\label{Two:SeeSaw:Section}
 
 Let $\pi$, resp. $\sigma$, be cuspidal automorphic representations of 
$\GL_4(\A)$, resp. $\GL_2(\A)$, such that 
$\omega_\pi  \omega_\sigma =1$.
As in \S \ref{subsec_non_vanishing_central_L_value_degree_12}, we use Garrett's pullback formula (Corollary \ref{CorPullbackZeta}) and the Siegel-Weil formula of Theorem \ref{Siegel-Weilnew} to deduce that, for ${\rm Re}(z)>>0$, we have 

\small 
\begin{align}
    J(\tfrac{1}{2},z,\varphi,f^\Psi_{\phi_{1/2}}, \eta) &= \int_{Z_{\GSp_6}(\A)\backslash [\GL_2\boxtimes \GSp_4]} E_{P_{(3,0)}}^*(\iota( g_1  , g_2), \tfrac{1}{2},\phi_{\frac{1}{2}}) \Psi^w( g_1  ) \varphi^z_\eta(g_2) |\nu(g_2)|_\A^{-2z}\, dg_1\,dg_2
    \nonumber\\ &= 2 \,\zeta^S_F(2)^2  \sum_V \int_{Z_{\GSp_6}(\A)\backslash [\GL_2\boxtimes \GSp_4]} \int_{[\mathbf{O}_V]}\!\!\!\!\! \theta(h h_1, \iota( g_1  , g_2) ; f^V) \Psi^w( g_1 ) \varphi^z_\eta(g_2) |\nu(g_2)|_\A^{-2z}  \, d h \, dg_1\,dg_2 \nonumber\\ 
    &= \zeta^S_F(2)^2  \sum_V \int_{Z_{\GSp_6}(\A)\backslash [\GL_2\boxtimes \GSp_4]} \int_{[\SO_V]}\!\!\!\!\! \theta(h h_1, \iota( g_1  , g_2) ; f^V) \Psi^w( g_1 ) \varphi^z_\eta(g_2) |\nu(g_2)|_\A^{-2z}  \, d h \, dg_1\,dg_2, \label{after_GL4theta_SW1}
\end{align}
\normalsize
where the latter equality follows from Lemma \ref{Reduction:To:SO}. Recall also that $V$ ranges through the (finite) set of quaternionic quadratic spaces of dimension $4$ over $F$ for which the image of $\phi_{1/2}$ via the projection ${\rm pr}_{\Pi(V)}: I_{P_{(3,0)}}(1/2, \mathbf{1}) \to  \Pi(V)$ is non-zero, and $f^V \in \mathcal{S}(V(\A)^3)$ is such that ${\rm pr}_{\Pi(V)} \phi_{1/2}  = \lambda_V(f^V)^{\sim}$.

\noindent Now consider the factor of the sum \eqref{after_GL4theta_SW1} corresponding to $f= f^V \in \mathcal{S}(V(\A)^3)$. Recall the seesaw pair \begin{align*}
\xymatrix{
    \mathbf{GSO}_V \boxtimes \mathbf{GSO}_V \ar@{-}[dd] \ar@{-}[ddr]  & \GSp_6 \ar@{-}[ddl] \ar@{-}[dd] \\ \\ 
   \mathbf{GSO}_V  & \GL_2\boxtimes \GSp_4,
} 
\end{align*}
where the vertical left line denotes the diagonal embedding $\mathbf{GSO}_V \hookrightarrow \mathbf{GSO}_V \boxtimes \mathbf{GSO}_V$. 
\noindent To obtain the corresponding seesaw identity, we first record the following Lemma.

\begin{lemma}\label{Factorization:Regularization}
For $f = f_1\otimes f_2\in \mathcal{S}(V(\A))\otimes \mathcal{S}(V(\A)^2)\subset \mathcal{S}(V(\A)^3)$, we have the following equality:
\begin{align}\label{eq_regularized_thetas_seesaw1}
    \int_{[\mathbf{SO}_V]}\theta(hh_1, \iota( g_1  , g_2) ; f)dh = \int_{[\mathbf{SO}_V]}\theta(hh_1 , g_1; f_1)\theta(hh_1, g_2 ; f_2)dh,
\end{align}
where each theta function is regularized as in \cite{IchinoSW} when $V$ is split.
\end{lemma}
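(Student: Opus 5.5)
The plan is to first establish the identity at the level of theta kernels, pointwise, and then pass to the integrals, with the regularization handled separately. Since $\iota(g_1,g_2)$ preserves the decomposition $W_6 = W_2\oplus W_4$, the symplectic space $V\otimes W_6$ splits orthogonally as $(V\otimes W_2)\oplus(V\otimes W_4)$. With the polarization $V\otimes W_6^+$, the Lagrangian restricts as $V\otimes\langle e_1\rangle \oplus V\otimes\langle e_2,e_3\rangle$. This matches $\mathcal{S}(V(\A)^3)=\mathcal{S}(V(\A))\hat\otimes\mathcal{S}(V(\A)^2)$.

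\textbf{Step 1: factorization of the Weil representation.} For $(g_1,g_2)\in \GL_2\boxtimes\GSp_4$ and $h_1\in\GO_V$ with $\nu'(h_1)=\nu(g_1)=\nu(g_2)$, I will check that
\[
\omega(h_1,\iota(g_1,g_2))(f_1\otimes f_2)=\omega(h_1,g_1)f_1\otimes\omega(h_1,g_2)f_2 .
\]
This follows from the explicit formulas on generators: the Weyl element and Siegel unipotent/Levi elements of each factor act only on the corresponding variables. The normalizing factor in \eqref{eq_normalization_Weil_similitudes} also behaves correctly, since
\[
|\nu'|^{-3\dim V/4}=|\nu'|^{-\dim V/4}\,|\nu'|^{-2\dim V/4},
\]
and $\widetilde{\iota(g_1,g_2)}=\iota(\tilde g_1,\tilde g_2)$. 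Summing over $V(F)^3=V(F)\times V(F)^2$ then gives the pointwise identity
\[
\theta(hh_1,\iota(g_1,g_2);f)=\theta(hh_1,g_1;f_1)\,\theta(hh_1,g_2;f_2).
\]

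\textbf{Step 2: anisotropic case.} When $V$ is anisotropic, $[\SO_V]$ is compact and the theta functions are of moderate growth. Integrating the pointwise identity gives \eqref{eq_regularized_thetas_seesaw1} directly.

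\textbf{Step 3: split case (the main obstacle).} For $V=\mathbb{H}^2$ the integrals diverge and must be regularized. I plan to use Ichino's regularization \cite{IchinoSW}: choose an element $z$ of the center of the universal enveloping algebra of $\Sp_6$ (at a real place), or a Hecke operator at a finite place, such that $\omega(z)$ kills the non-rapidly-decaying part. The regularized integral is then defined as $\kappa^{-1}\int_{[\SO_V]}\theta(h,g;\omega(z)f)\,dh$.

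The key point is to pick the operator so that it is compatible with the factorization. Concretely, I would take a Hecke operator $\alpha_1\otimes\alpha_2$ at a finite place $v$, acting through $\Sp_2\times\Sp_4$. At that place, the operator on $\mathcal{S}(V_v^3)$ should agree with the tensor of operators on $\mathcal{S}(V_v)$ and $\mathcal{S}(V_v^2)$ via Step 1. It should also act on the theta correspondence of the trivial representation of $\mathbf{O}_V$ by a nonzero scalar $\kappa=\kappa_1\kappa_2$. This is possible because the relevant spherical/trivial constituents match under restriction.

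Then:
\begin{itemize}
\item $\theta(\cdot,g;\omega(z)f)=\theta(\cdot,g_1;\omega(\alpha_1)f_1)\,\theta(\cdot,g_2;\omega(\alpha_2)f_2)$;
\item each factor, after the operator, is rapidly decreasing on $[\SO_V]$;
\item the right-hand side of \eqref{eq_regularized_thetas_seesaw1} is interpreted as this regularized product, and the identity follows by integrating.
\end{itemize}

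Independence of the choice of operator follows as in \cite{IchinoSW}, because both sides define $\SO_V(\A)$-invariant functionals. If compatibility at a single place fails, I would instead compare both sides through the Siegel--Weil identities (Theorem \ref{SWcentralSplit}), using the equivariance in $g$.
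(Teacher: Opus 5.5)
Your Steps 1 and 2 are fine; the pointwise factorization is what makes the non-split case ``straightforward'' in the paper. Step 3, however, has a genuine gap, and it sits exactly at the one non-formal point of the lemma. By definition, the left side of \eqref{eq_regularized_thetas_seesaw1} is regularized with an operator $z_6$ from the spherical Hecke algebra of $\Sp_6(F_v)$. Your $\alpha_1\otimes\alpha_2$ lies in the Hecke algebra of $\SL_2(F_v)\times\Sp_4(F_v)$. So $\kappa^{-1}\int_{[\SO_V]}\theta(h,g;\omega(\alpha_1\otimes\alpha_2)f)\,dh$ is not Ichino's regularization of the $\Sp_6$ theta integral, and the claim that $\alpha_1\otimes\alpha_2$ acts by $\kappa_1\kappa_2$ on the theta lift of the trivial representation is false.

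What makes the choice of regularizing operator irrelevant is the following. Every $\mathbf{O}_V(F_v)$-invariant functional on $\mathcal{S}(V_v^3)$ factors through the coinvariants $R_3(V_v)$, whose $\Sp_6(\mathcal{O}_v)$-fixed line is an eigenline for the commutative spherical Hecke algebra of $\Sp_6$. By contrast, the $\SL_2(\mathcal{O}_v)\times\Sp_4(\mathcal{O}_v)$-fixed vectors of $R_3(V_v)$ form a much larger space.

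Your parameter matching holds only on the closed orbit of Lemma \ref{lemmaondoublecosetsimple}. There $\omega(\iota(g_1,g_2))(f_1\otimes f_2)(0)=\omega(g_1)f_1(0)\,\omega(g_2)f_2(0)$, with both factors at the Siegel--Weil points of $\SL_2$ and $\Sp_4$. On the open orbit of $\xi$ the matching breaks down, and this is precisely what Garrett's formula (Theorem \ref{PullbackFormulaNew}) detects. The projection of $E_{P_{(3,0)}}(\iota(g_1,g_2),\tfrac12,\phi)$ onto a cusp form $\Psi$ in the variable $g_1$ is in general a non-zero Klingen Eisenstein series in $g_2$. On that component $\alpha_1$ acts through the Hecke eigenvalue of $\Psi$, which in general differs from $\kappa_1$.

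If the two regularizations agreed for all $f$, the Siegel--Weil formula (Theorem \ref{Siegel-Weilnew}) would force the restriction of this Eisenstein series to $\SL_2\times\Sp_4$ to be an $\alpha_1\otimes\alpha_2$-eigenfunction with eigenvalue $\kappa_1\kappa_2$, and it is not. So $\SO_V(\A)$-invariance of both functionals cannot identify them. Your fallback through Theorem \ref{SWcentralSplit} does not touch this point.

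For comparison, the paper does not re-regularize the left side with a non-$\Sp_6$ operator. It keeps Ichino's operators $z_2,z_4,z_6$, with constants $c_2,c_4,c_6$, and sets
$I_1(f_1\otimes f_2)=(c_2c_4)^{-1}\int_{[\SO_V]}\theta(h,I_2,\omega(z_2)f_1)\theta(h,I_4,\omega(z_4)f_2)\,dh$, which is your right-hand side, and $I_2(f)=c_6^{-1}\int_{[\SO_V]}\theta(h,I_6,\omega(z_6)f)\,dh$. It extends $I_1$ by density to an $\SO_V(\A)$-invariant $\tilde I_1$ and matches both functionals with the honest integral on absolutely convergent data. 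It then concludes via \cite[Lemma 1.9]{IchinoSW}, which applies to $\SO_V(\A)$-invariant functionals by Lemma \ref{Reduction:To:SO}. In other words, a uniqueness principle for invariant functionals replaces your operator-independence claim.

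Be aware, though, that matching $\tilde I_1$ with the honest integral on a convergent pure tensor means $(c_2c_4)^{-1}\int\theta(\omega(z_2)f_1)\theta(\omega(z_4)f_2)=\int\theta(f_1)\theta(f_2)$. That is the same $z_2\otimes z_4$-eigenvalue statement, so the obstruction above has to be faced there too.

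What is robust, and what the subsequent seesaw identity actually uses, is the version paired with cusp forms $\Psi$ on $\SL_2$ and $\varphi$ on $\Sp_4$. Then the right side converges with no regularization, since the Shimizu lift is cuspidal. On the left only $z_6$ has to be removed, and that is legitimate: the functional $F\mapsto\int_{[\SO_V]}\int_{[\SL_2\times\Sp_4]}\theta(h,\iota(g_1,g_2);F)\Psi(g_1)\varphi(g_2)\,dg_1\,dg_2\,dh$ is $\SO_V(\A)$-invariant, hence $\mathbf{O}_V(\A)$-invariant by the argument of Lemma \ref{Reduction:To:SO}. Therefore $\omega(z_6)$ multiplies it by $c_6$ on data spherical at the regularizing place.
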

\begin{proof}
The result is straightforward when $V$ is non-split; when $V$ is split, to obtain the desired equality one has to check that each side of \eqref{eq_regularized_thetas_seesaw1} can be compatibly regularized. This follows from the results of \cite{IchinoSW}, as we now explain.
    
    Let $V$ be split and let $\omega(z_2)$, $\omega(z_4)$ and $\omega(z_6)$ be the regularization operators defined in \cite[Lemma 1.3]{IchinoSW} for the dual reductive pairs $(\mathbf{O}_V,\Sp_2)$, $(\mathbf{O}_V,\Sp_4)$ and $(\mathbf{O}_V,\Sp_6)$, respectively. We then let $c_2,c_4,c_6$ be the constants of \cite[Lemma 1.6]{IchinoSW} attached to $z_2$, $z_4$ and $z_6$, respectively. We define the following two functionals: 
      \begin{align*}
    I_1 &: \mathcal{S}(V(\A))\otimes \mathcal{S}(V(\A)^2)\to \C,\,\, f\otimes f'\mapsto (c_2c_4)^{-1}\int_{[\mathbf{SO}_V]}\theta(h,I_2,\omega(z_2)f)\theta(h,I_4,\omega(z_4)f')dh,\\
    I_2 &: \mathcal{S}(V(\A)^3)\to \C,\,\, f''  \mapsto c_{6}^{-1}\int_{[\mathbf{SO}_V]}\theta(h,I_6,\omega(z_6)f'')dh.\end{align*} 
    By the proof of \cite[Proposition 1.5]{IchinoSW}, the theta functions \[\theta(h,I_2,\omega(z_2)f),\;\theta(h,I_4,\omega(z_4)f'),\;\theta(h,I_6,\omega(z_6)f'')\] are rapidly decreasing. Thus, the functionals $I_1$ and $I_2$ are well defined for any Schwartz function $f, f'$ and $f''$. The result follows from \cite[Lemma 1.9]{IchinoSW}, once we show that the functional $I_1$ extends to a functional $\tilde{I}_1$ on $\mathcal{S}(V(\A)^3)$ and that $\tilde{I}_1$ equals $I_2$.
 Firstly, using the fact that $\mathcal{S}(V(\A))\otimes \mathcal{S}(V(\A)^2)$ is dense in $\mathcal{S}(V(\A)^3)$, the functional $I_1$ extends to an $\mathbf{SO}_V(\A)$-invariant functional $\tilde{I}_1 : \mathcal{S}(V(\A)^3)\to \C$. Moreover, using again the density of  $\mathcal{S}(V(\A))\otimes \mathcal{S}(V(\A)^2)$ in $\mathcal{S}(V(\A)^3)$,  $$\tilde{I}_1(f) = I_2(f) = \int_{[\mathbf{SO}_V]}\theta(h,I_6,f)dh,$$ for any Schwartz function $f \in \mathcal{S}(V(\A)^3)$ such that the theta integral is absolutely convergent.  This and \cite[Lemma 1.9]{IchinoSW} (which applies to these $\mathbf{SO}_V(\A)$-invariant functionals because of Lemma \ref{Reduction:To:SO}) conclude the proof of the Lemma.
\end{proof}

Suppose that $f = f_1\otimes f_2\in \mathcal{S}(V(\A))\otimes \mathcal{S}(V(\A)^2)\subset \mathcal{S}(V(\A)^3)$. By Lemma \ref{Factorization:Regularization} and following verbatim the argument of \cite[Lemma 5.1]{KudlaHarrisJacquet}, we get the seesaw identity
\begin{align}
    &\int_{Z_{\GSp_6}(\A)\backslash [\GL_2\boxtimes \GSp_4]} \int_{[\SO_V]}\!\!\!\!\! \theta(h h_1, \iota( g_1  , g_2) ; f) \Psi^w( g_1 ) \varphi^z_\eta(g_2) |\nu(g_2)|_\A^{-2z}  \, d h \, dg_1\,dg_2 \nonumber\\ &= \int_{Z_{\GSO_V}(\A)\backslash [\GSO_V]} \int_{[\SL_2 \times \Sp_4]}\!\!\!\!\! \theta(h , g_1 g'_1; f_1)\theta(h , g_2 g'_2; f_2) \Psi^w( g_1g'_1 ) \varphi^z_\eta(g_2 g'_2) |\nu'(h)|_\A^{-2z}   \, dg_1\,dg_2 \, d h\nonumber\\ &= \int_{Z_{\GSO_V}(\A)\backslash [\GSO_V]} \int_{[\SL_2 \times \Sp_4]}\!\!\!\!\! \theta(h , g_1 g'_1; f_1)\theta(h , g_2 g'_2; f_2) \Psi^w( g_1g'_1 ) |\nu'(h)|_\A^{-2z} \cdot \nonumber\\ &\cdot \int_{[\GL_4]}\varphi(r)|\det(r)|^{z}_{\A}\sum_{\substack{x\in M_4(F)\\x\neq 0}}\omega_{4,4}(g_2g_2',r)\eta(x)\,dr  \, dg_1\,dg_2\, d h,\label{intermediateseesawintegral}
\end{align}
where $g'_1\in \GL_2$ and $g_2'\in \GSp_4$ are chosen so that $\nu(g_1') = \nu(g_2') = \nu'(h)$ and where the last equality follows from \eqref{Theta:Lift:Expression}.
 
The following Proposition and Lemma concern a second seesaw pair, which involves the theta correspondence for $(\GL_4,\GL_4)$ and the symplectic-orthogonal theta correspondences. Namely, if we let $V' := V \oplus \mathbb{H}^4$, the discussion below can be summarized by saying that there is a seesaw pair \begin{align*}
\xymatrix{
    \mathbf{GSO}_{V'} \ar@{-}[dd] \ar@{-}[ddr]  & \GSp_4 \times  \GL_4  \ar@{-}[ddl] \ar@{-}[dd] \\ \\ 
   \mathbf{GSO}_V \times \GL_4  &  \GSp_4,
} 
\end{align*}
where the right vertical line corresponds to the diagonal embedding $\GSp_4 \hookrightarrow \GSp_4 \times  \GL_4 $, while the left vertical line is given by 
\begin{align*}
    j:\mathbf{GSO}_V\times \GL_4&\to \mathbf{GSO}_{V'},\\
    (h,r)&\mapsto \left(\begin{smallmatrix}r& &\\ &h& \\ & &\nu'(h)I_4'\;^{t}r^{-1}I_4'\end{smallmatrix}\right).
\end{align*}

\begin{proposition}\label{Product:Theta:Functions}
    Let $\eta \in \mathcal{S}(M_4(\A))$ be a Schwartz function with support outside $0$. There is a $\mathbf{GSO}_V(\A)\times \GL_4(\A)$-equivariant isomorphism $\mathcal{F}:\mathcal{S}(V(\A)^2)\otimes \mathcal{S}(M_4(\A))\to \mathcal{S}(V'(\A)^2)$, such that, for any $g_2 \in \Sp_4(\A)$,   
    \[\theta(h,g_2g_2';f_2)\cdot \!\!\sum_{\substack{x\in M_4(F)\\x\neq 0}}\omega_{4,4}(g_2g_2',r)\eta(x) = \theta(j(h,r), g_2g_2', \mathcal{F}(f_2\otimes \eta)), \]
    where the theta function appearing on the right hand side is the theta function with respect to the dual reductive pair $(\mathbf{O}_{V'},\Sp_4)$ and polarization $\left(V'\otimes W_4^+\right)\oplus\left(V' \otimes W_4^-\right)$. 
\end{proposition}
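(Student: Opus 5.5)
The idea is to realize $V'=V\oplus\mathbb{H}^4$ as $V\oplus (X\oplus X^\vee)$, with $X=\langle e_1,\dots,e_4\rangle$ and $X^\vee$ its dual. Then
\[V'\otimes W_4=(V\otimes W_4)\oplus(X\otimes W_4)\oplus(X^\vee\otimes W_4).\]
On the $V$-part I keep the polarization $V\otimes W_4^+$ already used for $\theta(h,g_2g_2';f_2)$, so the factor $\mathcal{S}(V(\A)^2)$ is carried over unchanged. On the $\mathbb{H}^4$-part I pass from the polarization $(\mathbb{H}^4\otimes W_4^+)\oplus(\mathbb{H}^4\otimes W_4^-)$ to the polarization $(X\otimes W_4)\oplus(X^\vee\otimes W_4)$. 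The latter is a mixed model, the $(\GL_4,\GL_4)$ type II model. Since $X\otimes W_4\simeq M_4$, with $\Sp_4\subset\GL_4$ acting on the right and $\GL_4=\GL(X)$ on the left, it matches $\omega_{4,4}$.

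Concretely, I define
\[\mathcal{F}(f_2\otimes\eta)=f_2\otimes\widehat{\eta},\]
where $\widehat{\eta}$ is the partial Fourier transform in the variables $X\otimes W_4^-$, which intertwines the mixed model with the Schrödinger model for $\mathbb{H}^4\otimes W_4^+$. Fourier transform is an isomorphism of Schwartz spaces, so $\mathcal{F}$ is an isomorphism.

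For equivariance I check generators. Elements $j(1,r)$ act in the mixed model by $\eta(x)\mapsto|\det r|^{-2}\eta(r^{-1}x)$, which matches $\omega_{4,4}(1,r)$ and the similitude normalization \eqref{eq_normalization_Weil_similitudes}. Elements $j(h,1)$ act only on the $V$-factor and on the multiplier $\nu'(h)I'_4\,{}^tr^{-1}I'_4$ block; I track this through the similitude twist $d(\nu')$ as in \cite[Lemma 3.2]{RobertsTheta}. For $g_2\in\Sp_4(\A)$, the mixed model is given by right translation $\eta(x)\mapsto\eta(xg_2)$ (up to the $|\det|^2$ factor, which is trivial on $\Sp_4$), so the equivariance for $\Sp_4$ is the standard compatibility of partial Fourier transform with change of polarization. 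This is routine but requires care with the normalizations of the $|\nu'|$-twists.

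For the theta identity, the Poisson summation formula applied to the partial Fourier transform over $(X\otimes W_4^-)(F)$ gives that the rational-point sum of the Schrödinger model equals the rational-point sum of the mixed model. The mixed-model theta function for $V'$ factors as
\[\Big(\sum_{V(F)^2}\omega(\cdot)f_2\Big)\cdot\Big(\sum_{x\in M_4(F)}\omega_{4,4}(g_2g_2',r)\eta(x)\Big).\]
The remaining issue is the $x=0$ term. Because $\eta$ is supported away from $0$, and because $\omega_{4,4}(g,r)\eta$ is again supported away from $0$ (the action is by invertible linear maps), this term vanishes. Hence the sum over $x\neq0$ equals the full sum, and the claimed identity follows.

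The main obstacle is the precise bookkeeping of the similitude twists, namely making $j(h,r)$ with $\nu'(h)\neq1$ act compatibly with $g_2'$ on both sides, together with the sign and $\psi$ conventions in the partial Fourier transform. I would first verify everything on $\Sp_4\times\SO_V\times\GL_4$ and then extend to similitudes via the $d(\nu)$-conjugation formula.
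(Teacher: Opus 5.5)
Your proposal is correct and follows essentially the same route as the paper. The paper also passes to the mixed model for the polarization $\bigl(V\otimes W_4^{+}\oplus X\otimes W_4\bigr)\oplus\bigl(V\otimes W_4^{-}\oplus X^\vee\otimes W_4\bigr)$ (written there via $W_4^\vee$, with $W_4^{\vee,-}\simeq W_4^+$ through the symplectic pairing) and takes $\mathcal{F}$ to be the corresponding partial Fourier transform. It then applies Poisson summation, identifies $X\otimes W_4$ with $M_4$, and matches the actions of $h$, $r$, $g_2$, where the discrepancy $|\nu'(h)|^{-6}$ versus $|\nu'(h)|^{-2}$ is absorbed by the factor $|\det(g_2g_2')|^{-2}$ in $\omega_{4,4}$; the support condition kills the $x=0$ term exactly as you say.

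One convention point to fix. In $\omega_{4,4}(g_2g_2',r)$ the symplectic element sits in the left (inverse) slot and $r$ in the right slot. So the identification $M_4\simeq X\otimes W_4$ must be a transpose twisted by the symplectic form, as in the paper's $g^{-1}Mr\mapsto (I_4'\,{}^tr^{-1}I_4')^{-1}w\otimes vg$, under which $r$ acts on $X$ through its contragredient. As written, your formula $|\det r|^{-2}\eta(r^{-1}x)$ is $\omega_{4,4}(r,1)$ in the paper's normalization, not $\omega_{4,4}(1,r)$.
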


\begin{proof}
     The factorization \begin{equation}\label{Polarization:RHS:Lemma}\left(V\otimes W_4^{\vee,-}\oplus  (\mathbb{H}^{\vee,+})^4 \otimes W_4^\vee \right)\oplus\left(V\otimes W_4^{\vee,+} \oplus (\mathbb{H}^{\vee,-})^4 \otimes W_4^\vee\right),\end{equation}
    defines a polarization of $(V\oplus \mathbb{H}^{\vee,4})\otimes W_4^\vee$. By \cite[Chapitre 2, I.7]{Moeglin:Vigneras:Waldspurger}, there exists an intertwining operator \[\mathcal{F}: \mathcal{S}(V(\A)^2)\otimes \mathcal{S}(W_4^\vee(\A)^4)\to \mathcal{S}(V'(\A)^2),\]
    given by the partial Fourier transform that relates the polarization \eqref{Polarization:RHS:Lemma} with the polarization $$\left(V'\otimes W_4^+\right)\oplus\left(V' \otimes W_4^-\right).$$ By Poisson summation formula, we then have that, for $f_2 \in \mathcal{S}(V(\A)^2)$ and $\eta \in \mathcal{S}(W_4^\vee(\A)^4)$,    \begin{equation}\label{Mixed:Model:Lemma:Comparison}\theta(j(h,r), g_2, \mathcal{F}(f_2\otimes \eta)) = \sum_{y\in V\otimes W_4^{\vee,-}}\sum_{x\in (\mathbb{H}^{\vee,+})^4 \otimes W_4^\vee}\omega(j(h,r),g_2) (f_2 \otimes \eta)(y,x),\end{equation}
    for any $g_2 \in \GSp_4(\A)$ and $(h,r) \in \GSO_V(\A) \times \GL_4(\A)$ such that $\nu(g_2) = \nu'(h)$. We now compare $\theta(j(h,r), g_2, \mathcal{F}(f_2\otimes \eta)) $ with
    \begin{align}\theta(h,g_2;f_2)\cdot \!\!\sum_{\substack{x\in M_4(F)\\x\neq 0}}\omega_{4,4}(g_2,r)\eta(x)  &= | \nu'(h)|_\A^{-4} |{\rm det}(r)|_\A^2\sum_{y\in V\otimes W_4^+}  \sum_{\substack{x\in M_4(F)\\x\neq 0}}  \omega(h,g_2) f_2(y)  \eta(g_2^{-1}xr).  \label{Lemma:Comparison:2} 
    \end{align} 
 We fix an isomorphism\footnote{Our choice is in line with the one of Kudla for a similar seesaw dual pair described in \cite[(2.22) \& (2.23)]{SeeSawKudla}.} $$M_{4}(F) \simeq  (\mathbb{H}^{\vee,+})^4 \otimes W_4^\vee,$$  such that if $M \mapsto w \otimes v$, with $w \in (\mathbb{H}^{\vee,+})^4$ and $v \in W_4^\vee$, then $$g^{-1} M r \mapsto (r,g) \cdot  (w \, \otimes \, v)  :=(I_4'{}^tr^{-1}I_4')^{-1}  w \, \otimes \, v  g .$$    
Via this identification, we can write \eqref{Lemma:Comparison:2} as 
\[ | \nu'(h)|_\A^{-4} |{\rm det}(r)|_\A^2\sum_{y\in V\otimes W_4^+}  \sum_{\substack{x\in (\mathbb{H}^{+})^4 \otimes W_4^\vee \\x\neq 0}}  \omega(h,g_2) f_2(y) \eta\left((r,g_2)\cdot   x\right). \] 
The symplectic pairing gives a $\Sp_4(F)$-equivariant isomorphism $W_4 \simeq W_4^\vee$ which induces $W_4^+ \simeq W_4^{\vee,-}$. Using this, if we let $\eta \in \mathcal{S}(M_4(\A))$ be a Schwartz function with support outside $0$, the sums in  
    \eqref{Mixed:Model:Lemma:Comparison} and \eqref{Lemma:Comparison:2} agree up to verifying that the actions of $h$, $r$, and $g_2$ coincide. Recall that the action of $\omega(j(h,r),g_2)$ in  
    \eqref{Mixed:Model:Lemma:Comparison} depends on the polarization \eqref{Polarization:RHS:Lemma}. Since the group $\GL_4$ embeds via $j$ into the Levi component of the maximal parabolic of $\mathbf{GSO}_{V'}$ that stabilizes $(\mathbb{H}^{+})^4$, we deduce that it acts trivially on $V$ and
    \[\omega(j(1,r),1) \eta(x) = |\mathrm{det}(r)|_{\A}^{2}\eta((r,1) \cdot x ).\] Moreover, $h \in \mathbf{GSO}_V$ acts by left translation by $h^{-1}$ on $f_2$ and by multiplication by $|\nu'(h)|_{\A}^{-6}$ (because of \eqref{eq_normalization_Weil_similitudes}). Similarly, for the theta function $\theta(h,g_2;f_2)$ for the pair $(\mathbf{GSO}_V,\GSp_4)$, $h \in \mathbf{GSO}_V(\A)$ acts by left translation by $h^{-1}$ together with multiplication by $|\nu'(h)|_{\A}^{-2}$. These two actions coincide up to $|\nu'(h)|_{\A}^{-4}$, which, indeed,  appears as a multiplying factor in \eqref{Lemma:Comparison:2}. The comparison for the action of $\GSp_4(\A)$ follows in a similar way. 
\end{proof}

\begin{lemma}\label{Regularization:SW:Sp}
    When $V\simeq \mathbb{H}^2$, for any $f_2\otimes \eta\in \mathcal{S}(V(\A)^2)\otimes \mathcal{S}(M_4(\A))$, the function $\theta(j(h,r),g_2g_2',\mathcal{F}(f_2\otimes \eta))$ is absolutely convergent in the variable $g_2$. Furthermore,
    \[\int_{[\Sp_4]}\theta(j(h,r),g_2g_2',\mathcal{F}(\omega(z_4)f_2\otimes \eta))dg_2 = \int_{[\Sp_4]}\theta(j(h,r),g_2g_2',\tilde{\omega}(z_4)\mathcal{F}(f_2\otimes \eta))dg_2,\] 
    where $\tilde{\omega}(z_4)$ is the regularized operator for the dual reductive pair $(\mathbf{O}_{V'},\Sp_4)$ defined in \cite{IchinoSW}.
\end{lemma}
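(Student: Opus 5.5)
To prove Lemma \ref{Regularization:SW:Sp}, I would reduce both statements to structural facts about the quadratic space $V' = V\oplus\mathbb{H}^4$ and about the intertwining operator $\mathcal{F}$ of Proposition \ref{Product:Theta:Functions}. When $V\simeq\mathbb{H}^2$, the space $V'\simeq\mathbb{H}^6$ is split of dimension $12$. For the dual pair $(\mathbf{O}_{V'},\Sp_4)$ we have $m=12$ and $n=2$, so $m > 2n+2$. Weil's convergence criterion (the Kudla--Rallis range $m-r>n+1$, where $r$ is the Witt index) fails, since $r=6$. However, for integrating over $[\Sp_4]$ rather than over the orthogonal group, the relevant criterion is the symplectic one: the theta integral $\int_{[\Sp_4]}\theta(\cdot,g_2,\Phi)\,dg_2$ converges absolutely when $m>2n+2$. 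This is Weil's theorem applied on the symplectic side, as in the "second term range".

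So, first, I would invoke this: the integral over $[\Sp_4]$ of $\theta(j(h,r),g_2g_2',\Phi)$ converges absolutely for every $\Phi\in\mathcal{S}(V'(\A)^2)$, in particular for $\Phi=\mathcal{F}(f_2\otimes\eta)$. Concretely, I would bound $\theta$ via its Fourier expansion along the Siegel parabolic of $\Sp_4$. Using reduction theory, the growth on a Siegel set is governed by the constant term, which is of order $|a|^{m/2}$. Against the measure $|a|^{-(n+1)}$, this is integrable since $m/2>n+1$.

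Second, I would treat the regularization identity. Ichino's operator $\omega(z_4)$, from \cite[Lemma 1.3]{IchinoSW}, is the action of an element $z_4$ of the center of the universal enveloping algebra (or of a spherical Hecke algebra at a chosen place) of $\Sp_4$. The key point is that $\mathcal{F}$ is an intertwining operator for $\mathbf{GSO}_V\times\GL_4$ and also for $\Sp_4$, because it is a partial Fourier transform between two models of the same Weil representation of $\Sp(V'\otimes W_4)$ restricted to $\Sp_4$. Hence $\mathcal{F}(\omega(z_4)f_2\otimes\eta)=\omega'(z_4)\mathcal{F}(f_2\otimes\eta)$, where $\omega'$ is the $\Sp_4$-action on $\mathcal{S}(V'(\A)^2)$. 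Here I am using that $\Sp_4$ acts on $\eta$ through $\omega_{4,4}$ compatibly, via the embedding into the $(\mathbf{O}_{V'},\Sp_4)$ Weil representation.

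It then remains to compare $\omega'(z_4)$ with $\tilde\omega(z_4)$ for the pair $(\mathbf{O}_{V'},\Sp_4)$. Ichino's operators are defined via the same element $z_4$, normalized by its eigenvalues. By the absolute convergence from the first step, $\int_{[\Sp_4]}\theta(\cdot,g_2g_2',\omega'(z)\Phi)\,dg_2$ equals $\int_{[\Sp_4]}(z*\theta)(\cdot,g_2g_2',\Phi)\,dg_2$. So the two sides agree once one checks that Ichino's $\tilde\omega(z_4)$ is literally the action of that same element. If the normalizations differ, I would instead show that both sides are $\Sp_4(\A)$-invariant functionals proportional to the convergent integral, with the constants $c_4$ matching.

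The main obstacle is this identification of the regularizing element: it must be checked that the operator chosen for $(\mathbf{O}_V,\Sp_4)$ transports under $\mathcal{F}$ to one admissible for $(\mathbf{O}_{V'},\Sp_4)$. I would handle this by choosing $z_4$ at a single unramified finite place, where both Weil representations are explicit and $\mathcal{F}$ is a local Fourier transform.
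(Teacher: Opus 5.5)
There is a genuine gap, in your first step, and everything else rests on it. The integral over $[\Sp_4]$ of a theta function for $(\mathbf{O}_{V'},\Sp_4)$ with $\dim V'=12$ is \emph{not} absolutely convergent for general $\Phi\in\mathcal{S}(V'(\A)^2)$.

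Write $g_2=n\,m(a)\,k$ on a Siegel set and put $\Phi_k=\omega(k)\Phi$. The constant term along the Siegel parabolic contains $\omega(m(a))\Phi_k(0)=|\det a|^{6}\,\Phi_k(0)$. The invariant measure contributes only $\delta_P(m(a))^{-1}=|\det a|^{-3}$, so the integrand grows like $|\det a|^{3}$ in the cusp.

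Your exponent comparison is reversed. $|a|^{m/2}$ against $|a|^{-(n+1)}\,d^\times a$ is integrable at infinity only if $m/2<n+1$, i.e.\ $m<2n+2$. For $\SL_2$ this is the familiar $y^{m/4}$ against $dy/y^2$, convergent only for $m<4$. So integration over the symplectic group converges only for \emph{small} orthogonal spaces. Here $m=12>6$ is the singular range in which Theorem~\ref{SW:Sp} is formulated, which is exactly why regularizing operators appear in the statement at all. You also misapply the orthogonal-side criterion: $m-r=6>n+1=3$ does hold, but it concerns integration over $\mathbf{O}_{V'}$, not $\Sp_4$.

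Your second step fails independently. By Proposition~\ref{Product:Theta:Functions}, $\mathcal{F}$ intertwines the \emph{diagonal} $\Sp_4$-action on $\mathcal{S}(V(\A)^2)\otimes\mathcal{S}(M_4(\A))$, through both the $(\mathbf{O}_V,\Sp_4)$ Weil representation and $\omega_{4,4}$. So an operator $\omega(z_4)$ acting on the $f_2$ factor alone does not become the $(\mathbf{O}_{V'},\Sp_4)$-action of $z_4$ after applying $\mathcal{F}$.

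The missing idea is that the decay must come from the regularization on the small pair, carried through the seesaw factorization. By the proof of Ichino's Proposition~1.5, $\omega(z_4)\hat f_2$ vanishes on $w\in W_4(\A)^2$ of rank $<2$. From this one gets that $\theta(h,g_2g_2';\omega(z_4)f_2)$ for $(\mathbf{O}_{\mathbb{H}^2},\Sp_4)$ is rapidly decreasing in $g_2$.

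For $\eta$ supported away from $0$ (as in the setting where the lemma is used), Proposition~\ref{Product:Theta:Functions} writes $\theta(j(h,r),g_2g_2',\mathcal{F}(\omega(z_4)f_2\otimes\eta))$ as this function times $\sum_{x\neq0}\omega_{4,4}(g_2g_2',r)\eta(x)$. The second factor has only moderate growth in $g_2$, so the product is rapidly decreasing, and that gives the convergence.

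The identity of the two integrals is then proved as in Lemma~\ref{Factorization:Regularization}. The functional $f\otimes f'\mapsto\int_{[\Sp_4]}\theta(I_{12},g_2,\mathcal{F}(\omega(z_4)f\otimes f'))\,dg_2$ is well defined, extends by density, and is compared with the $\tilde\omega(z_4)$-regularized functional through Ichino's Lemma~1.9. Your fallback of comparing invariant functionals is close to this. But it only becomes available once the rapid decay has been obtained from $\omega(z_4)$, not from an unconditional convergence that does not hold.
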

\begin{proof}
    We begin by showing that $\theta(j(h,r),g_2g_2',\mathcal{F}(\omega(z_4)f_2\otimes \eta))$ is rapidly decreasing in $g_2\in \Sp_4(\A)$. 
   Firstly, we prove that  $\theta(h,g_2g_2';\omega(z_4)f_2)$ is rapidly decreasing in $g_2$, by mimicking the strategy used in  \cite[Proposition 1.5]{IchinoSW} and \cite[Proposition 5.3.1]{KudlaRallis}. By the proof of \cite[Proposition 1.5]{IchinoSW}, 
\begin{equation}\label{FirstVanishingConvergenceSp}\omega(z_4)\hat{f_2}(w) = 0,\textit{ for all }w\in W_4(\A)^2\textit{ with }\mathrm{rank}(w)<2,\end{equation}
 where $\hat{f_2}\in \mathcal{S}(W_4(\A)^2)$ is the partial Fourier transform of $f_2$ constructed in \cite[(5.2.4)]{KudlaRallis}; as shown in \cite[Proposition 1.5]{IchinoSW}, this fact implies that the theta function $\theta(h,g_2g_2';\omega(z_4)f_2)$ is rapidly decreasing in $h\in \mathbf{O}_{\mathbb{H}^2}(\A)$. 
 Then, identifying 
 $V(\A)^2 \simeq W_4(\A)^2$ as vector spaces,  \eqref{FirstVanishingConvergenceSp} implies that
   \[\omega(z_4)f_2(w') = 0,\textit{ for all }w'\in V(\A)^2\textit{ with }\mathrm{rank}(w')<2,\]
   thus, following again the proof of \cite[Proposition 1.5]{IchinoSW}, one has that $\theta(h,g_2g_2';\omega(z_4)f_2)$ is rapidly decreasing in $g_2$, proving our claim.  Since the theta function \[\sum_{\substack{x\in M_4(F)\\x\neq 0}}\omega_{4,4}(g_2g_2',r)\eta(x)\]
   has moderate growth in $g_2$, Proposition \ref{Product:Theta:Functions} and the rapid decay of $\theta(h,g_2g_2';\omega(z_4)f_2)$ in $g_2$ imply that \[\theta(j(h,r),g_2g_2',\mathcal{F}(\omega(z_4)f_2\otimes \eta))\]
 is rapidly decreasing in $g_2$. 
The rapid decay of these functions implies that the map 
   \begin{align*}
        \mathcal{S}(V(\A)^2)\otimes \mathcal{S}(M_4(\A))&\to \C,\\
       f\otimes f'&\mapsto \int_{[\Sp_4]}\theta(I_{12},g_2,\mathcal{F}(\omega(z_4)f\otimes f'))dg_2,
   \end{align*}
   is well-defined. The desired equality is then proved as in Lemma \ref{Factorization:Regularization}.
 \end{proof}

Proposition \ref{Product:Theta:Functions} and Lemma \ref{Regularization:SW:Sp} show that \eqref{intermediateseesawintegral} equals 
\begin{align*}
    \int_{Z_{\GSO_{V'}}(\A)\backslash [\mathbf{GSO}_V\times \GL_4]}   &|\nu'(h)|_\A^{-2z}|\det(r)|^{z}_{\A} \varphi(r) \cdot \\ &\left(\int_{[\SL_2]} \theta(h, g_1g_1',f_1) \Psi^w( g_1g_1' )dg_1\right) \left(\int_{[\Sp_4]} \theta(j(h,r), g_2g_2', \mathcal{F}(f_2\otimes \eta)) d g_2\right)dh\, d r,
\end{align*}
where we have omitted the regularization operators to simplify notation. 
\begin{remark}
    The inner integral over $[\SL_2]$ is exactly the one studied by Shimizu \cite{Shimizu:JL} and Waldspurger \cite{WaldsToricPeriods}. In particular, when $V$ is given by a non-split quaternion algebra $D$ over $F$, if $\sigma$ does not lie in the image of the Jacquet--Langlands correspondence from $D^\times$, then the integral above is zero.
\end{remark}
\noindent From now on, we restrict our attention to the case of the totally split quadratic space $\mathbb{H}^2$ and assume the following:
\begin{itemize}
    \item[(\mylabel{Spl}{\textbf{Spl}})] $\phi_{1/2} \in  \Pi(\mathbb{H}^2)$, \textit{i.e.} there exists $f^{\mathbb{H}^2} \in \mathcal{S}(\mathbb{H}^2(\A)^3)$ such that $ \lambda_{\mathbb{H}^2}(f^{\mathbb{H}^2})^{\sim} = \phi_{1/2}.$
\end{itemize} 
 
\begin{proposition}\label{FinalPropSct83}
Assume \eqref{Spl} and let $\eta \in \mathcal{S}(M_4(\A))$ be a Schwartz function with support outside $0$. If $ \lambda_{\mathbb{H}^2}(f_1 \otimes f_2)^{\sim} = \phi_{1/2},$ for $ f_1\in \mathcal{S}(\mathbb{H}^2(\A))$ and $f_2 \in \mathcal{S}(\mathbb{H}^2(\A)^2)$, we have, for ${\rm Re}(z)>>0$, the equality
\begin{align*}
    J(\tfrac{1}{2},z,\varphi,f^\Psi_{\phi_{1/2}}, \eta) &= \zeta^S_F(2)^2  \int_{Z_{\GSO_{\mathbb{H}^6}}(\A)\backslash [\mathbf{GSO}_{\mathbb{H}^2}\times \GL_4]}   \!\!\!\!\!\!\!\!|\nu'(h)|_\A^{-2z}|\det(r)|^{z}_{\A} \varphi(r) \cdot \\ & \left(\int_{[\SL_2]} \theta(h, g_1g_1',f_1) \Psi^w( g_1g_1' )dg_1\right) \left(\int_{[\Sp_4]} \theta(j(h,r), g_2g_2', \mathcal{F}(f_2\otimes \eta)) d g_2\right)dh\, d r. 
\end{align*}
 
\end{proposition}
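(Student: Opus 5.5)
Under \eqref{Spl}, the sum over $V$ in \eqref{after_GL4theta_SW1} collapses to the single term $V=\mathbb{H}^2$. The plan is to chain together the identities already established in \S\ref{Two:SeeSaw:Section}. First, since $\phi_{1/2}\in\Pi(\mathbb{H}^2)$, the projections ${\rm pr}_{\Pi(V)}\phi_{1/2}$ vanish for every $V\neq\mathbb{H}^2$, and they vanish for all incoherent collections, whose contribution is zero by Theorem \ref{Siegel-Weilnew}(2). So starting from Corollary \ref{CorPullbackZeta} with $\varphi$ replaced by $\varphi^z_\eta|\nu|^{-2z}$, Theorem \ref{Siegel-Weilnew}(1) and Lemma \ref{Reduction:To:SO} give
\[
J(\tfrac12,z,\varphi,f^\Psi_{\phi_{1/2}},\eta)=\zeta_F^S(2)^2\int_{Z_{\GSp_6}(\A)\backslash[\GL_2\boxtimes\GSp_4]}\int_{[\SO_{\mathbb{H}^2}]}\theta(hh_1,\iota(g_1,g_2);f)\Psi^w(g_1)\varphi^z_\eta(g_2)|\nu(g_2)|_\A^{-2z}\,dh\,dg_1\,dg_2,
\]
with $f=f_1\otimes f_2$ and the theta integral regularized as in \cite{IchinoSW}. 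Here I take ${\rm Re}(z)\gg0$ so that $\varphi^z_\eta$ is a cusp form and $|\det|^{-z}$ is absorbed into the central character considerations. The constant $2$ from Siegel--Weil cancels against the $\tfrac12$ of Lemma \ref{Reduction:To:SO}, which is why only $\zeta_F^S(2)^2$ remains.

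Next I apply Lemma \ref{Factorization:Regularization}, which is legitimate because the section is a pure tensor $f_1\otimes f_2$. This factors the theta kernel as $\theta(\cdot,g_1;f_1)\theta(\cdot,g_2;f_2)$. I then run the seesaw argument of \cite[Lemma 5.1]{KudlaHarrisJacquet}: write $\GL_2\boxtimes\GSp_4$ as $(\SL_2\times\Sp_4)\cdot\{(g_1',g_2')\}$ with $\nu(g_i')=\nu'(h)$, and exchange the order of integration so that $h$ runs over $Z_{\GSO}(\A)\backslash[\GSO_{\mathbb{H}^2}]$. Substituting \eqref{Theta:Lift:Expression} for $\varphi^z_\eta$ then yields \eqref{intermediateseesawintegral}.

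Since $\eta$ is supported away from $0$, Proposition \ref{Product:Theta:Functions} identifies the product of $\theta(h,g_2g_2';f_2)$ with the nonzero-$x$ sum of $\omega_{4,4}$ as the single theta function $\theta(j(h,r),g_2g_2',\mathcal{F}(f_2\otimes\eta))$ for $(\mathbf{O}_{V'},\Sp_4)$, where $V'=\mathbb{H}^2\oplus\mathbb{H}^4=\mathbb{H}^6$. Lemma \ref{Regularization:SW:Sp} then shows that the $[\Sp_4]$-integral converges and is compatible with the regularization operators. After this, Fubini lets me pull the $[\GL_4]$-integral against $\varphi(r)|\det r|^z$ out together with the $h$-integral, combining them over $Z_{\GSO_{\mathbb{H}^6}}(\A)\backslash[\GSO_{\mathbb{H}^2}\times\GL_4]$ via $j$. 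The $[\SL_2]$-integral against $\Psi^w$ separates off as its own factor.

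I expect the main obstacle to be justifying these interchanges of integration and the compatible regularizations, in particular the $(\mathbf{O}_{V'},\Sp_4)$ regularization and the convergence in $r$ for ${\rm Re}(z)\gg0$. For this I would rely on the rapid decay in $g_2$ from Lemma \ref{Regularization:SW:Sp}, the rapid decay of $\varphi$, and the absolute convergence of Watanabe's integral for large ${\rm Re}(z)$. A second point to check is the bookkeeping of the centers: the quotient by $Z_{\GSp_6}(\A)$ must become the quotient by $Z_{\GSO_{\mathbb{H}^6}}(\A)$ under $j$. I would verify this using $\omega_\pi\omega_\sigma=1$ and the central characters $\omega_\pi|\cdot|^{4z}$ of $\Theta^z(\pi)$, where the factor $|\nu'(h)|^{-2z}$ compensates exactly.
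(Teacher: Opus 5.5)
Your proposal is correct and follows the paper's own route. You use \eqref{Spl} to reduce \eqref{after_GL4theta_SW1} to the single term $V=\mathbb{H}^2$, then apply the first seesaw (Lemma \ref{Factorization:Regularization} together with \cite[Lemma 5.1]{KudlaHarrisJacquet}) to reach \eqref{intermediateseesawintegral}, and finally the second seesaw via Proposition \ref{Product:Theta:Functions} and Lemma \ref{Regularization:SW:Sp}; your remarks on the cancellation of the constants $2$ and $\tfrac12$ and on matching the centres $Z_{\GSp_6}$ and $Z_{\GSO_{\mathbb{H}^6}}$ simply spell out bookkeeping the paper leaves implicit.
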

\begin{proof}
   Thanks to \eqref{Spl}, \eqref{after_GL4theta_SW1} reads as \small
\begin{align*}
  J(\tfrac{1}{2},z,\varphi,f^\Psi_{\phi_{1/2}}, \eta)=\zeta^S_F(2)^2  \int_{Z_{\GSp_6}(\A)\backslash [\GL_2\boxtimes \GSp_4]} \int_{[\SO_{\mathbb{H}^2}]}\!\!\!\!\! \theta(h h_1, \iota( g_1  , g_2) ; f^{\mathbb{H}^2}) \Psi^w( g_1 ) \varphi^z_\eta(g_2) |\nu(g_2)|_\A^{-2z}  \, d h \, dg_1\,dg_2,
\end{align*}
\normalsize
with $f^{\mathbb{H}^2} = f_1 \otimes f_2$. The result then follows from applying the two seesaw identities above.
\end{proof}

\begin{remark}\leavevmode
    \begin{enumerate}
    \item If $\sigma$ and $\pi$ are unramified everywhere, then \eqref{Spl} holds.
    \item If $\sigma$ does not lie in the image of the Jacquet--Langlands correspondence from any non-split quaternion algebra, then one can show that $J(\tfrac{1}{2},z,\varphi,f^\Psi_{\phi_{1/2}}, \eta)$ equals $J(\tfrac{1}{2},z,\varphi,f^\Psi_{\phi_{1/2}'}, \eta)$, with $\phi_{1/2}'$ satisfying \eqref{Spl}.
        \end{enumerate}
        \end{remark}
\begin{remark}\label{remark_on_spl} We impose the hypothesis \eqref{Spl} for the following reason. In order to continue with the computation we relate the integral of the theta function $\theta(-, -, \mathcal{F}(f_2\otimes \eta))$ over $\Sp_4$ to an orthogonal Eisenstein series on $\GSO_{\mathbb{H}^6}$ via a Siegel--Weil type formula and proceed as in \S \ref{Section:Period:Central:Value}. 
When $V\not\simeq \mathbb{H}^2$, this Siegel--Weil type formula does not seem to be available in the literature. Despite this, such a formula should not be essential. Instead, one should be able to replace its use by an explicit relation between the integral over $\GL_4$ of the cusp form $\varphi$ with the theta function $\theta(-, -, \mathcal{F}(f_2\otimes \eta))$ for the pair $(\GSp_4,\GSO_{V'})$ and the Jacquet--Langlands lift of $\varphi$ to the inner form of $\GL_4$ associated with $V$. We hope to return to this problem in future work. 
\end{remark}

\subsection{A Siegel-Weil formula for $(\GSp_4, \GSO_{\mathbb{H}^6})$ in the first term range}

We recall a Siegel-Weil formula proved by Yamana in \cite[Theorem 2.2 (i)]{YamanaSingular}. 

Recall that $Q_{(6,0)} = M_{(6,0)}N_{(6,0)}$ denotes the Siegel parabolic subgroup of $\GSO_{\mathbb{H}^6}$, with $ M_{(6,0)} \simeq \GL_6 \times \GL_1$. For any $q = m n \in Q_{(6,0)}$, we have $$\delta_{Q_{(6,0)}}(q)=\frac{|\det(A)|^5}{|\nu'|^{15}},\text{ where $m = (A , \nu') \in \GL_6 \times \GL_1$.}$$  We consider the normalized induced representation $I_{Q_{(6,0)}}(s,\mathbf{1}) := \mathrm{Ind}_{Q_{(6,0)}(\A)}^{\GSO_{\mathbb{H}^6}(\A)}(\delta_{Q_{(6,0)}}^{\frac{s}{5}})$. Given a standard holomorphic section $\Phi_{s}\in I_{Q_{(6,0)}}(s,\mathbf{1}) $, define the Eisenstein series \[E_{Q_{(6,0)}}(\tilde{h},\Phi_{s}) := \sum_{\gamma\in Q_{(6,0)}(F)\setminus \GSO_{\mathbb{H}^6}(F)}\Phi_{s}(\gamma \tilde{h}).\]
It converges absolutely for $\mathrm{Re}(s)>>0$ and admits a meromorphic continuation to the whole complex plane. According to \cite[Theorem 2.1]{YamanaSingular}, it is holomorphic at $s = -\tfrac{1}{2}$. For the following result, denote by $\mathcal{S}(\mathbb{H}^6(\A)^2)\to \mathcal{S}(W_4(\A)^6)$, $f\mapsto \hat{f}$, the partial Fourier transform associated with the change of polarization. 
\begin{theorem}\label{SW:Sp}
    For every $\tilde{h} \in \GSO_{\mathbb{H}^6}(\A)$ and $g_2' \in \GSp_4(\A)$ such that $\nu'(\tilde{h}) = \nu(g_2')$, we have
    \[ \int_{[\Sp_4]}\theta(\tilde{h},g_2g_2',\mathcal{F}(f_2\otimes \eta))dg_2 = \frac{1}{2}\cdot E_{Q_{(6,0)}}\left(\tilde{h},\Phi_{-\frac{1}{2}}\right),\]
    where, if $\tilde{h} = mnk \in L_{(6,0)}(\A)N_{(6,0)}(\A)K_{\GSO_{\mathbb{H}^6}(\A)}$ with $m = (A,\nu')$, we let $$\Phi_{s}(\tilde{h}) := \omega(\tilde{h},g_2')\widehat{ \mathcal{F}}(f_2\otimes \eta)(0)\cdot \left|\tfrac{\mathrm{det}(A)}{\nu'}\right|^{1/2+s},$$
    with $\widehat{ \mathcal{F}}(f_2\otimes \eta)$ denoting the Fourier transform of $\mathcal{F}(f_2\otimes \eta)$.
\end{theorem}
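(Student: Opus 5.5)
This is Yamana's Siegel--Weil formula \cite[Theorem 2.2 (i)]{YamanaSingular} for the dual pair $(\mathbf{O}_{\mathbb{H}^6},\Sp_4)$, transported to similitude groups in the same way Theorem \ref{Siegel-Weilnew} and Theorem \ref{SWcentralSplit} were obtained from \cite{KudlaRallis} via \cite{KudlaHarrisJacquet}. The pair $(\mathbf{O}_{\mathbb{H}^6},\Sp_4)$ lies in the first term range: $\dim V' = 12 > 2\cdot 4 + 2$, but the roles are swapped, since we integrate over the symplectic side. The integral over $[\Sp_4]$ therefore converges absolutely by Weil's criterion, because $\dim V' = 12 > 4+1$ is the relevant condition for $\Sp_4$ acting on theta functions attached to a split orthogonal space. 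The resulting Eisenstein series is the degenerate Siegel series on the orthogonal side, evaluated at the point where the section $\Phi_s$ is the Rallis section $\omega(\tilde h)\hat f(0)$. With the normalization $\delta_{Q_{(6,0)}}^{s/5}$, the value $\rho_{Q}$-shifted by $\tfrac12(\dim W_4 -\tfrac{\dim V' -2}{2})$ gives $s=-\tfrac12$. Yamana's theorem, restricted to $\tilde h\in\SO_{\mathbb{H}^6}(\A)$ and $g_2'=1$, thus reads
\[
\int_{[\Sp_4]}\theta(\tilde h, g_2, f)\,dg_2=\tfrac12\,E_{Q_{(6,0)}}(\tilde h,\Phi_{-1/2})
\]
for all $f\in\mathcal{S}(\mathbb{H}^6(\A)^2)$. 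The factor $\tfrac12$ is the usual ratio of Tamagawa numbers ($\tau(\Sp_4)=1$ against the normalization of the orthogonal Eisenstein series), and I would take the constant directly from \emph{loc.\ cit.} We apply this with $f=\mathcal{F}(f_2\otimes\eta)$. I will check that $\Phi_s$ as defined is a well-defined element of $I_{Q_{(6,0)}}(s,\mathbf{1})$, i.e. that $\omega(mn,1)\hat f(0)=|\det A/\nu'|^{\dim W_4/2}$ up to the similitude normalization \eqref{eq_normalization_Weil_similitudes}, exactly as in Lemma \ref{lemma_constructing_sectionsforR}.

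The extension to similitudes follows the proof of Theorem \ref{SWcentralSplit}. First I write $\tilde h = \tilde h_F\, z\, h_0\, h_f$ using $\GSO_{\mathbb{H}^6}(\A)=\GSO_{\mathbb{H}^6}(F)Z(\A)\SO_{\mathbb{H}^6}(\A)\GSO_{\mathbb{H}^6}(\A_f)$. The real similitudes pose no issue, because $\mathbb{H}^6$ is split and so $\nu'$ is surjective at every place. Both sides are left $\GSO_{\mathbb{H}^6}(F)$-invariant. For the theta side this requires choosing $g_2'$ compatibly, and since the theta integral over $[\Sp_4]$ is independent of that choice, the invariance follows as in \cite[\S 4]{KudlaHarrisJacquet}. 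The central character of each side is trivial after the normalization. It remains to check equivariance under $h_f$. On the theta side, $\theta(\tilde h h_f, g_2 g_2' g_f', f)=\theta(\tilde h, g_2g_2', \omega(h_f,g_f')f)$ with $\nu(g_f')=\nu'(h_f)$. On the Eisenstein side, I need the map $f\mapsto \Phi_s(f)$ to intertwine $\omega(h_f,g_f')$ with right translation by $h_f$. This is the computation already carried out in Lemma \ref{lemma_constructing_sectionsforR} and Theorem \ref{SWcentralSplit}: take $g_f'=d(\nu'(h_f))$ and compare the factors $|\nu'|^{-n\dim V'/4}$, which cancel against the $|\nu'|^{-1/2-s}$ in $\Phi_s$ at $s=-1/2$.

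I expect the main obstacle to be bookkeeping the normalizations so that no stray power of $|\nu'(\tilde h)|$ survives. Two sources of such factors need to be matched: the partial Fourier transform $\mathcal{F}$ coming from the mixed polarization \eqref{Polarization:RHS:Lemma}, and the transform $\widehat{\ }$ to the Schrödinger model attached to $V'\otimes W_4^{\pm}$. Both are $\GSO_{\mathbb{H}^6}$-intertwiners only up to characters of $\nu'$. I would first verify the identity on $\SO_{\mathbb{H}^6}(\A)$, where it is literally Yamana's theorem. Then I would show that both sides transform identically under $h_f=\mathrm{diag}(1_6,\nu I_6)$, which suffices because this element together with $\SO$ and the center generates $\GSO_{\mathbb{H}^6}(\A)$ modulo rational points.
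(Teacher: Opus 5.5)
Your overall route is the paper's: its proof is just the citation of \cite[Theorem 2.2 (i)]{YamanaSingular} (case 3, $m=4$, $r=2$, $n=6$). Your reduction of the similitude statement to the isometry one, via $\GSO_{\mathbb{H}^6}(\A)=\GSO_{\mathbb{H}^6}(F)Z(\A)\SO_{\mathbb{H}^6}(\A)\GSO_{\mathbb{H}^6}(\A_f)$ and the equivariance of $f\mapsto\omega(\cdot,g_2')\hat f(0)$, is a reasonable supplement that the paper leaves implicit. One small correction there: at $s=-\tfrac12$ the factor $|\det(A)/\nu'|^{1/2+s}$ equals $1$, so nothing cancels against it; what you must check is that $\omega(\tilde h,g_2')\hat f(0)$ transforms by $\delta_{Q_{(6,0)}}^{2/5}=|\det(A)|^{2}|\nu'|^{-6}$.

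The genuine problem is your convergence claim, which is false and in fact contradicts your own first-term-range observation. Weil's criterion for integrating over the symplectic member of $(\mathbf{O}_{V'},\Sp_4)$ requires the orthogonal space to be \emph{small}, namely $\dim V'<\dim W_4+2=6$. Along the Siegel cusp, the constant term of $\theta(\tilde h,\cdot\,,f)$ contains $\omega(m(a))f(0)=|\det a|^{6}f(0)$, while $\delta_{B_{\Sp_4}}(m(a))^{-1}=|a_1|^{-4}|a_2|^{-2}$. So the integrand grows like $|a_1|^{2}|a_2|^{4}$ on a Siegel domain. More conceptually, $s_0=\tfrac12\dim W_4-\tfrac14(\dim V'-2)\le 0$ means $\dim V'\ge 2\dim W_4+2$, which is incompatible with $\dim V'<\dim W_4+2$. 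Since $\Sp_4$ is split, Weil's criterion can never hold on the symplectic side in the first term range. Hence $\int_{[\Sp_4]}\theta(\tilde h,g_2,f)\,dg_2$ does not converge for general $f\in\mathcal{S}(\mathbb{H}^6(\A)^2)$, and the identity you write ``for all $f$'' cannot be read as an identity of absolutely convergent integrals.

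What is missing is the regularization, which the paper supplies in Lemma \ref{Regularization:SW:Sp} just before the theorem; the operators are afterwards suppressed from the notation. There $f_2$ is replaced by $\omega(z_4)f_2$ and $0\notin\mathrm{supp}(\eta)$. By Proposition \ref{Product:Theta:Functions}, at $\tilde h=j(h,r)$ the theta function factors as $\theta(h,g_2g_2';\omega(z_4)f_2)$, which is rapidly decreasing in $g_2$, times the $\GL_4$ theta sum over $x\neq 0$, which has moderate growth. So for these particular data the integral over $[\Sp_4]$ converges absolutely. It agrees with $\int_{[\Sp_4]}\theta(j(h,r),g_2g_2',\tilde\omega(z_4)\mathcal{F}(f_2\otimes\eta))\,dg_2$, the integral regularized by Ichino's operator for $(\mathbf{O}_{V'},\Sp_4)$. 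It is to this regularized integral that Yamana's identity can be applied, together with the holomorphy of $E_{Q_{(6,0)}}$ at $s=-\tfrac12$ from \cite[Theorem 2.1]{YamanaSingular}. Your similitude manipulations (independence of the choice of $g_2'$, left $\GSO_{\mathbb{H}^6}(F)$-invariance, $h_f$-equivariance) also tacitly use absolute convergence. They must therefore be carried out for the regularized integral, not for arbitrary $f$.
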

\begin{proof}
This is \cite[Theorem 2.2 (i)]{YamanaSingular} in the case $3$ with $m = 4$, $r = 2$, and $n = 6$ in the notation of \textit{loc.cit.}.
\end{proof}

\subsection{The Shimizu--Jacquet--Langlands correspondence}

Let $D$ be a quaternion algebra defined over $F$, which, equipped with the reduced norm $\rm{Nm}_D$, is a quadratic space of dimension $4$. Recall from \S\ref{Exceptional:Isom:Section} that we have an exceptional isomorphism
\begin{align*}
    \GSO_{D}&\to D^{\times}\times D^{\times}/\{(z,z),\;z\in Z_{D^{\times}}\},\label{ExceptIsom}\\
    h&\mapsto (h_1,h_2),
\end{align*}
This identifies any automorphic representation of $\GSO_{D}$ with $\Pi_1\boxtimes\Pi_2$, where  $\Pi_1$ and $\Pi_2$ are automorphic representations of $D^{\times}$ satisfying $\omega_{\Pi_1} = \omega_{\Pi_2}^{-1}$. Accordingly, if $\Psi_1$ and $\Psi_2$ are forms in the space of $\Pi_1$ and $\Pi_2$, we write \[(\Psi_1\boxtimes \Psi_2)(h) := \Psi_1(h_1)\Psi_2(h_2),\]
where $h$ corresponds to $(h_1,h_2)$ under the above isomorphism.

\begin{theorem}[{\cite[Th\'eor\`eme 1]{WaldsToricPeriods}, \cite[Theorem 2]{Shimizu:JL}}]\label{SJL}
    Let $\sigma$ be any cuspidal automorphic representation of $\GL_2$. For any cusp form $\Psi\in \sigma$ and any $f\in \mathcal{S}(D(\A))$, we have
    \[\int_{[\SL_2]} \theta(h, g_1g_1',f) \Psi( g_1g_1' )dg_1 = (\overline{\Psi}'\boxtimes\Psi'')(h),\]
    where $\Psi',\Psi''$ are forms in $ \sigma^D$, with $\sigma^D$ denoting the inverse of the Jacquet--Langlands lift of $\sigma$. If $\sigma$ is not in the image of the Jacquet--Langlands correspondence, the above integral is zero. 
\end{theorem}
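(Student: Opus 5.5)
The plan is to reduce the statement to Shimizu's explicit description of the theta lift for the similitude dual pair $(\GL_2, \GSO_D)$, transported through the exceptional isomorphism of \S\ref{Exceptional:Isom:Section}. Throughout, write
\[
\theta(f,\Psi)(h) := \int_{[\SL_2]} \theta(h, g_1g_1', f)\, \Psi(g_1g_1')\, dg_1 .
\]

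\textbf{Step 1: well-definedness and automorphy.} First I check that $\theta(f,\Psi)$ does not depend on the choice of $g_1'$ with $\det(g_1') = \nu'(h)$. Then, using \eqref{eq_normalization_Weil_similitudes}, I check that $\theta(f,\Psi)$ is left $\GSO_D(F)$-invariant and cuspidal, since $\Psi$ is cuspidal and $D$ has dimension $4$. Its central behaviour is governed by $\omega_\sigma$. Pulling back along $\rho: D^\times \times D^\times \to \GSO_D$ therefore produces an automorphic form on $D^\times(\A)\times D^\times(\A)$, invariant under $\{(z,z)\}$.

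\textbf{Step 2: identify the representation.} By Shimizu (see also Waldspurger), the global theta lift of $\sigma$ to $\GSO_D$ is nonzero exactly when $\sigma$ lies in the image of the Jacquet--Langlands correspondence from $D^\times$. When nonzero, it is the irreducible representation $\overline{\sigma^D}\boxtimes\sigma^D$. This gives the vanishing assertion at once. When $\sigma$ does come from $D^\times$, it shows that $\theta(f,\Psi)$ lies in $\overline{\sigma^D}\boxtimes\sigma^D$.

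\textbf{Step 3: the product form, which is the main obstacle.} It remains to see that $\theta(f,\Psi)$ is a genuine product $\overline{\Psi}'(h_1)\Psi''(h_2)$, and not merely an element of the tensor product. Here I would invoke Shimizu's Theorem 2 in its explicit form, rather than an abstract Howe-duality argument. Shimizu computes the lift by unfolding $\theta$ against the Whittaker/Fourier expansion of $\Psi$ and applying the Poisson/Fourier transform in the mixed model on $D(\A)$. The outcome expresses the lift as a product of an explicit form in $\overline{\sigma^D}$, built from $\Psi$ and $f$, with one in $\sigma^D$. I would first verify this for factorizable $f = \otimes_v f_v$ and factorizable $\Psi$, using the local Shimizu correspondence $\mathcal{S}(D_v) \to \overline{\sigma^D_v}\otimes\sigma^D_v$, which is realised by matrix coefficients. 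I would then pass to general $f$ by the same explicit formula, which is linear in $f$.

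The delicate point I anticipate is exactly this last passage. Linearity in $f$ alone only yields a finite sum of pure tensors. So I would check directly in Shimizu's formula that the $f$-dependence enters through a single convolution factor on one side, by moving $f$ into one variable via the $D^\times\times D^\times$-equivariance $h\mapsto(h_1,h_2)$. That would keep the result a single product $\overline{\Psi}'\boxtimes\Psi''$ rather than a sum.
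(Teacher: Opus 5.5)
Your Steps 1--2 are fine and match what the paper does. The paper gives no argument of its own; it quotes Waldspurger and Shimizu both for the vanishing statement and for $\theta(f,\Psi)\in\overline{\sigma^D}\boxtimes\sigma^D$.

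Step 3, however, cannot be carried out. The claim it aims at is that $\theta(f,\Psi)$ is a \emph{single} product $\overline{\Psi}'(h_1)\Psi''(h_2)$ for every $f$, and this is false whenever the lift is nonzero, already for factorizable $f$ and $\Psi$.

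Here is why. For $h_0\in\SO_D(\A)$ one has $\theta(hh_0,g,f)=\theta(h,g,\omega(h_0)f)$. Hence the image of $f\mapsto\theta(f,\Psi)$ is a linear space stable under right translation $R$ by $\rho(x,y)$ with $\mathrm{Nm}(x)=\mathrm{Nm}(y)=1$.
\begin{itemize}
\item Suppose every element of the image were a product, and take a nonzero one, $P=\overline{\Psi}'\boxtimes\Psi''=\theta(f,\Psi)$ with $f=f_v\otimes f^v$.
\item Choose $x,y$ supported at a single place $v$ where $D$ splits, so they lie in $\SL_2(F_v)$.
\item $\SL_2(F_v)$ is perfect, and the infinite-dimensional representation $\sigma_v$ has no $\SL_2(F_v)$-fixed vectors. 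So no line through $\overline{\Psi}'$ or $\Psi''$ is $\SL_2(F_v)$-stable, and you can pick $x,y$ with $\overline{\Psi}'(\cdot\,x)\notin\C\,\overline{\Psi}'$ and $\Psi''(\cdot\,y)\notin\C\,\Psi''$.
\item Then $P+R(\rho(x,y))P=\theta\big((f_v+\omega(\rho(x,y))f_v)\otimes f^v,\Psi\big)$ is the lift of a factorizable Schwartz function, yet it is a tensor of rank two, not a product.
\end{itemize}
So your planned factorizable case does not produce pure tensors either. The same argument rules out any formula in which $f$ enters only one factor, because stability under $\rho(1,y)$ would contradict it; ``moving $f$ into one variable'' cannot help. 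Locally, $\theta_v(f_v,\Psi_v)$ is a general finite-rank element of $\overline{\sigma^D_v}\otimes\sigma^D_v$, not a rank-one one.

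The statement has to be read with $\overline{\Psi}'\boxtimes\Psi''$ denoting a vector of the space $\overline{\sigma^D}\boxtimes\sigma^D$, that is, $\theta(f,\Psi)=\sum_i\overline{\Psi}'_i\boxtimes\Psi''_i$ (a finite sum for $K$-finite data). This is exactly what your Steps 1--2 already deliver. It is also all the paper uses:
\begin{itemize}
\item the section $q_{\overline{\Psi}'\boxtimes\Psi''}(\Phi_s)$ of Proposition~\ref{Proposition:Pullback:Orthogonal}, the forms $\Lambda'_{r,a,s}\boxtimes\Lambda''_{r,a,s}$, and the unfolding in Proposition~\ref{Unfolding:Spherical:Period} are all linear in this vector;
\item the product $\Lambda'(I_2)\Lambda''(g_2)$ simply becomes $\sum_i\Lambda'_i(I_2)\Lambda''_i(g_2)$, which is still a cusp form in $\sigma$, and that is all Theorem~\ref{Main:Theorem:Spherical:Period} asserts.
\end{itemize}
So drop Step 3 and replace it by this linear reading. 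Nothing downstream requires a genuine product.
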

\begin{remark}
   Recall that, if $D \simeq \mathbb{H}^2$, we have $\sigma^D = \sigma$. 
\end{remark}

For the next Proposition, we consider the quotient $\GL_4\times \GL_1$ by $\GL_1$ via the embedding 
\begin{align}
    \label{Embedding:GL1}\GL_1&\to \GL_4\times \GL_1,\\
    a&\mapsto (aI_4,a^2).\nonumber
\end{align}
\begin{proposition}\label{Final:Proposition:SeeSaw}
   Assume \eqref{Spl} with $ \lambda_{\mathbb{H}^2}(f_1 \otimes f_2)^{\sim} = \phi_{1/2},$ for $ f_1\in \mathcal{S}(\mathbb{H}^2(\A))$ and $f_2 \in \mathcal{S}(\mathbb{H}^2(\A)^2)$. Let $\eta \in \mathcal{S}(M_4(\A))$ have support outside $0$. Then, for ${\rm Re}(z)>>0$, we have that $J(\tfrac{1}{2},z,\varphi,f^\Psi_{\phi_{1/2}}, \eta)$ is equal to
\[\frac{\zeta^S_F(2)^2}{4}\int_{[ \GL_1 \backslash (\GL_4 \times \GL_1)]}\int_{[ \mathbf{SO}_{\mathbb{H}^2}]}|a|_\A^{-2z}|\det(r)|^{z}_{\A}\varphi(r)(\overline{\Psi}'\boxtimes\Psi'')(h t_a )\mathrm{E}_{Q_{(6,0)}}(j(h t_a ,r),\Phi_{-\frac{1}{2}})dhdrda,\]
where $t_a := \left(\begin{smallmatrix}I_2& \\ &aI_2\end{smallmatrix}\right)$, and $\overline{\Psi}'\boxtimes\Psi''$ corresponds, via Theorem \ref{SJL}, to the pair $(\Psi^{w},f_1)\in \sigma\times\mathcal{S}(\mathbb{H}^2(\A))$. 
\end{proposition}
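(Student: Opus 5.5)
The plan is to start from Proposition \ref{FinalPropSct83} and substitute into it the two identities just recorded. First, Theorem \ref{SJL} applied to the pair $(\Psi^w,f_1)$ with $D\simeq\mathbb{H}^2$ converts the inner $[\SL_2]$-integral into $(\overline{\Psi}'\boxtimes\Psi'')(h)$. Second, Theorem \ref{SW:Sp} with $\tilde h=j(h,r)$ converts the $[\Sp_4]$-integral into $\tfrac12 E_{Q_{(6,0)}}(j(h,r),\Phi_{-1/2})$. This is legitimate because $\nu'(j(h,r))=\nu'(h)=\nu(g_2')$, and because, by Lemma \ref{Regularization:SW:Sp}, the regularized integral agrees with the one to which Yamana's formula applies; the support condition on $\eta$ is needed for Proposition \ref{Product:Theta:Functions}. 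After these substitutions, $J(\tfrac12,z,\varphi,f^\Psi_{\phi_{1/2}},\eta)$ equals $\tfrac{\zeta_F^S(2)^2}{2}$ times the integral over $Z_{\GSO_{\mathbb{H}^6}}(\A)\backslash[\GSO_{\mathbb{H}^2}\times\GL_4]$ of
\[|\nu'(h)|_\A^{-2z}|\det(r)|^z_\A\,\varphi(r)\,(\overline{\Psi}'\boxtimes\Psi'')(h)\,E_{Q_{(6,0)}}(j(h,r),\Phi_{-1/2}).\]

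Next I would rewrite the domain of integration. Since $\nu'$ is surjective on $\GSO_{\mathbb{H}^2}$, with section $a\mapsto t_a$ (indeed $\nu'(t_a)=a$), we have $\GSO_{\mathbb{H}^2}=\SO_{\mathbb{H}^2}\rtimes\{t_a\}$. The centre $Z_{\GSO_{\mathbb{H}^6}}$ meets $j(\GSO_{\mathbb{H}^2}\times\GL_4)$ in the elements $j(bI_4,b)$, and these have multiplier $b^2$. Writing $h=h_0t_a$, the quotient $Z\backslash(\GSO_{\mathbb{H}^2}\times\GL_4)$ therefore fibres over $\GL_1\backslash(\GL_4\times\GL_1)$, embedded via \eqref{Embedding:GL1}, with fibre $\SO_{\mathbb{H}^2}$. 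I will check that the integrand is invariant under this $\GL_1$. The factor $\varphi(br)$ contributes $\omega_\pi(b)$, the factor $\overline{\Psi}'\boxtimes\Psi''$ at the central element contributes $\omega_\sigma(b)$ after the identification of Theorem \ref{SJL}, and these cancel since $\omega_\pi\omega_\sigma=1$. The power $|b|^{-4z}|b|^{4z}$ cancels, and the Eisenstein series is left invariant by the centre.

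For the rational points, I would use $F^\times$-invariance of the integrand: rationality of the central element for the Eisenstein series and the cusp forms, and of $t_a$ for $a\in F^\times$. Then $[\GSO_{\mathbb{H}^2}\times\GL_4]$ decomposes as $[\SO_{\mathbb{H}^2}]\times[\GL_1\backslash(\GL_4\times\GL_1)]$. The change of variables $|\nu'(h)|=|a|$ produces exactly $|a|^{-2z}$.

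The main obstacle I expect is the measure normalization. The additional factor $\tfrac12$, which turns $\tfrac12$ into $\tfrac14$, should come from the two-to-one nature of the map $a\mapsto(aI_4,a^2)$ and the resulting identification of $\mu_2$ in the fibration (equivalently, from comparing the measure on $Z(\A)\backslash[\GSO]$ with the product measure). I would pin this down by tracking Tamagawa-type normalizations as in Lemma \ref{Reduction:To:SO}. Convergence for $\mathrm{Re}(z)\gg0$ is inherited from Proposition \ref{FinalPropSct83}, so Fubini applies throughout.
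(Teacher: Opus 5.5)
Your proposal follows the paper's proof. You substitute Theorem \ref{SJL} and Theorem \ref{SW:Sp} into Proposition \ref{FinalPropSct83}, which gives $\tfrac{\zeta^S_F(2)^2}{2}$ times an integral over $Z_{\GSO_{\mathbb{H}^6}}(\A)\backslash[\GSO_{\mathbb{H}^2}\times\GL_4]$. You then use $\GSO_{\mathbb{H}^2}\simeq\SO_{\mathbb{H}^2}\rtimes\GL_1$ to fibre this over $[\GL_1\backslash(\GL_4\times\GL_1)]$ with fibre $[\SO_{\mathbb{H}^2}]$.

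One correction concerns the extra factor $\tfrac12$. The map $a\mapsto(aI_4,a^2)$ is injective, and your own fibration computation shows that the fibre is exactly $[\SO_{\mathbb{H}^2}]$, with no $\mu_2$-ambiguity. So the passage from $\tfrac12$ to $\tfrac14$ is not a two-to-one phenomenon. It is a measure-normalization convention; the paper simply invokes $\mathrm{vol}([Z_{\SO_{\mathbb{H}^2}}])=2$. This constant plays no role in the later non-vanishing arguments.
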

\begin{proof}
Using Theorems \ref{SJL} and \ref{SW:Sp}, we have that $J(\tfrac{1}{2},z,\varphi,f^\Psi_{\phi_{1/2}}, \eta)$ is equal to
\[\frac{\zeta^S_F(2)^2}{2} \int_{Z_{\GSO_{\mathbb{H}^6}}(\A)\backslash[\mathbf{GSO}_{\mathbb{H}^2}\times \GL_4]}|\nu'(h)|_\A^{-2z}|\det(r)|^{z}_{\A}\varphi(r)(\overline{\Psi}'\boxtimes\Psi'')(h)\mathrm{E}_{Q_{(6,0)}}(j(h,r),\Phi_{-\frac{1}{2}})dhdr.\]
Since $\GSO_{\mathbb{H}^2}\simeq \mathbf{SO}_{\mathbb{H}^2}\rtimes \GL_1$ and since $\mathrm{vol}([Z_{\mathbf{SO}_{\mathbb{H}^2}}]) = 2$, we perform an outer integration over $\GL_4 \times \GL_1$ to reduce the integral over $Z_{\GSO_{\mathbb{H}^6}}(\A)\backslash[\mathbf{GSO}_{\mathbb{H}^2}\times \GL_4]$ to the one of the statement. 
\end{proof}

\subsection{A pullback formula for orthogonal groups}\label{subsec:Pullback:Orthogonal}

In order to compute the integral in Proposition \ref{Final:Proposition:SeeSaw}, we perform a series of computations similar to the ones that appear in \S \ref{Section:Period:Central:Value}. As a first step, we prove a pullback formula of Eisenstein series for orthogonal groups.

Consider the embedding
\begin{align*}
    \kappa: \GSO_{\mathbb{H}^2}\boxtimes \GSO_{\mathbb{H}^4}&\to \GSO_{\mathbb{H}^6},\\
    h\times x&\mapsto \left(\begin{smallmatrix}x_1& &x_2\\ &h& \\x_3& &x_4\end{smallmatrix}\right),\nonumber
\end{align*}
where $x = \left(\begin{smallmatrix}x_1&x_2\\x_3&x_4\end{smallmatrix}\right)$. We now study the integral
\begin{align}\label{Main:Integral:Pullback:Orthogonal}\int_{[\mathbf{SO}_{\mathbb{H}^2}]}&(\overline{\Psi}'\boxtimes\Psi'')(h t_{\nu'(x)})\mathrm{E}_{Q_{(6,0)}}(\kappa(ht_{\nu'(x)},x),\Phi_{s})dh,\end{align}
and relate it to an Eisenstein series on $\mathbf{GSO}_{\mathbb{H}^4}$. 
\begin{proposition}\label{Double:Quotient:Pullback:Orthogonal}
    We have
    \[Q^\circ_{(6,0)}(F)\setminus \mathbf{SO}_{\mathbb{H}^6}(F)/\kappa(\mathbf{SO}_{\mathbb{H}^2}\times\mathbf{SO}_{\mathbb{H}^4})(F) = \{q_i\}_{i = 1,2,3}.\]
    Furthermore, the stabilizers are
    \begin{enumerate}
        \item $\mathrm{Stab}_{\mathbf{SO}_{\mathbb{H}^2}\times\mathbf{SO}_{\mathbb{H}^4}}(q_1) = Q^\circ_{(2,0)}(F)\times Q^\circ_{(4,0)}(F)$.
        \item $\mathrm{Stab}_{\mathbf{SO}_{\mathbb{H}^2}\times\mathbf{SO}_{\mathbb{H}^4}}(q_2) $ equals $$ \left\{\left(\begin{smallmatrix}m& & \\ &h& \\ & &m^{-1}\end{smallmatrix}\right)\times \left(\begin{smallmatrix} l& & \\ &h& \\ & &I_3'\;^{t}l^{-1}I_3'\end{smallmatrix}\right),\;l\in \GL_3(F),\; m\in F^\times,\; h\in \mathbf{SO}_{\mathbb{H}^2}(F)\right\} \cdot \left( N_{{(1,2)}}(F) \times N_{(3,2)}(F) \right).$$
        \item $\mathrm{Stab}_{\mathbf{SO}_{\mathbb{H}^2}\times\mathbf{SO}_{\mathbb{H}^4}}(q_3) = \left\{h\times\left(\begin{smallmatrix} l& & \\ &h& \\ & &I_2'\;^{t}l^{-1}I_2'\end{smallmatrix}\right),\;l\in \GL_2, h\in \mathbf{SO}_{\mathbb{H}^2}\right\} \cdot (\{I_4 \} \times  N_{(2,4)} )$.
    \end{enumerate}
\end{proposition}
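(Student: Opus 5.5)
The plan is to parametrize $Q^\circ_{(6,0)}(F)\backslash \mathbf{SO}_{\mathbb{H}^6}(F)$ by maximal isotropic subspaces $L\subset \mathbb{H}^6$ of dimension $6$ lying in the $\mathbf{SO}$-orbit (one of the two rulings) of the standard one $\langle f_1,\dots,f_6\rangle$. We then classify these under $\mathbf{SO}_{\mathbb{H}^2}\times\mathbf{SO}_{\mathbb{H}^4}$, exactly as in Lemmas \ref{lemmaondoublecosetsimple} and \ref{doublecosetPtilde}. Write $\mathbb{H}^6=V_2\oplus V_4$, where $V_2\simeq\mathbb{H}^2$ is the middle block and $V_4\simeq\mathbb{H}^4$ is the outer block via $\kappa$. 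The invariant of the orbit is $d=\dim(L\cap V_2)$. Since $L$ is maximal isotropic in a $12$-dimensional space, the projections $p_2(L)\subset V_2$ and $p_4(L)\subset V_4$ satisfy $p_2(L)^\perp\cap V_2=L\cap V_2$, and similarly for $V_4$. Hence $\dim p_2(L)=4-d$ and $\dim(L\cap V_4)=6-\dim p_2(L)=2+d$. Because the Witt index of $V_2$ is $2$, we get $d\in\{0,1,2\}$.

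We treat the three cases separately.

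\textbf{Case $d=2$.} Here $L=(L\cap V_2)\oplus(L\cap V_4)$ with both summands maximal isotropic. Using the transitivity of $\mathbf{SO}$ on each ruling, we move $L$ to the standard flag, giving $q_1=I_{12}$. The stabilizer is then $Q^\circ_{(2,0)}\times Q^\circ_{(4,0)}$.

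\textbf{Case $d=1$.} Now $L\cap V_2$ is an isotropic line, which we normalize by $\mathbf{SO}_{\mathbb{H}^2}$ to $\langle f_1\rangle$, and $L\cap V_4$ is a $3$-dimensional isotropic subspace. The quotient $L/(L\cap V_2\oplus L\cap V_4)$ is $2$-dimensional and defines the graph of an anti-isometry between the $2$-dimensional hyperbolic spaces $f_1^\perp/f_1\subset V_2$ and $(L\cap V_4)^\perp/(L\cap V_4)\subset V_4$. This graph is unique up to the remaining Levi action, which yields the single representative $q_2$. The stabilizer is the one stated in (2): an element $m$ on the line, $l\in\mathrm{GL}_3$ on the $3$-space, and $h\in\mathbf{SO}_{\mathbb{H}^2}$ acting diagonally on the two hyperbolic planes, together with the unipotent radicals.

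\textbf{Case $d=0$.} Here $p_2$ is injective on a $4$-dimensional complement, and $L\cap V_4$ is a $2$-dimensional isotropic subspace. Then $L$ is the graph of an anti-isometry from $V_2$ onto the $4$-dimensional nondegenerate space $(L\cap V_4)^\perp/(L\cap V_4)$. By Witt's theorem all such configurations are conjugate, giving $q_3$. The stabilizer is $\{h\}$ embedded diagonally, times $\mathrm{GL}_2$ on $L\cap V_4$ and the radical $N_{(2,4)}$, as in (3).

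The main obstacle is the parity (ruling) issue for $\mathbf{SO}$ rather than $\mathbf{O}$. In each case we must check that the configurations lying in the correct ruling form a single $\mathbf{SO}_{\mathbb{H}^2}\times\mathbf{SO}_{\mathbb{H}^4}$-orbit, rather than splitting into two. We would handle this by noting that $\mathbf{O}_{\mathbb{H}^2}\times\mathbf{O}_{\mathbb{H}^4}$ acts transitively on all such $L$ of given $d$. Since the pair of determinants is constrained only through the ruling, an element of $\mathbf{O}\times\mathbf{O}$ with total determinant $1$ can be corrected by an element of the stabilizer with determinant $(-1,-1)$ to land in $\mathbf{SO}\times\mathbf{SO}$. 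Such an element exists in every case: for instance, a reflection swapping $e\leftrightarrow f$ in a hyperbolic plane contained in the free part, applied simultaneously on both sides of the graph. For $d=2$ the rulings in $V_2$ and $V_4$ must be chosen compatibly, and this forces uniqueness.

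Finally, the stabilizer computations are routine matrix checks once the representatives $q_i$ are written explicitly. We would write these down analogously to \eqref{def_open_orbit_forJR}.
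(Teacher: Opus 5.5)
Your reduction to $d=\dim(L\cap V_2)\in\{0,1,2\}$ is the same classification the paper imports from Huang--He: your pair $(d,2+d)$ is their first two invariants. Your treatment of $d=1$ and $d=0$ is correct. There the stabilizer of $L$ in $\mathbf{O}_{\mathbb{H}^2}\times\mathbf{O}_{\mathbb{H}^4}$ contains elements of determinant $(-1,-1)$, namely a reflection of $\ell^\perp/\ell$ (resp.\ of $V_2$) transported through the anti-isometry. So each of these loci is a single $\mathbf{SO}_{\mathbb{H}^2}\times\mathbf{SO}_{\mathbb{H}^4}$-orbit in the relevant ruling, with stabilizers as in (2) and (3).

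\textbf{The step that fails is $d=2$.} There $L=L_2\oplus L_4$ with $L_2$, $L_4$ Lagrangian in $V_2$, $V_4$, and its stabilizer in $\mathbf{O}_{\mathbb{H}^2}\times\mathbf{O}_{\mathbb{H}^4}$ is $P_{L_2}\times P_{L_4}$. An isometry preserving a Lagrangian always has determinant $1$: it acts by some $g$ on the Lagrangian and by the contragredient of $g$ on the quotient. So no determinant-$(-1,-1)$ correction is available. ``Compatible rulings'' only means that $L_2$ and $L_4$ are either both in the standard rulings of $V_2$, $V_4$, or both in the opposite ones. $\mathbf{SO}_{\mathbb{H}^2}\times\mathbf{SO}_{\mathbb{H}^4}$ cannot pass between these two cases, since it preserves each ruling of $V_2$ separately. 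Concretely, take $V_2=\langle e_5,e_6,f_6,f_5\rangle$ and $L_1=\langle e_1,e_2,e_3,f_4,e_5,f_6\rangle$. Then $L_1$ meets $\langle e_1,\dots,e_6\rangle$ in dimension $4\equiv 6 \pmod 2$, so it is the translate of the standard Lagrangian by $r_4r_6\in\mathbf{SO}_{\mathbb{H}^6}(F)$, where $r_i$ is the reflection exchanging $e_i$ and $f_i$. Yet $L_1\cap V_2=\langle e_5,f_6\rangle$ lies in the other ruling of $V_2$ from $\langle e_5,e_6\rangle$, so $L_1$ is not in the double coset of $q_1$.

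Hence the closed locus $d=2$ consists of two double cosets, $q_1=I_{12}$ and $q_1'=r_4r_6$. The second has stabilizer $r_4r_6\,(Q^\circ_{(2,0)}\times Q^\circ_{(4,0)})\,r_4r_6$, the product of the opposite-ruling Siegel parabolics. The double quotient therefore has four elements, not three. This is a defect of the statement itself, and the paper's proof shares it. Identifying $Q^\circ_{(6,0)}(F)\backslash\mathbf{SO}_{\mathbb{H}^6}(F)$ with all isotropic $6$-planes and reading orbits off the Huang--He tuple cannot detect rulings: $L_1$ and $\langle e_1,\dots,e_6\rangle$ both have tuple $(2,4,0,0,0)$.

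The correction is harmless for Proposition \ref{Proposition:Pullback:Orthogonal}. The $q_1'$ term unfolds, exactly like the $q_1$ term, to a constant term of $\overline{\Psi}'\boxtimes\Psi''$ along the unipotent radical of a Siegel parabolic of $\mathbf{SO}_{\mathbb{H}^2}$. Under the exceptional isomorphism this is a Borel in the second $\GL_2$-factor rather than the first, and the term vanishes by cuspidality. So the fix to your write-up is to record two closed orbits at $d=2$ instead of asserting uniqueness.
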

\begin{proof}
    Note that $Q_{(6,0)}^{\circ}(F)\setminus \mathbf{SO}_{\mathbb{H}^6}(F)$ is isomorphic to the set of flags $$ \mathcal{V} := \{S\subset \mathbb{H}^6\,:\,\mathrm{dim}(S) = 6,\;S\;\text{isotropic}\}.$$ Let us factor $\mathbb{H}^6 = V_1\oplus V_2 := \mathbb{H}^2\oplus \mathbb{H}^4$ according to the embedding $\kappa$. Since all the quadratic spaces considered in this Proposition are hyperbolic planes, we can use \cite[\S 5]{Huang:He} to compute the desired double quotient. Given a subspace $S\subset \mathbb{H}^6$, we denote 
    \[\mathrm{rad}(S) := \{s\in S,\;s\in S^{\perp}\},\]
    and by $P_{V_i}(S) = (S\times V_j)\cap V_i$ the projection to the space $V_i$ with respect to the decomposition $\mathbb{H}^6 = V_1\oplus V_2 $. In \cite[\S 5]{Huang:He} it is shown that the $\kappa(\mathbf{SO}_{\mathbb{H}^2}\times \mathbf{SO}_{\mathbb{H}^4})(F)$-orbits on $\mathcal{V}$ are uniquely determined by the following tuples:
    \[\left(\mathrm{dim}(S\cap V_1),\mathrm{dim}(S\cap V_2),\mathrm{dim}\frac{\mathrm{rad}(P_{V_2}(S))}{S\cap V_2},b_{V_2}(S),b_{V_1}(S)\right),\]
    where $b_{V_2}(S)$ is the dimension of the maximal positive definite subspace of $P_{V_2}(S)$ and $b_{V_1}(S)$ is the dimension of the maximal negative definite subspace of $P_{V_2}(S)$. According to \cite[Theorem 5.2]{Huang:He}, the only possible tuples that appear are $(2,4,0,0,0),(1,3,0,1,1),(0,2,0,2,2)$. The calculation of the stabilizer of each orbit is given in \cite[\S 2.3]{Huang:He}. We then choose representatives $q_1$, $q_2$ and $q_3$, respectively, as in \emph{loc.cit.}
\end{proof}
\begin{remark}
   A representative of the open orbit is given by    \[q_3 = \left(\begin{smallmatrix}
I_2 & 0 & 0 \\
0& A & 0 \\  0 & 0 & I_2
\end{smallmatrix}\right), \text{ where } A = \left(\begin{smallmatrix}
 I_2 & 0_2 & -\frac{1}{2} I_2 & 0_2  \\
 I_2  & 0_2 & \frac{1}{2} I_2 & 0_2  \\
 0_2 & I_2  & 0_2 & \frac{1}{2} I_2  \\
 0_2 & -I_2  & 0_2 & \frac{1}{2} I_2
\end{smallmatrix}\right). \]
\end{remark}

Using Proposition \ref{Double:Quotient:Pullback:Orthogonal} and the cuspidality of $\Psi'\boxtimes\Psi''$, we proceed exactly as in Theorem \ref{PullbackFormulaNew}, to reduce \eqref{Main:Integral:Pullback:Orthogonal} to the integral 
\begin{equation}\label{First:Eisenstein:Second:Pullback}\sum_{\gamma\in Q_{(2,4)}(F)\setminus \mathbf{SO}_{\mathbb{H}^4}(F)}\int_{ \mathbf{SO}_{\mathbb{H}^2}(\A)}(\overline{\Psi}'\boxtimes\Psi'')(ht_{\nu'(x)})\Phi_{s}(q_3\kappa(ht_{\nu'(x)},\gamma x))dh.\end{equation}
This integral converges for $\mathrm{Re}(s)>>0$. Consider the normalized induction\footnote{The modulus character of $Q_{(2,4)}$ is exactly  $\delta_{Q_{(2,4)}}(m(l,h)) = |\mathrm{det}(l)|_{\A}^{5}|\nu'(h)|_{\A}^{-5}$ for $l \in \GL_2$, $h \in \GSO_{\mathbb{H}^2}$.} \begin{equation}\label{induction:orthogonal}I_{Q_{(2,4)}}(s,\overline{\sigma}\boxtimes \sigma) := \mathrm{Ind}_{Q_{(2,4)}(\A)}^{\mathbf{GSO}_{\mathbb{H}^4}(\A)} \left(|\mathrm{det}|_{\A}^{s}|\nu'|^{-s}_{\A}(\overline{\sigma}\boxtimes \sigma) \right).\end{equation}
For any $\varrho_{s}\in I_{Q_{(2,4)}}(s,\overline{\sigma}\boxtimes \sigma)$ define the Eisenstein series
\[E_{Q_{(2,4)}}(x,\varrho_{s}) := \sum_{\gamma\in Q_{(2,4)}(F)\setminus \GSO_{\mathbb{H}^4}(F)}\varrho_{s}(\gamma x),\]
which converges for $\mathrm{Re}(s)>>0$ and admits meromorphic continuation to the whole complex plane $\C$.
\begin{proposition}\label{Proposition:Pullback:Orthogonal}
    Let $\sigma \subset \mathcal{A}_{\rm cusp}([\GL_2])$, $\Psi',\Psi''$ be cusp forms in the space of $\sigma$, and let $\Phi_{s} \in  I_{Q_{(6,0)}}(s,\mathbf{1}) $ be a holomorphic section.
    \begin{enumerate}
    \item For $\mathrm{Re}(s)>>0$, we have
    \begin{align*}
    q_{\overline{\Psi}'\boxtimes\Psi''}(\Phi_{s})(x) := \int_{\mathbf{SO}_{\mathbb{H}^2}(\A)}(\overline{\Psi}'\boxtimes\Psi'')(ht_{\nu'(x)})\Phi_{s}(q_3\kappa(ht_{\nu'(x)},x))dh\in I_{Q_{(2,4)}}(s,\overline{\sigma}\boxtimes \sigma).
    \end{align*}
    \item We obtain the pullback formula
    \[\int_{[\mathbf{SO}_{\mathbb{H}^2}]}(\overline{\Psi}'\boxtimes\Psi'')(ht_{\nu'(x)})\mathrm{E}_{Q_{(6,0)}}(\kappa(h t_{\nu'(x)},x),\Phi_{s})dh = \mathrm{E}_{Q_{(2,4)}}(x,q_{\overline{\Psi}'\boxtimes\Psi''}(\Phi_{s})).\]
    \item Let $S$ be a finite set of places containing the archimedean places and the ramified places of $\sigma$ and $\Phi_s$. If $\overline{\Psi}'\boxtimes\Psi'' \in   \sigma\boxtimes \overline{\sigma}$ and $\Phi_{s} = \otimes_v \Phi_{s,v}\in I_{Q_{(6,0)}}(s,\mathbf{1})$ is normalized so that $\Phi_{s,v}(1) = 1$ when $v \not \in S$, we have \[q_{\overline{\Psi}'\boxtimes\Psi''}(\Phi_{s})(I_8) =  \frac{L^S(5/2+s,\sigma\boxtimes\overline{\sigma},\mathrm{std})}{\zeta^S(5+2s)\zeta^S(3+2s)} \int_{\mathbf{SO}_{\mathbb{H}^2}(\A_S)}(\overline{\Psi}'\boxtimes\Psi'')(h)\Phi_{s,S}(q_3\kappa(h,I_8))dh,\] 
    where $\A_S = \prod_{v\in S} F_v$ and $ \Phi_{s,S} = \otimes_{v \in S} \Phi_{s,v}$. In particular $s\mapsto q_{\overline{\Psi}'\boxtimes\Psi''}(\Phi_{s})$ admits meromorphic continuation to all $s\in \C$. Hence,  $q_{\overline{\Psi}'\boxtimes\Psi''}(\Phi_{s})\in I_{Q_{(2,4)}}(s,\overline{\sigma}\boxtimes \sigma)$ for every $s$ at which it is holomorphic.
    \end{enumerate}
\end{proposition}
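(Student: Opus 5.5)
The proof follows the template of Theorem \ref{PullbackFormulaNew}, transplanted to the orthogonal setting and using Proposition \ref{Double:Quotient:Pullback:Orthogonal} in place of Lemma \ref{lemmaondoublecosetsimple}.

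For (1), I check the left $Q_{(2,4)}(\A)$-equivariance of $q_{\overline{\Psi}'\boxtimes\Psi''}(\Phi_s)$ directly, for $\mathrm{Re}(s)\gg0$ where absolute convergence holds by the usual Schwartz/majorant argument.
\begin{itemize}
\item Unipotent part: for $n\in N_{(2,4)}(\A)$, the element $q_3\kappa(1,n)q_3^{-1}$ lies in $N_{(6,0)}(\A)$ (by the stabilizer description in Proposition \ref{Double:Quotient:Pullback:Orthogonal}(3)), so $\Phi_s$ is unchanged.
\item Levi element $m(l,h_0)$ with $l\in\GL_2$, $h_0\in\GSO_{\mathbb{H}^2}$: I write $q_3\kappa(h,m(l,h_0))$ as $q_3\kappa(h_0,m(l,h_0))q_3^{-1}$ times $q_3\kappa(h_0^{-1}h,1)$. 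The stabilizer computation shows that $q_3\kappa(h_0,m(l,h_0))q_3^{-1}$ lies in $Q_{(6,0)}$ with Levi determinant giving $|\det l|^{\cdot}|\nu'|^{\cdot}$.
\item I then change variables $h\mapsto h_0h$ in the integral. This produces the factor $(\overline{\Psi}'\boxtimes\Psi'')(h_0\,\cdot)$, i.e. the action of $\overline\sigma\boxtimes\sigma$, together with the character $|\det|^s|\nu'|^{-s}$ times the modulus factor, matching \eqref{induction:orthogonal}.
\item The twist by $t_{\nu'(x)}$ handles the similitude exactly as $h_{\nu(g_2)}$ did before. Independence of the choice follows as in \eqref{PullbackMapRepresentation}.
\end{itemize}

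For (2), I unfold $E_{Q_{(6,0)}}$ for $\mathrm{Re}(s)\gg0$ along the three double cosets $q_1,q_2,q_3$, after first reducing from $\GSO$ to $\SO$ (note $Q_{(6,0)}\backslash\GSO_{\mathbb{H}^6}=Q^\circ_{(6,0)}\backslash\SO_{\mathbb{H}^6}$).
\begin{itemize}
\item For $q_1$ and $q_2$, the stabilizers contain the unipotent radicals $N_{(2,0)}$ and $N_{(1,2)}$ of proper parabolics of $\SO_{\mathbb{H}^2}$ in the first factor. Via Remark \ref{Remark:Siegel:Parabolic:Exceptional}, these correspond to unipotent radicals of Borels in one $\GL_2$-factor. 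Collapsing the $h$-integral over them produces a constant term of $\Psi'$ or $\Psi''$, which vanishes by cuspidality.
\item For $q_3$, the stabilizer is $\{(h,\mathrm{diag}(l,h,\ast))\}\cdot N_{(2,4)}$. Collapsing the sum over the $\SO_{\mathbb{H}^2}(F)$-component into $\int_{[\SO_{\mathbb{H}^2}]}$ gives $\int_{\SO_{\mathbb{H}^2}(\A)}$, and the remaining sum is over $Q_{(2,4)}(F)\backslash\SO_{\mathbb{H}^4}(F)$.
\item This is \eqref{First:Eisenstein:Second:Pullback}, which equals $E_{Q_{(2,4)}}(x,q(\Phi_s))$.
\end{itemize}
Meromorphic continuation then extends the identity.

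For (3), the unramified computation is the main obstacle. The global integral factors, and at $v\notin S$ I compute
\[z_v(s)=\int_{\SO_{\mathbb{H}^2}(F_v)}\Phi_{s,v}(q_3\kappa(h,1))\,(\sigma_v\boxtimes\overline\sigma_v)(h)v_0\,dh.\]
\begin{itemize}
\item Bi-$K$-invariance: as with \eqref{Form:Stab}, $h\mapsto\Phi_{s,v}(q_3\kappa(h,1))$ is bi-$\SO_{\mathbb{H}^2}(\mathcal O_v)$-invariant, using that $\kappa(k_1,k_1^\sharp)$ stabilizes $q_3$.
\item Cartan decomposition: I decompose via $\SO_{\mathbb{H}^2}\cong(\GL_2\times\GL_2)/\GL_1$ (with $\det$ matching), reducing to dominant tori $\varpi^{\lambda}$.
\item Torus values: I evaluate $\Phi_{s,v}(q_3\kappa(\varpi^\lambda,1))$ by an Iwasawa decomposition in $\SO_{\mathbb{H}^6}$. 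It should be $q^{-(\cdot)(s+5/2)|\lambda|}$-type.
\item Spherical functions: the $K$-averages of $\sigma_v\boxtimes\overline\sigma_v$ on the torus are given by Macdonald's spherical function formula.
\end{itemize}
Summing the resulting series is the analogue of B\"ocherer's identity \eqref{Bocherer:Identity}. It should give $L(5/2+s,\sigma_v\times\overline\sigma_v)/(\zeta_v(5+2s)\zeta_v(3+2s))$, i.e. a Rankin--Selberg local factor divided by the normalizing zeta factors; the generating-function identity behind this is the known doubling-method computation for $\SO_4\subset\SO_{8}$. I would try to quote this identity from the doubling literature (Piatetski-Shapiro--Rallis) rather than recompute it.

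Meromorphic continuation of the section then follows: the partial $L$-function continues meromorphically, and the finitely many ramified integrals are handled as in Proposition \ref{non:vanishing:and:abs:conv:zeta:bad}.
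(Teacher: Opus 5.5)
Your plan follows the paper's proof essentially step for step. For (1), you get $N_{(2,4)}$-invariance from $q_3\kappa(1,n)q_3^{-1}\in N_{(6,0)}$, and the Levi equivariance from the $q_3$-stabilizer plus a change of variables. The paper makes your ``the twist handles the similitude'' precise by splitting the $\GSO_{\mathbb{H}^2}$-component of $m(l,h')$ into an $\SO_{\mathbb{H}^2}$-part and $t_{\nu'(h')}$; these contribute $|\det l|^{s+5/2}$ and $|\nu'(h')|^{-s-5/2}$ respectively. For (2), you unfold along $q_1,q_2,q_3$ and use cuspidality to kill the first two. For (3), you reduce to a bi-$K$-invariant matrix-coefficient integral handled by the Cartan decomposition and the doubling computation of Gelbart--Piatetski-Shapiro--Rallis, which is exactly what the paper cites.

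The step that will not go through as you state it is the final identity in (3). Make your reduction explicit:
\begin{itemize}
\item $q_3\kappa(h,1)$ lies in the copy of $\SO_8$ acting on $\langle e_3,\dots,e_6,f_6,\dots,f_3\rangle$, and $Q_{(6,0)}\cap\SO_8$ is its Siegel parabolic.
\item By Proposition~\ref{Double:Quotient:Pullback:Orthogonal}(3), the relevant stabilizer is the diagonal $\SO_4$. So $h\mapsto \Phi_{s,v}(q_3\kappa(h,1))$ is the unramified doubling section for $\SO_4\times\SO_4\subset\SO_8$.
\item On the Levi $\GL_6$, $\Phi_{s,v}$ transforms by $|\det|^{s+5/2}$, while $\delta^{1/2}$ of the Siegel parabolic of $\SO_8$ is $|\det|^{3/2}$. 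Hence the normalized doubling parameter is $s'=s+1$.
\end{itemize}
The Piatetski-Shapiro--Rallis identity $L(s'+\tfrac12,\pi)/\bigl(\zeta(2s'+1)\zeta(2s'+3)\bigr)$ then gives the stated denominator $\zeta_v(2s+3)\zeta_v(2s+5)$. However, it gives the numerator $L(s+\tfrac32,\sigma_v\boxtimes\overline{\sigma}_v,\mathrm{std})$, not $L(s+\tfrac52,\dots)$, and no single $s'$ produces both the stated numerator and denominator. Compare the symplectic case: there $s'=2s-\tfrac12$ reproduces Theorem~\ref{PullbackFormulaNew}(2) exactly, numerator included.

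So quoting the literature, as you propose, will not produce the displayed formula. The shift in the statement appears to be off by one. The paper's proof only cites this computation, so the discrepancy sits in the statement rather than in your route. It is not cosmetic: at $s=-\tfrac12$, where the section is used later, the corrected numerator is $L^S(1,\sigma\times\overline{\sigma})$, which has a pole for unitary $\sigma$. You should carry out this identification rather than take the target formula on faith.

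A smaller point: meromorphic continuation of the ramified factors comes from the meromorphy of local doubling zeta integrals. It does not come from Proposition~\ref{non:vanishing:and:abs:conv:zeta:bad}, which concerns the Whittaker integral.
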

\begin{proof} \leavevmode
    \begin{enumerate}
        \item The invariance of $q_{\overline{\Psi}'\boxtimes\Psi''}(\Phi_{s})$ by $N_{(2,4)}(\A)$ follows from the fact that  $q_3\kappa(I_4,n)q_3^{-1}\in N_{(6,0)}(\A)$. 
        If $m(l,h')\in M_{(2,4)}(\A)\simeq \GL_2(\A)\times \GSO_{\mathbb{H}^2}(\A)$, then  $q_{\overline{\Psi}'\boxtimes\Psi''}(\Phi_{s})(m(l,h'))$ equals \begin{align} &\int_{\mathbf{SO}_{\mathbb{H}^2}(\A)}(\overline{\Psi}'\boxtimes\Psi'')(ht_{\nu'(h')})\Phi_{s}(q_3\kappa(ht_{\nu'(h')},m(l,h')))dh \nonumber \\
 =&\int_{\mathbf{SO}_{\mathbb{H}^2}(\A)}(\overline{\Psi}'\boxtimes\Psi'')(ht_{\nu'(h')})\Phi_{s}(q_3\kappa(\tilde{h}',m(l,\tilde{h}'))  \kappa(  {\tilde{h}}^{'-1}h t_{\nu'(h')}, m(I_2, t_{\nu'(h')})))dh \nonumber  \\ 
=&|\det(l)|^{5/2+s}_{\A}\int_{\mathbf{SO}_{\mathbb{H}^2}(\A)}(\overline{\Psi}'\boxtimes\Psi'')(ht_{\nu'(h')})\Phi_{s}(q_3  \kappa(  {\tilde{h}}^{'-1}h t_{\nu'(h')}, m(I_2, t_{\nu'(h')})))dh ,\nonumber 
        \end{align}
        where in the first equality we have denoted $\tilde{h}' := t_{\nu'(h')}^{-1}h'$, and for the second we have used that $$q_3\kappa(\tilde{h}',m(l,\tilde{h}'))q_3^{-1}\in Q_{(6,0)}(\A),$$ and $$\delta_{Q_{(6,0)}}^{\frac{s}{5}+\frac{1}{2}}(q_3\kappa(\tilde{h}',m(l,\tilde{h}'))q_3^{-1}) = |\det(l)|^{5/2+s}_{\A}.$$ 
        Similarly, a direct computation shows that $$\delta_{Q_{(6,0)}}^{\frac{s}{5}+\frac{1}{2}}(q_3 \kappa(  t_{\nu'(h')}, m(I_2, t_{\nu'(h')}))q_3^{-1}) = | \nu'(h')|^{-s-5/2}.$$
        Using this, together with the change of variables $\tilde{h}^{'-1}h\mapsto h$, we obtain  
\begin{align}\label{SecondAuxiliarIntegral:PullbackFormula2}
        q_{\overline{\Psi}'\boxtimes\Psi''}(\Phi_{s})(m(l,h')) = |\det(l)|^{s+5/2}_{\A}|\nu(h')|^{-s-5/2}_{\A}\int_{\mathbf{SO}_{\mathbb{H}^2}(\A)}(\overline{\Psi}'\boxtimes\Psi'')(h'h)\Phi_{s}(q_3\kappa(h,I_8))dh.
        \end{align}
        Hence, for $\mathrm{Re}(s)>>0$,  we have $q_{\overline{\Psi}'\boxtimes\Psi''}(\Phi_{s})\in I_{Q_{(2,4)}}(s,\overline{\sigma}\boxtimes \sigma)$. 
        \item Proceeding exactly as in Theorem \ref{PullbackFormulaNew}, tt follows from \eqref{First:Eisenstein:Second:Pullback} and part (1) above. 
        \item Assume $v$ is a finite place at which both $(\sigma\boxtimes\overline{\sigma})_v$ and $\Phi_{s,v}$ are unramified. Let $v_0\in (\sigma\boxtimes\overline{\sigma})_v$ be the spherical vector satisfying $\langle v_0,v_0\rangle = 1$. Since the space of spherical vectors of $(\sigma\boxtimes\overline{\sigma})_v$ is one-dimensional, we have
        \[\int_{\mathbf{SO}_{\mathbb{H}^2}(F_v)}\Phi_{s,v}(q_3\kappa(h,I_8))(\sigma\boxtimes\overline{\sigma})_v(h)v_0dh = c_v(s)v_0,\]
        where
        \[c_v(s) = \int_{\mathbf{SO}_{\mathbb{H}^2}(F_v)}\Phi_{s,v}(q_3\kappa(h,I_8))\langle(\sigma\boxtimes\overline{\sigma})_v(h)v_0,v_0\rangle dh.\]
        Using the Cartan decomposition as in Theorem \ref{PullbackFormulaNew} (2) and \cite[Proposition 6.4 (a) \& Proof of Theorem 6.1]{ExplicitGSR}, one shows that $c_v(s)$ equals $$\frac{L(5/2+s,\sigma_v\boxtimes\overline{\sigma}_v,\mathrm{std})}{\zeta_v(5+2s)\zeta_v(3+2s)}. $$ 
    \end{enumerate}

\end{proof}

Denote
\begin{align}\label{embedding:e} e:\GL_4 \times \GL_1 &\to \mathbf{GSO}_{\mathbb{H}^4},\\ (r,a)&\mapsto e(r,a) := \left(\begin{smallmatrix}r & \\ &aI_4'\;^{t}r^{-1}I_4'\end{smallmatrix}\right).\nonumber\end{align}

\begin{corollary}\label{Corollary:Pullback:Orthogonal}
    Assume \eqref{Spl} with $ \lambda_{\mathbb{H}^2}(f_1 \otimes f_2)^{\sim} = \phi_{\frac{1}{2}},$ for $ f_1\in \mathcal{S}(\mathbb{H}^2(\A))$ and $f_2 \in \mathcal{S}(\mathbb{H}^2(\A)^2)$. Let $\eta \in \mathcal{S}(M_4(\A))$ have support outside $0$. We have
\[J(\tfrac{1}{2},z,\varphi,f^\Psi_{\phi_{1/2}}, \eta) = \tfrac{\zeta^S_F(2)^2}{4}\int_{[ \GL_1 \backslash (\GL_4 \times \GL_1)]}|a|_\A^{-2z}|\det(r)|^{z}_{\A}\varphi(r)\mathrm{E}_{Q_{(2,4)}}(e(r,a),q_{\overline{\Psi}'\boxtimes\Psi''}(\Phi_{-\frac{1}{2}})) drda.\]
\end{corollary}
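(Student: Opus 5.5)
The natural route is to start from Proposition \ref{Final:Proposition:SeeSaw} and apply the orthogonal pullback formula, Proposition \ref{Proposition:Pullback:Orthogonal}(2), to the inner integral over $[\mathbf{SO}_{\mathbb{H}^2}]$. Proposition \ref{Final:Proposition:SeeSaw} expresses $J(\tfrac{1}{2},z,\varphi,f^\Psi_{\phi_{1/2}},\eta)$, for ${\rm Re}(z)\gg0$, as
\[\tfrac{\zeta^S_F(2)^2}{4}\int_{[\GL_1\backslash(\GL_4\times\GL_1)]}|a|_\A^{-2z}|\det(r)|_\A^{z}\varphi(r)\left(\int_{[\mathbf{SO}_{\mathbb{H}^2}]}(\overline{\Psi}'\boxtimes\Psi'')(ht_a)\,\mathrm{E}_{Q_{(6,0)}}(j(ht_a,r),\Phi_{-\frac12})\,dh\right)dr\,da .\]
The first task is to match the embedding $j:\mathbf{GSO}_{\mathbb{H}^2}\times\GL_4\to\mathbf{GSO}_{\mathbb{H}^6}$ with $\kappa$ composed with $e$. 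With the same ordering of basis vectors, we should get $j(ht_a,r)=\kappa(ht_a,e(r,a))$, where $\nu'(e(r,a))=a=\nu'(t_a)$. At worst, this identity holds after conjugating by a fixed element of $\mathbf{GSO}_{\mathbb{H}^6}(F)$ that permutes the blocks $(\mathbb{H}^+)^4$, $\mathbb{H}^2$, $(\mathbb{H}^-)^4$. The Eisenstein series is left $\mathbf{GSO}_{\mathbb{H}^6}(F)$-invariant, so such a conjugation does not change it. Hence, with $x=e(r,a)$, the inner integral is exactly \eqref{Main:Integral:Pullback:Orthogonal} with $t_{\nu'(x)}=t_a$.

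The central step is to evaluate this inner integral at $s=-\tfrac12$ and identify it with $\mathrm{E}_{Q_{(2,4)}}(e(r,a),q_{\overline{\Psi}'\boxtimes\Psi''}(\Phi_{-\frac12}))$. Proposition \ref{Proposition:Pullback:Orthogonal}(2) gives this identity only for ${\rm Re}(s)\gg0$. The plan is to replace the fixed section by a holomorphic family $\Phi_s\in I_{Q_{(6,0)}}(s,\mathbf{1})$ specializing to the section of Theorem \ref{SW:Sp} at $s=-\tfrac12$. Both sides are then meromorphic in $s$, so the identity of Proposition \ref{Proposition:Pullback:Orthogonal}(2) continues analytically.

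This continuation is the main obstacle. On the left, $\mathrm{E}_{Q_{(6,0)}}(\cdot,\Phi_s)$ is holomorphic at $s=-\tfrac12$ by \cite[Theorem 2.1]{YamanaSingular}. Since $\Psi',\Psi''$ are cusp forms, they are rapidly decreasing on $[\mathbf{SO}_{\mathbb{H}^2}]$, so the integral over the compact-modulo-center quotient against a function of moderate growth converges locally uniformly in $s$ away from poles. The left side is therefore holomorphic at $s=-\tfrac12$.

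On the right, Proposition \ref{Proposition:Pullback:Orthogonal}(3) shows that $q_{\overline{\Psi}'\boxtimes\Psi''}(\Phi_s)$ is meromorphic. Its unramified normalization at $s=-\tfrac12$ is
\[\frac{L^S(2,\sigma\boxtimes\overline\sigma,\mathrm{std})}{\zeta^S(4)\zeta^S(2)},\]
which is finite and non-zero. The ramified local integrals converge for $s$ near $-\tfrac12$; I would check this from the matrix coefficient bounds for $\sigma_v$ and the growth of $\Phi_{s,v}(q_3\kappa(h,I_8))$. It follows that $q_{\overline{\Psi}'\boxtimes\Psi''}(\Phi_s)$ is holomorphic at $-\tfrac12$ and lies in $I_{Q_{(2,4)}}(-\tfrac12,\overline\sigma\boxtimes\sigma)$. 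The identity of meromorphic functions then makes $\mathrm{E}_{Q_{(2,4)}}(x,q_{\overline{\Psi}'\boxtimes\Psi''}(\Phi_s))$ holomorphic at $s=-\tfrac12$ as well, and the two sides agree there.

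Finally, substitute this identity back into the displayed formula; the outer integral is unchanged and produces the stated expression. For ${\rm Re}(z)\gg0$ the outer integral converges absolutely because $\varphi$ is cuspidal and the Eisenstein series has moderate growth. This justifies exchanging the specialization $s=-\tfrac12$ with the outer integration over $[\GL_1\backslash(\GL_4\times\GL_1)]$.
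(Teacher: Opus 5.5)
Your argument reproduces the first half of the paper's proof. You combine Proposition~\ref{Final:Proposition:SeeSaw} with the orthogonal pullback formula of Proposition~\ref{Proposition:Pullback:Orthogonal}(2), and you are more explicit than the paper about specializing at $s=-\tfrac12$. Your hedge about conjugation is unnecessary: $j(ht_a,r)=\kappa(ht_a,e(r,a))$ holds exactly, since $\nu'(t_a)=a=\nu'(e(r,a))$.

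The gap is in the range of $z$. Proposition~\ref{Final:Proposition:SeeSaw} holds only for ${\rm Re}(z)\gg0$. Your proof inherits this restriction, and your last paragraph explicitly works in that range. The corollary, however, is stated for arbitrary $z$, and this matters. It is applied at $z=0$ in Theorem~\ref{Main:Theorem:Spherical:Period}, where it is combined with Corollary~\ref{Corollary:Multi:Variable:Integral}, which computes $J(\tfrac12,0,\varphi,f,\eta)$. Passing from ${\rm Re}(z)\gg0$ to all $z$ is exactly what the paper's proof adds beyond quoting the two propositions, and your proposal never does it.

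The missing step is an analytic continuation in $z$. The left-hand side is entire in $z$, by Theorem~\ref{THM_two_var_integral}, Lemma~\ref{J_ram_lemma} and Corollary~\ref{Corollary:Multi:Variable:Integral}. You must show that the right-hand integral converges locally uniformly for $z\in\C$, so that it too is holomorphic; the paper appeals here to the rapid decay of $\varphi$. Uniqueness of analytic continuation then gives the identity for all $z$.

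Your convergence remark for ${\rm Re}(z)\gg0$ does not give this near $z=0$. Use the embedded $\GL_1$ to normalize $|\det r|_\A=1$. The variable $|a|_\A$ still ranges over all of $\R_{>0}$, and it enters only through $|a|_\A^{-2z}$ and the Eisenstein series. So you need the function $a\mapsto\int\varphi(r)\,\mathrm{E}_{Q_{(2,4)}}(e(r,a),q_{\overline{\Psi}'\boxtimes\Psi''}(\Phi_{-\frac12}))\,dr$ to be integrable at both ends of $\R_{>0}$, and rapidly decreasing if you want every $z$. Moderate growth of the Eisenstein series alone does not provide this. It has to be argued, for instance via the cuspidality of $\varphi$ against the constant terms of the Eisenstein series.
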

\begin{proof}
    It follows from Propositions \ref{Final:Proposition:SeeSaw} and \ref{Proposition:Pullback:Orthogonal}, that, for ${\rm Re}(z)>>0$, we have
\[J(\tfrac{1}{2},z,\varphi,f^\Psi_{\phi_{1/2}}, \eta) = \tfrac{\zeta^S_F(2)^2}{4}\int_{[ \GL_1 \backslash (\GL_4 \times \GL_1)]}|a|_\A^{-2z}|\det(r)|^{z}_{\A}\varphi(r)\mathrm{E}_{Q_{(2,4)}}(e(r,a),q_{\overline{\Psi}'\boxtimes\Psi''}(\Phi_{-\frac{1}{2}})) drda.\]
Since the left hand side defines a holomorphic function on $z$ and the right hand side gives a holomorphic function on $z$ because of rapid decay of $\varphi$, we get the desired equality.
\end{proof}

\subsection{The spherical period}\label{subsec:spherical_period}

Consider the embedding of algebraic groups
\begin{align*}
    \GL_2\times \GL_2&\to \GL_4\times \GL_2,\\
    (g_1, g_2) &\mapsto \left( \mathrm{diag}(g_1,g_2),g_2\right),
\end{align*}
where $\mathrm{diag}(g_1,g_2) = \left(\begin{smallmatrix}
        g_1& \\ &g_2\end{smallmatrix}\right)$. The quotient
\[\mathbf{X} := (\GL_2\times \GL_2)\setminus (\GL_4\times \GL_2),\]
defines a strongly tempered spherical variety (see \cite[Table 1]{Wan:Zhang:Spherical:Periods}). For $\varphi\in \mathcal{A}_{\rm cusp}([\GL_4])$ and $\Psi\in \mathcal{A}_{\rm cusp}([\GL_2])$ with trivial product of central characters, define
\[\mathcal{P}_{\GL_2 \times \GL_2}(\varphi,\Psi):= \int_{Z_{\GL_4}(\A)\backslash [\GL_2\times \GL_2]}\varphi(\mathrm{diag}(g_1,g_2))\Psi(g_2)dg_1dg_2.\]
This is the automorphic period attached to the spherical variety $ \mathbf{X}$ (or rather to the BZSV quadruple $(\GL_4 \times \GL_2, \GL_2\times \GL_2, 0, 1 )$ in the notation of \cite{MWZ}). Associated with $\mathbf{X}$, there is also an automorphic $L$-function $L(s,\pi \otimes \sigma, \rho_\mathbf{X})$ on $\GL_4 \times \GL_2$ (see \cite{sakellaridis2013spherical} and \cite{Wan:Zhang:Spherical:Periods}), where $\rho_{\mathbf{X}}$ is the representation of $^L(\GL_4\times \GL_2)$, given by $$\rho_{\mathbf{X}} := \wedge^2 \otimes {\rm std}_2 \oplus {\rm std}_4 \oplus {\rm std}_4^\vee,$$
where recall that ${\rm std}_n$  and ${\rm std}_n^\vee$ denote the standard representation of $\GL_n(\C)$ and its dual, respectively. 

The aim of this section is to set Conjecture \ref{conj:Wan:Zhang:+:Bessel:GGP}(2), when $\pi \subset \mathcal{A}_{\rm cusp}([\GL_4])$ and $\sigma \subset \mathcal{A}_{\rm cusp}([\GL_2])$ are unramified everywhere. In particular, we will show that $$ L(\tfrac{1}{2}, \pi \otimes \sigma, \rho_{\mathbf{X}}) \ne 0 \Longrightarrow \mathcal{P}_{\GL_2 \times \GL_2}|_{\pi \otimes \sigma} \not \equiv 0.$$ 
Note that the direction $\Longleftarrow$ follows by \cite[Corollary 1.3(6)]{Pollack:Wan:Zydor} and \cite[Theorem 1.1]{Watanabe:Global:Theta}, under certain additional hypotheses \footnote{The work \cite{Pollack:Wan:Zydor} assumes that $\pi$ and $\sigma$ have trivial central characters and that $L(\frac{3}{2},\pi \otimes \sigma,\wedge^2\otimes\mathrm{std}_2)\neq 0$, which is always true if $\pi$ and $\sigma$ are tempered. Under these hypotheses, they deduce that $\mathcal{P}_{\GL_2 \times \GL_2}|_{\pi \otimes \sigma} \not \equiv 0 \Longrightarrow L(\frac{1}{2},\pi \otimes \sigma,\wedge^2\otimes\mathrm{std}_2)\neq 0$. To obtain the non-vanishing of $L(\tfrac{1}{2}, \pi \otimes \sigma, \rho_{\mathbf{X}})$, one also needs a comparison between $\mathcal{P}_{\GL_2 \times \GL_2}|_{\pi \otimes \sigma}$ and generalized Shalika periods for $\Theta^0(\pi) \otimes \sigma$, which we discuss in Lemma \ref{Shalika:Equals:Period}}. 

To this end, our aim is to relate the period $\mathcal{P}_{\GL_2 \times \GL_2}|_{\pi \otimes \sigma}$ to
\begin{equation}\label{Integral:Section:Period:Final}\int_{[ \GL_1 \backslash (\GL_4 \times \GL_1)]}\varphi(r)\mathrm{E}_{Q_{(2,4)}}(e(r,a),q_{\overline{\Psi}'\boxtimes\Psi''}(\Phi_{s})) drda,\end{equation}
as it plays a key role in proving that $$J(\tfrac{1}{2},0,\varphi,f^\Psi_{\phi_{1/2}}, \eta) \ne 0  \Longrightarrow {\mathcal{P}_{ \GL_2 \times \GL_2}}|_{\pi \otimes \sigma} \not \equiv 0.$$

\begin{lemma}\label{Final:Double:Quotient}
    Let $Q_{(2,4)}$ be the parabolic subgroup of $\mathbf{GSO}_{\mathbb{H}^4}$ with Levi component equal to $\GL_2\times \mathbf{GSO}_{\mathbb{H}^2}$. We have that 
    \[Q_{(2,4)}(F)\setminus \mathbf{GSO}_{\mathbb{H}^4}(F)/e(\GL_4\times \GL_1)(F) = \{\omega_i\}_{i = 1,\cdots,7}.\]
    For $1\leq i \leq 6$, the stabilizer of $\omega_i$ in $\GL_4$ contains a normal unipotent subgroup. Moreover, the stabilizer of the (open) orbit $ \omega_7$ is isomorphic to   \[\left\{\left(\left(\begin{smallmatrix}g_1& \\ &g_2\end{smallmatrix}\right),\mathrm{det}(g_1)\right),\;g_1,g_2\in \GL_2(F)\right\}\subset \GL_4(F)\times \GL_1(F).\]
\end{lemma}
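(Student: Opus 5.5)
The plan is to translate the double coset into an orbit problem on a flag variety, as in Lemmas \ref{lemmaondoublecosetsimple}, \ref{doublecosetPtilde} and Proposition \ref{Double:Quotient:Pullback:Orthogonal}. The quotient $Q_{(2,4)}(F)\setminus \mathbf{GSO}_{\mathbb{H}^4}(F)$ parametrizes isotropic planes $S\subset \mathbb{H}^4=F^8$. For this it helps that $\mathbf{GSO}$ acts transitively on isotropic planes; the issue with two families of maximal isotropic subspaces only arises in dimension $4$. Via $e$, the group $\GL_4\times\GL_1$ is the Levi of the Siegel parabolic $Q_{(4,0)}$. It stabilizes the decomposition $\mathbb{H}^4=X\oplus Y$ into the two maximal isotropic spaces $X=\langle e_1,\dots,e_4\rangle$ and $Y=\langle f_4,\dots,f_1\rangle$, which are in duality. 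The element $r$ acts on $X$ by $r$ and on $Y\simeq X^\vee$ by the contragredient, twisted by $a$.

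I would classify $S$ by the invariants $d_X=\dim(S\cap X)$ and $d_Y=\dim(S\cap Y)$, together with the projections $p_X(S)\subset X$ and $p_Y(S)\subset Y$. Isotropy of $S$ means that the pairing $\langle p_X(s),p_Y(s')\rangle+\langle p_X(s'),p_Y(s)\rangle$ vanishes on $S$. So $S$ is determined by the pair of subspaces $(p_X S, p_Y S)$ and a linear map between suitable quotients whose associated form is alternating. There are three cases.

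\begin{enumerate}
\item $d_X+d_Y=2$: then $S=(S\cap X)\oplus(S\cap Y)$ with $S\cap Y\subset (S\cap X)^\perp$. The possible types are $(2,0)$ and $(0,2)$; $(1,1)$ splits into two orbits according to whether $S\cap Y$ pairs trivially with $S\cap X$ or not, but isotropy forces trivial pairing, so $(1,1)$ is a single orbit.
\item $d_X+d_Y=1$: then $S=\langle x\rangle\oplus\langle x'+y'\rangle$ (or symmetrically), giving two orbits.
\item $d_X=d_Y=0$: then $S$ is the graph of an injective map $\phi:p_XS\to Y$. Isotropy means the bilinear form $(u,v)\mapsto\langle u,\phi v\rangle$ on the plane $U=p_XS$ is alternating. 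Either this form is zero (one more orbit) or it is nondegenerate (the open orbit $\omega_7$). A representative of the latter is $S=\langle e_1+f_2,\, e_2-f_1\rangle$, up to signs fixed by $I_8'$.
\end{enumerate}

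I expect the count to give $7$, matching the statement. If my case split gives a different number, I would recheck the $(1,1)$ and degenerate-graph cases against \cite[\S 5]{Huang:He}.

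For the stabilizers of $\omega_1,\dots,\omega_6$, each $S$ determines a proper nonzero subspace of $X$ stabilized by the stabilizer: $S\cap X$, or $p_X S$, or the annihilator of $p_Y S$. Elements of $\GL_4$ acting trivially on the graded pieces of the resulting flag, and compatibly with the data $\phi$, form a nontrivial unipotent radical contained in the stabilizer. For the degenerate graph orbit, the stabilizer contains $\{r : r|_U=\mathrm{id},\ r(X)\subset X \text{ trivially on } X/U\}$, which is unipotent. One has to check that it is normal in each stabilizer; this follows because it is the intersection of the stabilizer with the unipotent radical of the parabolic of $\GL_4$ attached to the canonical flag.

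For $\omega_7$, the stabilizer of $S=\operatorname{graph}(\phi)$ consists of $(r,a)$ with $r(U)=U$. It must also preserve the complement $U'=\ker$ of the pairing with $\phi(U)$; equivalently, $r$ preserves $\phi(U)^\perp$, and $U\oplus U'=X$ because the form is nondegenerate. So $r=\mathrm{diag}(g_1,g_2)$ in an adapted basis, and compatibility with $\phi$ forces $a\,{}^tg_1^{-1}$ restricted to $\phi(U)$ to match $\phi g_1$. This means that $g_1$ preserves the symplectic form on $U$ up to the scalar $a$, that is $a=\det g_1$. Hence the stabilizer is $\{(\mathrm{diag}(g_1,g_2),\det g_1)\}$ with no condition on $g_2$, as claimed. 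I expect the main obstacle to be the orbit count and the normality check in the intermediate orbits, together with choosing representatives compatible with the form $I_8'$.
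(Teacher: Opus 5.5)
Your proposal is correct and follows essentially the same route as the paper. You identify $Q_{(2,4)}(F)\backslash \mathbf{GSO}_{\mathbb{H}^4}(F)$ with isotropic planes $S$, sort them by $(\dim S\cap X,\dim S\cap Y)$ to obtain the five orbits $(2,0),(0,2),(1,1),(1,0),(0,1)$, split the $(0,0)$ case according to whether the alternating form on $p_X(S)$ vanishes (this is exactly the paper's parameter $\alpha_2$), and read off the stabilizer $\{(\mathrm{diag}(g_1,g_2),\det g_1)\}$ of the open orbit, while your unipotent-radical argument for $\omega_1,\dots,\omega_6$ supplies the details the paper omits.
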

\begin{proof}
    The quotient $Q_{(2,4)}(F)\setminus \mathbf{GSO}_{\mathbb{H}^4}(F)$ can be identified with the set of flags $$\mathcal{V} := \{S\subset \mathbb{H}^4,\;\mathrm{dim}(S) = 2,\;S\;\text{isotropic}\}.$$ Set $U_1 := \langle e_1,e_2,e_3,e_4\rangle $ and $U_2 := \langle f_4,f_3,f_2,f_1\rangle$. Given $S\in \mathcal{V}$, consider the following invariants for the $\GL_4(F) \times \GL_1(F)$-orbit $\mathcal{S}$ of $S$:
    \[ \mathbf{d}_\mathcal{S} : = (\mathrm{dim}(S\cap U_1), \mathrm{dim}(S\cap U_2)).\]
    When $\mathbf{d}_\mathcal{S} \ne (0,0)$, we get 5 (closed) orbits corresponding to $(2,0),(0,2),(1,1),(1,0),(0,1)$. An elementary computation shows that the stabilizer of each of these orbits contains a normal unipotent subgroup. We omit the details. Suppose that $\mathbf{d}_\mathcal{S} = (0,0)$. For $S \in \mathcal{S}$, write
\begin{equation}\label{Expression:Projections:Final:Double}
    S = \langle x,y\rangle  = \langle p_{U_1}(x)+p_{U_2}(x),p_{U_1}(y)+p_{U_2}(y)\rangle,
    \end{equation}
    where $p_{U_i}(\cdot)$ is the projection map to the vector space $U_i$. Any element $(g,\lambda)\in \GL_4(F)\times \GL_1(F)$ acts on $S$ via
    \[ \langle x,y\rangle \cdot e(g,\lambda) = \langle  p_{U_1}(x) \cdot g +  p_{U_2}(x) \cdot \lambda g', p_{U_1}(y) \cdot  g +  p_{U_2}(y)\cdot \lambda g'\rangle,\]
    where $g' = I_4'\;^{t}g^{-1}I_4'$. After using the $\GL_4(F)$-action, we can suppose that $p_{U_1}(x) = e_1$ and $p_{U_1}(y) = e_2$. We now show there are two $ \GL_4(F) \times \GL_1(F)$-orbit $\mathcal{S}$ of planes satisfying $\mathbf{d}_\mathcal{S} = (0,0)$ with representatives equal to $\langle e_1 + f_3, e_2 + f_4 \rangle$ and $\langle e_1 - f_2, e_2 + f_1 \rangle$. Indeed, if $S =  \langle e_1+p_{U_2}(x), e_2 +p_{U_2}(y)\rangle$, we can assume  (because of isotropy of $S$)
    \begin{align*}
    p_{U_2}(x) &= \alpha_2 f_2 + \alpha_3 f_3 +\alpha_4 f_4   \\ 
    p_{U_2}(y) &= -\alpha_2 f_1 + \beta_3 f_3 +\beta_4 f_4, 
     \end{align*}    
    with $\alpha_i,\beta_i \in F$. \begin{itemize} 
    \item Suppose that  $\alpha_2 = 0$. Then, the pair $\{p_{U_2}(x),    p_{U_2}(y) \}$ defines a basis of $\langle f_3 ,f_4 \rangle$. Using the action of the Levi of the Siegel parabolic $P$ of $\GL_4(F)$, which stabilizes\footnote{As we are letting $\GL_4$ act on the right, this Siegel parabolic is contained in the lower triangular Borel of $\GL_4$} the plane $\langle e_1 , e_2 \rangle$, we can thus send $\langle e_1+p_{U_2}(x), e_2 +p_{U_2}(y)\rangle$ to $\langle e_1+f_3, e_2 +f_4\rangle$.  A direct computation shows that $\mathrm{Stab}_{\GL_4(F)\times \GL_1(F)}(\omega_6)$ contains, as a normal subgroup, the unipotent radical of $P$. 
    \item Suppose that $\alpha_2 \ne 0$. Use the action of the unipotent radical of $P$ to send $\langle e_1+p_{U_2}(x), e_2 +p_{U_2}(y)\rangle$ to $ \langle e_1 +  \alpha_2 f_2, e_2 -\alpha_2 f_1\rangle$. Finally, the action of $e(\{ I_4\} \times F^\times)$ let us send $S$ to $\omega_7 :=\langle e_1 - f_2, e_2 + f_1 \rangle $. By direct computation, its stabilizer is given by \[\left\{\left(\left(\begin{smallmatrix}g_1& \\ &g_2\end{smallmatrix}\right),\mathrm{det}(g_1)\right),\;g_1,g_2\in \GL_2(F)\right\}\subset \GL_4(F)\times \GL_1(F).\]
    \end{itemize}
\end{proof}
\begin{remark}
In matrix form, the representative of the open orbit can be chosen to be
\[\omega_7 =\left(\begin{smallmatrix}
I_2 &  &  -A \\
 & I_4 &  \\
A & &  0_2
\end{smallmatrix}\right), \text{ with } A =\left(\begin{smallmatrix} 1 & 0 \\ 0 & -1 
\end{smallmatrix}\right).\] 
\end{remark}

Note that the section $q_{\overline{\Psi}'\boxtimes\Psi''}(\Phi_{s})$ gives a family of cusp forms on $ \overline{\sigma}\boxtimes \sigma$. To simplify exposition, we give the following.
\begin{notation}
   For any $(r,a) \in \GL_4(\A) \times \GL_1(\A)$ and $s \in \C$, denote by  \[(\Lambda_{r,a,s}'\boxtimes\Lambda''_{r,a,s})(h) := q_{(\overline{\sigma} \boxtimes \sigma)(h^{-1})\cdot\overline{\Psi}'\boxtimes\Psi''}(\Phi_{s})(\omega_7e(r,a)).\] 
   \end{notation}
Concretely, $\Lambda_{r,a,s}'\boxtimes\Lambda''_{r,a,s}$ is a cusp form of $\overline{\sigma} \boxtimes  \sigma$, depending, in particular, on the section $\Phi_s$ and the cusp form $\overline{\Psi}'\boxtimes\Psi''$ of $\overline{\sigma} \boxtimes  \sigma$.

\begin{proposition}\label{Unfolding:Spherical:Period} When $\mathrm{Re}(s)>>0$, the integral \eqref{Integral:Section:Period:Final} equals 
      \begin{align*}
        \int_{ \mathbf{Y}(\A)^\circ } \Lambda_{r,a,s}'(I_2)\int_{Z_{\GL_4}(\A)\backslash [\GL_2\times \GL_2]} \!\!\!\!\varphi({\rm diag}(g_1,g_2) r) \Lambda''_{r,a,s}(g_2) dg_1dg_2drda.  
             \end{align*}
   where  $\mathbf{Y}(\A)^\circ :=  (\GL_2 \times \GL_2)(\A) \backslash (\GL_4 \times \GL_1)(\A)$.
\end{proposition}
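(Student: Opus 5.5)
The proof follows the unfolding pattern already used in Theorem \ref{Final:Theorem:Central:Value} and Theorem \ref{PullbackFormulaNew}. For $\mathrm{Re}(s)\gg0$ the Eisenstein series $\mathrm{E}_{Q_{(2,4)}}(x,q_{\overline{\Psi}'\boxtimes\Psi''}(\Phi_{s}))$ converges absolutely and $\varphi$ is rapidly decreasing, so summation and integration can be interchanged. We therefore replace the sum over $Q_{(2,4)}(F)\backslash \mathbf{GSO}_{\mathbb{H}^4}(F)$ by the double cosets of Lemma \ref{Final:Double:Quotient}. This writes \eqref{Integral:Section:Period:Final} as a sum over $i=1,\dots,7$ of
\[\int_{\mathrm{Stab}(\omega_i)\GL_1(\A)\backslash (\GL_4\times\GL_1)(\A)}\varphi(r)\,q_{\overline{\Psi}'\boxtimes\Psi''}(\Phi_s)(\omega_i e(r,a))\,dr\,da,\]
where $\mathrm{Stab}(\omega_i)$ is taken over $F$ and $\GL_1$ is embedded by \eqref{Embedding:GL1}.

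For $1\le i\le 6$, Lemma \ref{Final:Double:Quotient} says the stabilizer in $\GL_4$ contains a normal unipotent subgroup $U_i$. We factor the integral through $U_i(F)\backslash U_i(\A)$. There are two cases.
\begin{itemize}
\item If $\omega_i U_i\omega_i^{-1}$ lands in the unipotent radical $N_{(2,4)}$, the section is invariant under it. Integrating $\varphi$ over $[U_i]$ then produces a constant term of $\varphi$ along the unipotent radical of a proper parabolic of $\GL_4$, which vanishes by cuspidality.
\item Otherwise, part of $\omega_i U_i\omega_i^{-1}$ lands in the unipotent radical of a Borel or parabolic of the Levi $\GL_2\times\mathbf{GSO}_{\mathbb{H}^2}$. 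Then the section, whose values are cusp forms in $\overline{\sigma}\boxtimes\sigma$, integrates to a constant term of $\sigma$, which vanishes.
\end{itemize}
The main obstacle is checking, orbit by orbit, which of the two cases applies. The paper omitted these stabilizer details, so this has to be done with explicit representatives built from the flag invariants $\mathbf{d}_\mathcal{S}$. For $\omega_6=\langle e_1+f_3,e_2+f_4\rangle$, we expect the unipotent radical of the (lower) Siegel parabolic $P$ to act and give the $\varphi$-constant term along $P$.

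Only the open orbit $\omega_7$ survives. Its stabilizer is $H=\{(\mathrm{diag}(g_1,g_2),\det g_1)\}\simeq\GL_2\times\GL_2$, and $H\cap\GL_1$ is the diagonal central copy. We collapse the sum over $H(F)$ and write the outer domain as $H(\A)\backslash(\GL_4\times\GL_1)(\A)=\mathbf{Y}(\A)^\circ$, which gives
\[\int_{\mathbf{Y}(\A)^\circ}\int_{Z_{\GL_4}(\A)\backslash[\GL_2\times\GL_2]}\varphi(\mathrm{diag}(g_1,g_2)r)\,q(\Phi_s)\big(\omega_7 e(\mathrm{diag}(g_1,g_2),\det g_1)e(r,a)\big)\,dg\,dr\,da.\]
It remains to commute $\omega_7 e(\mathrm{diag}(g_1,g_2),\det g_1)$ past $\omega_7$. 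A direct matrix computation with the explicit $\omega_7$ and $A=\mathrm{diag}(1,-1)$ shows that $\omega_7 e(\mathrm{diag}(g_1,g_2),\det g_1)\omega_7^{-1}$ lies in the Levi $M_{(2,4)}$. Its $\GL_2$-component is essentially $g_1$ conjugated by $A$, and its $\mathbf{GSO}_{\mathbb{H}^2}$-component is $\rho(\,\cdot\,,\cdot\,)$ of a pair involving $g_1$ and $g_2$ as in \eqref{eq:exceptionalisomorphismsplit}.

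The modulus factor $|\det l|^{s+5/2}|\nu'|^{-s-5/2}$ becomes trivial because $\det g_1=\nu'$. By the definition of $\Lambda'_{r,a,s}\boxtimes\Lambda''_{r,a,s}$ (translation by $h^{-1}$), the section value equals $(\Lambda'_{r,a,s}\boxtimes\Lambda''_{r,a,s})(\rho(\cdot,\cdot))$ evaluated on that pair. We expect the $\Lambda'$ argument to trivialize to $I_2$, since $g_1$ is absorbed by $\rho$'s quotient by the center and by the $\GL_2$-equivariance in Remark \ref{Remark:Siegel:Parabolic:Exceptional}, while the $\Lambda''$ argument becomes $g_2$. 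Tracking exactly how $\rho$ decomposes this Levi element, so that only $\Lambda''(g_2)$ survives and the integrand reduces to $\Lambda'_{r,a,s}(I_2)$, is the delicate bookkeeping step. If the first component does not collapse exactly, I would first try reparametrizing $g_2\mapsto g_2 g_1$-type substitutions, using invariance of the inner measure.
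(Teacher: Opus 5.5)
Your skeleton (unfold, discard $\omega_1,\dots,\omega_6$, collapse the open orbit) is the paper's. However, the step you flag as delicate does not go the way you expect.

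In fact $e(\mathrm{diag}(g_1,g_2),\det g_1)$ commutes with $\omega_7$ and already lies in $M_{(2,4)}$. It equals $m(g_1,h)$ with $h=\mathrm{diag}(g_2,\det(g_1)\,I_2'\,{}^tg_2^{-1}I_2')$. By Remark~\ref{Remark:Siegel:Parabolic:Exceptional}, $h=\rho(t_{g_1,g_2},g_2)$ with $t_{g_1,g_2}=\mathrm{diag}(1,\det(g_1)^{-1}\det(g_2))$. So the integrand you actually get is $\varphi(\mathrm{diag}(g_1,g_2)r)\,\Lambda'_{r,a,s}(t_{g_1,g_2})\,\Lambda''_{r,a,s}(g_2)$, not $\Lambda'_{r,a,s}(I_2)\Lambda''_{r,a,s}(g_2)$. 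The factor $\Lambda'_{r,a,s}(t_{g_1,g_2})$ depends on $\det g_1/\det g_2$. It is therefore not absorbed by the quotient by $\{(z,z)\}$, which only moves scalars. Nor can it be removed by substitutions like $g_2\mapsto g_2g_1$ inside $[\GL_2\times\GL_2]$.

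The device your proposal lacks is the paper's: insert $\alpha=e(\mathrm{diag}(t_{g_1,g_2},I_2),\det t_{g_1,g_2})$. It commutes with $\omega_7$ and turns the $\mathbf{GSO}_{\mathbb{H}^2}$-block into $\tilde g_2=\mathrm{diag}(g_2,\det(g_2)I_2'\,{}^tg_2^{-1}I_2')\leftrightarrow(I_2,g_2)$. The paper then pushes $\alpha^{-1}=e(m_{g_1,g_2},\det(g_1)\det(g_2)^{-1})$ onto $e(r,a)$ and changes variables in $(r,a)$ and then $g_1t_{g_1,g_2}\mapsto g_1$.

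Be warned that this last map sends $\GL_2\times\GL_2$ into $\{\det g_1=\det g_2\}$, so it is not a bijection. Since $\alpha$ lies in the stabilizer $H(\A)$, the device really just splits $H=\mathbf{J}\cdot T'$, where $T'$ is the image of $y\mapsto(\mathrm{diag}(1,y,1,1),y)$. Done carefully, it yields $\Lambda'_{r,a,s}(I_2)$ times an inner integral of $\varphi(\mathrm{diag}(g_1,g_2)r)\Lambda''_{r,a,s}(g_2)$ over $Z_{\GL_4}(\A)\backslash[\mathbf{J}]$. The outer domain is then $H(F)\mathbf{J}(\A)\backslash(\GL_4\times\GL_1)(\A)$. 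Equivalently, one keeps $\Lambda'_{r,a,s}(t_{g_1,g_2})$ inside the $[\GL_2\times\GL_2]$-integral.

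Your prediction for $\omega_6$ is also wrong. Take $u=\left(\begin{smallmatrix}I_2&0\\X&I_2\end{smallmatrix}\right)\in U_P$ and $S=\langle e_1+f_3,e_2+f_4\rangle$. Then $e(u,1)$ fixes $S$ pointwise, but on $S^\perp/S$ it acts by
$\overline{e_3-f_1}\mapsto\overline{e_3-f_1}+(X_{12}-X_{21})\,\bar e_2$ and $\overline{e_4-f_2}\mapsto\overline{e_4-f_2}-(X_{12}-X_{21})\,\bar e_1$.
Hence $\omega_6e(u,1)\omega_6^{-1}$ has trivial $\GL_2$-component but a non-trivial component in the one-parameter unipotent $N_{(2,0)}\subset\mathbf{GSO}_{\mathbb{H}^2}$. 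Only the symmetric part of $X$ lands in $N_{(2,4)}$. So the section is not $U_P$-invariant, and integrating over $[U_P]$ gives no constant term of $\varphi$.

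This mixed case is covered by neither of your two cases. The $[U_P]$-integral pairs the non-constant Fourier modes of the cusp form in $\overline{\sigma}\boxtimes\sigma$ along $N_{(2,0)}$ with Fourier coefficients of $\varphi$ along $U_P$ for the rank-two characters $X\mapsto\psi(\xi(X_{12}-X_{21}))$. Cuspidality of $\varphi$ does not kill these. Collapsing the remaining stabilizer then leads to a generalized-Shalika-type period rather than to zero. The paper only asserts that orbits $1$--$6$ vanish, so an orbit-by-orbit check along your lines cannot close this case without a genuinely different argument, or without keeping this term.
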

\begin{proof}
    We proceed as in Theorem \ref{Final:Theorem:Central:Value} by unfolding the integral \eqref{Integral:Section:Period:Final}. For ${\rm Re}(s)$ big enough, the Eisenstein series unfolds and, by Lemma \ref{Final:Double:Quotient} and the cuspidality of $\varphi$ and $\Psi'\boxtimes\Psi''$, \eqref{Integral:Section:Period:Final} is shown to be equal to    \begin{equation}\label{First:Eq:Prop:Final:Unfolding}\int_{ (\GL_2 \times \GL_2)(F) \GL_1(\A) \backslash (\GL_4 \times \GL_1)(\A)}\varphi(r) q_{\overline{\Psi}'\boxtimes\Psi''}(\Phi_{s})(\omega_7 e(r,a)) drda,\end{equation}  
    where we recall that $\GL_1$ embeds into $\GL_4\times \GL_1$ via \eqref{Embedding:GL1}. Collapsing the integral over $[\GL_2 \times \GL_2]$, we get 
\begin{equation}\label{Second:Eq:Prop:Final:Unfolding}\int_{ \mathbf{Y}(\A)^\circ } \int_{Z_{\GL_4}(\A)\backslash [\GL_2\times \GL_2]} \!\!\!\!\!\!\!\!\!\!\! \varphi({\rm diag}(g_1,g_2)r) q_{\overline{\Psi}'\boxtimes\Psi''}(\Phi_{s})(\omega_7 e({\rm diag}(g_1,g_2) r,\det(g_1) a)) drda.\end{equation} 
    Consider the element
    \[\alpha := \left(\begin{smallmatrix}1& & & & &\\ &\mathrm{det}(g_1)^{-1}\mathrm{det}(g_2)& & & & \\ & & I_2& & &\\  & & &\mathrm{det}(g_1)^{-1}\mathrm{det}(g_2)I_2 & & \\   & & & &1 & \\   & & & & &\mathrm{det}(g_1)^{-1}\mathrm{det}(g_2)\end{smallmatrix}\right)\in  Q_{(2,4)}(\A),\]
which commutes with $\omega_7$. Note
    \[\omega_7 e(\mathrm{diag}(g_1,g_2),\mathrm{det}(g_1)) \alpha \omega_7^{-1} = \left(\begin{smallmatrix}\mathrm{det}(g_2)A'\;^t( g_1 \cdot t_{g_1,g_2} )^{-1}A'^{-1}  & & & \star\\ &g_2& & \\ & &\mathrm{det}(g_2)I_2'\;^tg_2^{-1}I_2'& \\ & & &A g_1 \cdot t_{g_1,g_2} A^{-1}\end{smallmatrix}\right)\in Q_{(2,4)}(\A),\]
    where $A= \left(\begin{smallmatrix}1& \\ &-1\end{smallmatrix}\right)$, $ A':= \left(\begin{smallmatrix} & -1\\1& \end{smallmatrix}\right)$, and  $t_{g_1,g_2} := \left(\begin{smallmatrix}1 & \\ &\mathrm{det}(g_1)^{-1}\mathrm{det}(g_2)\end{smallmatrix}\right)$. Thus, multiplying and dividing by $\alpha$ to the right of $\omega_7e(\mathrm{diag}(g_1,g_2),\mathrm{det}(g_1))\omega_7^{-1}$, by Proposition \ref{Proposition:Pullback:Orthogonal} (1), we have that the integral \eqref{Second:Eq:Prop:Final:Unfolding} equals
        \begin{align*}
     \int_{ \mathbf{Y}(\A)^\circ } \int_{Z_{\GL_4}(\A)\backslash [\GL_2\times \GL_2]} \!\!\!\!\varphi({\rm diag}(g_1,g_2)r) 
       q_{(\overline{\sigma} \boxtimes \sigma)(\tilde{g}_2^{-1}) \cdot \overline{\Psi}'\boxtimes\Psi''}(\Phi_{s})(\omega_7 \alpha^{-1}e(r,a))dg_1dg_2drda,
       \end{align*}
       where we denoted $\tilde{g}_2 := \left(\begin{smallmatrix}g_2& \\ &\mathrm{det}(g_2)I_2'\;^tg_2^{-1}I_2'\end{smallmatrix}\right)$. Since $\alpha^{-1} = e\left(m_{g_1,g_2}, \mathrm{det}(g_1)\mathrm{det}(g_2)^{-1}\right)$, where $$m_{g_1,g_2} := \left(\begin{smallmatrix}1 & & & \\ &\mathrm{det}(g_1)\mathrm{det}(g_2)^{-1}& & \\ & &1 & \\ & & &1\end{smallmatrix}\right)\in \GL_4(\A),$$  we do the change of variables \[\mathrm{det}(g_1)\mathrm{det}(g_2)^{-1}a\mapsto a,\;\;\;\;m_{g_1,g_2}r\mapsto r,\]
       to obtain
       \begin{align*}
       &\int_{ \mathbf{Y}(\A)^\circ } \int_{Z_{\GL_4}(\A)\backslash [\GL_2\times \GL_2]} \!\!\!\!\varphi({\rm diag}(g_1 ,g_2) m_{g_1,g_2}^{-1} r) 
       q_{(\overline{\sigma} \boxtimes \sigma)(\tilde{g}_2^{-1}) \cdot \overline{\Psi}'\boxtimes\Psi''}(\Phi_{s})(\omega_7 e(r,a))dg_1dg_2drda \\ 
        &=\int_{ \mathbf{Y}(\A)^\circ } \int_{Z_{\GL_4}(\A)\backslash [\GL_2\times \GL_2]} \!\!\!\!\varphi({\rm diag}(g_1 t_{g_1,g_2},g_2) r) 
       q_{(\overline{\sigma} \boxtimes \sigma)(\tilde{g}_2^{-1}) \cdot \overline{\Psi}'\boxtimes\Psi''}(\Phi_{s})(\omega_7 e(r,a))dg_1dg_2drda .
        \end{align*}
      Lastly, we do the change of variables $g_1t_{g_1,g_2}\mapsto g_1$, so that the above integral equals 
        \begin{align*}
        \int_{ \mathbf{Y}(\A)^\circ } \int_{Z_{\GL_4}(\A)\backslash [\GL_2\times \GL_2]} \!\!\!\!\varphi({\rm diag}(g_1,g_2) r) 
       q_{(\overline{\sigma} \boxtimes \sigma)(\tilde{g}_2^{-1}) \cdot \overline{\Psi}'\boxtimes\Psi''}(\Phi_{s})(\omega_7 e(r,a))dg_1dg_2drda.  
             \end{align*}
      The result now follows from the fact that, via the exceptional isomorphism, $ \tilde{g}_2$ corresponds to $(I_2, g_2)$ (see Remark \ref{Remark:Siegel:Parabolic:Exceptional}). Then 
    \begin{align*}q_{(\overline{\sigma} \boxtimes \sigma)(\tilde{g}_2^{-1}) \cdot \overline{\Psi}'\boxtimes\Psi''}(\Phi_{s})(\omega_7e(r,a)) = \left( \Lambda_{r,a,s}'\boxtimes\Lambda''_{r,a,s}\right)(\tilde{g_2}) = \Lambda_{r,a,s}'(I_2)\Lambda''_{r,a,s}(g_2),
    \end{align*}
    obtaining the result. 
\end{proof}
\begin{theorem}\label{Main:Theorem:Spherical:Period}
     Let $\pi$ and $\sigma$ be cuspidal automorphic representations  of $\GL_4$ and $\GL_2$ respectively, such that $\omega_\pi  \omega_\sigma = \mathbf{1}.$
     Assume that $\phi_{1/2} = \otimes_v \phi_{1/2,v} \in  I_{P_{(3,0)}}(1/2, \mathbf{1})$ satisfies the hypothesis \eqref{Spl}. Then, for any cusp form $\varphi = \otimes_v \varphi_v\in \pi$ and Schwartz function $\eta = \prod_v \eta_v \in \mathcal{S}(M_4(\A))$ with support outside $0$ and unramified outside $S$, we have that if
     \begin{equation}\label{Lfunctionsandzetaintegrals} L( \tfrac{1}{2},\pi,{\rm std}_4)L^S(\tfrac{1}{2},\pi \otimes \sigma,\wedge^2 \otimes \mathrm{std}_2) \prod_{v \in S} J^\star(1/2,0,\varphi_v,f_{\phi_{1/2,v}}^\Psi,\eta_v)\neq 0,\end{equation}
     there exists a cusp form $\Lambda \in \sigma$, depending on $f$ and $\eta$, such that $\mathcal{P}_{\GL_2 \times \GL_2}(\varphi,\Lambda) \ne 0$. 
\end{theorem}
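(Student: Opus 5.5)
By Corollary \ref{Corollary:Multi:Variable:Integral}, the hypothesis \eqref{Lfunctionsandzetaintegrals} says exactly that $J(\tfrac12,0,\varphi,f^\Psi_{\phi_{1/2}},\eta)\neq 0$. The plan is to pass from this non-vanishing to the spherical period in three steps. First, use \eqref{Spl} to decompose $f^{\mathbb{H}^2}$ into factorizable pieces. Second, apply Corollary \ref{Corollary:Pullback:Orthogonal} at $z=0$. Third, unfold the resulting orthogonal Eisenstein series via Proposition \ref{Unfolding:Spherical:Period}.

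\textbf{Reduction to pure tensors.} Since $\mathcal{S}(\mathbb{H}^2(\A))\otimes\mathcal{S}(\mathbb{H}^2(\A)^2)$ is dense in $\mathcal{S}(\mathbb{H}^2(\A)^3)$, and the whole expression is linear and continuous in $f^{\mathbb{H}^2}$, we may assume $f^{\mathbb{H}^2}$ is a finite sum of pure tensors $f_1\otimes f_2$. At least one such term then gives a nonzero contribution. Corollary \ref{Corollary:Pullback:Orthogonal} at $z=0$ then yields
\[
0\neq \int_{[\GL_1\backslash(\GL_4\times\GL_1)]}\varphi(r)\,E_{Q_{(2,4)}}\bigl(e(r,a),q_{\overline{\Psi}'\boxtimes\Psi''}(\Phi_{-1/2})\bigr)\,dr\,da .
\]
This is the value at $s=-\tfrac12$ of the meromorphic family \eqref{Integral:Section:Period:Final}. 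Proposition \ref{Proposition:Pullback:Orthogonal}(3) supplies the meromorphic continuation of the section. Because $\varphi$ is rapidly decreasing and the Eisenstein series has moderate growth, the integral \eqref{Integral:Section:Period:Final} is meromorphic in $s$. Its being nonzero at $s=-\tfrac12$ forces it to be not identically zero, so it is nonzero for some $s$ with $\mathrm{Re}(s)\gg0$.

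\textbf{Unfolding.} For such $s$, Proposition \ref{Unfolding:Spherical:Period} rewrites the integral as
\[
\int_{\mathbf{Y}(\A)^\circ}\Lambda'_{r,a,s}(I_2)\,\mathcal{P}_{\GL_2\times\GL_2}\bigl(\pi(r)\varphi,\Lambda''_{r,a,s}\bigr)\,dr\,da .
\]
If $\mathcal{P}_{\GL_2\times\GL_2}$ vanished on all of $\pi\otimes\sigma$, the inner integral would vanish for every $(r,a)$, and the whole integral would be zero, a contradiction. Hence there is some $(r,a)$ with $\mathcal{P}_{\GL_2\times\GL_2}(\pi(r)\varphi,\Lambda''_{r,a,s})\neq0$.

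\textbf{Removing the translate $\pi(r)$.} The statement wants the period nonzero at $\varphi$ itself, not at $\pi(r)\varphi$. The inner period, viewed as a function of $r$, is left $\GL_2\times\GL_2$-equivariant. I would therefore try one of two routes. The first is to absorb $r$ into the cusp form $\Lambda$ by a change of variables. The second is to argue directly that, as a function of $r$, the period $\mathcal{P}(\pi(r)\varphi,\Lambda''_{r,a,s})$ is nonzero near $r=1$; this would use continuity together with the freedom to translate the data, replacing $\varphi$ and $\Phi_s$ by translates, which the theorem's "depending on $f$ and $\eta$" permits.

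\textbf{Main obstacle.} I expect the hardest step to be justifying the passage between $s=-\tfrac12$ and $\mathrm{Re}(s)\gg0$. Concretely, one must verify that $q_{\overline{\Psi}'\boxtimes\Psi''}(\Phi_s)$, together with the Eisenstein series and the outer integral over $\mathbf{Y}(\A)^\circ$, extend meromorphically in a compatible way, so that non-vanishing at one point propagates to identical non-vanishing of the unfolded expression. My first attempt would be the analytic-continuation argument already used in Theorem \ref{Final:Theorem:Central:Value}.
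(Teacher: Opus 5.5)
Your chain is the paper's proof. That proof simply cites Corollary~\ref{Corollary:Multi:Variable:Integral} (to get $J(\tfrac12,0,\varphi,f^\Psi_{\phi_{1/2}},\eta)\neq0$), then Corollary~\ref{Corollary:Pullback:Orthogonal} at $z=0$, then Proposition~\ref{Unfolding:Spherical:Period}. You correctly handle the two points the paper leaves implicit. The first is the reduction to $f^{\mathbb{H}^2}=f_1\otimes f_2$ by linearity. The second is the passage from $s=-\tfrac12$ to $\mathrm{Re}(s)\gg0$, which you do by applying the identity theorem to the meromorphic function \eqref{Integral:Section:Period:Final}; this is the same device used in the proof of Theorem~\ref{Final:Theorem:Central:Value}. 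So what you call the main obstacle is already settled by the argument you wrote.

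The step you leave open, removing the translate $\pi(r)$, is not carried out in the paper, and neither of your routes would achieve it. A change of variables can only absorb $r\in\mathrm{diag}(\GL_2,\GL_2)(\A)$, and that is exactly what has already been quotiented out in $\mathbf{Y}(\A)^\circ$. The vector $\varphi$ is fixed in the statement, so you may not replace it by a translate. Continuity only tells you that $r\mapsto\Lambda'_{r,a,s}(I_2)\,\mathcal{P}_{\GL_2\times\GL_2}(\pi(r)\varphi,\Lambda''_{r,a,s})$ is nonzero on some open set of cosets, and that set need not contain the identity coset.

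There is no soft fix either. The space of $\varphi'\in\pi$ with $\mathcal{P}_{\GL_2\times\GL_2}(\varphi',\Lambda)=0$ for all $\Lambda\in\sigma$ contains every $\pi(\mathrm{diag}(h,I_2))\varphi''-\varphi''$ with $h\in\GL_2(\A)$ and $\varphi''\in\pi$. It is therefore nonzero, and it is not $\GL_4(\A)$-stable once the period is nonzero. So non-vanishing at $\pi(r)\varphi$ does not transfer back to $\varphi$ without genuinely new input, for example local multiplicity one together with non-vanishing of the local functionals at the given $\varphi_v$.

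What the unfolding actually proves, in your write-up and in the paper's, is that $\mathcal{P}_{\GL_2\times\GL_2}(\pi(r)\varphi,\Lambda)\neq0$ for some $r\in\GL_4(\A)$ and $\Lambda\in\sigma$, i.e.\ $\mathcal{P}_{\GL_2\times\GL_2}|_{\pi\otimes\sigma}\not\equiv0$. That is the only form in which the theorem is used (Corollary~\ref{Main:Corollary:Spherical:1}). You should stop at the sentence producing such an $(r,a)$ and state the conclusion in that form, rather than try to force $r=1$.
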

\begin{proof}
  It follows from Corollary \ref{Corollary:Multi:Variable:Integral}, Corollary \ref{Corollary:Pullback:Orthogonal}, and Proposition \ref{Unfolding:Spherical:Period}.
\end{proof}

\begin{remark}\label{Main:remark:after:MTSP}
    We note that we could formulate Theorem \ref{Main:Theorem:Spherical:Period} in the same style as Theorem \ref{Final:Theorem:Central:Value} without imposing the hypothesis \eqref{Spl}. In this case, the non-vanishing of \eqref{Lfunctionsandzetaintegrals} 
    implies the non-vanishing of either $\mathcal{P}_{\GL_2 \times \GL_2}|_{\pi \otimes \sigma}$ or of the period \eqref{intermediateseesawintegral} for some non-split quaternionic quadratic space $V=D$ of dimension $4$ over $F$. However, we do not currently know how to relate \eqref{intermediateseesawintegral} to the linear period  $\mathcal{P}_{D^\times \times D^\times}$  - see Remark \ref{remark_on_spl}.
\end{remark}

\begin{corollary}\label{Main:Corollary:Spherical:1}
    Let $\pi$ and $\sigma$ be cuspidal automorphic representations of $\GL_4$ and $\GL_2$ respectively, such that $\omega_\pi  \omega_\sigma = \mathbf{1}$. \begin{enumerate}
        \item Assume that there exist $\phi \in \Pi(\mathbb{H}^2)$, $\eta = \otimes_v \eta_{v}\in \mathcal{S}(M_4(\A))$, $\Psi = \otimes_v \Psi_v \in \sigma$, and $\varphi = \otimes_v \varphi_v \in \pi$ such that
    \[\prod_{v \in S} J^\star(1/2,0,\varphi_v,f^{\Psi}_{\phi,v},\eta_v)\neq 0.\]
    Then \[L( \tfrac{1}{2},\pi,\mathrm{std}_4)L( \tfrac{1}{2},\pi,\mathrm{std}_4^{\vee})L^S(\tfrac{1}{2},\pi \otimes \sigma,\wedge^2 \otimes \mathrm{std}_2)\neq 0 \Longrightarrow {\mathcal{P}_{ \GL_2 \times \GL_2}}|_{\pi \otimes \sigma} \not \equiv 0.\]
    \item If $\pi$ and $\sigma$ are unramified everywhere, we have
    \[L( \tfrac{1}{2}, \pi \otimes \sigma, \rho_{\mathbf{X}}) \neq 0 \Longrightarrow {\mathcal{P}_{ \GL_2 \times \GL_2}}|_{\pi \otimes \sigma} \not \equiv 0.\]
    \end{enumerate} 
\end{corollary}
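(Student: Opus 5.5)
For part (1), the plan is to reduce to Theorem \ref{Main:Theorem:Spherical:Period}, with Watanabe's theta correspondence handling the two standard $L$-values. Assume the product of $L$-values is non-zero.

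First, $L(\tfrac12,\pi,\mathrm{std}_4)\neq 0$. By Proposition \ref{Watanabe:Theta:Main} this gives $\Theta^0(\pi)=\pi$. Lemma \ref{Support:Away:From:0} then lets us realize any cusp form of $\pi$ as $\varphi^0_\eta$ with $\eta$ supported away from $0$. This is the support condition needed in Proposition \ref{Product:Theta:Functions} and, through it, in Theorem \ref{Main:Theorem:Spherical:Period}.

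Next, take the data $\phi\in\Pi(\mathbb{H}^2)$, $\eta$, $\Psi$, $\varphi$ from the hypothesis, for which $\prod_{v\in S}J^\star\neq 0$. We need this $\eta$ to be supported away from $0$. If it is not, we modify it by the Hecke-algebra trick from the proof of Lemma \ref{Support:Away:From:0}: replace $(\eta,\varphi)$ by $(\eta*\alpha,\alpha*\varphi)$, which leaves $\varphi^z_\eta$ unchanged. Since the local factors $J^\star$ depend on the data only through $W_{\varphi,\eta}^z$, the product $\prod_{v\in S}J^\star$ is unchanged as well. The other local condition is $\phi_{1/2}=\phi$, which holds by choice, so \eqref{Spl} is satisfied.

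With these choices, \eqref{Lfunctionsandzetaintegrals} is non-zero: $L(\tfrac12,\pi,\mathrm{std}_4)\neq 0$, $L^S(\tfrac12,\pi\otimes\sigma,\wedge^2\otimes\mathrm{std}_2)\neq0$, and the $J^\star$ product is non-zero. Theorem \ref{Main:Theorem:Spherical:Period} then produces $\Lambda\in\sigma$ with $\mathcal{P}_{\GL_2\times\GL_2}(\varphi,\Lambda)\neq0$. The factor $L(\tfrac12,\pi,\mathrm{std}_4^\vee)$ is not actually used in this direction. It is included only so that the hypothesis matches $\rho_{\mathbf{X}}$, and by the functional equation it is non-zero exactly when $L(\tfrac12,\pi,\mathrm{std}_4)$ is.

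For part (2), the completed $L$-value factors as $L(\tfrac12,\pi\otimes\sigma,\rho_{\mathbf X})=L(\tfrac12,\wedge^2\otimes\mathrm{std}_2)\,L(\tfrac12,\mathrm{std}_4)\,L(\tfrac12,\mathrm{std}_4^\vee)$. We take $S$ to be the archimedean places only, since $\pi$ and $\sigma$ are unramified everywhere, and we must verify the local hypothesis of (1) at each archimedean $v$. We choose $\phi$ via Lemma \ref{Non:Vanishing:Pullback}(2), so that $p_\Psi(\phi)$ is a non-zero spherical section. We take $\eta_v$ to be the Gaussian. Its contribution is $W^z_{\varphi_v,\eta_v}=L(z+\tfrac12,\pi_v)|\det|^z W_{\varphi_v}$, the archimedean analogue of \eqref{eq_Godement_Jacquet} for the Godement--Jacquet integral. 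Then $J^\star_v$ is the spherical zeta integral. By Theorem \ref{Theorem_Arch_computation_split} this equals $L(s,\pi_v\otimes\sigma_v,\wedge^2\otimes\mathrm{std}_2)/L(2s,\sigma_v,\mathrm{Sym}^2\otimes\omega_{\pi_v})$, up to the non-zero constant scaling of $p_\Psi(\phi)_v$. At $s=\tfrac12$ this is finite and non-zero, because Gamma factors never vanish and the numerator has no pole there by the non-vanishing of the completed value.

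The main obstacle is this archimedean step. We must verify that the Gaussian choice is compatible with the support-away-from-$0$ requirement, which is a global condition. The plan is to apply the Hecke modification only at a finite place and then absorb it using the equivariance argument above, which leaves $\prod_{v\in S}J^\star$ unchanged. Once that is checked, part (1) yields the claim.
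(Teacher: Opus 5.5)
Your argument for both parts follows the paper's proof. Part (1) is Theorem \ref{Main:Theorem:Spherical:Period} together with the functional-equation remark that $L(\tfrac12,\pi,\mathrm{std}_4)$ and $L(\tfrac12,\pi,\mathrm{std}_4^\vee)$ vanish simultaneously. Part (2) combines spherical data, the unramified section from Lemma \ref{Non:Vanishing:Pullback}(2), and the archimedean computation of Theorem \ref{Theorem_Arch_computation_split}.

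\textbf{Where you go beyond the paper.} Theorem \ref{Main:Theorem:Spherical:Period} needs $\eta$ to vanish near $0$. This fails for the unramified/Gaussian $\eta$ used in (2), and the paper does not address it, so your Hecke modification at a finite place $v_0$ fills a real omission. State it accurately, though.
\begin{itemize}
\item Invariance of the theta kernel gives $\varphi^z_{\eta*\alpha}=(\alpha^\vee*\varphi)^z_\eta$, uniformly in $z$ if $\alpha$ is supported on $|\det|=1$. It does not give $(\alpha*\varphi)^z_{\eta*\alpha}=\varphi^z_\eta$.
\item For spherical $\varphi_{v_0}$ and bi-$K$-invariant $\alpha$, the lift is therefore multiplied by the Satake value $\widehat{\alpha^\vee}(\pi_{v_0})$. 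You can keep $\varphi$ and change only $\eta_{v_0}$.
\item Choose $\alpha$ with $\int\alpha=0$, so that $\eta*\alpha$ vanishes near $0$, and with $\widehat{\alpha^\vee}(\pi_{v_0})\neq 0$. This is possible because the Satake parameters of $\pi_{v_0}$ and of the trivial representation differ.
\item What survives is the global $J(\tfrac12,0,\dots)$ up to a non-zero scalar. The product over the old $S$ is not what survives, since $v_0$ becomes ramified. The global $J$ is all that the proof of Theorem \ref{Main:Theorem:Spherical:Period} uses.
\end{itemize}

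\textbf{One inference in (2) is wrong as written.} Non-vanishing of the completed value does not show that $L(s,\pi_v\otimes\sigma_v,\wedge^2\otimes\mathrm{std}_2)$ has no pole at $s=\tfrac12$. Such a pole could be cancelled by a zero of $L^S$. In that case the completed value is non-zero, but $L^S(\tfrac12,\dots)=0$ and the hypothesis of (1) fails.

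You can repair this in either of two ways.
\begin{itemize}
\item Use Proposition \ref{non:vanishing:and:abs:conv:zeta:bad}(i), equivalently Lemma \ref{J_ram_lemma}: $J^\star_v$ converges absolutely at $s=\tfrac12$. The denominator $L(1,\sigma_v,\mathrm{Sym}^2\otimes\omega_{\pi_v})=L(1,\sigma_v,\mathrm{Ad})$ is finite for unitary generic $\sigma_v$. So the archimedean numerator is finite, and non-zero as a product of Gamma values, whence $L^S(\tfrac12,\dots)\neq 0$.
\item Do as the paper does and take $S=\emptyset$. Then Corollary \ref{Corollary:Multi:Variable:Integral}, combined with the archimedean computation, expresses $J(\tfrac12,0,\dots)$ directly through completed $L$-values, and you never split off the archimedean factors.
\end{itemize}
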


\begin{proof}
    We first show that the non-vanishing of $L( \tfrac{1}{2},\pi,\mathrm{std}_4^{\vee})$ is equivalent to the non-vanishing of $L( \tfrac{1}{2},\pi,{\rm std}_4)$. For any place $v$, we denote by $\phi_{\pi_v}$ the Langland parameter of the representation $\pi_v$ of $\GL_4(F_v)$. By the local Langlands correspondence for $\GL_4$, we have that $\phi_{\pi_v^{\vee}} = \phi_{\pi_v}^{\vee}$ (see \cite[(1.4)]{Adams:Vogan}), which implies that \[L( \tfrac{1}{2},\pi^{\vee},{\rm std}_4) = L( \tfrac{1}{2},\pi,{\rm std}_4^{\vee}).\]
    Moreover, the functional equation gives
    \[L( s,\pi,{\rm std}_4) = \varepsilon(s,\pi,{\rm std}_4)L( 1-s,\pi^{\vee},{\rm std}_4).\]
    Consequently, $L( \tfrac{1}{2},\pi,{\rm std}_4)\neq 0$ if and only if $L( \tfrac{1}{2},\pi,{\rm std}_4^{\vee})\neq 0$. Then,  (1) follows from Theorem \ref{Main:Theorem:Spherical:Period}. For (2), 
recall that, thanks to the archimedean computation of Theorem \ref{Theorem_Arch_computation_split}, Corollary \ref{Corollary:Multi:Variable:Integral} shows that there exist data $(\varphi, f , \eta)$ such that $J(\varphi, f , \eta, s ,z)$ calculates the product of completed $L$-functions $$L( \tfrac{1}{2},\pi,\mathrm{std}_4) L(\tfrac{1}{2},\pi \otimes \sigma,\wedge^2 \otimes \mathrm{std}_2).$$ Since $S = \emptyset$, Lemma \ref{Non:Vanishing:Pullback} (ii) allows us to choose a non-trivial section $f = f^\Psi_\phi$, which is unramified at every place. Then, the implication follows from Theorem \ref{Main:Theorem:Spherical:Period}.
\end{proof}

\begin{remark}
    We expect the hypothesis on the non-vanishing of the local zeta integrals at any place $v$ in $S$ to be related to the non-vanishing of the local relative characters for the strongly tempered pair $(\GL_4(F_v), \GL_2(F_v) \times \GL_2(F_v))$. For example, an instance of this phenomenon is given in \cite{IchinoTriple} (see the proof of Theorem 1.1 and equation (5.1) of \emph{loc.cit.}). 
\end{remark}

\subsection{The relation between generalized Shalika periods and linear periods}

In what follows, we conclude the proof of Theorem \ref{Thmb}. Recall that, in \cite{Pollack:Wan:Zydor}, under certain hypotheses on $\pi \otimes \sigma$, it is shown that the non-vanishing of the period over $\GL_2 \times \GL_2$ implies that the central $L$-value $L(\tfrac{1}{2},\pi \otimes \sigma,\wedge^2 \otimes \mathrm{std}_2)\neq 0$. To further show that this implies the non-vanishing of $L( \tfrac{1}{2}, \pi \otimes \sigma, \rho_{\mathbf{X}})$, we use the fact that the non-vanishing of ${\mathcal{P}_{ \GL_2 \times \GL_2}}|_{\pi \otimes \sigma}$ implies that $\pi$ is in the image of the theta correspondence from $\GL_4$. The latter essentially follows from the computations of \cite[\S 3]{GanPeriodsandTheta} and from the proof of \cite[Proposition 7.1]{Chen-Gan}.

Recall that, in Definition \ref{definition_shalika_generalized}, given cusp forms $\varphi \in \mathcal{A}_{{\rm cusp}}(\GL_4)$, $\Psi \in \mathcal{A}_{{\rm cusp}}(\GL_2)$ with trivial product of central characters, we have defined the generalized Shalika period by
   \[\mathcal{S}_{\psi_S}({\varphi,\Psi}):= \int_{\A^\times \backslash[\GL_2]} \int_{[N]}  \varphi(n\, m(g))\Psi(g) \psi_S^{-1}(n) dn\,dg.\]

The authors thank Wee Teck Gan for sharing the following result with us. 

\begin{lemma}\label{Shalika:Equals:Period}
Given a cuspidal automorphic representation $\pi \otimes \sigma$ of $\GL_4 \times \GL_2$, we have 
$$\mathcal{S}_{\psi_S}|_{\Theta^0(\pi) \otimes \sigma}  \not \equiv 0 \Longleftrightarrow {\mathcal{P}_{ \GL_2 \times \GL_2}}|_{\pi \otimes \sigma} \not \equiv 0.$$
In particular, if ${\mathcal{P}_{ \GL_2 \times \GL_2}}|_{\pi \otimes \sigma}  \not \equiv 0$, then $\pi$ is in the image of the theta correspondence from $\GL_4$.
\end{lemma}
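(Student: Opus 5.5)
The plan is a direct seesaw computation for the type II dual pair $(\GL_4,\GL_4)$: write the generalized Shalika period of a theta lift $\varphi^0_f\in\Theta^0(\pi)$ against $\Psi\in\sigma$ as a period of $\varphi$ over $\GL_2\times\GL_2$, paired against a function on $\GL_2$ that ranges over $\sigma$.

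First, substitute the expression \eqref{Theta:Lift:Expression} with $z=0$ for $\varphi^0_f(n\,m(g))$ into $\mathcal{S}_{\psi_S}(\varphi^0_f,\Psi)$ and exchange the order of integration. This is justified by rapid decay of $\varphi$ together with Lemma \ref{Support:Away:From:0}, which lets us take $f$ with support away from $0$. The inner object is then
\[\int_{[N]}\sum_{x\neq 0}f\bigl((n\,m(g))^{-1}xr\bigr)\,\psi_S^{-1}(n)\,dn.\]
Write $x=\left(\begin{smallmatrix}x_1\\x_2\end{smallmatrix}\right)$ with $x_i\in M_{2\times4}(F)$. The $N$-integral picks out the terms with $x_2$ fixed and the $x_1$-sum Fourier transformed, much as in a mixed model. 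Concretely, perform a partial Fourier transform $\hat f$ in the $x_1$-variable. The condition $\psi_S(n(A))=\psi(\mathrm{Tr}A)$ then forces the transformed variable to equal a fixed multiple of $x_2$. Afterwards, $\mathcal{S}_{\psi_S}$ becomes a sum over $x_2\in M_{2\times4}(F)$ of integrals over $[\GL_2]\times[\GL_4]$.

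Second, analyse the $\GL_2(F)\times\GL_4(F)$-orbits on $M_{2\times4}(F)$, which are indexed by rank.
\begin{itemize}
\item The rank $0$ and rank $1$ orbits have stabilizers in $\GL_4$ containing the unipotent radical of a parabolic, so they vanish by cuspidality of $\varphi$. Some of them may instead have stabilizers in $\GL_2$ containing a unipotent radical, and then cuspidality of $\Psi$ kills them.
\item The rank $2$ orbit has representative $x_2=(I_2\ 0)$. Its stabilizer is isomorphic to $\{(g,\mathrm{diag}(g,h))\}$ up to a unipotent part (the upper right block), and the unipotent part integrates trivially because we have fixed $x_1$-data.
\end{itemize}
Collapsing the rank $2$ term gives
\[\mathcal{S}_{\psi_S}(\varphi^0_f,\Psi)=\int_{(\GL_2\times\GL_2)(\A)\backslash \GL_4(\A)\times\GL_2(\A)} \hat f(\ldots)\,\mathcal{P}_{\GL_2\times\GL_2}\bigl(\pi(r)\varphi,\sigma(g)\Psi\bigr)\,dr\,dg,\]
possibly up to swapping the roles of the two diagonal blocks.

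The easy direction follows at once: if $\mathcal{P}_{\GL_2\times\GL_2}|_{\pi\otimes\sigma}\equiv 0$, then every generalized Shalika period of $\Theta^0(\pi)\otimes\sigma$ vanishes. For the converse, choose $\hat f$ (equivalently $f$) to be a factorizable Schwartz function concentrated near a point $(r_0,g_0)$ where the period is nonzero; the freedom in Schwartz data makes the outer integral nonzero. Nonvanishing of $\mathcal{S}_{\psi_S}$ on $\Theta^0(\pi)\otimes\sigma$ then forces $\Theta^0(\pi)\neq0$, so by Proposition \ref{Watanabe:Theta:Main} we get $L(\tfrac12,\pi,\mathrm{std}_4)\neq0$ and $\Theta^0(\pi)=\pi$. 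This gives the final statement of the lemma.

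The main obstacle is the bookkeeping in the first two steps. One must choose the Fourier transform and the identification of $M_4$ so that the Shalika character lands exactly on the $\GL_2\times\GL_2$ block and central characters match, and one must check convergence and the exchange of summation at $z=0$. If direct convergence fails, I would first prove the identity for $\mathrm{Re}(z)\gg0$ using $\varphi^z_f$, then continue analytically to $z=0$ using the entireness established in \cite{Watanabe:Global:Theta}.
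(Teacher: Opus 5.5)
Your strategy is the paper's own, which follows Chen--Gan, Prop.~7.1. You insert the theta kernel and pass to a mixed model by a partial Fourier transform. The $[N]$-integral then cuts out a single $\GL_2(F)\times\GL_4(F)$-orbit with stabilizer $\{(g,\mathrm{diag}(g,h))\}$, and you unfold to an outer integral of Schwartz data against $\mathcal{P}_{\GL_2\times\GL_2}$. You work at $\mathrm{Re}(z)\gg0$, pass to $z=0$, and get the converse by concentrating the Schwartz function near a point of the orbit. Your middle step, however, is misstated, and read literally it would kill the main term.

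Here is the correct bookkeeping.

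\begin{enumerate}
\item Since $n(A)^{-1}x=\left(\begin{smallmatrix}x_1-Ax_2\\ x_2\end{smallmatrix}\right)$, Poisson summation in $x_1$ turns the $x_1$-sum into a sum over $\xi\in M_{4\times 2}(F)$ carrying the phase $\psi(\mathrm{Tr}(A\,x_2\xi))$. So the $[N]$-integral against $\psi_S^{-1}$ keeps exactly the pairs with $x_2\xi=I_2$, up to a sign fixed by normalizations. This is the paper's set $\mathcal{O}$.
\item Thus $\xi$ is not a multiple of $x_2$; it is dual to it. Matrices $x_2$ of rank $0$ or $1$ never occur at all, so no cuspidality argument is involved there.
\item The group acts on pairs by $(\xi,x_2)\mapsto(r^{-1}\xi g,\,g^{-1}x_2r)$. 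The stabilizer of $x_2=(I_2\ 0)$ alone is $\{(g,\left(\begin{smallmatrix}g&0\\ c&d\end{smallmatrix}\right))\}$.
\item Its unipotent block does not ``integrate trivially''. It sends $\xi_0=\left(\begin{smallmatrix}I_2\\ 0\end{smallmatrix}\right)$ to $\left(\begin{smallmatrix}I_2\\ -d^{-1}c\end{smallmatrix}\right)$, so it acts simply transitively on the fibre $\{\xi: x_2\xi=I_2\}\simeq M_2(F)$. That fibre sum is what gets unfolded against it.
\item Hence the stabilizer of $(\xi_0,x_2)$ is exactly $\{(g,\mathrm{diag}(g,d))\}$, with no unipotent part. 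This matters: if the unipotent radical of the Siegel parabolic lay in the stabilizer of the data, the unfolded term would contain a constant term of $\varphi$ and vanish by cuspidality.
\end{enumerate}

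With this correction your argument coincides with the paper's.
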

\begin{proof}
    The proof follows closely that of \cite[Proposition 7.1]{Chen-Gan}. For the reader's convenience, we sketch it.
    
      Let $\eta \in \mathcal{S}_0(M_4(\A))$ be a Schwartz function whose support does not contain zero. For $\mathrm{Re}(z)>>0$, consider
    \[\mathcal{S}_{\psi_S}({\varphi_\eta^z,\Psi})= \int_{\A^\times \backslash[S]}\Psi(g)\psi_S^{-1}(n)\int_{[\GL_4]}|\det(h)|^z\varphi(h)\sum_{\substack{x\in M_{4}(F)}}(\omega_{4,4}(n\, m(g),h)\eta)(x)dh\,dn \,dg.\]
    Choose two $4$-dimensional $F$ vector spaces $L_1$, $L_2$ with $M_{4}(F)\simeq L_1^\vee \otimes L_2$ and fix a decomposition $L_2 = \mathcal{L}_1\oplus \mathcal{L}_2$, where $\mathrm{dim}_F\;\mathcal{L}_i = 2$. Let
    \[\mathcal{F}:\mathcal{S}(M_{4}(\A))\to \mathcal{S}(L_1(\A)\otimes \mathcal{L}_1^{\vee}(\A))\otimes \mathcal{S}(L_1^{\vee}(\A)\otimes \mathcal{L}_2(\A))\]
    be the partial Fourier transform of \cite[\S 3.2]{GanPeriodsandTheta}. By Poisson summation
    \[\sum_{\substack{x\in M_{4}(F)}}(\omega_{4,4}(h_1,h_2)\eta)(x) = \sum_{(y,t)\in (L_1\otimes \mathcal{L}_1^{\vee})\oplus (L_1^{\vee}\otimes \mathcal{L}_2)}(\omega(h_1,h_2)\mathcal{F}(\eta))(y,t),\]
    with $\omega$ denoting the action of the Weil representation on this different model. Using the explicit action of $N$ on this model, as given by the formulas in \textit{loc.cit.}, the inner integral over $[N]$ of $\mathcal{S}_{\psi_S}({\varphi_\eta^z,\Psi})$ is non-zero for $(y,t)$ satisfying $ty = I_4$, while it vanishes otherwise for orthogonality of characters. Hence,
    \[\mathcal{S}_{\psi_S}({\varphi_\eta^z,\Psi}) = \int_{\A^{\times}\setminus[\GL_2]}\Psi(g)\int_{[\GL_4]}|\det(h)|^z\varphi(h)\sum_{\substack{(y,t)\in \mathcal{O}}}(\omega(m(g),h)\mathcal{F}(\eta))(y,t)dhdg,\]
    where $\mathcal{O} := \{(y,t)\in (L_1\otimes \mathcal{L}_1^{\vee})\oplus (L_1^{\vee}\otimes \mathcal{L}_2),\,\text{with }\,ty = I_4\}$. By \cite[\S 3.3]{GanPeriodsandTheta}, the group $\GL_2(F)\times \GL_4(F)$ acts transitively on $\mathcal{O}$. Furthermore, as explained in \emph{loc.cit.}, one can choose $(y_\mathcal{O},t_\mathcal{O})\in \mathcal{O}$ such that its stabilizer is $$H(F) = \left\{ (g,  \mathrm{diag}(g,h))\,:\, g,h \in \GL_2(F) \right\} \hookrightarrow \GL_2(F)\times \GL_4(F).$$
    Unfolding the sum yields that $\mathcal{S}_{\psi_S}({\varphi_\eta^z,\Psi})$ equals 
    \begin{align}\label{eq_final_Shal}
        \int_{H(\A)\setminus (\GL_2(\A)\times \GL_4(\A)}(\omega(m(g),h)\mathcal{F}(\eta))(y_\mathcal{O},t_\mathcal{O})\mathcal{P}_{\GL_2\times\GL_2}(\pi(h)(\varphi \otimes|\det(-)|^z),\sigma(g)\Psi)dhdg.
    \end{align}
    Note that one can choose $\eta$ so that the integral \eqref{eq_final_Shal} converges absolutely when $z = 0$, which shows that  
    $$\mathcal{S}_{\psi_S}|_{\Theta^0(\pi) \otimes \sigma}  \not \equiv 0 \Longrightarrow {\mathcal{P}_{ \GL_2 \times \GL_2}}|_{\pi \otimes \sigma} \not \equiv 0.$$
    Finally, arguing as in \cite[Proposition 7.1]{Chen-Gan}, one can further choose $\eta$ so that  $\mathcal{P}_{\GL_2\times\GL_2}(\varphi,\Psi)\neq 0$ implies that the integral \eqref{eq_final_Shal} is non-zero at $z=0$. Hence, $\mathcal{S}_{\psi_S}|_{\Theta^0(\pi) \otimes \sigma}  \not \equiv 0$ as desired. 
\end{proof}

Thanks to Lemma \ref{Shalika:Equals:Period}, using a combination of the above-mentioned result of \cite{Pollack:Wan:Zydor} and Corollary \ref{Main:Corollary:Spherical:1}, we can prove \cite[Conjecture 1.9]{Wan:Zhang:Spherical:Periods} for Model 1 of \emph{loc.cit.} in the case where $\pi$ and $\sigma$ are unramified everywhere.

\begin{corollary}\label{Main:Corollary:Spherical:2}
     Let $\pi$ and $\sigma$ be cuspidal automorphic representations of $\mathbf{PGL}_4$ and $\mathbf{PGL}_2$, which are unramified everywhere. Suppose that $L(\frac{3}{2},\pi \otimes \sigma,\wedge^2\otimes\mathrm{std}_2)\neq 0$. Then,
\[L( \tfrac{1}{2}, \pi \otimes \sigma, \rho_{\mathbf{X}}) \neq 0 \Longleftrightarrow {\mathcal{P}_{ \GL_2 \times \GL_2}}|_{\pi \otimes \sigma} \not \equiv 0.\]
\end{corollary}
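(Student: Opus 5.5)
The forward direction $(\Longrightarrow)$ is Corollary \ref{Main:Corollary:Spherical:1}(2), which applies since $\pi$ and $\sigma$ are unramified everywhere and have trivial central characters, so $\omega_\pi\omega_\sigma=\mathbf{1}$. No further work is needed there; the content of the proof lies in the converse.

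For $(\Longleftarrow)$, I start from ${\mathcal{P}_{\GL_2\times\GL_2}}|_{\pi\otimes\sigma}\not\equiv 0$ and use the factorization
\[
L(\tfrac12,\pi\otimes\sigma,\rho_{\mathbf{X}})=L(\tfrac12,\pi\otimes\sigma,\wedge^2\otimes\mathrm{std}_2)\,L(\tfrac12,\pi,\mathrm{std}_4)\,L(\tfrac12,\pi,\mathrm{std}_4^\vee),
\]
which holds for completed $L$-functions because $\rho_{\mathbf{X}}$ is a direct sum. I treat the factors separately. First, Lemma \ref{Shalika:Equals:Period} shows that $\pi$ lies in the image of the theta correspondence from $\GL_4$, i.e.\ $\Theta^0(\pi)\neq0$. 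By Proposition \ref{Watanabe:Theta:Main} this gives $L(\tfrac12,\pi,\mathrm{std}_4)\neq0$ and $\Theta^0(\pi)=\pi$. The functional equation together with $\phi_{\pi_v^\vee}=\phi_{\pi_v}^\vee$, exactly as in the proof of Corollary \ref{Main:Corollary:Spherical:1}, then gives $L(\tfrac12,\pi,\mathrm{std}_4^\vee)\neq0$.

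For the degree $12$ factor I invoke \cite[Corollary 1.3(6)]{Pollack:Wan:Zydor}, whose hypotheses (trivial central characters and $L(\tfrac32,\pi\otimes\sigma,\wedge^2\otimes\mathrm{std}_2)\neq0$) are precisely those of the statement. It yields that the non-vanishing of the period implies $L(\tfrac12,\pi\otimes\sigma,\wedge^2\otimes\mathrm{std}_2)\neq0$. One could instead pass through Shalika periods: since $\Theta^0(\pi)=\pi$, Lemma \ref{Shalika:Equals:Period} gives $\mathcal{S}_{\psi_S}|_{\pi\otimes\sigma}\not\equiv0$, and then Theorem \ref{Final:Theorem:Shalika} gives the non-vanishing directly. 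I would record this route as well, since it uses only results proved in the paper.

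The main point needing care is matching conventions. The result of \cite{Pollack:Wan:Zydor} concerns the partial or completed $L$-function in its own normalization. Because everything is unramified everywhere, the archimedean factors are Gamma values at points where Lemma \ref{L_factor_for_spherical_arch} shows they are nonzero, so partial and completed non-vanishing agree. The Shalika route avoids this issue altogether, so I would present it as the primary argument and cite \cite{Pollack:Wan:Zydor} as an alternative.
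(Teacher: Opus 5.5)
Your proof is correct. The forward direction coincides with the paper's, and so does your treatment of $L(\tfrac12,\pi,\mathrm{std}_4)L(\tfrac12,\pi,\mathrm{std}_4^\vee)$ via Lemma \ref{Shalika:Equals:Period}, Proposition \ref{Watanabe:Theta:Main} and the functional equation. Your fallback for the degree $12$ factor, \cite[Corollary 1.3(6)]{Pollack:Wan:Zydor}, is exactly what the paper uses.

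The genuine difference is your primary route for the degree $12$ factor. The paper extracts only $\Theta^0(\pi)\neq0$ from Lemma \ref{Shalika:Equals:Period}. You instead use its full conclusion $\mathcal{S}_{\psi_S}|_{\Theta^0(\pi)\otimes\sigma}\not\equiv0$, identify $\Theta^0(\pi)=\pi$, and apply the $(\Longleftarrow)$ direction of Theorem \ref{Final:Theorem:Shalika}. That chain is valid, and it buys something real. It never uses $L(\tfrac32,\pi\otimes\sigma,\wedge^2\otimes\mathrm{std}_2)\neq0$ or the triviality of the individual central characters. So it yields the equivalence under the hypotheses of Corollary \ref{Main:Corollary:Spherical:1}(2) alone; in the paper those extra assumptions enter only through \cite{Pollack:Wan:Zydor}.

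Two corrections. First, this route does not use ``only results proved in the paper''. The $(\Longleftarrow)$ direction of Theorem \ref{Final:Theorem:Shalika} is not derived from the zeta integral; it is quoted from \cite[Theorem 1.3]{PanYan}. You are therefore trading one external input for another, and dropping the hypotheses rests on that citation applying as stated in Theorem \ref{Final:Theorem:Shalika}.

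Second, your concern about conventions is aimed at the wrong thing. Gamma factors have no zeros, so what could separate partial from completed non-vanishing at $s=\tfrac12$ is a pole of an archimedean factor. Lemma \ref{L_factor_for_spherical_arch} by itself does not rule that out. On the Shalika route the point is moot, since Theorem \ref{Final:Theorem:Shalika} is already stated for the completed $L$-function.
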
 
     
\begin{proof}
The implication $(\Longrightarrow)$ follows from Corollary \ref{Main:Corollary:Spherical:1}(2). 

Now, assume first that there exist cusp forms $\varphi,\Psi$ such that $\mathcal{P}_{ \GL_2 \times \GL_2}(\varphi,\Psi)\neq 0$. By \cite[Corollary 1.3(6)]{Pollack:Wan:Zydor}, we have  \[L(\tfrac{1}{2},\pi \otimes \sigma,\wedge^2 \otimes \mathrm{std}_2)\neq 0.\] 
Moreover, by Lemma \ref{Shalika:Equals:Period}, we have that $\Theta^0(\pi)\neq 0$. Thus, Proposition \ref{Watanabe:Theta:Main}, together with the discussion in Corollary \ref{Main:Corollary:Spherical:1}, implies that \[L( \tfrac{1}{2},\pi,\mathrm{std}_4)L( \tfrac{1}{2},\pi,\mathrm{std}_4^{\vee})\neq 0,\] which proves the implication $(\Longleftarrow)$. 
\end{proof}

\section{The pole of the degree 12 $L$-function}

Let $\pi$ be a globally generic cuspidal automorphic representation of either $\GL_4(\A)$ or $\GU_{2,2}(\A)$ and let $\sigma$ be a cuspidal automorphic representation of $\GL_2(\A)$. We suppose that both representations are unramified outside a finite set $S$ of places of $F$. In this section, we study the analytic properties of the partial $L$-function $L^S(s,\pi \otimes \sigma, \wedge^2 \otimes  \mathrm{std}_2)$. We start with the following.

\begin{proposition}\label{Proposition:Poles:L}
    The $L$-function $L^S(s,\pi \otimes \sigma, \wedge^2 \otimes  \mathrm{std}_2)$ is entire unless $\omega_{\pi}\omega_{\sigma}\neq 1$ and $(\omega_{\pi}\omega_{\sigma})^2 = 1$. In this case, it can have at most a simple pole at $s = 1$. 
\end{proposition}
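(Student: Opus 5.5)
We will use the integral representation of Theorem \ref{zetaintegralfinalthm} together with Garrett's pullback formula (Corollary \ref{CorPullbackZeta}). By Theorem \ref{zetaintegralfinalthm},
\[Z(s,\varphi,f_s) = L^S(s,\pi\otimes\sigma,\wedge^2\otimes\mathrm{std}_2)\prod_{v\in S} Z(s,W_{\pi_v},W_{\sigma_v,s}).\]
The left-hand side is an integral of a rapidly decreasing cusp form against $E^*_{P_{(1,2)}}(g,s,f_s)$. Its poles are therefore contained in the poles of the normalized Klingen Eisenstein series. Section \ref{Section:Klingen:Eisenstein} recalls that these poles come from $L^S(2s-1,\sigma,\mathrm{Sym}^2\otimes\omega_\pi)$.

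By \cite[Theorem 9.3]{GelbartJacquet} and the isobaric analysis recalled there, this $L$-function is entire unless $\omega_\pi\omega_\sigma\neq 1$ and $(\omega_\pi\omega_\sigma)^2=1$. In that case its only possible pole is a simple pole at $2s-1=1$, i.e. at $s=1$. The pole at $s=0$ coming from the functional equation should be harmless: we will check it against the normalization of $E^*$, or handle it via the functional equation of the Eisenstein series. Thus $Z(s,\varphi,f_s)$ is entire outside this exceptional case, and otherwise has at most a simple pole at $s=1$.

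Next we must transfer this to $L^S$, and this requires controlling the ramified local factors. We will use Proposition \ref{non:vanishing:and:abs:conv:zeta:bad}. At finite $v\in S$, part (ii) provides data for which $Z(s,W_{\pi_v},W_{\sigma_v,s})$ is a nonzero constant. At archimedean $v$, part (iii) gives, for any fixed $s_0$, data with the local integral nonzero at $s_0$, and these local integrals are holomorphic near $s_0$. Then near any $s_0$ the function $L^S(s,\pi\otimes\sigma,\wedge^2\otimes\mathrm{std}_2)$ is a quotient of $Z(s,\varphi,f_s)$ by a function that is holomorphic and nonvanishing near $s_0$. Its poles therefore occur only where $Z$ has poles, and with no larger order.

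The main obstacle is the holomorphy of the archimedean integrals over all of $\C$, not only on the half-plane $\mathrm{Re}(s)>1/2-\varepsilon$ given by part (i). We would handle this in one of two ways. The first option is to choose the archimedean data via Dixmier--Malliavin as in \cite[Lemma 12.3]{GanGurevichCAP}, so that the local integral is entire; for instance, a section supported in the big cell makes the integral a finite combination of convergent Mellin-type integrals. The second option is to restrict to $\mathrm{Re}(s)>1/2-\varepsilon$, where the pole at $s=1$ lives, and deduce entirety in the remaining region from a functional equation or from known analytic properties, e.g. via Langlands--Shahidi, where this $L$-function appears for the pair $(\GSpin_6\times\GL_2)$. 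We would first try the first option. If it fails, we would use convergence and nonvanishing near each $s_0$ in $\mathrm{Re}(s)>1/2-\varepsilon$, and for $\mathrm{Re}(s)\le 1/2$ appeal to the functional equation relating $s$ and $1-s$.
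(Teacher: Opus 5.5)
Your proposal is the paper's proof. The poles of $Z(s,\varphi,f_s)$ are bounded by those of $E^*_{P_{(1,2)}}(g,s,f_s)$, hence by $L^S(2s-1,\sigma,\mathrm{Sym}^2\otimes\omega_\pi)$ together with Gelbart--Jacquet (\S\ref{Section:Klingen:Eisenstein}), and this bound is transferred to $L^S(s,\pi\otimes\sigma,\wedge^2\otimes\mathrm{std}_2)$ via Theorem \ref{zetaintegralfinalthm} and Proposition \ref{non:vanishing:and:abs:conv:zeta:bad}; Garrett's pullback formula plays no role here, as the paper uses it only in the subsequent remark as an alternative route. The archimedean continuation issue you flag is exactly what the paper takes for granted in Proposition \ref{non:vanishing:and:abs:conv:zeta:bad}(iii), and your worry about a pole at $2s-1=0$ is moot, since the partial $L$-function $L^S(2s-1,\sigma,\mathrm{Sym}^2\otimes\omega_\pi)$ has no pole there.
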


\begin{proof}
The proof relies on the analysis of the poles of the zeta integral $Z(s,\varphi,f_s)$. As we discussed in \S\ref{Section:Klingen:Eisenstein}, the Klingen Eisenstein series $E^*_{P_{(1,2)}}(g,s, f_s)$ is entire unless $\omega_{\pi}\omega_{\sigma}\neq 1$ and $(\omega_{\pi}\omega_{\sigma})^2 = 1$, in which case it can have a simple pole at $s=1$. Therefore,  $Z(s,\varphi,f_s)$ can only have a pole at $s = 1$ provided that $\omega_{\pi}\omega_{\sigma}\neq 1$ and $(\omega_{\pi}\omega_{\sigma})^2 = 1$. The result then follows from Theorem \ref{zetaintegralfinalthm} and Proposition \ref{non:vanishing:and:abs:conv:zeta:bad}.
\end{proof}

\begin{remark}
   We note that Proposition \ref{Proposition:Poles:L} can be equivalently proved by using the pullback formula (see Corollary \ref{CorPullbackZeta}) and analyzing the zeta integral at points where the normalized Siegel Eisenstein series $E_{P_{(3,0)}}^*(g, s,\phi_s)$ can have poles. As a sanity check, we provide a sketch of the proof using this alternative method. It is well known that  $E_{P_{(3,0)}}^*(g, s,\phi_s)$ is entire if $(\omega_{\pi}\omega_{\sigma})^2 \ne 1$ and it has simple poles at $s=1$, resp. $s=1$ and $2$, when  $\omega_{\pi}\omega_{\sigma} \ne 1$ and $(\omega_{\pi}\omega_{\sigma})^2=1$, resp. $\omega_{\pi}\omega_{\sigma} = 1$. We now explain how to deduce in the latter case that the zeta integral is entire everywhere using Corollary \ref{CorPullbackZeta}. Assume $\omega_{\pi}\omega_{\sigma} = 1$; the residue at $s=2$ of $E_{P_{(3,0)}}^*(g, s,\phi_s)$ is constant, hence the residue at $s=2$ of the right hand side of \eqref{eq_Cor_Pullback}, hence of $Z(s,\varphi,f_s)$, is zero because the period of the cusp form $\varphi$ over $\Sp_4$ is zero. At $s=1$, the residue of $E_{P_{(3,0)}}^*(g, s,\phi_s)$ is proportional to a value of a degenerate Klingen Eisenstein series $E_{P_{(1,4)}}(g, s_0,\phi_{s_0}')$  (see \cite[Proposition 1.10]{IkedaTriple}), where $P_{(1,4)}$ is the standard parabolic of $\GSp_6$ with Levi $\GL_1 \times \GSp_4$, for some $s_0 \in \C$ and section $\phi_{s_0}'$. Then the residue of \eqref{eq_Cor_Pullback} at $s=1$ is zero because a simple unfolding argument shows that 
   $$\int_{Z_{\GSp_6}(\A)\backslash [\GL_2\boxtimes \GSp_4]} E_{P_{(1,4)}}(\iota( g_1  , g_2), s,\phi_{s}') \Psi^w( g_1  ) \varphi(g_2)\, dg_1\,dg_2$$
   is identically zero. 
\end{remark}

In what follows, we assume that  $\omega_{\pi}\omega_{\sigma}\neq 1$ and $(\omega_{\pi}\omega_{\sigma})^2 = 1$ and give a period condition for the vanishing of the residue at $s = 1$ of the partial $L$-function $L^S(s,\pi \otimes \sigma, \wedge^2 \otimes  \mathrm{std}_2)$. We follow a similar strategy to that of \S\ref{Section:Period:Central:Value}: we first use Corollary \ref{CorPullbackZeta} and then apply the relevant Siegel--Weil formulae.

\subsection{An application of the Siegel-Weil formula}\label{SW:FirstTerm:Pole}

For simplicity, denote by $\chi$ the quadratic Hecke character $ \omega_\pi\omega_\sigma$ on $F^\times \backslash \A^\times$. Associated to $\chi$, there is a unique quadratic field extension
$F_{\chi}/F$. When endowed with the quadratic form  $N_{F_\chi/F}$, it  defines a quadratic space over $F$ of dimension $2$. 

Let $\tilde{I}_{P_{(3,0)}}(s,\chi)$ be the restriction to $\Sp_6(\A)$ of the induction representation $I_{P_{(3,0)}}(s,\chi)$. Let $V_\chi$ be any quadratic space over  $F$ of dimension $2$ with character $\chi$. By \cite{KudlaRallis}, we have a map 
\[\lambda_{V_\chi} :\mathcal{S}(V_\chi(\A)^3)\to  \tilde{I}_{P_{(3,0)}}(0,\chi).\] Its image is denoted by $\tilde{\Pi}(V_\chi)$.

For the next Theorem, let $M^*(1,\chi): \tilde{I}_{P_{(3,0)}}(1,\chi)\to \tilde{I}_{P_{(3,0)}}(0,\chi)$ be the (normalized) intertwining operator defined as in \cite[p. 4]{KudlaRallis}.
\begin{theorem}\label{Siegel-Weil}
     Let $\phi_s\in I_{P_{(3,0)}}\left(s,\chi\right)$ be a section such that $M^*(1,\chi){\phi_1}|_{\Sp_6(\A)}\in\tilde{\Pi}(V_\chi)$. For any $g\in\Sp_6(\A)$, we have
     \[\mathrm{Res}_{s = 1}E_{6}\left(g,s,\phi_s\right) = c_0\int_{[\mathbf{O}_{{V_\chi}}]}\theta(g,h,\varphi_{V_\chi})dh,\]
     where $\varphi_{V_\chi}\in \mathcal{S}(V_\chi(\A)^3)$ is so that $\lambda_{V_\chi}(\varphi_{V_\chi}) = M^*(1,\chi){\phi_1}|_{\Sp_6(\A)}$ and $c_0\in \C^{\times}$.
\end{theorem}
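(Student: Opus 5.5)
This is the first-term range Siegel--Weil formula of Kudla--Rallis for the dual pair $(\mathbf{O}_{V_\chi},\Sp_6)$ with $\dim V_\chi = 2 < n+1 = 4$. I would deduce it from \cite{KudlaRallis} rather than reprove it, after matching normalizations. The statement is for $g\in\Sp_6(\A)$, so only the restriction $\phi_s|_{\Sp_6(\A)}\in\tilde I_{P_{(3,0)}}(s,\chi)$ matters. First I would record how our parameter compares with theirs. Our $\delta_{P_{(3,0)}}^{\frac12(s+1/2)}$ gives $|\det h|^{2s+1}$, while Kudla--Rallis use $|\det h|^{s'+2}$ for $\Sp_6$ (since $\rho_3 = 2$). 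Hence $s' = 2s-1$, and our point $s=1$ is their $s_0 = 1 = \rho_3 - m/2$ with $m=2$. This is exactly the first-term range point where the Siegel Eisenstein series has at most a simple pole. Likewise our $s=0$ corresponds to $s'=-1=-s_0$, which is the target of $M^*(1,\chi)$ and the point where $\tilde\Pi(V_\chi)$ sits.

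Next, write the constant term of $E$ along $P_{(3,0)}$ as $\phi_s + M(s)\phi_s$. The residue at $s=1$ is governed by the residue of $M(s)$, which equals a nonzero scalar times the normalized operator $M^*(1,\chi)$. The Laurent expansion in \cite[\S 5--6]{KudlaRallis} then identifies $\mathrm{Res}_{s=1}E$ with an automorphic form whose Siegel constant term is a multiple of $M^*(1,\chi)\phi_1$. The key input is their result (Theorem 6.2/Theorem 7.2 type, in the ``first term range'' case) that for $\Phi\in\tilde I(-s_0,\chi)$ lying in $\tilde\Pi(V_\chi)$ with $\Phi=\lambda_{V_\chi}(\varphi)$, the residue equals $c_0$ times the theta integral $I(g,\varphi)$. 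Here $V_\chi$ is anisotropic when $F_\chi$ is a field, so $\int_{[\mathbf O_{V_\chi}]}\theta$ converges absolutely and no regularization is needed. In the split case $V_\chi=\mathbb H$, the integral would have to be regularized as in \cite{IchinoSW}, but $\chi\neq 1$ excludes this.

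Finally, I would check that the residue depends only on $M^*(1,\chi)\phi_1$. This follows because the leading term of $E$ at $s=1$ is $E(g,-1,M^*(1,\chi)\phi_1)$ up to a constant, via the functional equation $E(s,\phi)=E(-s,M(s)\phi)$ in Kudla--Rallis' normalization. Applying the Siegel--Weil formula at $-s_0$ to $\lambda_{V_\chi}(\varphi_{V_\chi})$ then gives the result, with $c_0$ collecting the normalizing $L$-factors and $\kappa$-constants, which are all nonzero. The main obstacle is the bookkeeping: the exact translation between our $E_6$ and induction (including the similitude extension, which is irrelevant here because $g\in\Sp_6$) and Kudla--Rallis' normalized $E^*$, $M^*$. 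In particular, one must verify that $M^*(1,\chi)$ is holomorphic and nonvanishing at the relevant point, so that $c_0\neq0$. I would do this first by computing the unramified constant and then comparing.
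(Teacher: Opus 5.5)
Your plan matches the paper, which proves the statement simply by invoking \cite[Corollary 6.3]{KudlaRallis} for the six-dimensional space $V_\chi\oplus\mathbb{H}^2$. That space sits at $s_0=1$ in their normalization (your $s'=2s-1$ bookkeeping is right) and has complementary space $V_\chi$, which is anisotropic because $\chi\neq 1$. One inaccuracy to drop: for $\Sp_6$ the constant term along $P_{(3,0)}$ has four Bruhat-cell terms, not just $\phi_s+M(s)\phi_s$, but your deduction rests on the Kudla--Rallis residue identity and the functional equation, not on that description.
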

\begin{proof}
    This is \cite[Corollary 6.3]{KudlaRallis} for the quadratic space  $V_\chi \oplus \mathbb{H}^2$, which has complementary space $V_\chi$ in the notation of \textit{loc.cit.}.
\end{proof}

\begin{remark}
    In contrast to \S\ref{Section:Period:Central:Value}, we cannot extend the formula of Theorem \ref{Siegel-Weil} to the case of similitude groups. The reason being that the translates of $\lambda_{V_\chi}(\varphi_{V_\chi})$ by elements in $\GSp_6^+(\A)$ might not come from any $\mathcal{S}(V(\A)^3)$, where $V$ is an anisotropic quadratic space of dimension $2$ over $F$ of character $\chi$. 
\end{remark}

\begin{proposition}\label{First:Term:Application:pole}
If ${\rm Res}_{s=1}Z(s,\varphi,f^\Psi_{\phi_s}) \ne 0$, then there exists $\varphi_{V_\chi}\in \mathcal{S}(V_\chi(\A)^3)$ such that 
 \[ \int_{[\mathbf{O}_{V_\chi}]}\int_{[\mathbf{PSL}_2\times \mathbf{PSp}_4]}\theta(\iota(g_1,g_2),h,\varphi_{V_\chi})\Psi^{w}(g_1)\varphi(g_2)dg_1dg_2 \ne 0.\]
\end{proposition}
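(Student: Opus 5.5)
We would start from the pullback identity of Corollary \ref{CorPullbackZeta}, now with $\chi = \omega_\pi\omega_\sigma$ quadratic and nontrivial, and with the normalized Siegel Eisenstein series $E^*_{P_{(3,0)}}(g,s,\phi_s)$ on $\GSp_6$ built from $I_{P_{(3,0)}}(s,\chi)$. Since $\varphi$ is rapidly decreasing on $\GSp_4(\A)$ and $\Psi$ is a cusp form, the integral in \eqref{eq_Cor_Pullback} converges absolutely away from the poles of the Eisenstein series. We may therefore take residues under the integral. This gives ${\rm Res}_{s=1}Z(s,\varphi,f^\Psi_{\phi_s})$ as the integral over $Z_{\GSp_6}(\A)\backslash[\GL_2\boxtimes\GSp_4]$ of ${\rm Res}_{s=1}E^*_{P_{(3,0)}}(\iota(g_1,g_2),s,\phi_s)\Psi^w(g_1)\varphi(g_2)$. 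The normalizing factor $L^S(2s+1,\chi)L^S(4s,\chi^2)$ is holomorphic and nonzero at $s=1$, so up to a nonzero constant this residue is that of the unnormalized $E_{P_{(3,0)}}$. Hence the hypothesis forces the residual integral to be nonzero.

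Because the Siegel--Weil formula of Theorem \ref{Siegel-Weil} is only available on $\Sp_6$, we next reduce the similitude integral to one over the isometry groups. Using $\GL_2\boxtimes\GSp_4 = Z\cdot(\SL_2\times\Sp_4)\cdot\{(h_a,d_a)\}$, we write the integral as an outer integral over representatives of the similitude classes (an $[\GL_1]$-type quotient) of an inner integral over $[\mathbf{PSL}_2\times\mathbf{PSp}_4]$ of translates. The residue function is still an automorphic form on $\GSp_6$, so its right translate by $\iota(h_a,d_a)$ is again a residue of an Eisenstein series, now attached to the translated section $r(\iota(h_a,d_a))\phi_s$. The outer integrand being nonzero for some $a$, we conclude that for some translated section $\phi'_s$, and some translates of $\varphi$ and $\Psi$ (which remain in $\pi$ and $\sigma$), the integral
\[\int_{[\mathbf{PSL}_2\times\mathbf{PSp}_4]}{\rm Res}_{s=1}E_{P_{(3,0)}}(\iota(g_1,g_2),s,\phi'_s)\Psi^w(g_1)\varphi(g_2)\,dg_1dg_2\]
is nonzero.

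On $\Sp_6(\A)$ we then decompose $M^*(1,\chi)\phi'_1|_{\Sp_6(\A)}$ according to the theta images $\tilde\Pi(V)$ of binary quadratic spaces of character $\chi$, coherent and incoherent. By Kudla--Rallis, the residue depends only on $M^*(1,\chi)\phi'_1$, and incoherent components contribute zero to the residue. Applying Theorem \ref{Siegel-Weil} to each coherent component $V_\chi$ gives, for $g\in\Sp_6(\A)$, a finite sum $\sum_{V_\chi}c_0\int_{[\mathbf{O}_{V_\chi}]}\theta(g,h,\varphi_{V_\chi})\,dh$. Substituting this into the nonvanishing integral above, at least one summand must be nonzero. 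Exchanging the order of integration, which is justified because $\mathbf{O}_{V_\chi}$ is anisotropic (so $[\mathbf{O}_{V_\chi}]$ is compact) and $\varphi,\Psi$ decay rapidly, yields the stated nonvanishing.

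The main obstacle is the second step. There we must check two things: that translating the section by similitude elements keeps us inside induced representations where the residue-and-$M^*$ description of Kudla--Rallis still applies, and that the central characters are trivial on the relevant centers, so the inner integral over the projective groups is well defined. The remaining steps are formal.
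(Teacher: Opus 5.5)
Your route is the paper's: the pullback identity of Corollary \ref{CorPullbackZeta}, the residue taken inside the integral, and Kudla--Rallis' description of the residue through $M^*(1,\chi)\phi_1$ and the coherent binary theta images. Then Theorem \ref{Siegel-Weil} on $\Sp_6(\A)$ and compactness of $[\mathbf{O}_{V_\chi}]$ finish the argument. The paper restricts the Klingen integral to $\Sp_4$ before pulling back, while you pull back first; this is immaterial. Your explicit similitude-to-isometry step is more careful than the paper, whose first line simply asserts nonvanishing of the residue of the integral over $[\mathbf{PSp}_4]$ with the original data.

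The one thing to fix is your last sentence. Write the similitude integral as $\int_C I(c)\,dc$, where $C=\A^\times/F^\times(\A^\times)^2$ is the group of similitude classes and $I(c)$ is the inner integral over $[\mathbf{PSL}_2\times\mathbf{PSp}_4]$ of the right translate by $\iota(h_a,d_a)$. Nonvanishing of this integral only gives $I(c)\neq 0$ for some $c$. So what you actually prove is the stated nonvanishing for $\pi(d_a)\varphi$ and $\sigma(wh_aw)\Psi$, with a translated section, not for the given $\varphi,\Psi$. Either state the conclusion for some $\varphi'\in\pi$, $\Psi'\in\sigma$, or supply an extra argument for $c=1$. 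The paper's proof does not provide such an argument either.

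The two ``obstacles'' you flag are routine. First, $r(\iota(h_a,d_a))\phi_s$ is again a section of $I_{P_{(3,0)}}(s,\chi)$, and its restriction to $\Sp_6(\A)$ lies in $\tilde{I}_{P_{(3,0)}}(s,\chi)$, so Kudla--Rallis applies verbatim. Second, $E_{P_{(3,0)}}(\cdot,s,\phi_s)$ has central character $\chi^{-1}$. Since $\omega_\pi\omega_\sigma=\chi$, the integrand is $Z_{\GSp_6}(\A)$-invariant, in particular invariant under $\mu_2(\A)$.
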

\begin{proof}
   If ${\rm Res}_{s=1}Z(s,\varphi,f^\Psi_{\phi_s}) \ne 0$, then 
$${\rm Res}_{s=1}  \int_{ [\mathbf{PSp}_4]} E^*_{P_{(1,2)}}(g,s, f^\Psi_{\phi_s})\varphi(g)dg \ne 0.$$
Then, by Garrett's pullback formula and the Siegel-Weil formula of Theorem \ref{Siegel-Weil}, the non-vanishing of  ${\rm Res}_{s=1}Z(s,\varphi,f^\Psi_{\phi_s}) \ne 0$ implies that 
$$\sum_{V_\chi} \int_{[\mathbf{O}_{V_\chi}]}\int_{[\mathbf{PSL}_2\times \mathbf{PSp}_4]}\theta(\iota(g_1,g_2),h,\varphi_{V_\chi})\Psi^{w}(g_1)\varphi(g_2)dg_1dg_2 \ne 0,$$
where, we have used that, by \cite[Theorem 4.9]{KudlaRallis},  $$M^*(1,\chi)\phi_{1}\mapsto \mathrm{Res}_{s = 1}E^*_{P_{(3,0)}}(\iota(g_1,g_2),s,M^*(s,\chi)\phi_s)$$ factors through $\bigoplus_{V_\chi} \tilde{\Pi}(V_\chi)$, and we have denoted $\varphi_{V_\chi} = \mathrm{pr}_{\tilde{\Pi}(V)}(M^*(1,\chi)\phi_{1})$.
\end{proof}

Applying Theorem \ref{zetaintegralfinalthm} and Proposition \ref{First:Term:Application:pole}, we get the following.

\begin{corollary}\label{cor_pole_SW1} 
Let $\phi_s\in I_{P_{(3,0)}}\left(s,\chi\right)$ be a section such that $M^*(1,\chi){\phi_1}|_{\Sp_6(\A)}\in\tilde{\Pi}(V_\chi)$. Suppose that there exist data $W_{\pi_v}$ and $W_{\sigma_v,f_{\phi_{1/2,v}}^{\Psi_v}}$ such that $\prod_{v \in S} Z\left(1,W_{\pi_v}, W_{\sigma_v,f_{\phi_{1/2,v}}^{\Psi_v}} \right) \ne 0,$ then \[{\rm Res}_{s=1}L^S(s, \pi \otimes \sigma, \wedge^2 \otimes {\rm std}_2) \ne 0 \Longrightarrow  \int_{[\mathbf{O}_{V_\chi}]}\int_{[\mathbf{PSL}_2\times \mathbf{PSp}_4]}\theta(\iota(g_1,g_2),h,\varphi_{V_\chi})\Psi^{w}(g_1)\varphi(g_2)dg_1dg_2 \ne 0.\]
\end{corollary}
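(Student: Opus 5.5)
The plan is to combine the Euler product factorization of Theorem \ref{zetaintegralfinalthm} with Proposition \ref{First:Term:Application:pole}. Take $\varphi=\otimes_v\varphi_v\in\pi$ and $\Psi=\otimes_v\Psi_v\in\sigma$ whose Whittaker functions at $v\in S$ are the data $W_{\pi_v}$, $W_{\sigma_v,f^{\Psi_v}_{\phi_{1/2,v}}}$ in the hypothesis. Take $\phi_s$ as given, with $M^*(1,\chi)\phi_1|_{\Sp_6(\A)}\in\tilde\Pi(V_\chi)$. Form the section $f^\Psi_{\phi_s}$ of Theorem \ref{PullbackFormulaNew}. Then by Theorem \ref{zetaintegralfinalthm},
\[
Z(s,\varphi,f^\Psi_{\phi_s}) = L^S(s,\pi\otimes\sigma,\wedge^2\otimes\mathrm{std}_2)\prod_{v\in S}Z(s,W_{\pi_v},W_{\sigma_v,s}).
\]

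The first step is to show that the product over $S$ is holomorphic near $s=1$ and non-zero at $s=1$. Holomorphy comes from Proposition \ref{non:vanishing:and:abs:conv:zeta:bad}(i): each local integral converges absolutely for $\mathrm{Re}(s)>1/2-\varepsilon$. The hypothesis $\omega_\pi\omega_\sigma=\chi$ quadratic makes $\omega_{\pi_v}\omega_{\sigma_v}$ unitary, so that proposition applies. Non-vanishing at $s=1$ is exactly the hypothesis. Taking residues at $s=1$ then gives
\[
\mathrm{Res}_{s=1}Z(s,\varphi,f^\Psi_{\phi_s}) = \mathrm{Res}_{s=1}L^S(s,\pi\otimes\sigma,\wedge^2\otimes\mathrm{std}_2)\cdot\prod_{v\in S}Z(1,W_{\pi_v},W_{\sigma_v,1}),
\]
and the right-hand side is non-zero whenever the residue of $L^S$ is non-zero.

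The second step applies Proposition \ref{First:Term:Application:pole}. From $\mathrm{Res}_{s=1}Z\neq0$ it produces the non-vanishing of the theta integral. The Schwartz function there is $\varphi_{V_\chi}=\mathrm{pr}_{\tilde\Pi(V_\chi)}(M^*(1,\chi)\phi_1)$. Under our assumption $M^*(1,\chi)\phi_1\in\tilde\Pi(V_\chi)$, this projection is the function with $\lambda_{V_\chi}(\varphi_{V_\chi})=M^*(1,\chi)\phi_1$. So only the single space $V_\chi$ contributes, and the sum over $V_\chi$ in the proof of that proposition collapses to one term.

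The main obstacle is keeping the choice of local data consistent. The hypothesis is stated through sections $f^{\Psi_v}_{\phi_{1/2,v}}$, but the residue is taken at $s=1$. One has to check that the section $f^\Psi_{\phi_s}$ built from the fixed holomorphic $\phi_s$ realizes the hypothesized local data at $v\in S$ and is unramified outside $S$, so that Theorem \ref{zetaintegralfinalthm} applies with the same $S$. I would handle this by noting that $p_\Psi$ is local, so the data at $v\in S$ are determined place by place. Part (2) of Theorem \ref{PullbackFormulaNew} shows that the unramified places contribute exactly the normalizing $L$-factors absorbed into $E^*$. Hence the local factor at $v\in S$ is precisely $Z(s,W_{\pi_v},W_{\sigma_v,f^{\Psi_v}_{\phi_{s,v}}})$, evaluated at $s=1$.
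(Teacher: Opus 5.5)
Your proposal is correct and follows the paper's argument. The paper's proof is just the combination of the Euler factorization of Theorem \ref{zetaintegralfinalthm} with Proposition \ref{First:Term:Application:pole}: the ramified factors are holomorphic at $s=1$ and, by hypothesis, non-zero there, so $\mathrm{Res}_{s=1}Z(s,\varphi,f^\Psi_{\phi_s})\neq 0$, and the assumption $M^*(1,\chi)\phi_1|_{\Sp_6(\A)}\in\tilde{\Pi}(V_\chi)$ reduces the sum over $V_\chi$ to the single term in the statement. You read the hypothesis as the local integrals attached to $\phi_{s,v}$ evaluated at $s=1$, rather than to $\phi_{1/2,v}$; that is the sensible interpretation of what appears to be a typo in the statement.
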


\bibliographystyle{acm}

\bibliography{V2}

@article{CauchiGutiMVI,
  title={A two variable {R}ankin{--}{Selberg} integral for {$\mathrm{GU}_{2, 2}$} and the degree 5 {$L$}-function of {$\mathrm{GSp}_4$}},
  author={Cauchi, Antonio and Gutierrez Terradillos, Armando},
  journal={Mathematische {Z}eitschrift},
  volume={308},
  pages={29},
  year={2024}
}

@article{IshiiMultivariate,
  title={The computation of multivariate archimedean zeta integrals on {${\mathrm{GL}_2}\times \mathrm{GSp}_4$} and {$\mathrm{GL}_4$}},
  author={Ishii, Taku},
  journal={Proceedings of the American Mathematical Society},
  volume={147},
  number={1},
  pages={103--114},
  year={2019}
}

@article{Stade,
  title={Archimedean {L}-factors on {$\mathrm{GL} (n)\times \mathrm{GL} (n)$} and generalized {B}arnes integrals},
  author={Stade, Eric},
  journal={Israel Journal of Mathematics},
  volume={127},
  pages={201--219},
  year={2002},
  publisher={Springer}
}

@book{HiranoIshiiMiyazakiGL3,
  title={Archimedean zeta integrals for {${\mathrm{GL}_3}\times \mathrm{GL}_2$}},
  author={Hirano, Miki and Ishii, Taku and Miyazaki, Tadashi},
  volume={278},
  number={1366},
  year={2022},
  publisher={American Mathematical Society}
}

@article{HiranoIshiiMiyazakiGL4,
  title={Whittaker functions on {$\mathrm{GL} (4, \mathbb{R})$} and archimedean {Bump--Friedberg} integrals},
  author={Hirano, Miki and Ishii, Taku and Miyazaki, Tadashi},
  journal={arXiv preprint arXiv:2409.00401},
  year={2024}
}

@inproceedings{Novodvorsky,
  title={Automorphic {L}-functions for symplectic group {$\mathrm{GSp}(4)$}},
  author={Novodvorsky, Mark E},
  booktitle={Automorphic forms, representations and L-functions (Proc. Sympos. Pure Math., Oregon State Univ., Corvallis, Ore., 1977), Part},
  volume={2},
  pages={87--95},
  year={1979}
}

@article {RobertsTheta,
    AUTHOR = {Roberts, Brooks},
     TITLE = {The theta correspondence for similitudes},
   JOURNAL = {Israel J. Math.},
  FJOURNAL = {Israel Journal of Mathematics},
    VOLUME = {94},
      YEAR = {1996},
     PAGES = {285--317},
       DOI = {10.1007/BF02762709},
}

@incollection {SeeSawKudla,
    AUTHOR = {Kudla, Stephen S.},
     TITLE = {Seesaw dual reductive pairs},
 BOOKTITLE = {Automorphic forms of several variables ({K}atata, 1983)},
    SERIES = {Progr. Math.},
    VOLUME = {46},
     PAGES = {244--268},
 PUBLISHER = {Birkh\"auser Boston, Boston, MA},
      YEAR = {1984},
      ISBN = {0-8176-3172-0},
}

@book{GodementJacquet,
 author = {Godement, Roger and Jacquet, Herv{\'e}},
 title = {Zeta functions of simple algebras},
 fseries = {Lecture Notes in Mathematics},
 series = {Lect. Notes Math.},
 issn = {0075-8434},
 volume = {260},
 year = {1972},
 publisher = {Springer, Cham},
 language = {English},
 doi = {10.1007/BFb0070263},
 zbMATH = {3385758},
 Zbl = {0244.12011}
}

@Article{KimK,
 Author = {Kim, Henry H. and Krishnamurthy, Muthukrishnan},
 Title = {Twisted exterior square lift from {{\(\text{GU}(2,2)_{E/F}\)}} to {{\(\text{GL}_6/F\)}}},
 FJournal = {Journal of the Ramanujan Mathematical Society},
 Journal = {J. Ramanujan Math. Soc.},
 ISSN = {0970-1249},
 Volume = {23},
 Number = {4},
 Pages = {381--412},
 Year = {2008},
 Language = {English},
 URL = {www.math.uiowa.edu/~mkrishna/Research/ki-mu-final-rev.pdf},
 zbMATH = {5532120},
 Zbl = {1220.11070}
}

@Article{FurusawaMorimoto,
 Author = {Furusawa, Masaaki and Morimoto, Kazuki},
 Title = {Shalika periods on {{\(\mathrm{GU}(2,2)\)}}},
 FJournal = {Proceedings of the American Mathematical Society},
 Journal = {Proc. Am. Math. Soc.},
 ISSN = {0002-9939},
 Volume = {141},
 Number = {12},
 Pages = {4125--4137},
 Year = {2013},
 Language = {English},
 DOI = {10.1090/S0002-9939-2013-11690-4},
 zbMATH = {6218142},
 Zbl = {1354.11038}
}

@inproceedings{KnappLLCA,
  title={Local {L}anglands correspondence: the {A}rchimedean case},
  author={Knapp, Anthony W},
  booktitle={Proc. Sympos. Pure Math},
  volume={55},
  number={2},
  year={1994}
}

@article{Chen-Gan,
  title={Unitary {F}riedberg-{J}acquet periods},
  author={Chen, Rui and Gan, Wee Teck},
  journal={arXiv preprint arXiv:2108.04064},
  year={2021}
}

@article {IchinoTriple,
    AUTHOR = {Ichino, Atsushi},
     TITLE = {Trilinear forms and the central values of triple product
              {$L$}-functions},
   JOURNAL = {Duke Math. J.},
  FJOURNAL = {Duke Mathematical Journal},
    VOLUME = {145},
      YEAR = {2008},
    NUMBER = {2},
     PAGES = {281--307},
      ISSN = {0012-7094,1547-7398},
   MRCLASS = {11F67 (11F70)},
}

@article{IshiiStade,
  title={New formulas for {W}hittaker functions on {$\mathrm{GL} (n, \mathbf{R})$}},
  author={Ishii, Taku and Stade, Eric},
  journal={Journal of functional Analysis},
  volume={244},
  number={1},
  pages={289--314},
  year={2007},
  publisher={Elsevier}
}

@article{GinzburgRallis,
  title={The exterior cube {$L$}-function for {$\mathrm{GL}(6)$}},
  author={Ginzburg, David and Rallis, Stephen},
  journal={Compositio Mathematica},
  volume={123},
  number={3},
  pages={243--272},
  year={2000},
  publisher={London Mathematical Society}
}

@Article{GelbartJacquet,
 Author = {Gelbart, Stephen and Jacquet, Herv{\'e}},
 Title = {A relation between automorphic representations of {{\(\text{GL}(2)\)}} and {{\(\text{GL}(3)\)}}},
 FJournal = {Annales Scientifiques de l'{\'E}cole Normale Sup{\'e}rieure. Quatri{\`e}me S{\'e}rie},
 Journal = {Ann. Sci. {\'E}c. Norm. Sup{\'e}r. (4)},
 ISSN = {0012-9593},
 Volume = {11},
 Number = {4},
 Pages = {471--542},
 Year = {1978},
 Language = {English},
 DOI = {10.24033/asens.1355},
 zbMATH = {3630846},
 Zbl = {0406.10022}
}

@misc{CauchiGutiArchGU,
      title={Archimedean integrals for {$\mathrm{GU}_{2,2} \times \mathrm{GL}_2$}}, 
      author={Cauchi, Antonio and Gutierrez Terradillos, Armando},
     note={In preparation}
}

@article {YamanaSingular,
    AUTHOR = {Yamana, Shunsuke},
     TITLE = {On the {S}iegel-{W}eil formula: the case of singular forms},
   JOURNAL = {Compos. Math.},
  FJOURNAL = {Compositio Mathematica},
    VOLUME = {147},
      YEAR = {2011},
    NUMBER = {4},
     PAGES = {1003--1021},
      ISSN = {0010-437X,1570-5846},
   MRCLASS = {11F27 (11F70 11M36 22E55)},
       DOI = {10.1112/S0010437X11005379},
       URL = {https://doi.org/10.1112/S0010437X11005379},
}

@Article{WaldsToricPeriods,
 Author = {Waldspurger, J.-L.},
 Title = {On the values of certain automorphic {{\(L\)}}-functions at their center of symmetry},
 FJournal = {Compositio Mathematica},
 Journal = {Compos. Math.},
 ISSN = {0010-437X},
 Volume = {54},
 Pages = {173--242},
 Year = {1985},
 Language = {French},
 zbMATH = {3904677},
 Zbl = {0567.10021}
}

@Book{BumpBook,
 Author = {Bump, Daniel},
 Title = {Automorphic forms and representations},
 FSeries = {Cambridge Studies in Advanced Mathematics},
 Series = {Camb. Stud. Adv. Math.},
 Volume = {55},
 ISBN = {0-521-55098-X},
 Year = {1997},
 Publisher = {Cambridge: Cambridge University Press},
 Language = {English},
 zbMATH = {1016459},
 Zbl = {0868.11022}
}

@Book{ExplicitGSR,
 Author = {Gelbart, Stephen and Piatetski-Shapiro, Ilya I. and Rallis, Stephen},
 Title = {Explicit constructions of automorphic {{\(L\)}}-functions},
 FSeries = {Lecture Notes in Mathematics},
 Series = {Lect. Notes Math.},
 ISSN = {0075-8434},
 Volume = {1254},
 ISBN = {3-540-17848-1},
 Year = {1987},
 Publisher = {Springer, Cham},
 Language = {English},
 zbMATH = {3989450},
 Zbl = {0612.10022}
}

@Article{Ton-That,
 Author = {Tuong Ton-That},
 Title = {Lie group representations and harmonic polynomials of a matrix variable},
 FJournal = {Transactions of the American Mathematical Society},
 Journal = {Trans. Am. Math. Soc.},
 ISSN = {0002-9947},
 Volume = {216},
 Pages = {1--46},
 Year = {1976},
 Language = {English},
 DOI = {10.2307/1997683},
 zbMATH = {3450509},
 Zbl = {0287.22014}
}

@InCollection{BlasiusRogawski,
 Author = {Blasius, Don and Rogawski, Jonathan D.},
 Title = {Zeta functions of {Shimura} varieties},
 BookTitle = {Motives. Proceedings of the summer research conference on motives, held at the University of Washington, Seattle, WA, USA, July 20-August 2, 1991},
 ISBN = {0-8218-1637-3; 0-8218-1635-7},
 Pages = {525--571},
 Year = {1994},
 Publisher = {Providence, RI: American Mathematical Society},
 Language = {English},
 zbMATH = {596379},
 Zbl = {0827.11033}
}

@Article{CasselmanShalikaII,
 Author = {Casselman, William and Shalika, J.},
 Title = {The unramified principal series of p-adic groups. {II}: {The} {Whittaker} function},
 FJournal = {Compositio Mathematica},
 Journal = {Compos. Math.},
 ISSN = {0010-437X},
 Volume = {41},
 Pages = {207--231},
 Year = {1980},
 Language = {English},
 zbMATH = {3739835},
 Zbl = {0472.22005}
}

@Article{GanHundley,
 Author = {Gan, Wee Teck and Hundley, Joseph},
 Title = {The spin {{\(L\)}}-function of quasi-split {{\(D_4\)}}},
 FJournal = {IMRP. International Mathematics Research Papers},
 Journal = {IMRP, Int. Math. Res. Pap.},
 ISSN = {1687-3017},
 Volume = {2006},
 Number = {14},
 Pages = {74},
 Note = {Id/No 68213},
 Year = {2006},
 Language = {English},
 DOI = {10.1155/IMRP/2006/68213},
 zbMATH = {5136937},
 Zbl = {1201.11052}
}

@InCollection{GarrettIntegralRepEisenstein,
 Author = {Garrett, Paul B.},
 Title = {Integral representations of {Eisenstein} series and {L}-functions},
 Year = {1989},
 BookTitle = {Number theory, trace formulas and discrete groups, {Symp}. in {Honor} of {Atle} {Selberg}, {Oslo}/{Norway} 1987},
Pages = {241--264}
}

@Article{KudlaRallis,
 Author = {Kudla, Stephen S. and Rallis, Stephen},
 Title = {A regularized {Siegel}-{Weil} formula: {The} first term identity},
 FJournal = {Annals of Mathematics. Second Series},
 Journal = {Ann. Math. (2)},
 ISSN = {0003-486X},
 Volume = {140},
 Number = {1},
 Pages = {1--80},
 Year = {1994},
 Language = {English},
 DOI = {10.2307/2118540},
 zbMATH = {709808},
 Zbl = {0818.11024}
}

@InCollection{KudlaHarrisJacquet,
 Author = {Harris, Michael and Kudla, Stephen S.},
 Title = {On a conjecture of {Jacquet}},
 BookTitle = {Contributions to automorphic forms, geometry, and number theory. Papers from the conference in honor of Joseph Shalika on the occasion of his 60th birthday, Johns Hopkins University, Baltimore, MD, USA, May 14--17, 2002},
 ISBN = {0-8018-7860-8},
 Pages = {355--371},
 Year = {2004},
 Publisher = {Baltimore, MD: Johns Hopkins University Press},
 Language = {English},
 URL = {muse.jhu.edu/journals/american_journal_of_mathematics/info/docs/hida_pdfs/hida14.pdf},
 zbMATH = {2149785},
 Zbl = {1080.11039}
}

@Article{GanSavinSW,
 Author = {Gan, Wee Teck and Savin, Gordan},
 Title = {An exceptional {Siegel}-{Weil} formula and poles of the spin {{\(L\)}}-function of {{\(\mathrm{PGSp}_6 \)}}},
 FJournal = {Compositio Mathematica},
 Journal = {Compos. Math.},
 ISSN = {0010-437X},
 Volume = {156},
 Number = {6},
 Pages = {1231--1261},
 Year = {2020},
 Language = {English},
 DOI = {10.1112/S0010437X20007186},
 zbMATH = {7321635},
 Zbl = {1476.11076}
}

@Article{GanGurevichCAP,
 Author = {Gan, Wee Teck and Gurevich, Nadya},
 Title = {{CAP} representations of {{\(G_{2}\)}} and the spin {{\(L\)}}-function of {{\(\mathrm{PGSp}_6\)}}},
 FJournal = {Israel Journal of Mathematics},
 Journal = {Isr. J. Math.},
 ISSN = {0021-2172},
 Volume = {170},
 Pages = {1--52},
 Year = {2009},
 Language = {English},
 DOI = {10.1007/s11856-009-0018-9},
 zbMATH = {5601705},
 Zbl = {1177.22010}
}

@Misc{JacquetShalikaExterior,
 Author = {Jacquet, Herv{\'e} and Shalika, Joseph.},
 Title = {Exterior square {{\(L\)}}-functions},
 Year = {1990},
 Language = {English},
 HowPublished = {Automorphic forms, {Shimura} varieties, and {L}-functions. {Vol}. {II}, {Proc}. {Conf}., {Ann} {Arbor}/{MI} ({USA}) 1988, {Perspect}. {Math}. 11, 143-226 (1990).},
 zbMATH = {4137873},
 Zbl = {0695.10025}
}

@Article{PitaleSchmidt,
 Author = {Pitale, Ameya and Saha, Abhishek and Schmidt, Ralf},
 Title = {On the standard {{\(L\)}}-function for {{\(\mathrm{GSp}_{2n}\times\mathrm{GL}_1\)}} and algebraicity of symmetric fourth {{\(L\)}}-values for {{\(\mathrm{GL}_2\)}}},
 FJournal = {Annales Math{\'e}matiques du Qu{\'e}bec},
 Journal = {Ann. Math. Qu{\'e}.},
 ISSN = {2195-4755},
 Volume = {45},
 Number = {1},
 Pages = {113--159},
 Year = {2021},
 Language = {English},
 DOI = {10.1007/s40316-020-00134-6},
 zbMATH = {7341608}
}

@Article{IchinoSW,
 Author = {Ichino, Atsushi},
 Title = {On the regularized {Siegel}-{Weil} formula},
 FJournal = {Journal f{\"u}r die Reine und Angewandte Mathematik},
 Journal = {J. Reine Angew. Math.},
 ISSN = {0075-4102},
 Volume = {539},
 Pages = {201--234},
 Year = {2001},
 Language = {English},
 DOI = {10.1515/crll.2001.076},
 zbMATH = {1653375},
 Zbl = {1022.11020}
}

@Article{IkedaTriple,
 Author = {Ikeda, Tamotsu},
 Title = {On the location of poles of the triple {L}-functions},
 FJournal = {Compositio Mathematica},
 Journal = {Compos. Math.},
 ISSN = {0010-437X},
 Volume = {83},
 Number = {2},
 Pages = {187--237},
 Year = {1992},
 Language = {English},
 zbMATH = {64245},
 Zbl = {0773.11035}
}

@Article{GanQiuTakeda,
 Author = {Gan, Wee Teck and Qiu, Yannan and Takeda, Shuichiro},
 Title = {The regularized {Siegel}-{Weil} formula (the second term identity) and the {Rallis} inner product formula},
 FJournal = {Inventiones Mathematicae},
 Journal = {Invent. Math.},
 ISSN = {0020-9910},
 Volume = {198},
 Number = {3},
 Pages = {739--831},
 Year = {2014},
 Language = {English},
 DOI = {10.1007/s00222-014-0509-0},
 zbMATH = {6379806},
 Zbl = {1320.11037}
}

@article{GrossPrasadFirst,
 author = {Gross, Benedict H. and Prasad, Dipendra},
 title = {On the decomposition of a representation of {{\(SO_ n\)}} when restricted to {{\(SO_{n-1}\)}}},
 fjournal = {Canadian Journal of Mathematics},
 journal = {Can. J. Math.},
 issn = {0008-414X},
 volume = {44},
 number = {5},
 pages = {974--1002},
 year = {1992},
 language = {English},
 doi = {10.4153/CJM-1992-060-8},
 zbMATH = {85728},
 Zbl = {0787.22018}
}

@incollection{GGPOriginal,
 author = {Gan, Wee Teck and Gross, Benedict H. and Prasad, Dipendra},
 title = {Symplectic local root numbers, central critical {{\(L\)}}-values, and restriction problems in the representation theory of classical groups},
 booktitle = {Sur les conjectures de Gross et Prasad. I},
 isbn = {978-2-85629-348-5},
 pages = {1--109},
 year = {2012},
 publisher = {Paris: Soci{\'e}t{\'e} Math{\'e}matique de France (SMF)},
 language = {English},
 zbMATH = {6154194},
 Zbl = {1280.22019}
}

@book{SakellaridisVenkatesh,
 author = {Sakellaridis, Yiannis and Venkatesh, Akshay},
 title = {Periods and harmonic analysis on spherical varieties},
 fseries = {Ast{\'e}risque},
 series = {Ast{\'e}risque},
 issn = {0303-1179},
 volume = {396},
 isbn = {978-2-85629-871-8},
 year = {2017},
 publisher = {Paris: Soci{\'e}t{\'e} Math{\'e}matique de France (SMF)},
 language = {English},
 zbMATH = {6847674},
 Zbl = {1479.22016}
}

@article{sakellaridis2013spherical,
  title={Spherical functions on spherical varieties},
  author={Sakellaridis, Yiannis},
  journal={American Journal of Mathematics},
  volume={135},
  number={5},
  pages={1291--1381},
  year={2013},
  publisher={Johns Hopkins University Press}
}

@article{MWZ,
  title={Relative {L}anglands duality for some strongly tempered spherical varieties},
  author={Mao, Zhengyu and Wan, Chen and Zhang, Lei},
  journal={Inventiones mathematicae},
  year={2025}
}

@article{BZSV,
  title={Relative {L}anglands duality},
  author={Ben-Zvi, David and Sakellaridis, Yiannis and Venkatesh, Akshay},
  journal={arXiv preprint arXiv:2409.04677},
  year={2024}
}

@article{CauchiGuti,
  title={Spherical {S}halika models on {$\mathrm{PGU}_{2,2}$} and the theta correspondence for {$(\mathrm{PGSp}_{4}, \mathrm{PGU}_{2,2})$}},
  author={Cauchi, Antonio and Gutierrez Terradillos, Armando},
  journal={Forum Mathematicum},
  year={2025}
}

@article{Shimizu:JL,
 author = {Shimizu, Hideo},
 title = {Theta series and automorphic forms on {{\(\text{GL}_2\)}}},
 fjournal = {Journal of the Mathematical Society of Japan},
 journal = {J. Math. Soc. Japan},
 issn = {0025-5645},
 volume = {24},
 pages = {638--683},
 year = {1972},
 language = {English},
 doi = {10.2969/jmsj/02440638},
 zbMATH = {3380700},
 Zbl = {0241.10016}
}

@article{Watanabe:Global:Theta,
  AUTHOR = {Watanabe, Takao},
     TITLE = {Global theta liftings of general linear groups},
   JOURNAL = {J. Math. Sci. Univ. Tokyo},
  FJOURNAL = {The University of Tokyo. Journal of Mathematical Sciences},
    VOLUME = {3},
      YEAR = {1996},
    NUMBER = {3},
     PAGES = {699--711},
      ISSN = {1340-5705},
   MRCLASS = {11F70 (11F27 22E55)},
}

@article{Conservation:Relation:Theta,
 author = {Sun, Binyong and Zhu, Chen-Bo},
 title = {Conservation relations for local theta correspondence},
 fjournal = {Journal of the American Mathematical Society},
 journal = {J. Am. Math. Soc.},
 issn = {0894-0347},
 volume = {28},
 number = {4},
 pages = {939--983},
 year = {2015},
 language = {English},
 doi = {10.1090/S0894-0347-2014-00817-1},
 zbMATH = {6460476},
 Zbl = {1321.22017}
}

@article{Kudla:Integral:Borcherds,
 author = {Kudla, Stephen S.},
 title = {Integrals of {Borcherds} forms},
 fjournal = {Compositio Mathematica},
 journal = {Compos. Math.},
 issn = {0010-437X},
 volume = {137},
 number = {3},
 pages = {293--349},
 year = {2003},
 language = {English},
 doi = {10.1023/A:1024127100993},
 zbMATH = {1972162},
 Zbl = {1046.11027}
}

@book{Moeglin:Vigneras:Waldspurger,
 author = {Moeglin, Colette and Vign{\'e}ras, Marie-France and Waldspurger, Jean-Loup},
 title = {Correspondances de {Howe} sur un corps {{\(p\)}}-adique},
 fseries = {Lecture Notes in Mathematics},
 series = {Lect. Notes Math.},
 issn = {0075-8434},
 volume = {1291},
 isbn = {3-540-18699-9},
 year = {1987},
 publisher = {Springer, Cham},
 language = {French},
 doi = {10.1007/BFb0082712},
 zbMATH = {4046035},
 Zbl = {0642.22002}
}

@incollection{Prasad:Theta:Survey,
 author = {Prasad, Dipendra},
 title = {Weil representation, {Howe} duality, and the theta correspondence},
 booktitle = {Theta functions: from the classical to the modern},
 isbn = {0-8218-6997-3},
 pages = {105--127},
 year = {1993},
 publisher = {Providence, RI: American Mathematical Society},
 language = {English},
 zbMATH = {431570},
 Zbl = {0820.11027}
}

@article{Huang:He,
 author = {Huang, Huajun and He, Hongyu},
 title = {Symmetric subgroup actions on isotropic {Grassmannians}},
 fjournal = {Journal of Algebra},
 journal = {J. Algebra},
 issn = {0021-8693},
 volume = {337},
 number = {1},
 pages = {141--168},
 year = {2011},
 language = {English},
 doi = {10.1016/j.jalgebra.2011.04.022},
 url = {digitalcommons.lsu.edu/cgi/viewcontent.cgi?article=1474&context=mathematics_pubs},
 zbMATH = {5996435},
 Zbl = {1235.14042}
}

@book{Wan:Zhang:Spherical:Periods,
 author = {Wan, Chen and Zhang, Lei},
 title = {Periods of automorphic forms associated to strongly tempered spherical varieties},
 fseries = {Memoirs of the American Mathematical Society},
 series = {Mem. Am. Math. Soc.},
 issn = {0065-9266},
 volume = {1590},
 isbn = {978-1-4704-7656-4; 978-1-4704-8457-6},
 year = {2025},
 publisher = {Providence, RI: American Mathematical Society (AMS)},
 language = {English},
 doi = {10.1090/memo/1590},
 zbMATH = {8096342}
}

@article{Pollack:Wan:Zydor,
 author = {Pollack, Aaron and Wan, Chen and Zydor, Micha{\l}},
 title = {On the residue method for period integrals},
 fjournal = {Duke Mathematical Journal},
 journal = {Duke Math. J.},
 issn = {0012-7094},
 volume = {170},
 number = {7},
 pages = {1457--1515},
 year = {2021},
 language = {English},
 doi = {10.1215/00127094-2020-0078},
 zbMATH = {7369856},
 Zbl = {1473.11104}
}

@article{Adams:Vogan,
 author = {Adams, Jeffrey and Vogan, David A. jun.},
 title = {Contragredient representations and characterizing the local {Langlands} correspondence},
 fjournal = {American Journal of Mathematics},
 journal = {Am. J. Math.},
 issn = {0002-9327},
 volume = {138},
 number = {3},
 pages = {657--682},
 year = {2016},
 language = {English},
 doi = {10.1353/ajm.2016.0024},
 url = {hdl.handle.net/1721.1/116073},
 zbMATH = {6603603},
 Zbl = {1354.11039}
}

@misc{Branching:Law,
 author = {Rajan, C. S. and Shrivastava, Sagar},
 title = {Relative {Weyl} {Character} formula, {Relative} {Pieri} formulas and {Branching} rules for {Classical} groups},
 year = {2023},
 howpublished = {Preprint, {arXiv}:2310.00323 [math.{RT}] (2023)},
 url = {https://arxiv.org/abs/2310.00323},
 arXiv = {arXiv:2310.00323}
}

@article{Soo:Bo,
 author = {Lee, Soo Teck and Zhu, Chen-Bo},
 title = {Degenerate principal series and local theta correspondence. {III}: {The} case of complex groups},
 fjournal = {Journal of Algebra},
 journal = {J. Algebra},
 issn = {0021-8693},
 volume = {319},
 number = {1},
 pages = {336--359},
 year = {2008},
 language = {English},
 doi = {10.1016/j.jalgebra.2007.06.018},
 zbMATH = {5235129},
 Zbl = {1139.22004}
}

@misc{Kudla:Rallis:lost,
 author = {Kudla, Stephen S. and Rallis, Stephen},
 title = {Poles of {Eisenstein} series and {L}-functions},
 year = {1990},
 language = {English},
 howpublished = {Festschrift in honor of {I}.{I}. {Piatetski}-{Shapiro} on the occasion of his sixtieth birthday. {Pt}. {II}: {Papers} in analysis, number theory and automorphic {L}-functions, {Pap}. {Workshop} {L}-{Funct}., {Number} {Theory}, {Harmonic} {Anal}., {Tel}-{Aviv}/{Isr}. 1989, {Isr}. {Math}. {Conf}. {Proc}. 3, 81-110 (1990).},
 zbMATH = {4171016},
 Zbl = {0712.11029}
}

@incollection{GanPeriodsandTheta,
 author = {Gan, Wee Teck},
 title = {Periods and theta correspondence},
 booktitle = {Representations of reductive groups. Conference in honor of Joseph Bernstein. Representation theory and algebraic geometry, June 11--16, 2017. Weizmann Institute of Science and The Hebrew University of Jerusalem, Israel},
 isbn = {978-1-4704-4284-2; 978-1-4704-5157-8},
 pages = {113--132},
 year = {2019},
 publisher = {Providence, RI: American Mathematical Society (AMS)},
 language = {English},
 zbMATH = {7278441},
 Zbl = {1481.11046}
}

@incollection {BumpFriedberg,
    AUTHOR = {Bump, Daniel and Friedberg, Solomon},
     TITLE = {The exterior square automorphic {$L$}-functions on {${\rm
              GL}(n)$}},
 BOOKTITLE = {Festschrift in honor of {I}. {I}. {P}iatetski-{S}hapiro on the
              occasion of his sixtieth birthday, {P}art {II} ({R}amat
              {A}viv, 1989)},
    SERIES = {Israel Math. Conf. Proc.},
    VOLUME = {3},
     PAGES = {47--65},
 PUBLISHER = {Weizmann, Jerusalem},
      YEAR = {1990},
   MRCLASS = {11F55 (11F70 22E55)},
}

@article{PanYan,
  title={Rankin-{S}elberg integrals for {$\mathrm{GSpin}$} groups with application to the global {G}an-{G}ross-{P}rasad conjecture},
  author={Yan, Pan},
  journal={arXiv preprint arXiv:2508.09066},
  year={2025}
}

@book{Folland:Harmonic:Analysis,
 author = {Folland, Gerald B.},
 title = {A course in abstract harmonic analysis},
 edition = {2nd updated edition},
 fseries = {Textbooks in Mathematics},
 series = {Textb. Math.},
 isbn = {978-1-4987-2713-6; 978-1-032-92221-8; 978-1-4987-2715-0},
 year = {2016},
 publisher = {Boca Raton, FL: CRC Press},
 language = {English},
 doi = {10.1201/b19172},
 zbMATH = {6473590},
 Zbl = {1342.43001}
}

\end{document}